\numberwithin{equation}{section}
\numberwithin{figure}{section}
\begin{document}

\title[Asymptotic expansions, relative Reshetikhin-Turaev, Turaev-Viro invariant]
{On the asymptotic expansion of quantum invariants related to surgeries of Whitehead link I: relative
Reshetikhin-Turaev invariants and the Turaev-Viro invariants at $e^{\frac{2\pi\sqrt{-1}}{N+\frac{1}{2}}}$}
\author[Qingtao Chen and Shengmao Zhu]{Qingtao Chen and
Shengmao Zhu}

\address{Department of Pure Mathematics \\
Xi'an Jiaotong-Liverpool University \\
Suzhou Jiangsu \\
China}
\email{Qingtao.Chen@xjtlu.edu.cn,chenqtao@hotmail.com}

\address{Department of Mathematics \\
Zhejiang Normal University  \\
Jinhua Zhejiang,  321004, China }
\email{szhu@zju.edu.cn}
\begin{abstract} 
In this article, we obtain an asymptotic expansion formula for the relative Reshetikhin-Turaev invariant in the case that the ambient 3-manifold is gained by doing rational surgery along one component of Whitehead link.  In addition, we obtain an asymptotic expansion formula for the  Turaev-Viro invariant of the cusped 3-manifold which is gained by doing rational surgery along one component of the Whitehead link. 
\end{abstract}

\maketitle

\theoremstyle{plain} \newtheorem{thm}{Theorem}[section] \newtheorem{theorem}[%
thm]{Theorem} \newtheorem{lemma}[thm]{Lemma} \newtheorem{corollary}[thm]{%
Corollary} \newtheorem{proposition}[thm]{Proposition} \newtheorem{conjecture}%
[thm]{Conjecture} \theoremstyle{definition}
\newtheorem{remark}[thm]{Remark}
\newtheorem{remarks}[thm]{Remarks} \newtheorem{definition}[thm]{Definition}
\newtheorem{example}[thm]{Example}

%mathfrak

%\newcommand{\bS}{{\mathbb S}}
%%%%%%%%%%%%%%%%%%%%%%%%%%%%%%%%%%%%%%%%%%%%%%%%%%%%%%%%%%%%%%

%%%%%%%%%%%%%%%%%%%%%%%%%%%%%%%%%%%%%%%%%%%%%%%%%%%%%%%%%%%%%%%%%%%%%%%%%%%%%%%%%%%%%%%%%%%%%%%%
%Abstract                                            %
%%%%%%%%%%%%%%%%%%%%%%%%%%%%%%%%%%%%%%%%%%%%%%%%%%%%%%%%%%%%%%%%%%%%%%%%%%%%%%%%%%%%%%%%%%%%%%%%

%\setcounter{tocdepth}{5} \setcounter{page}{1}

\tableofcontents
\newpage

\section{Introduction}
In \cite{CY18},  the first author and Yang proposed a version of volume conjecture for Reshetikhin-Turaev and the Turaev-Viro invariants of a hyperbolic 3-manifold at certain roots of unity.  In \cite{WongYang23}, Wong and Yang proposed the volume conjecture for the relative Reshetikhin-Turaev invariants of a closed oriented 3-manifold with a colored framed link inside it whose asymptotic behavior is related to the volume and Chern-Simons invariant of the hyperbolic cone metric on the manifold with singular locus the link and cone angles determined by the coloring (see Conjecture 1.1 in \cite{WongYang23}). 

This paper contains two parts. First,  we prove an asymptotic expansion formula for the relative Reshetikhin-Turaev invariant in the case that the ambient 3-manifold is obtained by doing rational surgery along one component of whitehead link. Next, we present an asymptotic expansion formula for the  Turaev-Viro invariant of the cusped 3-manifold obtained by doing rational surgery along one component of whitehead link.

In the following, we first focus on the following  special version of Wong-Yang's volume conjecture for relative Reshetikhin-Turaev invariant \cite{WongYang23}.

\begin{conjecture} \label{conjecture}
Let $M$ be a closed oriented 3-manifold and let $\mathcal{L}$ be a framed hyperbolic link with $n$-components in $M$. For an odd integer $r=2N+1$ with $N\geq 1$ and let $\mathbf{N}=(N,\cdots, N)$ be the $n$-tuple $N$'s, then we have
\begin{align}
  \lim_{r\rightarrow \infty}\frac{4\pi}{r}\log RT_{r}(M,\mathcal{L};\mathbf{N})=Vol(M\setminus \mathcal{\mathcal{L}})+\sqrt{-1}CS(M\setminus\mathcal{\mathcal{L}})  \mod \sqrt{-1}\pi^2 \mathbb{Z}.
\end{align}
\end{conjecture}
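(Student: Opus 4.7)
The plan is to reduce the conjecture to a stationary phase analysis of the Reshetikhin-Turaev surgery sum and to identify the dominant critical point with the geometric representation of $\pi_1(M\setminus\mathcal{L})$, following the broad strategy pioneered by Ohtsuki and extended by Detcherry-Kalfagianni-Yang and by Wong-Yang to the relative setting.

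First I would choose a surgery presentation $M \cong S^{3}_{\mathcal{L}_s}$ by integral Dehn surgery along a framed link $\mathcal{L}_s\subset S^3\setminus\mathcal{L}$, and apply the Reshetikhin-Turaev surgery formula to rewrite
\[
RT_r(M,\mathcal{L};\mathbf{N}) \;=\; C_r \sum_{\mathbf{k}} \omega_r(\mathbf{k})\, J_r\bigl(\mathcal{L}\cup\mathcal{L}_s;\mathbf{N},\mathbf{k}\bigr),
\]
where $\omega_r(\mathbf{k})$ is the Kirby-color weight, $J_r$ is the Kauffman-bracket colored Jones polynomial of the augmented link $\mathcal{L}\cup\mathcal{L}_s \subset S^3$, and $C_r$ is a framing-correction prefactor with explicit asymptotics. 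This reduces the problem to the asymptotic behavior of a finite weighted sum of colored Jones polynomials of a link in $S^3$.

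Next, each $J_r$ is expanded as a state sum of quantum factorials, and the asymptotic expansion of the Faddeev quantum dilogarithm at $q=e^{2\pi\sqrt{-1}/r}$ recasts the summand in the form $e^{\frac{r}{4\pi\sqrt{-1}} V(\mathbf{y};\mathbf{u})+O(\log r)}$, where $\mathbf{u}$ encodes the meridian holonomies fixed by $\mathbf{N}=(N,\dots,N)$ and $\mathbf{y}$ is the continuous variable obtained from the Kirby indices after Poisson summation. A saddle point analysis then applies: the critical equations $\partial_{\mathbf{y}} V=0$ should match the Thurston gluing and completeness equations for an ideal triangulation of $M\setminus\mathcal{L}$, and at the geometric solution $\mathbf{y}_0$, corresponding to the discrete faithful $\mathrm{PSL}(2,\mathbb{C})$-representation of $\pi_1(M\setminus\mathcal{L})$, the Neumann-Zagier theorem identifies $V(\mathbf{y}_0;\mathbf{u})$ with the complexified volume $\mathrm{Vol}(M\setminus\mathcal{L})+\sqrt{-1}\,\mathrm{CS}(M\setminus\mathcal{L})$ modulo $\sqrt{-1}\pi^{2}\mathbb{Z}$.

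The hard part is threefold: (i) producing a uniform integral representation together with sharp Poisson-tail estimates requires an explicit planar diagram for $\mathcal{L}\cup\mathcal{L}_s$ and considerable case-by-case combinatorial work; (ii) showing that the geometric critical point of $V$ is non-degenerate and dominates all other critical points (i.e.\ no other stationary point has larger real part $\Re V$) is the central open obstacle in every formulation of the volume conjecture, and typically requires a delicate deformation argument through the parameter space of $\mathbf{u}$; and (iii) matching the critical value with the complexified volume in full generality requires extending the Neumann-Zagier potential identification beyond the classes of triangulations where it is presently understood. These obstructions are precisely what forces the actual theorems of this paper to specialize to surgeries along one component of the Whitehead link, where an explicit diagram, a canonical ideal triangulation of the complement, and an effective Neumann-Zagier potential are all simultaneously available and render each of the three steps above tractable.
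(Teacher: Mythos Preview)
The statement you were asked to address is a \emph{conjecture}, not a theorem; the paper does not prove it in general but only verifies the special case $M=L(p,q)$, $\mathcal{L}=L_2$ for $(p,q)$ in a certain set $S$ (Theorem~\ref{theorem-main} and its Corollary). You have correctly recognized this, and your proposal is not a proof but rather an accurate sketch of the general stationary-phase strategy together with an honest enumeration of the obstructions that prevent it from going through unconditionally.

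Your outline of the method---surgery presentation, Faddeev dilogarithm asymptotics, Poisson summation, saddle-point analysis, and identification of the critical value with the Neumann--Zagier potential---matches precisely the architecture the paper implements in Sections~\ref{Section-Potentialfunction}--\ref{Section-asympticexpansion} for the Whitehead surgery family. Your three ``hard parts'' (uniform integral representation with Poisson-tail control, dominance and non-degeneracy of the geometric critical point, and the Neumann--Zagier identification) correspond exactly to what the paper carries out by hand in Propositions~\ref{prop-fouriercoeff} and~\ref{prop-neglected}, Proposition~\ref{Prop-Hessian} together with the one-dimensional saddle estimates of Appendix~\ref{subsection-onedim}, and Proposition~\ref{prop-crit=volume}, respectively. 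Your concluding remark that these obstacles are what forces the specialization to Whitehead surgeries is exactly right.

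In short: there is no proof to compare against, and your proposal is an appropriate and well-informed explanation of why.
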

Note that when $M=S^3$, Conjecture \ref{conjecture} is reduced to the Question 1.7 in \cite{DKY18}.

We consider the Whitehead link $W$ given in Figure \ref{figureWL0} which has two components $L_1$ and $L_2$.
\begin{figure}[!htb] 
\begin{align}
\raisebox{-40pt}{
\includegraphics[width=150pt]{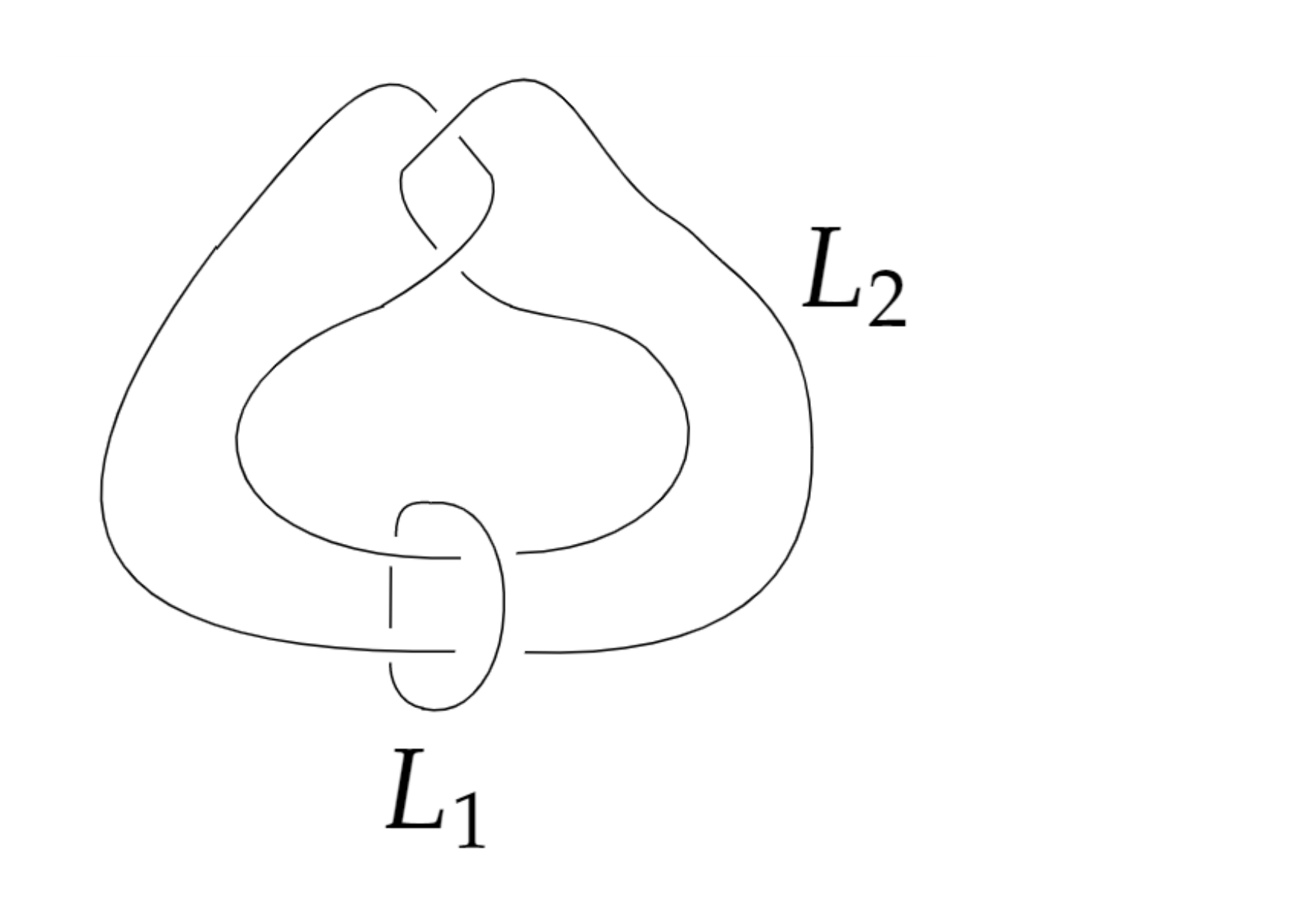}}
\end{align}
\caption{The Whitehead link $W$}
\label{figureWL0} 
\end{figure}
Let $W(p,q)$ be the cusped 3-manifold obtained by doing $\frac{p}{q}$-surgery along the component $L_1$ of the Whitehead link. The $\frac{p}{q}$-surgery along the unknot in $S^3$ gives the lens space $L(p,q)$, so  $W(p,q)$ is the complement of the knot $L_2$ in lens space $L(p,q)$.  For brevity, we let 
$\bar{J}_{N+1}(W(p,q);t)=RT_{r}(L(p,q),L_2;N)$    
be the $r$-th relative Reshetikhin-Turaev invariant of $L_2$ with color $N$ in $L(p,q)$, where $t=e^{\frac{4\pi\sqrt{-1}}{r}}$ and $r=2N+1$. Then  
the normalized relative Reshetikhin-Turaev invariant is defined as $J_{N}(W(p,q);t)=(-1)^{N-1}\frac{\{1\}}{\{N\}}\bar{J}_{N}(W(p,q);t)$. 

   Let $V^+ (p,q;\theta_1,\theta_2)$ be the potential function of the relative Reshetikhin-Turaev invariant given by formula (\ref{formula-V+-0}). 
Let $( \theta_1^0,\theta_2^0)$ be the critical point of $V^{+}(p,q;\theta_1,\theta_2)$. Set $z_i^0=e^{2\pi\sqrt{-1}\theta_i^0}$ for $i=1,2$,  put
\begin{align} \label{formula-zetapq}
  \zeta(p,q)&=
V(p,q;\theta_1^0,\theta_2^0)\\\nonumber
&=\pi\sqrt{-1}\left(\left(\frac{p}{2q}+1\right)(\theta_1^0)^2-\frac{\theta_1^0}{q}+(\theta_2^0)^2-\theta_2^0+\frac{5}{6}+\frac{p'}{2q}\right)\\\nonumber
&+\frac{1}{2\pi\sqrt{-1}}\left(\text{Li}_2(z_2^0z_1^0)+\text{Li}_2\left(\frac{z_2^0}{z_1^0}\right)+3\text{Li}_2(z_2^0)-\text{Li}_2\left((z_2^0)^2\right)\right)
\end{align}
and
\begin{align} \label{formula-omegapq0}
\omega(p,q)=\frac{\sin\left(\frac{\theta_1^0\pi}{q}-J(s^+)\pi\right)}{\sqrt{1-(z_2^0)^2}\sqrt{H(p,q;z_1^0,z_2^0)}},    
\end{align}
where
\begin{align}
&H(p,q;z_1^0,z_2^0)\\\nonumber
&=\left(\frac{p}{2q}+1\right)\left(1+\frac{3z_2^0}{1-z_2^0}-\frac{4(z_2^0)^2}{1-(z_2^0)^2}\right)+\left(\frac{p}{2q}+2+\frac{3z_2^0}{1-z_2^0}-\frac{4(z_2^0)^2}{1-(z_2^0)^2}\right)\\\nonumber
&\cdot\left(\frac{z_1^0z_2^0}{1-z_1^0z_2^0}+\frac{z_2^0}{z_1^0-z_2^0}\right)+\frac{4z_1^0(z_2^0)^2}{(1-z_2^0z_1^0)(z_1^0-z_2^0)}.    
\end{align}

Then, we have
\begin{theorem}  \label{theorem-main}
  For $(p,q)$ lies in the set $S$ which is given by formula (\ref{set-S}),  the asymptotic expansion of the (normalized) relative Reshetikhin-Turaev is given by 
    \begin{align} \label{formula-asymain}
        J_{N}(W(p,q);t)&=C_{N}(p,q)\frac{\sqrt{2N+1}}{\sin\frac{\pi}{2N+1}\sqrt{q}}\omega(p,q)e^{(N+\frac{1}{2})\zeta(p,q)}\left(1+O\left(\frac{1}{N+\frac{1}{2}}\right)\right),
    \end{align}
    where $C_N(p,q)$ is a constant of norm $1$ independent of the geometric structure, and $\zeta(p,q)$ and $\omega(p,q)$ are given by formulas (\ref{formula-zetapq}) and (\ref{formula-omegapq0}).
\end{theorem}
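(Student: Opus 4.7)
The plan is to adapt the Poisson summation plus saddle point strategy pioneered by Ohtsuki and developed by Chen-Yang and Wong-Yang to the present surgery setting. The starting point is an explicit finite-sum formula for $\bar J_{N+1}(W(p,q);t)$. Since $W(p,q)$ is obtained from $S^3$ by $\frac{p}{q}$-surgery on the unknotted component $L_1$ with $L_2$ coloured by $N$, the relative Reshetikhin-Turaev invariant may be written as a sum over a single quantum colour $n$ labelling the surgered component, whose summand is the product of (i) the two-variable coloured Jones polynomial of the Whitehead link evaluated at colours $(n,N)$, (ii) the Gauss-sum kernel implementing the $\frac{p}{q}$-surgery on $L_1$, which brings in Bernoulli-type phases in $p/q$ and the inverse $p^\ast$ of $p$ modulo $q$, and (iii) standard framing and normalisation factors. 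This step is essentially combinatorial.

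Next I would convert the summand into exponential form. Using the large-$N$ asymptotic expansion of the quantum dilogarithm for the various $\{k\}!$ terms, the summand becomes $e^{(N+\frac12) V^{+}(p,q;\,\theta_1(n),\,\theta_2(N))} B_N(\theta_1(n))\bigl(1+O(1/N)\bigr)$, where $\theta_1=n/(N+\tfrac12)$, $\theta_2=N/(N+\tfrac12)$, $V^{+}$ is the potential of (\ref{formula-V+-0}), and $B_N$ is a slowly varying bounded prefactor. I would then apply Poisson summation, trading the sum over $n$ for a sum over Fourier modes $m\in\mathbb{Z}$ of integrals
\[
I_m=\int e^{(N+\frac12) V_m^{+}(\theta_1)} B_N(\theta_1)\,d\theta_1, \qquad V_m^{+}(\theta_1)=V^{+}(\theta_1)-2\pi\sqrt{-1}\,m\theta_1,
\]
and carry out steepest descent through the critical point $(\theta_1^0,\theta_2^0)$ of the dominant mode. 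The admissibility hypothesis $(p,q)\in S$ in (\ref{set-S}) is precisely what guarantees that a single Fourier mode dominates, that the critical point is non-degenerate, and that the contour can be deformed into a steepest descent contour without crossing branch cuts of $\mathrm{Li}_2$.

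The leading contribution then assembles (\ref{formula-asymain}) as follows: the value $V^{+}(\theta_1^0,\theta_2^0)=\zeta(p,q)$ gives the exponential rate; the Hessian of $V^{+}$ produces the $1/\sqrt{H(p,q;z_1^0,z_2^0)}$ factor from Gaussian integration; the normalisation quantum integer $\{N\}^{-1}$ combined with the evaluation of $B_N$ at the critical point yields the $\sqrt{1-(z_2^0)^2}$ and $\sin(\theta_1^0\pi/q-J(s^+)\pi)$ pieces of $\omega(p,q)$; the $\frac{p}{q}$-surgery periodicity $\theta_1\mapsto \theta_1+\tfrac{1}{q}$ introduces the $\sqrt q$ in the denominator together with the overall $\sqrt{2N+1}/\sin\frac{\pi}{2N+1}$ scale; and the Gauss sum, framing, and normalisation phases, all of modulus one, are collected into $C_N(p,q)$. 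The principal technical obstacle will be the contour deformation and the mode-by-mode tail estimate: one must show that for every $(p,q)\in S$ the Fourier modes $m\neq m_0$ give integrals exponentially smaller than the leading one, and that the global geometry of $\operatorname{Re} V^{+}$ permits a deformation to a steepest descent contour passing only through $(\theta_1^0,\theta_2^0)$. Verifying this requires a careful global study of $V^{+}$ on a two-dimensional complex domain, and it is here that the precise definition of the admissible set $S$ does the real work.
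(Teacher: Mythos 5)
Your proposal captures the broad philosophy (finite sum $\to$ Poisson summation $\to$ saddle point, potential function related to dilogarithms, critical value $=\zeta(p,q)$, unit-norm phases into $C_N(p,q)$), but it has a genuine structural gap: it is one-dimensional where the problem is intrinsically two-dimensional.

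You treat the coloured Jones polynomial $J(n,N)$ of the Whitehead link at colours $(n,N)$ as a black box and reduce the invariant to a single sum over the surgery colour $n$, so that after rescaling only $\theta_1 = n/(N+\tfrac12)$ is a free variable and $\theta_2 = N/(N+\tfrac12)$ is fixed. That is not how the potential $V^{\pm}(p,q;\theta_1,\theta_2)$ of (\ref{formula-V+-0}) arises. To put the Whitehead link summand into exponential form one must first expand $\langle e_{N-1}, e_{n_l}\rangle_W$ by Habiro's cyclotomic formula, which introduces a second internal summation index $i$. After the change of variables both $\theta_1 = n'/(N+\tfrac12)$ and $\theta_2 = i'/(N+\tfrac12)$ are genuine free variables, Poisson summation is applied in the two-dimensional lattice $(n',i')\in(\mathbb{Z}+\tfrac12)^2$ producing Fourier pairs $(m_1,m_2)$, and the saddle point analysis is two-dimensional: the critical point $(\theta_1^0,\theta_2^0)$ is a simultaneous critical point in both variables, and the factor $H(p,q;z_1^0,z_2^0)$ in $\omega(p,q)$ is the determinant of a $2\times 2$ Hessian. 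A one-variable steepest-descent argument cannot produce this structure, nor can it explain the extra $\sqrt{1-(z_2^0)^2}$ prefactor, which comes from evaluating the non-exponential part of the two-variable integrand at the critical $\theta_2^0$.

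A second, smaller omission: the $\tfrac{p}{q}$-surgery is handled in the paper via the Rolfsen continued fraction expansion into a chain of integer-framed unknots and the Wong--Yang resolution of the resulting multi-Gauss sum (Lemma \ref{lemma-s(n)}), which yields an \emph{outer} sum over $s\in\{0,\ldots,|q|-1\}$. The statement's $J(s^+)$ and the constants $s^\pm,\,m_1^\pm$ come from this resolution. Your sketch mentions a ``Gauss-sum kernel'' but gives no mechanism for selecting the two surviving values $s^\pm$ and the single surviving Fourier pair $(m_1^\pm,1)$. Also, the hypothesis $(p,q)\in S$ does not ensure ``a single Fourier mode dominates'' in the sense you suggest; it is a volume lower bound $\operatorname{Vol}(W(p,q))>3.374482$, used to discard all contributions from the complement of the region $D_0$ via the elementary estimate of Lemma \ref{lemma-v}, before any saddle-point analysis is performed.
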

By Proposition \ref{prop-crit=volume}, we know that
\begin{align}
     2\pi \zeta(p,q)= Vol(W(p,q))+\sqrt{-1}CS(W(p,q)) \mod \pi^2\sqrt{-1}\mathbb{Z}.
\end{align}
Theorem \ref{theorem-main} implies that
\begin{corollary}
    For $(p,q)\in S$, we have
    \begin{align}
        \lim_{N\rightarrow \infty}\frac{2\pi}{N}\log J_{N}(W(p,q);t)=Vol(W(p,q))+\sqrt{-1}CS(W(p,q)) \mod \pi^2\sqrt{-1}\mathbb{Z}.
    \end{align}
\end{corollary}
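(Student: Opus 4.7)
The plan is to extract the corollary directly from the asymptotic expansion (\ref{formula-asymain}) in Theorem \ref{theorem-main} together with the identification of the critical value in Proposition \ref{prop-crit=volume}. First I would take the complex logarithm of both sides of (\ref{formula-asymain}) and observe that only the exponential $e^{(N+\frac{1}{2})\zeta(p,q)}$ produces a term of order $N$, while the remaining factors --- the unit-modulus constant $C_N(p,q)$, the prefactor $\frac{\sqrt{2N+1}}{\sin(\pi/(2N+1))\sqrt{q}}$, the $N$-independent factor $\omega(p,q)$, and the multiplicative error $1+O(1/(N+\frac{1}{2}))$ --- all contribute either bounded quantities (purely imaginary in the case of $\log C_N(p,q)$, since $|C_N(p,q)|=1$) or quantities growing at most logarithmically in $N$.

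After multiplying by $2\pi/N$, the dominant term becomes
\begin{equation*}
\frac{2\pi(N+\frac{1}{2})}{N}\,\zeta(p,q) \;\longrightarrow\; 2\pi\,\zeta(p,q) \qquad \text{as } N\to\infty,
\end{equation*}
and every other term vanishes in the limit. Invoking Proposition \ref{prop-crit=volume}, which asserts $2\pi\,\zeta(p,q) = Vol(W(p,q)) + \sqrt{-1}\,CS(W(p,q))$ modulo $\pi^2\sqrt{-1}\,\mathbb{Z}$, then yields the stated equality of limits modulo $\pi^2\sqrt{-1}\mathbb{Z}$.

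The only mildly delicate point is the branch choice for $\log J_N(W(p,q);t)$, since $J_N$ is complex-valued: different branches differ by an additive integer multiple of $2\pi\sqrt{-1}$, which after division by $N$ and passage to the limit vanishes, and which is in any case absorbed by the $\mod \pi^2\sqrt{-1}\,\mathbb{Z}$ convention on the right-hand side. I do not anticipate any substantive obstacle: all of the analytic work is already packaged inside Theorem \ref{theorem-main}, and the identification of $2\pi\,\zeta(p,q)$ with the complex volume is furnished by Proposition \ref{prop-crit=volume}, so the corollary is a mechanical consequence of these two inputs.
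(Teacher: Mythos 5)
Your proposal is correct and takes the same route the paper does: the corollary is stated in the paper as a direct consequence of Theorem \ref{theorem-main} and Proposition \ref{prop-crit=volume}, and your explicit bookkeeping (isolating the $e^{(N+\frac{1}{2})\zeta(p,q)}$ factor as the sole source of the linear-in-$N$ growth in $\log J_N$, noting that the unit-modulus constant, the polynomially growing prefactor, and the $1+O(1/N)$ error all become negligible after dividing by $N$, and handling branch ambiguity modulo $2\pi\sqrt{-1}\mathbb{Z}$) is precisely the standard verification the paper leaves implicit.
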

It confirms Conjecture \ref{conjecture} for the relative Reshetikhin-Turaev invariants $RT_{r}(L_{p,q},L_2;N)$.  

We should mention that although we have used the notation $\bar{J}_{N}(W(p,q);t)$ to denote the relative Reshetikhin-Turaev invariant, it is not a topological invariant of the cusped manifold $W(p,q)$. Now we turn to the Turaev-Viro invariant of $W(p,q)$ which is a topological invariant. Let us recall the volume conjecture for Turaev-Viro invariants proposed in \cite{CY18}.  
\begin{conjecture}
    Let $M$ be a hyperbolic 3-manifold, either closed, with cusps, or compact
with totally geodesic boundary. Then as $r$ varies along the odd natural numbers, 
\begin{align}
 \lim_{r\rightarrow \infty} \frac{2\pi}{r} \log |TV_{r}(M,t)|=Vol(M)   
\end{align}
\end{conjecture}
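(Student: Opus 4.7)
The conjecture is stated in full generality, but within the scope of this paper the natural target is the case $M = W(p,q)$ with $(p,q) \in S$. The plan is to reduce the Turaev-Viro estimate to the Reshetikhin-Turaev asymptotic expansion of Theorem \ref{theorem-main} via a sum-over-colors identity, and then extract the dominant contribution by a Laplace/saddle-point analysis.

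First, I would express the Turaev-Viro invariant as a weighted sum of squared norms of relative Reshetikhin-Turaev invariants of $L_2$ in $L(p,q)$:
\begin{align*}
TV_r(W(p,q); t) = \sum_{n} \eta_n \, |\bar{J}_n(W(p,q); t)|^2,
\end{align*}
where $n$ ranges over admissible colors and $\eta_n$ is an explicit weight built from quantum dimensions. Such an identity comes from the general relation between Turaev-Viro invariants of a cusped manifold and squared norms of Reshetikhin-Turaev invariants of its link-complement description (in the spirit of Roberts and Belletti-Detcherry-Kalfagianni-Yang). The second step is to extend Theorem \ref{theorem-main} to a family in which the color $n$ scales with $N$: writing $n \approx N\alpha$ for $\alpha \in (0,1)$, the potential function becomes an $\alpha$-dependent function $V^+(p,q;\theta_1,\theta_2;\alpha)$, whose critical value $\zeta(p,q;\alpha)$ can be identified, as in Proposition \ref{prop-crit=volume}, with (a multiple of) the complex volume of the hyperbolic cone manifold $W(p,q)_\alpha$ filled along $L_2$ with cone angle depending on $\alpha$.

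Granting such a uniform asymptotic expansion, of the form $|\bar{J}_n(W(p,q); t)|^2 \sim e^{(r/2\pi)\operatorname{Vol}(W(p,q)_\alpha)}$ up to polynomial prefactors, I would then apply Laplace's method to the discrete sum. By Thurston's hyperbolic Dehn surgery theorem, the function $\alpha \mapsto \operatorname{Vol}(W(p,q)_\alpha)$ is strictly maximized at the complete cusped structure, whose value is precisely $\operatorname{Vol}(W(p,q))$. Thus the dominant term gives $|TV_r(W(p,q);t)| \sim (\text{polynomial}) \cdot e^{(r/2\pi)\operatorname{Vol}(W(p,q))}$, and taking $\frac{2\pi}{r}\log$ recovers $\operatorname{Vol}(W(p,q))$.

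The main obstacle will be twofold. First, the asymptotic expansion of Theorem \ref{theorem-main} must be upgraded so as to be uniform in the color $n$, especially as $n/N$ approaches the boundary of the admissible range where the potential may develop singularities and new saddle branches can emerge; establishing such uniform control requires a careful refinement of the stationary-phase estimates underlying Theorem \ref{theorem-main} together with a description of $S$ sufficient to rule out competing saddles. Second, one must verify that $\alpha \mapsto \operatorname{Re}\zeta(p,q;\alpha)$ has a unique non-degenerate global maximum at the cusped value, so that the Laplace method genuinely isolates a single saddle; this is ultimately a quantitative rigidity statement for the complete hyperbolic structure on $W(p,q)$ and should follow from Hodgson-Kerckhoff-type monotonicity of hyperbolic volume under Dehn-filling deformations.
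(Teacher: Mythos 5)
Your proposal correctly identifies that the statement is a conjecture (due to Chen--Yang), that the paper only establishes it for $M=W(p,q)$ via Theorem~\ref{theorem-main2}, and your strategy --- expand $TV_r(W(p,q),t)=\mu_r^2\sum_{a}|\bar J_{N-a}(W(p,q);t)|^2$, upgrade Theorem~\ref{theorem-main} to a one-parameter family in $x=(a+\tfrac12)/(N+\tfrac12)$, then apply a discrete Laplace method --- is essentially the plan carried out in Section~\ref{Section-TV}. The genuine methodological difference is in how the uniqueness and non-degeneracy of the maximum of $x\mapsto\operatorname{Re}\zeta(p,q;x)$ is obtained. You propose to read it off from Thurston's Dehn surgery theorem and Hodgson--Kerckhoff monotonicity, which would require first showing that $\zeta(p,q;x)$ for $x\neq 0$ is a complex volume of a hyperbolic cone structure along $L_2$ --- something the paper never proves (Proposition~\ref{prop-crit=volume} handles $x=0$ only). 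The paper instead proceeds purely analytically: Lemma~\ref{lemma-zetadx} computes $\frac{d\operatorname{Re}\zeta}{dx}$ in closed form and exhibits the negative sign directly, and Lemma~\ref{lemma-zeta''} gives $(\operatorname{Re}\zeta)''(0)=-4\pi\operatorname{Im}\bigl(\tfrac{1}{1-z_2^0}\bigr)<0$ via $\operatorname{Im}(z^0)<0$. Your route is conceptually cleaner but presupposes a geometric interpretation that is not established; the paper's route is more elementary and self-contained. On the uniformity obstacle you flag: the paper sidesteps a fully uniform expansion by splitting the sum at $x_0=0.01$, using Theorem~\ref{proposition-JN-a} for $x\in[0,x_0)$ (this is where the restriction $p\geq 1000$ or $q\geq 1000$ enters, through the Poincar\'e--Miranda localization of the saddle in Proposition~\ref{proposition-existence-x}) and showing in Proposition~\ref{proposition-JN-a2} that the $x\geq x_0$ contribution is exponentially subleading --- a cutoff-and-crude-bound device rather than the uniform saddle control you anticipated needing.
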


It is well-known that there is a close relationship between Turaev-Viro and Reshetikhin-Turaev invariants \cite{Turaev94,Roberts95,BP96}. Hence the volume conjecture for Turaev-Viro invariant of a closed manifold is a direct consequence of  the volume conjecture for the corresponding  Reshetikhin-Turaev invariant.  Now we turn to the cusped hyperbolic 3-manifolds. When $\mathcal{L}$ is a link in $S^3$ with $n$-components, the following formula was presented by Detcherry-Kalfagianni-Yang \cite{DKY18}, 
\begin{align}
TV_{r}(S^3\setminus \mathcal{L},t)=2^{n-1}\mu_r^2\sum_{m=1}^{N}|\bar{J}_{m}(\mathcal{L},t)|^2.    
\end{align}
Using this formula, they proved the volume conjecture for the Turaev-Viro invariant of link components in $S^3$ when the link $\mathcal{L}$ is equal to the figure-8 knot $4_1$ and Borromean ring. The volume conjecture for the Turaev-Viro invarinat of the complements of any fundamental shadow link in $\#^{c+1}(S^1\times S^2)$ was proved by Belletti-Detcherry-Kalfagianni-Yang \cite{BDKY22}. On the other hand, Wong and Au \cite{WongAu17} obtained an asymptotic expansion for the Turaev-Viro invariants of the figure-8 knot complement, see Theorem 8 in \cite{WongAu17}. 

As to the Turaev-Viro invariant of the cusped manifold $W(p,q)$,  we have the following formula
\begin{align}
TV_{r}(W(p,q),t)=\mu_r^2\sum_{m=1}^{N}|\bar{J}_{m}(W(p,q);t)|^2.    
\end{align}

Using the saddle point method two times similar to \cite{WongAu17}, we obtain the following asymptotic expansion formula.

\begin{theorem} \label{theorem-main2}
When $p\geq 1000$ or $q\geq 1000$, we have
\begin{align} \label{formula-asymTV}
    &TV_{r}(W(p,q),t)\\\nonumber
    &\sim_{N\rightarrow \infty}\Big|\frac{\sin^2\left(\frac{\pi \theta_1^0}{q}-J(s^+)\pi\right)}{q(1-(z_2^0)^2)H(p,q;z_1^0,z_2^0)}\Big|\frac{(N+\frac{1}{2})^{\frac{1}{2}}}{\sqrt{\text{Im}\left(\frac{1}{1-z_2^0}\right)}}e^{\frac{(N+\frac{1}{2})}{\pi}\text{Vol}(W(p,q))}.
\end{align}
\end{theorem}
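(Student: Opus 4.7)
The starting point is the identity $TV_r(W(p,q),t) = \mu_r^2\sum_{m=1}^{N}|\bar{J}_{m}(W(p,q);t)|^2$. The strategy, modeled on Wong-Au \cite{WongAu17}, is to apply the saddle point method twice: once (essentially via Theorem \ref{theorem-main}) to each summand, and once to the remaining sum over $m$.

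First, I would extend the proof of Theorem \ref{theorem-main} to give an asymptotic expansion for $\bar{J}_m(W(p,q);t)$ that is uniform in the parameter $s:=(m-\tfrac{1}{2})/(N+\tfrac{1}{2})$ over compact subsets of $(0,1)$. Under this parametrization the potential $V^+(p,q;s;\theta_1,\theta_2)$, its critical point $(\theta_1^0(s),\theta_2^0(s))$, and the functions $\omega(p,q;s)$ and $\zeta(p,q;s)$ all become smooth in $s$. Concretely, I would rerun the Poisson summation/saddle point analysis from Theorem \ref{theorem-main} while tracking the dependence of all estimates on $s$, and separately show that the contribution to the sum from $s$ near $0$ or $1$ is exponentially negligible compared to the interior contribution.

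Next, the main sum takes the form of a Riemann sum with step $\Delta s = 1/(N+\tfrac{1}{2})$, which to leading order equals
\begin{align*}
\mu_r^2\sum_{m=1}^{N}|\bar{J}_m|^2 \;\approx\; (N+\tfrac12)\int_0^1 F(s)\,e^{2(N+\frac12)\,\mathrm{Re}\,\zeta(p,q;s)}\,ds,
\end{align*}
where $F(s)$ collects the slowly-varying prefactor (in particular $\tfrac{2\sin^2(\pi s)}{q}|\omega(p,q;s)|^2$ times the $\mu_r^2$ and $|\{m\}/\{1\}|^2$ factors from converting between $J_m$ and $\bar{J}_m$). A one-variable saddle point analysis then identifies the dominant contribution at the critical point $s^+$ of $\mathrm{Re}\,\zeta(p,q;s)$. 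By Proposition \ref{prop-crit=volume}, this $s^+$ corresponds to the complete hyperbolic metric of $W(p,q)$, so $2\pi\,\mathrm{Re}\,\zeta(p,q;s^+) = \mathrm{Vol}(W(p,q))$, producing the exponential rate $\tfrac{N+1/2}{\pi}\mathrm{Vol}(W(p,q))$. The Gaussian integral around $s^+$ contributes $\sqrt{\pi/((N+\tfrac12)|\phi''(s^+)|)}$ with $\phi(s)=\mathrm{Re}\,\zeta(p,q;s)$, which combines with the Riemann-sum factor $(N+\tfrac12)$ to yield the overall $(N+\tfrac12)^{1/2}$ scaling claimed in (\ref{formula-asymTV}).

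The hypothesis $p\geq 1000$ or $q\geq 1000$ ensures that $s^+$ lies in the interior of $(0,1)$, that $W(p,q)$ is hyperbolic, and that the uniform expansion from Theorem \ref{theorem-main} is valid across the relevant range of $s$. The main obstacle is twofold: (i) establishing uniformity in $s$ of the expansion of $\bar{J}_m$ together with the tail estimates near the endpoints; and (ii) computing $\phi''(s^+)$ explicitly and identifying it, via a Schl\"afli-type variation formula for the critical value of the potential with respect to the cone angle on $L_2$, with $\mathrm{Im}(1/(1-z_2^0))$ up to an explicit positive constant, which is what produces the $1/\sqrt{\mathrm{Im}(1/(1-z_2^0))}$ factor in (\ref{formula-asymTV}). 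Everything else is bookkeeping once these two ingredients are in place.
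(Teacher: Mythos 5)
Your overall strategy matches the paper's: apply a uniform two-variable saddle point analysis to get an expansion of $\bar{J}_m(W(p,q);t)$ depending smoothly on the ratio $m/N$, throw away the tail, convert the remaining sum to an integral, and do a one-variable Laplace analysis on the rate function $\mathrm{Re}\,\zeta$. The factor $\mathrm{Im}(1/(1-z_2^0))$ indeed comes from the second derivative of the rate function at the maximizing $m$, and that is what the paper computes.

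However, there is a genuine error in your description of the one-dimensional Laplace step. You assert that the critical point $s^+$ of $\mathrm{Re}\,\zeta$ lies in the interior of $(0,1)$ and that the hypothesis $p\geq 1000$ or $q\geq 1000$ ensures this. That is not the case. In the paper's parametrization $x=\frac{a+\frac12}{N+\frac12}$ with $m=N-a$, the unique critical point of $\mathrm{Re}\,\zeta(p,q;x)$ is at $x=0$ (equivalently $m=N$, the fully colored term), which is the endpoint of the admissible range. Equivalently, in your parametrization $s$, the maximum is at $s=1^-$, not in the interior. The paper shows explicitly that $\frac{d}{dx}\mathrm{Re}\,\zeta|_{x=0}=0$ and $\frac{d^2}{dx^2}\mathrm{Re}\,\zeta|_{x=0}=-4\pi\,\mathrm{Im}(1/(1-z_2^0))<0$, so one has a degenerate boundary maximum and the Laplace integral only captures a half-Gaussian. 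This produces an extra factor $\frac12$ in the final constant (the paper even remarks: ``Note that there is a factor $\frac12$ in the above formula since the maximum point lies on the boundary''). If you ran the computation with an interior saddle as you describe, you would get a result twice the claimed one, contradicting (\ref{formula-asymTV}). The role of the hypothesis $p\geq 1000$ or $q\geq 1000$ is different from what you state: it guarantees (via the Poincar\'e--Miranda argument in Proposition \ref{proposition-existence-x}) the existence and uniqueness of the two-variable critical point $(\theta_1(x),\theta_2(x))$ and the positive definiteness of the Hessian for every $x$ in $[0,x_0)$, so that the uniform two-variable saddle-point expansion of $J_{N-a}$ is valid across the relevant range — it has nothing to do with pushing $s^+$ into the interior. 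Once you replace the interior-saddle claim with the correct boundary-maximum-with-vanishing-derivative analysis and insert the half factor, your outline agrees with the paper's proof.
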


Theorem \ref{theorem-main2} includes the asymptotics of the Turaev-Viro invariants of twist knots $K_{s}$ with $|s|\geq 1000$.

\begin{remark}
  Although we assume condition $p\geq 1000$ or $q\geq 1000$ in Theorem \ref{theorem-main2} for technical reasons, we believe that the  asymptotic expansion formula (\ref{formula-asymTV}) holds for all $p,q\in \mathbb{Z}$ such that $W(p,q)$ is hyperbolic. For example, when $(p,q)=(1,1)$, we have $W(1,1)=S^3\setminus 4_1$, and the above asymptotic expansion (\ref{formula-asymTV}) recovers Theorem 8 of \cite{WongAu17}. See Example \ref{example-4_1} for details.   
\end{remark}

We remark that the surgery approach to 3-manifold obtained from Dehn filling of Whitehead link was first attempted by us from Jul. 2022 to Feb. 2023. Then we studied Masbaum's formula of colored Jones polynomial to prove the asymptotic expansion formula for twist knots where we developed certain technique such as the 1-dim Saddle Point Method to shrink the integration area to meet the condition of positive definiteness of Hessian etc. and finished the paper \cite{CZ23-1}.
We apply those mature techniques to finish this present paper. Paper \cite{CZ25-2} of asymptotics of relative Reshetikhin-Turaev invariants at roots of unity in the case that the ambient 3-manifold is obtained by doing rational surgery along one component of Whitehead link at root of unity $e^{\frac{2\pi\sqrt{-1}}{N+\frac{1}{2M}}}$ and colored Jones polynomial of twist knots at $e^{\frac{2\pi\sqrt{-1}}{N}}$ (most calculation has been done by late Feb. 2023) and paper \cite{CZ25-3} of asymptotics of the Reshetikhin-Turaev invariants of closed hyperbolic 3-manifolds obtained by doing two rational surgeries along two components of Whitehead link at root of unity $e^{\frac{2\pi\sqrt{-1}}{N+\frac{1}{2}}}$ will be finished later.

The rest of this article is organized as follows. In Section \ref{Section-2RT}, we fix the notation and
review the related materials that will be used in this paper. In Section \ref{Section-Potentialfunction}, we compute
the potential function for the relative Reshetikhin-Turaev invariant $J_{N}(W(p,q),t)$. We follow the approach developed by Wong-Yang \cite{WongYang20-1} which largely simplifies the computations for the relative Reshetikhin-Turaev invariant of the 3-manifold obtained by doing rational surgery. In Section \ref{Section-Poissonsummation}, we express the relative Reshetikhin-Turaev invariant as a summation of Fourier coefficients with the help of Poisson summation formula. The geometric interpretation of the critical point equations and critical value was presented in Section \ref{Section-Critical}.  
In Section \ref{Section-asympticexpansion}, 
we first show that infinite terms of the Fourier coefficients can be neglected. Then we estimate the remained term of Fourier coefficients by using the saddle point method, we obtain that only two main Fourier coefficients will contribute to the final form of the  asymptotic expansion.  Hence we finish the proof of Theorem  \ref{theorem-main}. In Section \ref{Section-TV}, we study the asympotic expansion for the Turaev-Viro invariants $TV_r(W(p,q),t)$ and give the outline of the proof of Theorem \ref{theorem-main2}.  
The final Appendix  Section \ref{Section-Appendices} is devoted to the proof of several results which will be used in previous sections.

\textbf{Acknowledgements.} 

The first author would like to thank Nicolai Reshetikhin, Kefeng Liu and Weiping Zhang for bringing him to this area and a lot of discussions during his career, thank Francis Bonahon,   Giovanni Felder and Shing-Tung Yau for their continuous encouragement, support and discussions, and thank Jun Murakami and Tomotada Ohtsuki for their helpful discussions and support. He also want to thank Jørgen Ellegaard Andersen, Sergei Gukov, Thang Le, Gregor Masbaum,  Rinat Kashaev, Vladimir Turaev, Tian Yang and Hiraku Nakajima for their support, discussions and interests, and thank Yunlong Yao who built him solid analysis foundation twenty years ago. The second author would like to thank Kefeng Liu and Hao Xu for bringing him to this area when he was a graduate student at CMS of Zhejiang University, and for their constant encouragement and helpful discussions since then. Both of the authors thank Ruifeng Qiu for his interests and supports.

\section{Preliminaries}  \label{Section-2RT}

\subsection{Definition of the relative Reshetikhin-Turaev invariants}
We use the skein theory approach  of relative Reshetikhin-Turaev invariants \cite{BHMV92,Lic93} 
following the concise illustration given in \cite{WongYang23}.  We focus on the $SO(3)$ TQFT theory and consider the root of unity $e^{\frac{4\pi\sqrt{-1}}{r}}$, where $r$ is an odd number, we write $r=2N+1$ with $N\geq 1$. 

\begin{definition}
    Let $F$ be an oriented surface, given $A=e^{\frac{\pi\sqrt{-1}}{r}}$.  The Kauffman bracket skein module of $F$, denoted by $K(F)$, is a $\mathbb{C}$-module generated by the isotopic classes of link diagrams in $F$ modulo the submodule generated by the following two  relations:
    
(1) Kauffman bracket skein relation:
\begin{figure}[!htb] 
\begin{align}
\langle\raisebox{-15pt}{
\includegraphics[width=50 pt]{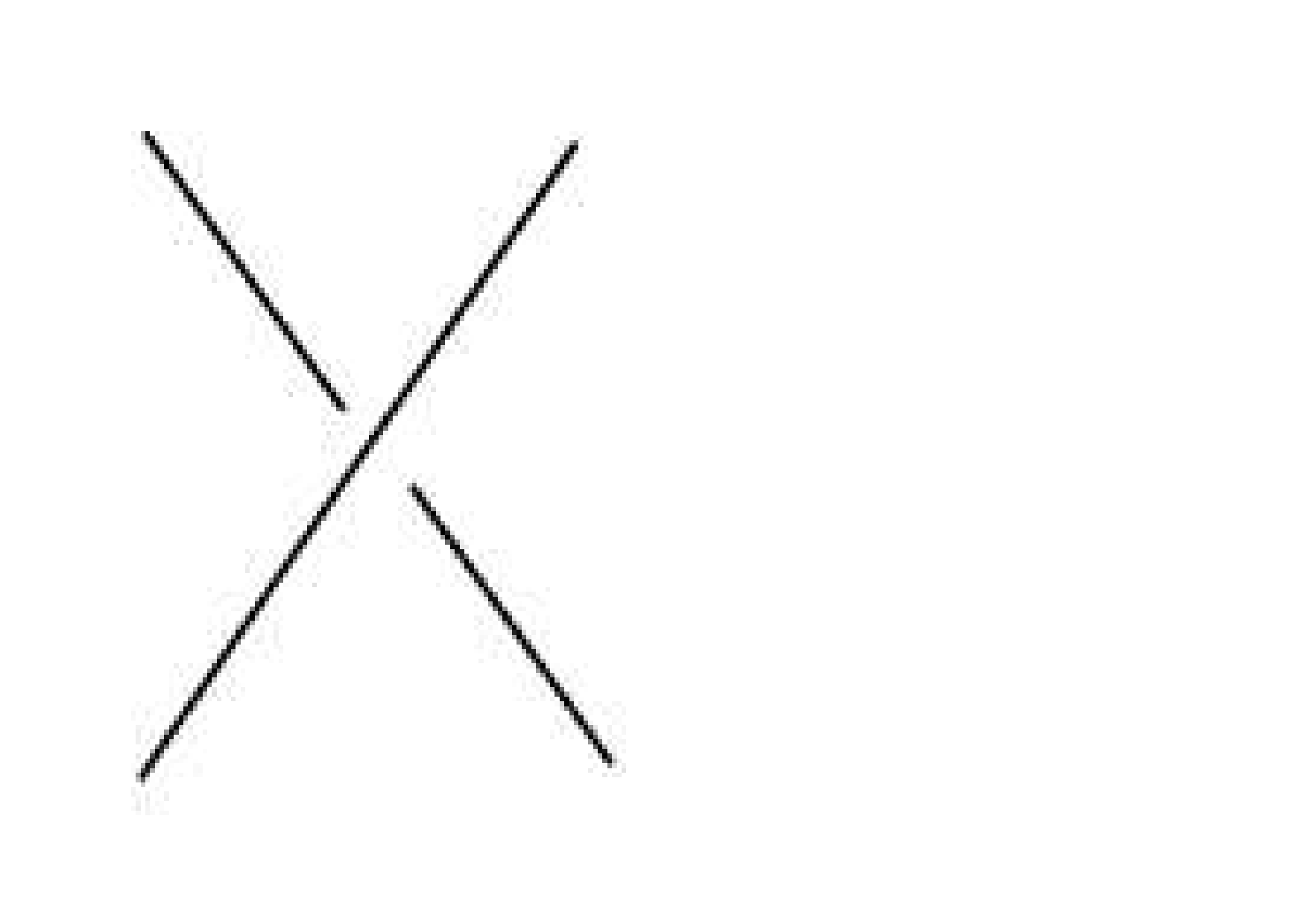}}\rangle=A\langle\raisebox{-15pt}{ \includegraphics[width=50 pt]{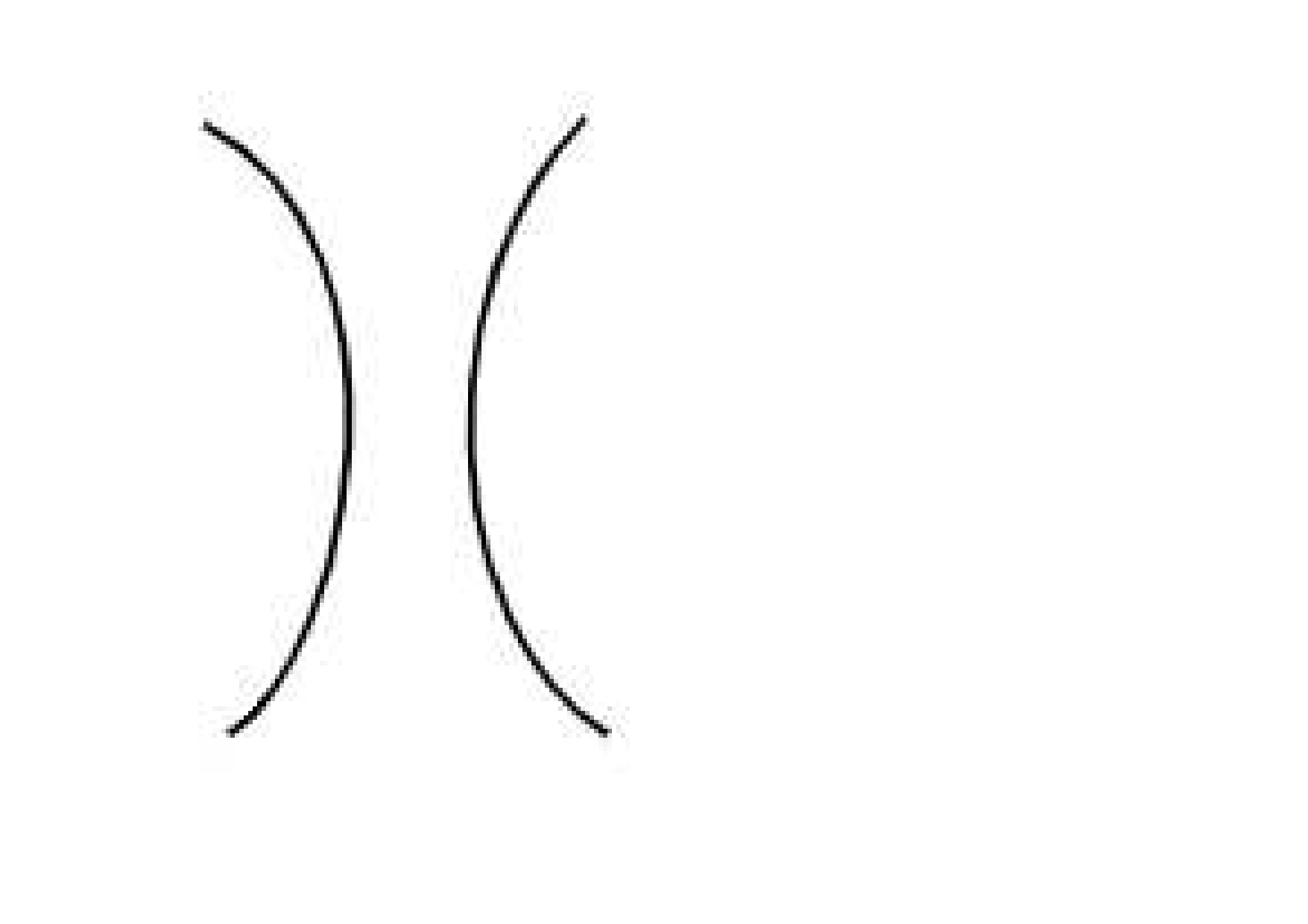}}\rangle+A^{-1}\langle\raisebox{-15pt}{ \includegraphics[width=50 pt]{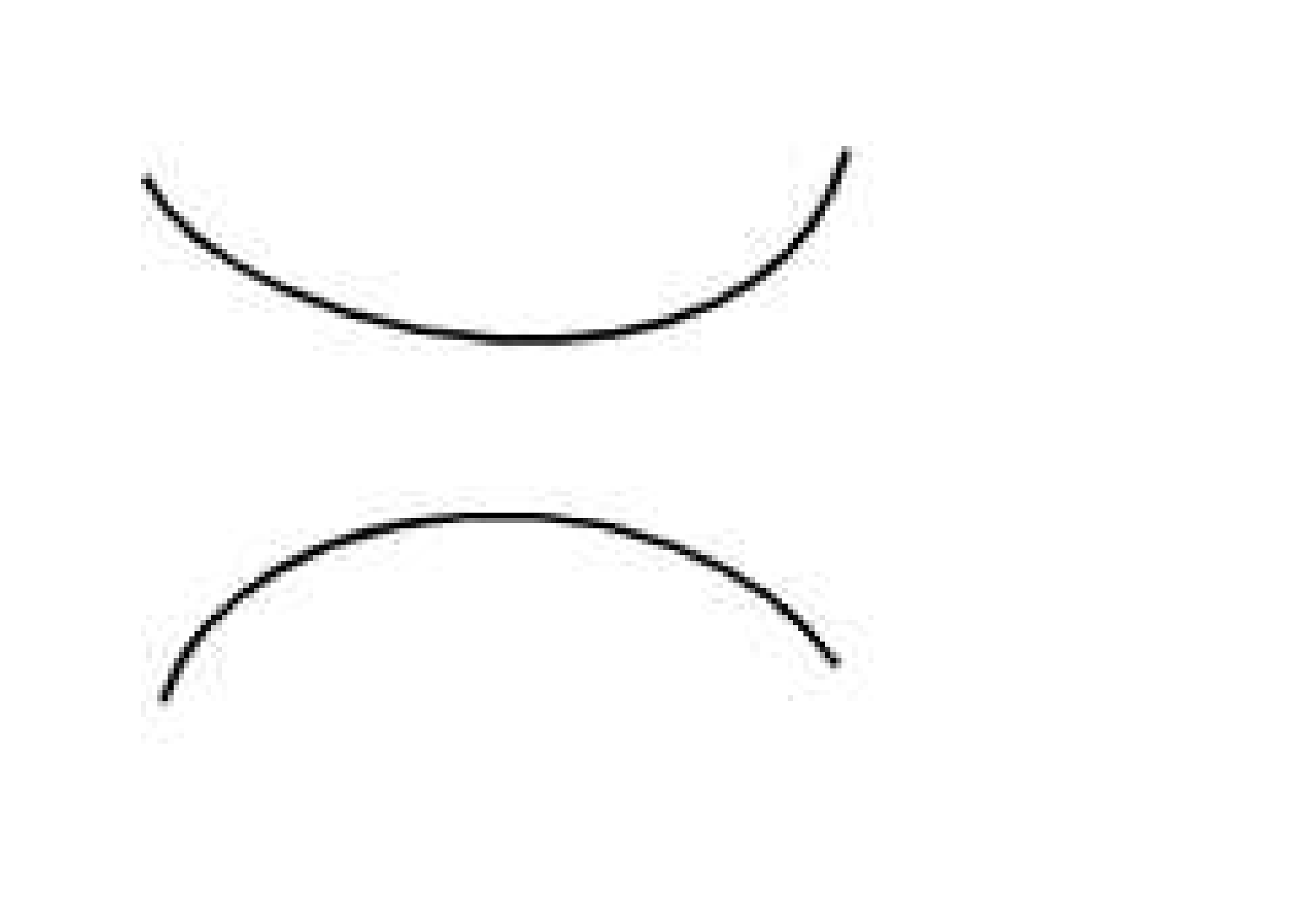}}\rangle  
\end{align}
\end{figure}

(2) Framing relation:
\begin{figure}[!htb] 
$
\langle D\cup \raisebox{-15pt}{
\includegraphics[width=50 pt]{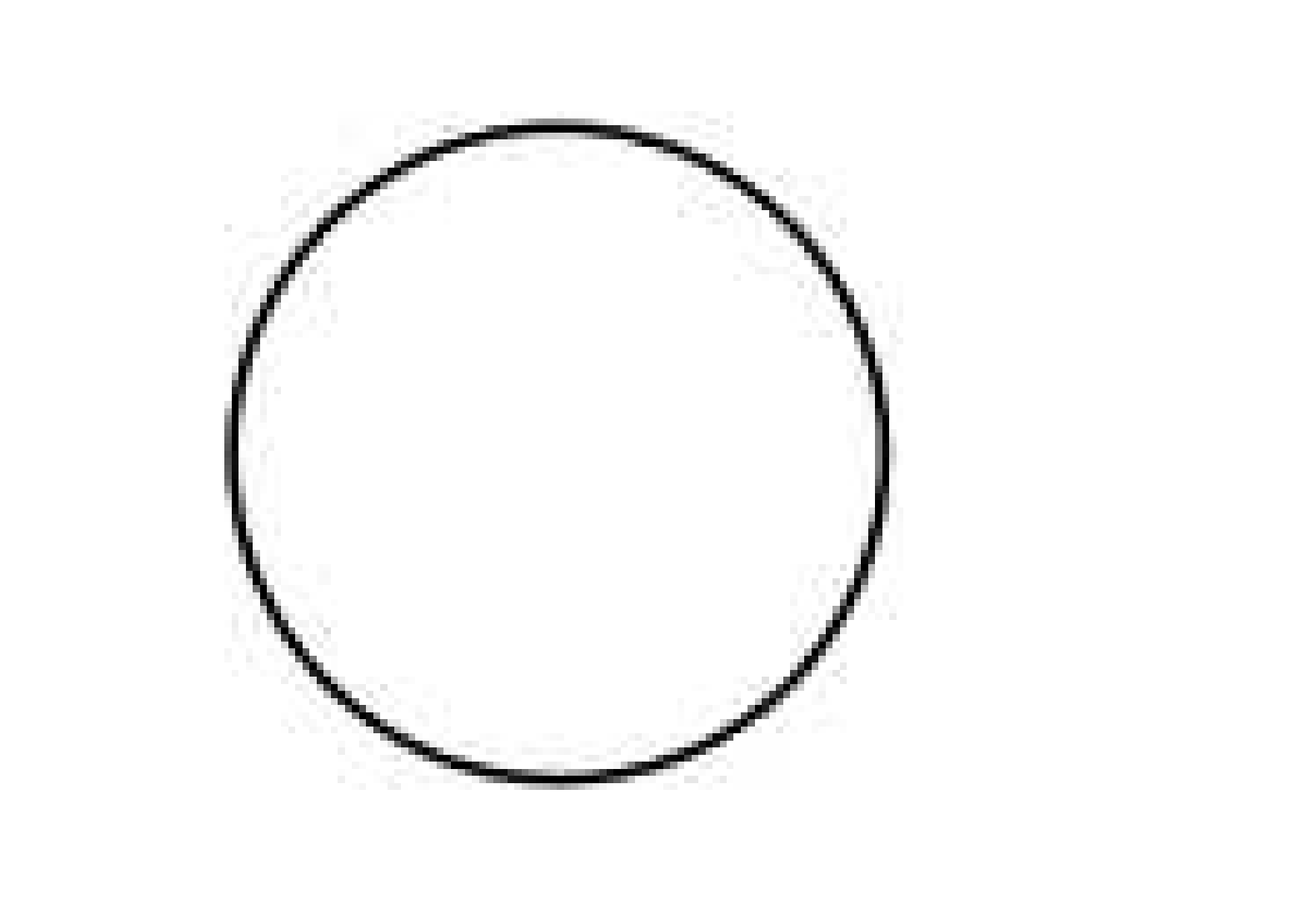}}\rangle=(-A^2-A^{-2})\langle D\rangle.
$
\end{figure}
\end{definition}

When
    $F=S^1\times [0,1]$ is an annulus, set $\mathcal{B}=K(S^1\times [0,1])$. Actually, $\mathcal{B}$ is an algebra, which is called the Kauffman bracket skein algebra of $S^1\times I$. The algebraic structure (i.e. product) of $\mathcal{B}$ is induced by the gluing of two annulus. For any link diagram $D$ in $\mathbb{R}^2$ with $k$ ordered components and $b_1,..,b_k\in \mathcal{B}$, let
    \begin{align}
        \langle b_1,..,b_k\rangle_D
    \end{align}
be the complex number obtained by cabling $b_1,...,b_k$ along the components of $D$ then taking the Kauffman bracket $\langle \ \rangle$.

Note that the skein algebra $\mathcal{B}$ is commutative and the empty diagram is the unit, hence denoted by $1$.
Let 
$z\in \mathcal{B}$ be the core curve of $S^1\times I$ as illustrated in  Figure \ref{figure-z}. 

 \begin{figure}[!htb] 
 \label{figure-z}
 \begin{align*} 
\raisebox{-15pt}{
\includegraphics[width=100 pt]{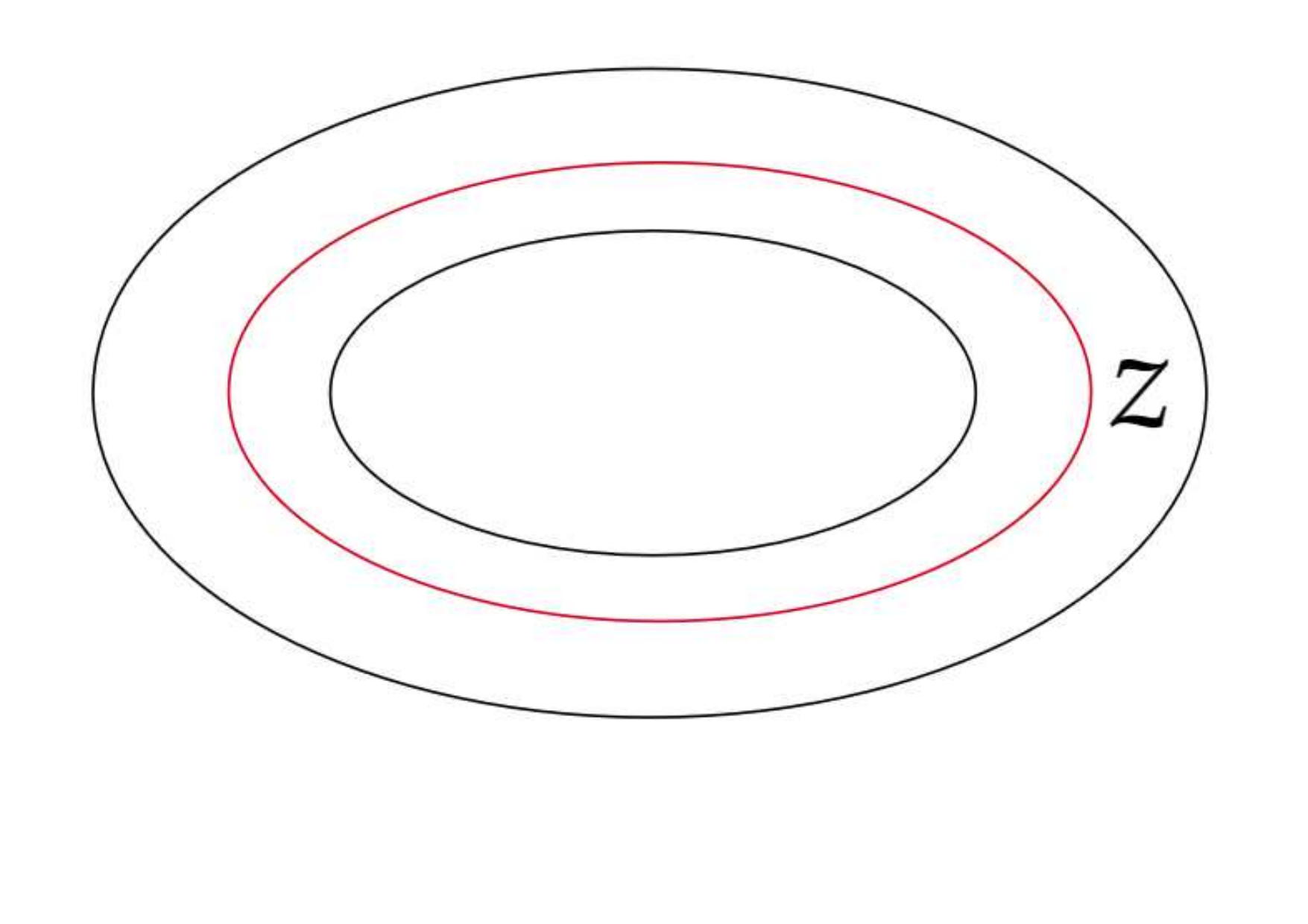}}
\end{align*}
\caption{The core curve $z$}
\end{figure}   
Then  $z^n$ means $n$-parallel copies of $z$.
Moreover, we have $\mathcal{B}=\mathbb{C}[z].$ We define the skein elements $e_n\in \mathcal{B}$ recursively by $e_0=1$, $e_1=z$ and 
$e_n=ze_{n-1}-e_{n-2}$
for $n\geq 2$. The Kirby color $\Omega_r\in \mathcal{B}$ is defined by 
\begin{align}
    \Omega_r=\mu_r\sum_{n=0}^{r-2}(-1)^n[n+1]e_n,
\end{align}
where $\mu_r=\frac{\sin\frac{2\pi}{r}}{\sqrt{r}}$, and $[n]$ is the quantum integer given by 
\begin{align}
   [n]=\frac{t^{\frac{n}{2}}-t^{-\frac{n}{2}}}{t^{\frac{1}{2}}-t^{-\frac{1}{2}}}. 
\end{align}
Note that we fix the convention $t=A^4=e^{\frac{4\pi\sqrt{-1}}{r}}$ throughout this paper.

Let $M$ be a closed oriented 3-manifold and let $\mathcal{L}$ be a framed link in $M$ with $n$ components. Suppose $M$ is obtained from $S^3$ by doing a surgery along a framed link $\mathcal{L}'$, let $D_\mathcal{L}'$ be the standard diagram of $\mathcal{L}'$ which implies that the blackboard framing of $D_{\mathcal{L}'}$ coincides with framing of $\mathcal{L}'$. 
The link $\mathcal{L}$ adds additional components to $D_{\mathcal{L}'}$ and forms a linking diagram $D_{\mathcal{L}\cup\mathcal{L}'}$ with $D_{\mathcal{L}}$ and $D_{\mathcal{L}'}$ linking in possibly a complicated way. Let $U_+$ be the diagram of the unknot with framing $1$, $\sigma(\mathcal{L}')$ be the signature of the linking matrix of $\mathcal{L}'$ and $\mathbf{m}=(m_1,..,m_n)$ be a multi-elements of $I_r=\{0,1,2..,\frac{r-3}{2}\}$. The $r$-th relative Reshetikhin-Turaev invariant of $M$ with $\mathcal{L}$ colored by $\mathbf{m}$ is defined as 
\begin{align} \label{formula-relativeRTformula}
RT_r(M,\mathcal{L};\mathbf{m})=\langle e_{m_1},\cdots,e_{m_n},\Omega_r,...,\Omega_r\rangle_{D_{\mathcal{L}\cup \mathcal{L}'}}\langle\Omega_r\rangle_{U_+}^{-\sigma(\mathcal{L}')}.      
\end{align}

Note that when $\mathcal{L}=\emptyset$ or $m_1=\cdots =m_n=0$, then $RT_r(M,\mathcal{L};\mathbf{0})=RT_r(M)$ the $r$-th Reshetikhin-Turaev invariant of $M$. When $M=S^3$, then $RT_{r}(S^3,\mathcal{L};\mathbf{m})=\bar{J}_{\mathbf{m+1}}(\mathcal{L},e^{\frac{4\pi\sqrt{-1}}{r}})$, the value of the $\mathbf{m+1}$-th unnormalized colored Jones polynomial of $\mathcal{L}$ at $t=e^{\frac{4\pi\sqrt{-1}}{r}}$. We should remark  that the relative Reshetikhin-Turaev invariant defined by (\ref{formula-relativeRTformula}) is different to \cite{WongYang23} with a factor $\mu_r$.

 The relationship between Turaev–Viro
and Reshetikhin–Turaev invariants was studied by Turaev-Walker \cite{Turaev94} and
Roberts \cite{Roberts95} for closed 3-manifolds, and by Benedetti and Petronio \cite{BP96} for 3-manifolds with boundary.  The $SO(3)$-version was treated in \cite{DKY18}.  Given $r=2N+1$ odd, for a link $\mathcal{L}$ in $S^3$ with $n$ components,  they derive a formula for Turaev-Viro invariant of the complement $S^3\setminus \mathcal{L}$ as follows
\begin{align}
TV_{r}(S^3\setminus \mathcal{L},t)=C\mu_r^2\sum_{ \mathbf{m}}|\overline{J}_{\mathbf{m+1}}(\mathcal{L},t)|^2=C\mu_r^2\sum_{ \mathbf{m}}|RT_{r}(S^3,\mathcal{L},\mathbf{m})|^2.
\end{align}
With a slightly modification, one can generalize the above formula to the case that a link $\mathcal{L}$ in a general closed oriented $M$. We have
\begin{align} \label{formula-TV=RT}
TV_{r}(M\setminus \mathcal{L},t)=C\mu_r^2\sum_{ \mathbf{m}}|RT_{r}(M,\mathcal{L},\mathbf{m})|^2,
\end{align}
where the sum is over all multi-elements $\mathbf{m}$ of $I_r$, and  $C$ is a constant \cite{BDKY22}. 
The formula (\ref{formula-TV=RT}) can be regarded as another definition of Turaev-Viro invariant, although we didn't give the original definition of Turaev-Viro invariant here.

\subsection{Dilogarithm and Lobachevsky functions}
Let $\log: \mathbb{C}\setminus (-\infty,0]\rightarrow \mathbb{C}$ be the standard logarithm function defined by 
\begin{align}
    \log z=\log |z|+\sqrt{-1}\arg z
\end{align}
with $-\pi <\arg z<\pi$. 

The dilogarithm function $\text{Li}_2: \mathbb{C}\setminus (1,\infty)\rightarrow \mathbb{C}$ is defined by 
\begin{align}
    \text{Li}_2(z)=-\int_0^{z}\frac{\log(1-x)}{x}dx
\end{align}
where the integral is along any path in $\mathbb{C}\setminus (1,\infty)$ connecting $0$ and $z$, which is holomorphic in $\mathbb{C}\setminus [1,\infty)$ and continuous in $\mathbb{C}\setminus (1,\infty)$. 

The dilogarithm function satisfies the following properties 
\begin{align}
    \text{Li}_2\left(\frac{1}{z}\right)=-\text{Li}_2(z)-\frac{\pi^2}{6}-\frac{1}{2}(\log(-z) )^2.
\end{align}
In the unit disk $\{z\in \mathbb{C}| |z|<1\}$,  $\text{Li}_2(z)=\sum_{n=1}^{\infty}\frac{z^n}{n^2}$, and on the unit circle 
\begin{align}
 \{z=e^{2\pi \sqrt{-1}\theta}|0 \leq \theta\leq 1\},    
\end{align}
we have
\begin{align}
    \text{Li}_2(e^{2\pi\sqrt{-1} \theta})=\frac{\pi^2}{6}+\pi^2\theta(\theta-1)+2\pi \sqrt{-1}\Lambda(\theta)
\end{align}
where 
\begin{align} \label{formula-Lambda(t)}
\Lambda(\theta)=\text{Re}\left(\frac{\text{Li}_2(e^{2\pi \sqrt{-1}\theta})}{2\pi \sqrt{-1}}\right)=-\int_{0}^{\theta}\log|2 \sin \pi \tau|d \theta 
\end{align}
for $\theta\in \mathbb{R}$. The function $\Lambda(\theta)$ is an odd function which has period $1$ and satisfies 
$
\Lambda(1)=\Lambda(\frac{1}{2})=0.
$

Furthermore, we have the follow estimation for the function

\begin{align}
    \text{Re}\left(\frac{1}{2\pi\sqrt{-1}}\text{Li}_2\left(e^{2\pi\sqrt{-1}(t+X\sqrt{-1})}\right)\right)
\end{align}
with $t,X\in \mathbb{R}$.   

\begin{lemma} (see Lemma 2.2 in \cite{OhtYok18}) \label{lemma-Li2}
    Let $\theta$ be a real number with $0<\theta<1$. Then there exists a constant $C>0$ such that 
\begin{align}
    \left\{ \begin{aligned}
         &0  &  \ (\text{if} \ X\geq 0) \\
         &2\pi \left(\theta-\frac{1}{2}\right)X & \ (\text{if} \ X<0)
                          \end{aligned} \right.-C<\text{Re}\left(\frac{1}{2\pi\sqrt{-1}}\text{Li}_2\left(e^{2\pi\sqrt{-1}(\theta+X\sqrt{-1})}\right)\right)
\end{align}
\begin{align*}
    <\left\{ \begin{aligned}
         &0  &  \ (\text{if} \ X\geq 0) \\
         &2\pi \left(\theta-\frac{1}{2}\right)X & \ (\text{if} \ X<0)
                          \end{aligned}\right.+C.
\end{align*}.
\end{lemma}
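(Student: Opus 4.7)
The plan is to split the argument at $X=0$ and use the dilogarithm inversion identity recalled just above the lemma in the paper. Throughout, set $w=e^{2\pi\sqrt{-1}(\theta+X\sqrt{-1})}$, so that $|w|=e^{-2\pi X}$, and let $f(X)=\mathrm{Re}\bigl(\tfrac{1}{2\pi\sqrt{-1}}\mathrm{Li}_2(w)\bigr)$. The key observation is that on the closed unit disk $\{|z|\le 1\}$ the dilogarithm is bounded (it is continuous there, being the sum of the absolutely convergent series $\sum z^{n}/n^{2}$), so we only need to deal with the region $|w|>1$, where the inversion identity reduces us back to the bounded regime.

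For $X\ge 0$ the idea is trivial: $|w|\le 1$, so $|\mathrm{Li}_2(w)|\le\pi^{2}/6$, which immediately yields $|f(X)|\le C$ for a constant depending only on $\theta$. For $X<0$ the plan is to substitute the identity
\[
\mathrm{Li}_2(w)=-\mathrm{Li}_2(1/w)-\frac{\pi^{2}}{6}-\frac{1}{2}\bigl(\log(-w)\bigr)^{2}
\]
into $f(X)$. Since $|1/w|=e^{2\pi X}<1$, the $\mathrm{Li}_2(1/w)$ term contributes an $O(1)$ amount, and the $-\pi^{2}/6$ term contributes purely to the imaginary part after division by $2\pi\sqrt{-1}$. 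So everything reduces to evaluating the real part of $-(\log(-w))^{2}/(4\pi\sqrt{-1})$.

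The step requiring the most care is the branch determination for $\log(-w)$. Writing $-w=e^{-2\pi X}e^{\sqrt{-1}(2\pi\theta+\pi)}$ and noting that $2\pi\theta+\pi\in(\pi,3\pi)$ for $\theta\in(0,1)$, the principal logarithm picks out
\[
\log(-w)=-2\pi X+2\pi\sqrt{-1}\Bigl(\theta-\tfrac{1}{2}\Bigr).
\]
Squaring $(a+b\sqrt{-1})^{2}=a^{2}-b^{2}+2ab\sqrt{-1}$ with $a=-2\pi X$ and $b=2\pi(\theta-\tfrac12)$, then dividing by $2\pi\sqrt{-1}$, a short computation gives
\[
\mathrm{Re}\!\left(\frac{-\tfrac12(\log(-w))^{2}}{2\pi\sqrt{-1}}\right)=-\frac{ab}{2\pi}=2\pi\Bigl(\theta-\tfrac{1}{2}\Bigr)X,
\]
so that $f(X)-2\pi(\theta-\tfrac12)X$ is bounded by a constant depending only on $\theta$. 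Combining the two cases and taking $C$ to be the maximum of the bounds yields the lemma.

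The main obstacle, as indicated, is not an analytic one but a branch-bookkeeping one: verifying that the principal branch of $\log(-w)$ gives the stated formula uniformly for all $\theta\in(0,1)$ (in particular across $\theta=\tfrac12$) and matches the sign conventions in the inversion identity as written in the paper. Once that is pinned down, the estimate is a direct computation and no further analysis (e.g.\ of the derivative $\arg(1-w)$) is needed.
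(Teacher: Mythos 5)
Your proof is correct and is the same approach the paper points to: the key identity recorded immediately after the lemma, namely $\mathrm{Re}\bigl(\tfrac{1}{2\pi\sqrt{-1}}(\mathrm{Li}_2(w)+\mathrm{Li}_2(1/w))\bigr)=2\pi(\theta-\tfrac{1}{2})X$, is precisely the reflection-formula computation you carry out (same principal-branch determination $\log(-w)=-2\pi X+2\pi\sqrt{-1}(\theta-\tfrac12)$), and combining it with the uniform bound $|\mathrm{Li}_2(z)|\le\pi^2/6$ on the closed unit disk and a case split on the sign of $X$ gives the lemma. The paper itself defers the full proof to Ohtsuki--Yokota, but your argument is the standard one and is correct as written.
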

A key identity used in the proof is 
\begin{align} \label{formula-key}
    \text{Re}\left(\frac{1}{2\pi\sqrt{-1}}\left(\text{Li}_2\left(e^{2\pi\sqrt{-1}(\theta+X\sqrt{-1})}\right)+\text{Li}_2\left(e^{-2\pi\sqrt{-1}(\theta+X\sqrt{-1})}\right)\right)\right)=2\pi(\theta-\frac{1}{2})X.
\end{align}

\subsection{Quantum dilogarithm functions}
Given a positive integer $N$, we introduce the holomorphic function $\varphi_N(\theta)$ for $\{\theta\in
\mathbb{C}| 0< \text{Re}(\theta) < 1\}$ by the following integral
\begin{align}
\varphi_N(\theta)=\int_{-\infty}^{+\infty}\frac{e^{(2\theta-1)x}dx}{4x \sinh x
\sinh\frac{x}{N+\frac{1}{2}}}.
\end{align}
Noting that this integrand has poles at $n\pi \sqrt{-1} (n\in
\mathbb{Z})$, where, to avoid the poles at $0$, we choose the
following contour of the integral
\begin{align}
\gamma=(-\infty,-1]\cup \{z\in \mathbb{C}||z|=1, \text{Im} z\geq 0\}
\cup [1,\infty).
\end{align}

\begin{lemma}  \label{lemma-varphixi}
The function $\varphi_N(\theta)$ satisfies 
\begin{align}
    (t)_n&=\exp \left(\varphi_N\left(\frac{\frac{1}{2}}{N+\frac{1}{2}}\right)-\varphi_N\left(\frac{n+\frac{1}{2}}{N+\frac{1}{2}}\right)\right)   \  \   \left(0\leq n\leq N\right), \\
    (t)_n&=\exp \left(\varphi_N\left(\frac{\frac{1}{2}}{N+\frac{1}{2}}\right)-\varphi_N\left(\frac{n+\frac{1}{2}}{N+\frac{1}{2}}-1\right)+\log 2\right)   \  \   \left(N< n\leq 2N\right).
\end{align}
\end{lemma}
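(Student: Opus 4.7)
The heart of the proof is the one-step recurrence
\begin{equation*}
\varphi_N\!\left(\tfrac{n+1/2}{N+1/2}\right) - \varphi_N\!\left(\tfrac{n-1/2}{N+1/2}\right) = -\log(1-t^n),\qquad 1\le n\le N.
\end{equation*}
To establish it, I would write the left-hand side as a single integral along $\gamma$, extract the common exponential factor, and simplify the difference of the numerators via $e^{2x/(N+1/2)}-1 = 2\sinh(x/(N+1/2))\,e^{x/(N+1/2)}$; this cancels one $\sinh(x/(N+1/2))$ in the denominator, leaving the integrand $\frac{e^{ax}}{2x\sinh x}$ with $a := 2n/(N+1/2)-1 \in (-1,1)$. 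For $|a|<1$ the integrand decays on large semicircles in the upper half-plane, so closing upward and summing the simple-pole residues $\frac{(-1)^{k}e^{ak\pi\sqrt{-1}}}{2k\pi\sqrt{-1}}$ at $x = k\pi\sqrt{-1}$ ($k\ge 1$) produces $-\log(1+e^{a\pi\sqrt{-1}}) = -\log(1-t^n)$, since $e^{a\pi\sqrt{-1}} = -t^n$.

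The first identity then follows by telescoping: summing the recurrence from $k=1$ to $n$ gives
\begin{equation*}
\varphi_N\!\left(\tfrac{1/2}{N+1/2}\right) - \varphi_N\!\left(\tfrac{n+1/2}{N+1/2}\right) = \sum_{k=1}^{n}\log(1-t^k) = \log (t)_n,
\end{equation*}
which exponentiates to the first formula for $0\le n\le N$.

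For the second formula with $N<n\le 2N$, the argument $(n+1/2)/(N+1/2)\in(1,2)$ lies outside the strip of definition, and one must replace it by the shifted argument $(n+1/2)/(N+1/2)-1 = (n-N)/(N+1/2)\in(0,1)$. My plan is to derive a companion shift identity
\begin{equation*}
\varphi_N(\theta) - \varphi_N(\theta-1) = \int_{\gamma}\frac{e^{2(\theta-1)x}\,dx}{2x\sinh(x/(N+1/2))}
\end{equation*}
by the analogous algebraic trick based on $e^{2x}-1 = 2\sinh(x)e^x$, and then to evaluate the right-hand side by deforming $\gamma$ and collecting the residue picked up at $x=0$; this produces the additive $\log 2$ correction. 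As a consistency check, at $n=2N$ the claimed formula predicts $2\exp(\varphi_N(1/(2N+1))-\varphi_N(2N/(2N+1))) = (t)_{2N}$, which matches $(t)_{2N} = 2N+1 = \prod_{j=1}^{2N}(1-\zeta^j)$ with $\zeta = e^{2\pi\sqrt{-1}/(2N+1)}$, noting that $\{t^k\}_{k=1}^{2N}$ is a permutation of $\{\zeta^j\}_{j=1}^{2N}$ since $\gcd(2,2N+1)=1$.

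The main obstacle is the rigorous evaluation of the shift integral: when $\theta\in(1,2)$ the exponential $e^{2(\theta-1)x}$ no longer decays uniformly on semicircles of large radius, so the contour deformation must be carried out onto the imaginary axis with careful tracking of every residue crossed. I would first verify the $\log 2$ constant on the toy case $N=1$, $n=2$, where $(t)_2 = 3$ can be computed by hand, before committing to the general residue calculation.
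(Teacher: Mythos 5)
The paper itself does not prove this lemma; it merely cites \cite{Oht16,CJ17,WongYang20-1}, so your proposal has to be assessed against the correct argument rather than against anything printed here.

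Your proof of the first identity is correct. The algebraic reduction of $\varphi_N\big(\tfrac{n+1/2}{N+1/2}\big)-\varphi_N\big(\tfrac{n-1/2}{N+1/2}\big)$ to $\int_\gamma \frac{e^{ax}}{2x\sinh x}\,dx$ with $a=\tfrac{2n}{N+1/2}-1\in(-1,1)$, the residue summation giving $-\log(1+e^{a\pi\sqrt{-1}})=-\log(1-t^n)$, and the telescoping are all the standard route.

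Your plan for the second identity has a genuine gap. The ``companion shift identity''
\begin{align*}
\varphi_N(\theta)-\varphi_N(\theta-1)=\int_{\gamma}\frac{e^{2(\theta-1)x}\,dx}{2x\sinh\!\big(\tfrac{x}{N+1/2}\big)}
\end{align*}
cannot be used at the arguments you actually need. For $\theta=\tfrac{n+1/2}{N+1/2}$ with $N<n\le 2N$ one has $2(\theta-1)(N+\tfrac12)=2(n-N)\ge 2$, so the numerator $e^{2(\theta-1)x}$ grows strictly faster at $x\to+\infty$ than $\sinh\!\big(\tfrac{x}{N+1/2}\big)$ decays, and the integral diverges; the identity is only meaningful for $\operatorname{Re}\theta$ in the very narrow band $\big(1-\tfrac{1}{2N+1},\,1+\tfrac{1}{2N+1}\big)$. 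Even there the right-hand side is not a constant: its $\theta$-derivative is $\int_\gamma e^{2(\theta-1)x}/\sinh\!\big(\tfrac{x}{N+1/2}\big)\,dx\neq 0$. The residue of the integrand at $x=0$ is also not a constant; $x=0$ is a double pole and the residue equals $(N+\tfrac12)(\theta-1)$, which is linear in $\theta$, so it cannot by itself account for the additive $\log 2$.

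A workable route stays closer to the first half. Telescoping the same recurrence applied to the shifted variables $\tilde\theta_k=\tfrac{k-N}{N+1/2}$ (for $N+2\le k\le n$; the same reduction gives $\int_\gamma \frac{e^{\tilde a x}}{2x\sinh x}\,dx$ with $\tilde a\in(-1,1)$ and again $e^{\tilde a\pi\sqrt{-1}}=-t^k$, so the step still produces $-\log(1-t^k)$) yields
\begin{align*}
\varphi_N\!\Big(\tfrac{n+1/2}{N+1/2}-1\Big)-\varphi_N\!\Big(\tfrac{1}{N+1/2}\Big)=-\sum_{k=N+2}^{n}\log(1-t^k),
\end{align*}
so the whole second identity reduces to the single base case
\begin{align*}
\log(t)_{N+1}=\varphi_N\!\Big(\tfrac{1/2}{N+1/2}\Big)-\varphi_N\!\Big(\tfrac{1}{N+1/2}\Big)+\log 2,
\end{align*}
which is where the $\log 2$ actually enters and must be proved separately. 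That base case does not follow from your residue scheme; the simplification $\sinh(2u)=2\sinh u\cosh u$ turns the relevant difference into an integral with a $\cosh$ in the denominator, a genuinely different shape, and in the literature the constant is typically pinned down via the reflection identity $\varphi_N(\theta)+\varphi_N(1-\theta)=\cdots$ of Lemma~\ref{lemma-varphixi2} and the explicit values at $\tfrac{1/2}{N+1/2}$ and $1-\tfrac{1/2}{N+1/2}$, exactly along the lines of your $n=2N$ sanity check. Your sanity checks at $n=2N$ and at $N=1$ are fine as checks, but they do not replace the missing base-case argument.
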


\begin{lemma} \label{lemma-varphixi2}
    We have the following identities:
\begin{align}
    \varphi_N(\theta)+\varphi_N(1-\theta)&=2\pi \sqrt{-1}\left(-\frac{N+\frac{1}{2}}{2}\left(\theta^2-\theta+\frac{1}{6}\right)+\frac{1}{24(N+\frac{1}{2})}\right),\\ 
    \varphi_N\left(\frac{\frac{1}{2}}{N+\frac{1}{2}}\right)&=\frac{N+\frac{1}{2}}{2\pi\sqrt{-1}}\frac{\pi^2}{6}+\frac{1}{2}\log \left(N+\frac{1}{2}\right)+\frac{\pi \sqrt{-1}}{4}-\frac{\pi \sqrt{-1}}{12(N+\frac{1}{2})},\\
    \varphi_N\left(1-\frac{\frac{1}{2}}{N+\frac{1}{2}}\right)&=\frac{N+\frac{1}{2}}{2\pi\sqrt{-1}}\frac{\pi^2}{6}-\frac{1}{2}\log \left(N+\frac{1}{2}\right)+\frac{\pi \sqrt{-1}}{4}-\frac{\pi \sqrt{-1}}{12(N+\frac{1}{2})}.
\end{align}
\end{lemma}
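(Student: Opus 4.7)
The plan is to establish these three identities via contour integration and residue computations, with the first as the key technical step, the second requiring careful evaluation of the full contour integral, and the third following formally from the first two.

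For the first identity, I would exploit the symmetry of the integrand under the reflection $x \mapsto -x$. The denominator $4x\sinh x \sinh(x/(N+\frac{1}{2}))$ is odd in $x$, so substituting $x \mapsto -x$ in the integral defining $\varphi_N(1-\theta)$ turns $e^{(1-2\theta)x}$ into $e^{(2\theta-1)x}$ while sending the contour $\gamma$ (which passes above $0$) to the reverse of its mirror image $\gamma^*$ (which passes below $0$). After tracking signs carefully, one obtains
\begin{align*}
\varphi_N(\theta) + \varphi_N(1-\theta) = \int_{\gamma-\gamma^*} \frac{e^{(2\theta-1)x}}{4x\sinh x\sinh(x/(N+\frac{1}{2}))}\,dx.
\end{align*}
The contour $\gamma-\gamma^*$ closes into a small clockwise loop around the origin, so the residue theorem reduces the problem to computing the residue at the triple pole $x=0$. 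Using the Taylor expansions $\sinh x = x(1+x^2/6+O(x^4))$, $\sinh(x/(N+\tfrac{1}{2})) = (x/(N+\tfrac{1}{2}))(1+x^2/(6(N+\tfrac{1}{2})^2)+O(x^4))$, and $e^{(2\theta-1)x} = 1+(2\theta-1)x+\tfrac{(2\theta-1)^2}{2}x^2+O(x^3)$, the residue works out to $\tfrac{N+1/2}{4}\bigl(\tfrac{(2\theta-1)^2}{2}-\tfrac{1}{6}-\tfrac{1}{6(N+1/2)^2}\bigr)$. Multiplying by $-2\pi\sqrt{-1}$ (to account for clockwise orientation) and using $\tfrac{(2\theta-1)^2}{2}-\tfrac{1}{6} = 2(\theta^2-\theta+\tfrac{1}{6})$ yields the first identity.

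For the second identity, evaluating $\varphi_N(\frac{1/2}{N+1/2})$ in closed form requires actually computing the integral, not merely relating values. My approach would be to substitute $y = x/(N+\frac{1}{2})$, recasting the integrand in the form $\frac{e^{-2Ny}}{2y\sinh y(1-e^{-(2N+1)y})}$ on a rescaled contour, and then deforming into the upper half plane. Two families of residues are encountered: those from $(1-e^{-(2N+1)y})^{-1}$ at $y = 2\pi\sqrt{-1}\,k/(2N+1)$ for $k=1,2,\ldots$, and those from $(\sinh y)^{-1}$ at $y = \sqrt{-1}\pi m$ for $m\geq 1$. The first family resums (via $\sum_{k\geq 1}k^{-2}=\pi^2/6$) into the leading $\frac{N+1/2}{2\pi\sqrt{-1}}\cdot\frac{\pi^2}{6}$ term; the second family contributes the $\tfrac{\pi\sqrt{-1}}{4}$ and $-\tfrac{\pi\sqrt{-1}}{12(N+1/2)}$ corrections; and the careful regularization of the near-zero contributions produces the $\tfrac{1}{2}\log(N+\frac{1}{2})$ term. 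This is the main obstacle of the proof: verifying convergence of the deformation, tracking every residue contribution, and confirming that the infinite sums telescope into the claimed closed form. An alternative route (which I would use as a cross-check) is to combine Lemma \ref{lemma-varphixi} at $n=N$ with the closed-form evaluation $(t)_N^2 = (-1)^N t^{N(N+1)/2}(2N+1)$ and the first identity applied at boundary values.

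The third identity then follows directly from the first two: specializing the first identity to $\theta = \frac{1/2}{N+1/2}$ gives
\begin{align*}
\varphi_N\bigl(1-\tfrac{1/2}{N+1/2}\bigr) = -\varphi_N\bigl(\tfrac{1/2}{N+1/2}\bigr) + 2\pi\sqrt{-1}\Bigl(-\tfrac{N+1/2}{2}(\theta^2-\theta+\tfrac{1}{6}) + \tfrac{1}{24(N+1/2)}\Bigr)\Big|_{\theta=1/(2N+1)},
\end{align*}
and substituting the second identity on the right-hand side produces the third. The $\frac{1}{2}\log(N+\frac{1}{2})$ term flips sign because it appears only in $\varphi_N(\frac{1/2}{N+1/2})$, while the cancellation of the quadratic-in-$\theta$ terms at $\theta = 1/(2N+1)$ confirms that the $\pi\sqrt{-1}/4$ and $-\pi\sqrt{-1}/12(N+1/2)$ contributions are preserved as in the stated formula.
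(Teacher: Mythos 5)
The paper offers no proof of this lemma; it merely cites \cite{Oht16,CJ17,WongYang20-1}, so there is no internal argument to compare against. Your residue computation for the first identity is correct and complete: the reflection $x\mapsto -x$ on the odd denominator, the identification of $\gamma-\gamma^*$ with a clockwise unit circle around the triple pole, the residue $\frac{N+1/2}{4}\bigl(\frac{(2\theta-1)^2}{2}-\frac{1}{6}-\frac{1}{6(N+1/2)^2}\bigr)$, and the algebra $\frac{(2\theta-1)^2}{2}-\frac{1}{6}=2(\theta^2-\theta+\frac{1}{6})$ all check out. Given the first and second identities, your deduction of the third is a correct formal calculation.

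The gap is in the second identity, and neither of your two routes closes it. The primary route (deforming the rescaled contour upward and resumming residues) is not justified as written: the integrand $\frac{e^{-2Ny}}{2y(1-e^{-(2N+1)y})\sinh y}$ has modulus independent of $\mathrm{Im}\,y$, so the boundary integral at height $T$ decays only like $O(1/T)$ and the two residue sums are at best conditionally convergent; the crucial $\frac{1}{2}\log(N+\frac{1}{2})$ term is simply asserted to arise from a "careful regularization near zero" without any derivation. Your alternative is one step off: applying Lemma \ref{lemma-varphixi} at $n=N$ (the $0\le n\le N$ branch) relates $\varphi_N\bigl(\frac{1/2}{N+1/2}\bigr)$ to $\varphi_N(1)$, a boundary value for which you have no independent handle, and the identity $(t)_N^2=(-1)^N t^{N(N+1)/2}(2N+1)$ alone cannot isolate $\varphi_N\bigl(\frac{1/2}{N+1/2}\bigr)$ from it. What does work is the \emph{second} branch of Lemma \ref{lemma-varphixi} at $n=2N$: since $\frac{2N+\frac12}{N+\frac12}-1 = 1-\frac{1/2}{N+1/2}$ and $(t)_{2N}=\prod_{k=1}^{2N}(1-t^k)=2N+1$, that branch gives
\begin{align*}
\varphi_N\Bigl(\tfrac{1/2}{N+1/2}\Bigr)-\varphi_N\Bigl(1-\tfrac{1/2}{N+1/2}\Bigr)=\log(2N+1)-\log 2=\log\bigl(N+\tfrac12\bigr),
\end{align*}
and combining this \emph{difference} with the \emph{sum} coming from your first identity at $\theta=\frac{1/2}{N+1/2}$ yields both $\varphi_N\bigl(\frac{1/2}{N+1/2}\bigr)$ and $\varphi_N\bigl(1-\frac{1/2}{N+1/2}\bigr)$ by solving a $2\times 2$ linear system. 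That is the missing idea: the $\frac{1}{2}\log(N+\frac12)$ comes exactly from this difference relation, not from near-zero contour regularization.
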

The function $\varphi_N (\theta)$ is closely related to the dilogarithm function as follows.
\begin{lemma} \label{lemma-varphixi3}
    (1) For every $\theta$ with $0<\text{Re}(\theta)<1$, 
    \begin{align}
        \varphi_N(\theta)=\frac{N+\frac{1}{2}}{2\pi \sqrt{-1}}\text{Li}_2(e^{2\pi\sqrt{-1}\theta})
 -\frac{\pi \sqrt{-1}e^{2\pi\sqrt{-1}z}}{6(1-e^{2\pi\sqrt{-1}z})}\frac{1}{2N+1}+O\left(\frac{1}{(N+\frac{1}{2})^3}\right).
    \end{align}
    (2) For every $\theta$ with $0<\text{Re}(\theta)<1$, 
    \begin{align}
        \varphi_N'(\theta)=-\left(N+\frac{1}{2}\right)\log(1-e^{2\pi\sqrt{-1}\theta})+O\left(\frac{1}{N+\frac{1}{2}}\right)
    \end{align}
    (3) As $N\rightarrow \infty$, $\frac{1}{N+\frac{1}{2}}\varphi_N(\theta)$ uniformly converges to $\frac{1}{2\pi\sqrt{-1}}\text{Li}_2(e^{2\pi\sqrt{-1}\theta})$ and $\frac{1}{N+\frac{1}{2}}\varphi'_N(\theta)$ uniformly converges to $-\log(1-e^{2\pi\sqrt{-1}\theta})$ on any compact subset of $\{\theta\in \mathbb{C}|0<\text{Re}(\theta)<1\}$. 
\end{lemma}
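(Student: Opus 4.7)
The plan is to substitute the Taylor expansion of $1/\sinh\bigl(x/(N+\tfrac{1}{2})\bigr)$ in powers of $1/(N+\tfrac{1}{2})$ into the integral defining $\varphi_N(\theta)$, interchange sum and integral, and evaluate the resulting contour integrals by the residue theorem. Writing
\begin{align*}
\frac{1}{\sinh u}=\frac{1}{u}-\frac{u}{6}+\frac{7u^{3}}{360}-\cdots
\end{align*}
with $u=x/(N+\tfrac{1}{2})$ (only odd powers of $u$ appear, hence only even powers of $1/(N+\tfrac{1}{2})$), truncation after two terms formally yields
\begin{align*}
\varphi_N(\theta)=(N+\tfrac{1}{2})\,I_0(\theta)-\frac{1}{6(N+\tfrac{1}{2})}\,I_1(\theta)+O\!\left((N+\tfrac{1}{2})^{-3}\right),
\end{align*}
where $I_0(\theta)=\int_\gamma \frac{e^{(2\theta-1)x}}{4x^{2}\sinh x}\,dx$ and $I_1(\theta)=\int_\gamma \frac{e^{(2\theta-1)x}}{4\sinh x}\,dx$.

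To evaluate $I_0$ and $I_1$, I close $\gamma$ by a large semicircle in the upper half plane. For $0<\text{Re}(\theta)<1$ the exponential factor $e^{(2\theta-1)x}$ decays both as $\text{Re}(x)\to\pm\infty$ and on the arc, so the closed contour integral equals $2\pi\sqrt{-1}$ times the sum of residues at $x=n\pi\sqrt{-1}$ for $n\geq 1$ (the pole at $x=0$ is excluded because $\gamma$ passes above it via the unit semicircle). Using $\text{Res}_{x=n\pi\sqrt{-1}}(1/\sinh x)=(-1)^{n}$ together with the identity $(-1)^{n}e^{(2\theta-1)n\pi\sqrt{-1}}=e^{2\pi\sqrt{-1}\theta n}$, I obtain
\begin{align*}
I_0(\theta)=\frac{1}{2\pi\sqrt{-1}}\sum_{n\geq 1}\frac{e^{2\pi\sqrt{-1}\theta n}}{n^{2}}=\frac{1}{2\pi\sqrt{-1}}\text{Li}_2\bigl(e^{2\pi\sqrt{-1}\theta}\bigr),\qquad I_1(\theta)=\frac{\pi\sqrt{-1}}{2}\cdot\frac{e^{2\pi\sqrt{-1}\theta}}{1-e^{2\pi\sqrt{-1}\theta}}.
\end{align*}
Plugging back and using $\tfrac{1}{12(N+1/2)}=\tfrac{1}{6(2N+1)}$ reproduces the claimed second term $-\tfrac{\pi\sqrt{-1}e^{2\pi\sqrt{-1}\theta}}{6(1-e^{2\pi\sqrt{-1}\theta})}\cdot\tfrac{1}{2N+1}$ in (1).

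For part (2), I differentiate $\varphi_N$ under the integral sign and apply the same method to $\int_\gamma \frac{e^{(2\theta-1)x}}{2\sinh x\,\sinh\frac{x}{N+1/2}}\,dx$. The leading residue computation reduces to $\sum_{n\geq 1}(-1)^{n}e^{(2\theta-1)n\pi\sqrt{-1}}/n=\sum_{n\geq 1}e^{2\pi\sqrt{-1}\theta n}/n=-\log(1-e^{2\pi\sqrt{-1}\theta})$, whence $\varphi_N'(\theta)=-(N+\tfrac{1}{2})\log(1-e^{2\pi\sqrt{-1}\theta})+O\bigl((N+\tfrac{1}{2})^{-1}\bigr)$. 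Part (3) is then an immediate consequence of the uniform-in-$\theta$ bounds furnished by (1) and (2) on any compact subset of the strip $0<\text{Re}(\theta)<1$.

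The main obstacle is to rigorously justify the interchange of the Taylor expansion of $1/\sinh u$ with the integral along $\gamma$ and to establish the explicit error term $O\bigl((N+\tfrac{1}{2})^{-3}\bigr)$. To address this, I would write $1/\sinh u=1/u-u/6+R(u)$ with an explicit remainder satisfying $|R(u)|\leq C|u|^{3}$ uniformly on the image of $\gamma$ under $x\mapsto x/(N+\tfrac{1}{2})$ (which avoids the poles $n\pi\sqrt{-1}$ of $\sinh$ for all large $N$), and estimate the remainder integral $\int_\gamma \frac{e^{(2\theta-1)x}}{4x\sinh x}R\bigl(x/(N+\tfrac{1}{2})\bigr)\,dx$ directly. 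The exponential decay of $e^{(2\theta-1)x}$ on $\gamma$ for $0<\text{Re}(\theta)<1$ and the $|u|^{3}$ bound on $R$ together give absolute convergence of that remainder integral with size $O\bigl((N+\tfrac{1}{2})^{-3}\bigr)$, uniformly on compact subsets of the strip, completing the proof.
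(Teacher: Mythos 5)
The paper does not give its own proof of this lemma, deferring instead to \cite{Oht16,CJ17,WongYang20-1}; your proposal supplies the standard argument: expand $1/\sinh\bigl(x/(N+\tfrac{1}{2})\bigr)$ in powers of $1/(N+\tfrac{1}{2})$, integrate term by term, and evaluate the $N$-independent contour integrals by residues at $x=n\pi\sqrt{-1}$. The residue bookkeeping is correct — $\operatorname{Res}_{n\pi\sqrt{-1}}(1/\sinh x)=(-1)^n$ cancels against $e^{-n\pi\sqrt{-1}}$ inside $e^{(2\theta-1)n\pi\sqrt{-1}}$, producing exactly the series for $\text{Li}_2(e^{2\pi\sqrt{-1}\theta})$ and the geometric series, and $-\tfrac{1}{6(N+1/2)}I_1$ reproduces the stated coefficient $-\tfrac{\pi\sqrt{-1}e^{2\pi\sqrt{-1}\theta}}{6(1-e^{2\pi\sqrt{-1}\theta})}\cdot\tfrac{1}{2N+1}$. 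Your remainder bound $|R(u)|\le C|u|^3$ on the image of $\gamma$ also holds: for small $|u|$ by Taylor's theorem, and for real $|u|\ge 1$ because $R(u)=\tfrac{u}{6}+o(1)$, hence $|R(u)|=O(|u|)\le C|u|^3$; this gives the uniformity needed in part (3).

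The contour-closing step, however, has a genuine gap. The assertion that ``the exponential factor $e^{(2\theta-1)x}$ decays \dots on the arc'' is false for $\operatorname{Im}(\theta)\le 0$. Near the top of a semicircle of radius $R_n=(n+\tfrac{1}{2})\pi$ one has $|\sinh x|\approx 1$ while $|e^{(2\theta-1)x}|\approx e^{-2R_n\operatorname{Im}(\theta)}$, so for $I_1=\int_\gamma \frac{e^{(2\theta-1)x}}{4\sinh x}\,dx$ the arc contribution is of size $O(R_n)$ when $\theta$ is real and diverges when $\operatorname{Im}(\theta)<0$; the extra $1/x^2$ in $I_0$ rescues the real-$\theta$ case but not $\operatorname{Im}(\theta)<0$. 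Consistently, the sums $\sum_{n\ge 1}e^{2\pi\sqrt{-1}\theta n}$ and $\sum_{n\ge 1}e^{2\pi\sqrt{-1}\theta n}/n^2$ converge absolutely only when $|e^{2\pi\sqrt{-1}\theta}|\le 1$, i.e.\ $\operatorname{Im}(\theta)\ge 0$. The needed repair, which you should state: establish the two residue identities for $\operatorname{Im}(\theta)>0$, where the arc integrals do vanish and the sums converge absolutely, and then observe that $I_0$, $I_1$, $\frac{1}{2\pi\sqrt{-1}}\text{Li}_2(e^{2\pi\sqrt{-1}\theta})$ and $\frac{\pi\sqrt{-1}}{2}\cdot\frac{e^{2\pi\sqrt{-1}\theta}}{1-e^{2\pi\sqrt{-1}\theta}}$ are all holomorphic on the whole strip $\{0<\operatorname{Re}(\theta)<1\}$, so the identities extend by analytic continuation. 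With that added, parts (1)--(3) go through as you outline.
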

See the literature, such as \cite{Oht16,CJ17,WongYang20-1} for the proof of Lemma \ref{lemma-varphixi}, \ref{lemma-varphixi2}, \ref{lemma-varphixi3}.

\section{Computations of the relative Reshetikhin-Turaev invariant } \label{Section-Potentialfunction} 
In this section, we compute the potential for the relative Reshetikhin-Turaev invariants $J_{N}(W(p,q),t)$.  As a preparation, we present some basic results about the continued fractions obtained in \cite{WongYang20-1}.

\subsection{Continued fractions}
For a pair of relatively prime integers $(p,q)$, let 
\begin{align}
    \frac{p}{q}=b_l-\frac{1}{b_{l-1}-\frac{1}{...-\frac{1}{b_1}}}
\end{align}
be a continued fraction. Let $A_0=\pm 1$ and $C_0=0$, and for $i\in \{1,...,l\}$, we define 
\begin{align}
    A_i=b_iA_{i-1}-C_{i-1}
\end{align}
$C_i=A_{i-1}$, then we have $\frac{A_l}{C_l}=b_l-\frac{1}{b_{l-1}-\frac{1}{...-\frac{1}{b_1}}}=\frac{p}{q}$ by induction. 
\begin{remark}
We choose $A_0=1$ or $-1$ to make sure that $A_l=p$ and $C_l=q$. In the following, we assume $A_l=p$ and $C_l=q$.
\end{remark}

Choose $p',q'\in \mathbb{Z}$ such that $p'p+q'q=1$. For $i\in \{1,...,l\}$, we introduce the quantity 

\begin{align}
    K_i=\frac{(-1)^i\sum_{j=1}^ib_jC_j}{C_i}.
\end{align}

\begin{lemma}[Lemma 3.3 in \cite{WongYang20-1}] \label{Lemma-(a)}

(a) Let $I: \{0,1,...,|q|-1\}\rightarrow \{0,...,2|q|-1\}$ be the map defined by 
    \begin{align}
        I(s)=-C_{l-1}(2s+1+K_{l-1}) \ \mod 2|q|.
    \end{align}
Then $I$ is injective with image the set of integers in $\{0,1,..,2|q|-1\}$ with parity that of $1-q$. In particular, there exist a unique $s^+\in \{0,..,|q-1|\}$ and a unique integer $m^+$ such that
\begin{align}
    I(s^+)=1-q+2m^+q,
\end{align}
and a unique $s^-\in \{0,...,|q|-1\}$ and a unique integer $m^-$ such that
\begin{align}
    I(s^-)=-1-q+2m^-q.
\end{align}
Furthermore, $s^+-s^-\equiv p' \mod q$

(b) Let $J: \{0,1,...,|q|-1\}\rightarrow \mathbb{Q}$ be the map defined by 
\begin{align}
    J(s)=\frac{2s+1}{q}+(-1)^l\sum_{i=1}^{l-1}\frac{(-1)^{i+1}K_i}{C_{i+1}}.
\end{align}
Then for $s^+$ and $s^-$ given in (a), we have
\begin{align}
    J(s^+)&\equiv \frac{p'}{q}  \mod \mathbb{Z}, \\ 
    J(s^-)&\equiv -\frac{p'}{q} \mod \mathbb{Z},
\end{align}
and 
$J(s^+)\equiv -J(s^-) \mod 2\mathbb{Z}$.

(c) Let $K:\{0,1,...,|q|-1\}\rightarrow \mathbb{Q}$ be the map defined by 
\begin{align}
    K(s)=\frac{C_{l-1}(2s+1+K_{l-1})^2}{q}+\sum_{i=1}^{l-2}\frac{C_{i}K_i^2}{C_{i+1}}.
\end{align}
Then for $s^+$ and $s^-$ given in (a), we have 
\begin{align}
    K(s^+)&\equiv -\frac{p'}{q}  \mod \mathbb{Z}, \\ 
    K(s^-)&\equiv -\frac{p'}{q} \mod \mathbb{Z}.
\end{align}
\end{lemma}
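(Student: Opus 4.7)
The plan is to exploit the recursion $A_i = b_i A_{i-1} - C_{i-1}$, $C_i = A_{i-1}$ together with the normalisation $A_l = p$, $C_l = q$. The key arithmetic input is the congruence $C_{l-1} \equiv -p \pmod q$, immediate from $p = A_l = b_l q - C_{l-1}$; in particular $\gcd(C_{l-1}, q) = 1$ since $\gcd(p, q) = 1$.

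For part (a), injectivity of $I$ follows at once from $\gcd(C_{l-1}, |q|) = 1$: cancelling $2 C_{l-1}$ in the congruence $I(s_1) \equiv I(s_2) \pmod{2|q|}$ forces $s_1 \equiv s_2 \pmod{|q|}$. To identify the image, I would compute the parity of $I(s)$: the term $2s$ contributes nothing mod $2$, so the parity of $I(s) = -C_{l-1}(2s+1+K_{l-1})$ equals $C_{l-1} + C_{l-1} K_{l-1} \equiv C_{l-1} + \sum_{j=1}^{l-1} b_j C_j \pmod 2$, independent of $s$. A direct induction on $i$ based on $A_i + A_{i-1} + \sum_{j \le i} b_j C_j \equiv A_{i-1} + A_{i-2} + \sum_{j \le i-1} b_j C_j \pmod 2$ (which follows from $A_i = b_i A_{i-1} - A_{i-2}$ and $C_i = A_{i-1}$) shows $A_{l-1} + A_{l-2} + \sum_{j \le l-1} b_j C_j \equiv 1 \pmod 2$; using $A_{l-1} = q$, $A_{l-2} = C_{l-1}$, this translates to $I(s)$ having the parity of $1 - q$. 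Since $I$ is then an injection from a set of size $|q|$ into the $|q|$-element set of integers of this parity in $\{0,\dots,2|q|-1\}$, it is a bijection onto that set, and $s^{\pm}$ are the preimages of $\pm 1 - q$. Subtracting the two defining congruences gives $C_{l-1}(s^+ - s^-) \equiv -1 \pmod q$; combined with $C_{l-1} \equiv -p \pmod q$ and $pp' \equiv 1 \pmod q$, this yields $s^+ - s^- \equiv p' \pmod q$.

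For parts (b) and (c), the strategy is to show that $J(s^\pm) \mp p'/q$ and $K(s^\pm) + p'/q$ lie in $\mathbb{Z}$ by direct computation. I substitute the defining congruence $C_{l-1}(2s^{\pm} + 1 + K_{l-1}) \equiv q \mp 1 \pmod{2q}$ into the formulas for $J$ and $K$, invoke $C_{l-1} \equiv -p \pmod q$ and $pp' + qq' = 1$, and cancel the intermediate continued-fraction data $b_i, C_i, K_i$ for $i < l-1$ by an inductive telescoping argument built on $C_i K_i = (-1)^i \sum_{j \le i} b_j C_j$. The sharper relative statement $J(s^+) \equiv -J(s^-) \pmod{2\mathbb{Z}}$ in (b) follows from the observation $J(s^+) + J(s^-) = 2\bigl[(s^+ + s^- + 1)/q + (-1)^l \sum_{i=1}^{l-1}(-1)^{i+1} K_i/C_{i+1}\bigr]$, once one checks that the bracketed quantity is an integer, which is a parallel telescoping calculation.

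The main obstacle is the careful bookkeeping of signs, parities, and the choice $A_0 = \pm 1$ through the continued-fraction induction. Every $(-1)^i$ in the definitions of $K_i$, $J$, and $K$, together with the parities of $l$, $q$, and the $C_i$, contributes to the final congruences, and since (b) demands precision modulo $2\mathbb{Z}$, a single sign error would flip the congruence class. My plan is to fix the sign of $A_0$ at the outset (forced by $A_l = p$, $C_l = q$), record the parity of $l$, and then propagate these choices consistently through the induction. Small test cases such as $p/q = 3/2$, $5/3$, $7/3$, and $23/10$ will serve as numerical sanity checks to catch any sign slip.
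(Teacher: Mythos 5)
A framing note first: the paper does not prove this lemma---it imports it verbatim as Lemma~3.3 of Wong--Yang \cite{WongYang20-1} without argument---so there is no in-paper proof to compare against. Your sketch for part~(a) is nonetheless sound on its own merits: $p=b_lq-C_{l-1}$ gives $C_{l-1}\equiv -p\pmod{q}$ and hence $\gcd(C_{l-1},q)=1$, yielding injectivity of $I$; the parity computation $I(s)\equiv C_{l-1}+\sum_{j\le l-1}b_jC_j\pmod{2}$ is legitimate because $C_{l-1}K_{l-1}=(-1)^{l-1}\sum_{j\le l-1}b_jC_j\in\mathbb{Z}$ even though $K_{l-1}$ itself may be non-integral; the induction you set up (base case $A_1+A_0+b_1C_1=A_0+2b_1A_0\equiv 1\pmod{2}$, inductive step from $b_iA_{i-1}=A_i+A_{i-2}$) does pin the parity of $I(s)$ down to that of $1-q$; and the derivation of $s^+-s^-\equiv p'\pmod{q}$ from the two defining congruences together with $-C_{l-1}\equiv p\pmod{q}$ and $pp'\equiv 1\pmod{q}$ is correct.

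Parts~(b) and~(c), however, are left at the level of a plan, and the plan skips exactly the nontrivial part. Saying that one can ``cancel the intermediate continued-fraction data by an inductive telescoping argument built on $C_iK_i=(-1)^i\sum_{j\le i}b_jC_j$'' does not specify what telescopes: the sums $\sum_i(-1)^{i+1}K_i/C_{i+1}$ (in $J$) and $\sum_i C_iK_i^2/C_{i+1}$ (in $K$) are not in telescoping form as written, and the identity one actually needs---relating $\sum_i 1/(C_iC_{i+1})$ to an auxiliary solution of the recursion $A_i=b_iA_{i-1}-A_{i-2}$, in the spirit of the classical determinant identity $p_iq_{i-1}-p_{i-1}q_i=\pm1$---is never produced. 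Part~(c) is worse: substituting $C_{l-1}(2s^{\pm}+1+K_{l-1})=(1-2m^{\pm})q\mp1$ into the quadratic term yields $\frac{((1-2m^{\pm})q\mp1)^2}{qC_{l-1}}=\frac{(1-2m^{\pm})^2q}{C_{l-1}}\mp\frac{2(1-2m^{\pm})}{C_{l-1}}+\frac{1}{qC_{l-1}}$, three rationals whose fractional parts must conspire with $\sum_i C_iK_i^2/C_{i+1}$ to give $-p'/q$ modulo $\mathbb{Z}$, and this is precisely the ``careful bookkeeping of signs and parities'' you yourself flag as the main obstacle and then defer to numerical sanity checks. Until those computations are actually carried out, (b) and (c) remain unproved; for a complete argument you would need to consult or reconstruct the proof in \cite{WongYang20-1}.
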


\begin{lemma} \label{lemma-s(n)}
For $1\leq n_l\leq 2N$, we let
 \begin{align}
    S(n_l)=\sum_{n_1,...,n_{l-1}=1}^{r-1}(-1)^{\sum_{j=1}^{l-1}b_jn_j}q^{\sum_{j=1}^{l-1}
    \frac{b_jn_j^{2}}{4}}\{n_1\}\prod_{j=1}^{l-1}\{n_jn_{j+1}\},
\end{align}
then we have 
\begin{align}
    S(n_l)=\tau'\sum_{s=0}^{|q|-1}e^{-\frac{\pi \sqrt{-1}}{r}\frac{C_{l-1}}{q}\left(n_l+sr+\frac{K_{l-1}r}{2}\right)^2}
    \sin \left(-\pi\left(\frac{(-1)^l}{rq}(2n_l+2sr)+\sum_{i=1}^{l-1}\frac{(-1)^iK_i}{C_{i+1}}\right)\right)
\end{align}
where 
\begin{align}
    \tau'=\frac{(-1)^{\frac{l+1}{4}}}{\sqrt{q}}2^lr^{\frac{l-1}{2}}e^{-\frac{\pi \sqrt{-1}}{r}\sum_{i=1}^{l-1}\frac{1}{C_iC_{i+1}}-\frac{\pi \sqrt{-1}r}{4}\sum_{i=1}^{l-2}\frac{C_iK_i^2}{C_{i+1}}}.
\end{align}
\end{lemma}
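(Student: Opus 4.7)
The plan is to evaluate $S(n_l)$ by iterated application of Gauss-sum reciprocity, using the continued fraction $p/q=b_l-1/(b_{l-1}-\cdots-1/b_1)$ as the combinatorial scaffold that organizes the successive reductions. Writing $\{m\}=t^{m/2}-t^{-m/2}$, the product $\{n_1\}\prod_{j=1}^{l-1}\{n_jn_{j+1}\}$ expands into a signed sum of $2^l$ exponentials indexed by sign vectors $\vec\epsilon=(\epsilon_0,\ldots,\epsilon_{l-1})\in\{\pm 1\}^l$. For each fixed $\vec\epsilon$, the inner sum over $(n_1,\ldots,n_{l-1})$ is a multi-dimensional finite Gauss sum whose quadratic form is $\sum_j b_j n_j^2/4$ and whose linear cross terms involve the neighbouring products $\epsilon_{j-1}n_{j-1}n_j$ together with the shifts coming from $(-1)^{b_j n_j}$.

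The main step is to integrate out $n_1,n_2,\ldots,n_{l-1}$ one variable at a time. The innermost sum over $n_1$ is a one-dimensional finite Gauss sum of length $r$ with quadratic coefficient $b_1$ and a linear term depending on $n_2$ and $\vec\epsilon$. Applying Landsberg--Schaar-type reciprocity converts it into a dual sum of length $b_1$ over an auxiliary variable $s_1$, introducing an eighth-root-of-unity phase, a factor of order $\sqrt{r/b_1}$, and a modification of the Gaussian in $n_2$. Induction on $i$ then shows that after $i$ reciprocities the residual sum retains the same shape, with the coefficient of $n_{i+1}^2$ and the accumulated linear shift in $n_{i+1}$ governed by exactly the recursions $A_i=b_iA_{i-1}-C_{i-1}$, $C_i=A_{i-1}$, and $K_i=(-1)^i\sum_{j=1}^i b_jC_j/C_i$ recorded at the beginning of this section. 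The argument parallels the computation in \cite{WongYang20-1} for rational surgeries; the only new ingredient is the extra $\{n_1\}$ factor, which contributes an additional sign $\epsilon_0$ that survives all the way to the final answer.

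After $l-1$ reductions the surviving expression is a Gaussian in $n_l$ with quadratic coefficient proportional to $-C_{l-1}/q$ and a shift of the form $sr+K_{l-1}r/2$, summed over $(s_1,\ldots,s_{l-1})\in\prod_{i=1}^{l-1}\mathbb{Z}/b_i$ and over $\vec\epsilon$. The multi-index sum over $(s_1,\ldots,s_{l-1})$ collapses into a single sum over $s\in\{0,\ldots,|q|-1\}$ via the change of variables provided by Lemma \ref{Lemma-(a)}(a), which is precisely the statement that the composite map $I$ is a parity-preserving bijection with the claimed image. The two signs in $\vec\epsilon$ that are not absorbed into $s$ recombine into the sine factor $\sin(-\pi((-1)^l(2n_l+2sr)/(rq)+\sum_{i=1}^{l-1}(-1)^iK_i/C_{i+1}))$, and the accumulated scalar factors collect into $\tau'$. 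The main technical obstacle is the careful bookkeeping of phases and normalisations: each reciprocity step contributes an eighth-root-of-unity and a square-root branch, and the accumulated quadratic phase $e^{-\pi\sqrt{-1}r\sum_{i=1}^{l-2}C_iK_i^2/(4C_{i+1})}$ appearing in $\tau'$ emerges only after carefully separating the $n_l$-dependent part (which produces the exponent $-\pi\sqrt{-1}C_{l-1}(n_l+sr+K_{l-1}r/2)^2/(rq)$) from the $n_l$-independent part (which contributes to $\tau'$) in every completed square, and matching these shifts against the $K_i$ recursion.
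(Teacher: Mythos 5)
Your overall strategy---expanding each $\{\cdot\}$ factor into two exponentials, summing out $n_1,\dots,n_{l-1}$ one at a time by Gauss-sum reciprocity, and matching the completed squares against the recursions for $A_i$, $C_i$, $K_i$---is the computation the paper actually relies on: the paper does not reprove this lemma but cites Lemma 3.6 and the proof of Proposition 3.4 of \cite{WongYang20-1}, where $S(n_l)$ is precisely the chain sum attached to the continued-fraction surgery diagram (in particular the factor $\{n_1\}$ is already present there, so it is not a new ingredient requiring extra care).

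There is, however, a genuine gap in your dual-variable bookkeeping. After the first reciprocity the quadratic coefficient of $n_{2}$ is the continued-fraction tail $A_2/C_2$ (recall $C_{i+1}=A_i$), not $b_2$, and more generally at step $i$ it is $A_i/C_i$; the correct mechanism is that the previously created dual variable $s_{i-1}$ merges with $n_i$ into a single variable of period $|A_{i-1}|r$ (since $s_{i-1}$ enters only through $n_i+s_{i-1}r$ inside the completed square), so each reciprocity produces a \emph{single} dual sum of length $|A_i|=|C_{i+1}|$, with Gaussian $e^{-\frac{\pi\sqrt{-1}}{r}\frac{C_i}{C_{i+1}}\left(n_{i+1}+sr+\frac{K_ir}{2}\right)^2}$, which at $i=l-1$ is exactly the stated formula. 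Your version instead keeps all dual variables, arriving at a sum over $(s_1,\dots,s_{l-1})\in\prod_{i=1}^{l-1}\mathbb{Z}/b_i$, and then asserts that this collapses to $\{0,\dots,|q|-1\}$ by a change of variables supplied by Lemma \ref{Lemma-(a)}(a). That step fails: the cardinalities do not match (e.g.\ $l=3$, $b_1=b_2=2$ gives $\prod_i|b_i|=4$ while $|q|=|b_1b_2-1|=3$), so no relabelling can effect the collapse without exhibiting cancellations you have not produced; and Lemma \ref{Lemma-(a)}(a) is not such a map in any case---it is an injection of $\{0,\dots,|q|-1\}$ into $\{0,\dots,2|q|-1\}$ that is used only later, in the Poisson-summation analysis, to single out $s^{\pm}$. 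To repair the argument, formulate the induction so that after summing $n_1,\dots,n_i$ there is one dual variable running over $\{0,\dots,|A_i|-1\}$, absorb the old dual variable during the completion of the square at each step, and collect the $n_{i+1}$-independent phases into the prefactor; this reproduces $\tau'$ and the sine factor as in the cited proof.
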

\begin{proof}
See Lemma 3.6 of \cite{WongYang20-1}  and the proof of Proposition 3.4 of \cite{WongYang20-1}.  
\end{proof}

\subsection{Computations of  $J_{N}(W(p,q);t)$}
Let $W\left(p,q\right)$ be the cusped 3-manifold obtained by doing $\frac{p}{q}$-surgery along the component $L_1$ of the Whitehead link as shown in Figure \ref{figureWL}.  

\begin{figure}[!htb]
\raisebox{-40pt}{
\includegraphics[width=150pt]{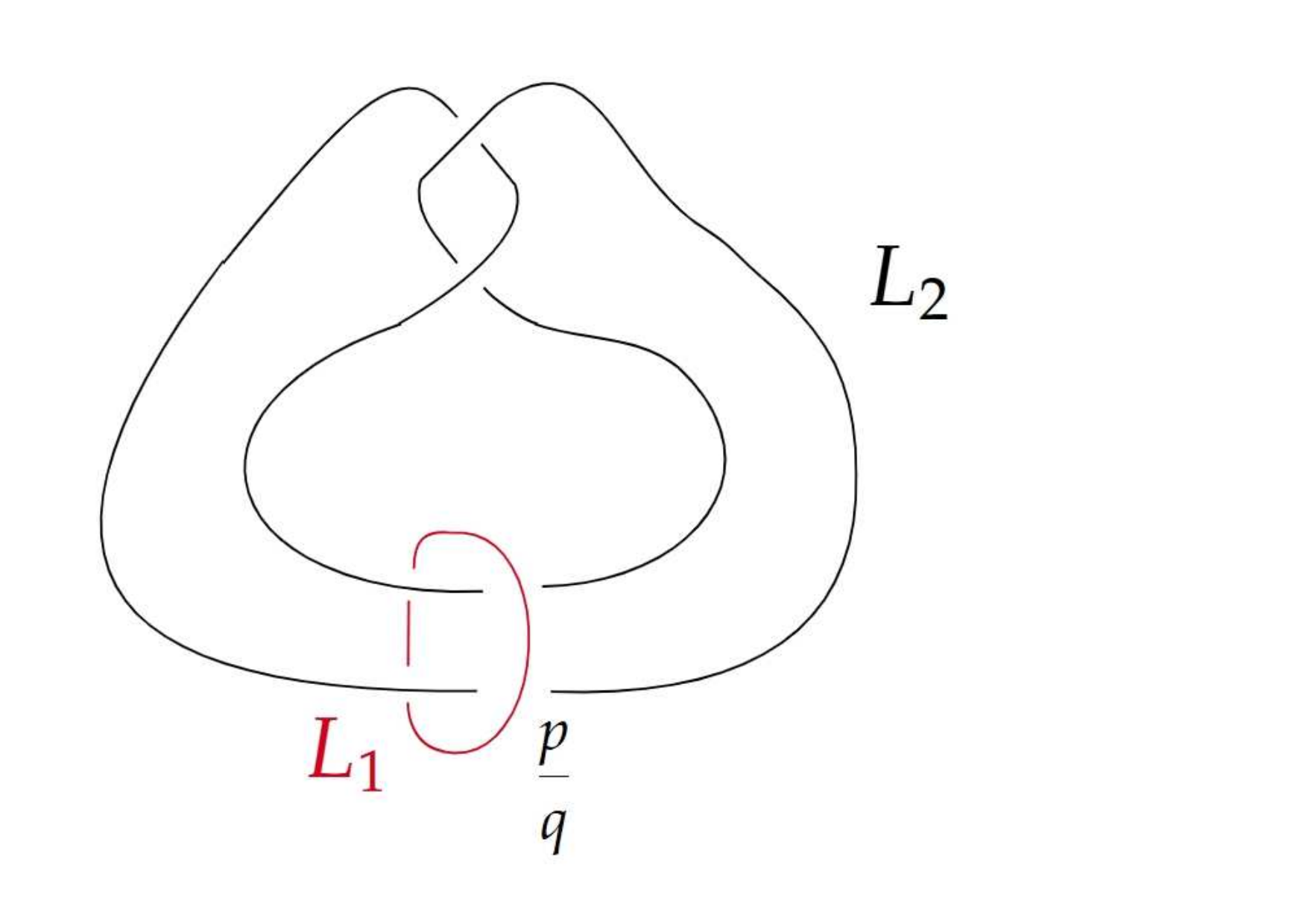}}
\caption{Doing $\frac{p}{q}$-surgery along the component $L_1$}
\label{figureWL} 
\end{figure}

Since the $\frac{p}{q}$-surgery along the unknot in $S^3$ gives the lens space $L(p,q)$, so  $W\left(p,q\right)$ is the complement of the knot $L_2$ in lens space $L(p,q)$. From \cite{Rolfsen90}, we know that doing a $\frac{p}{q}$-surgery along the component $L_1$ is equivalent to doing a surgery along a framed link $L'_1$ of $l$-components with framings $b_1,....,b_l$ from the continued fraction 
\begin{align}
\frac{p}{q}=b_l-\frac{1}{b_{l-1}-\frac{1}{\cdots -\frac{1}{b_1}}}
\end{align}
as shown in Figure \ref{figurefraction}

\begin{figure}[!htb] 
\raisebox{-40pt}{
\includegraphics[width=180pt]{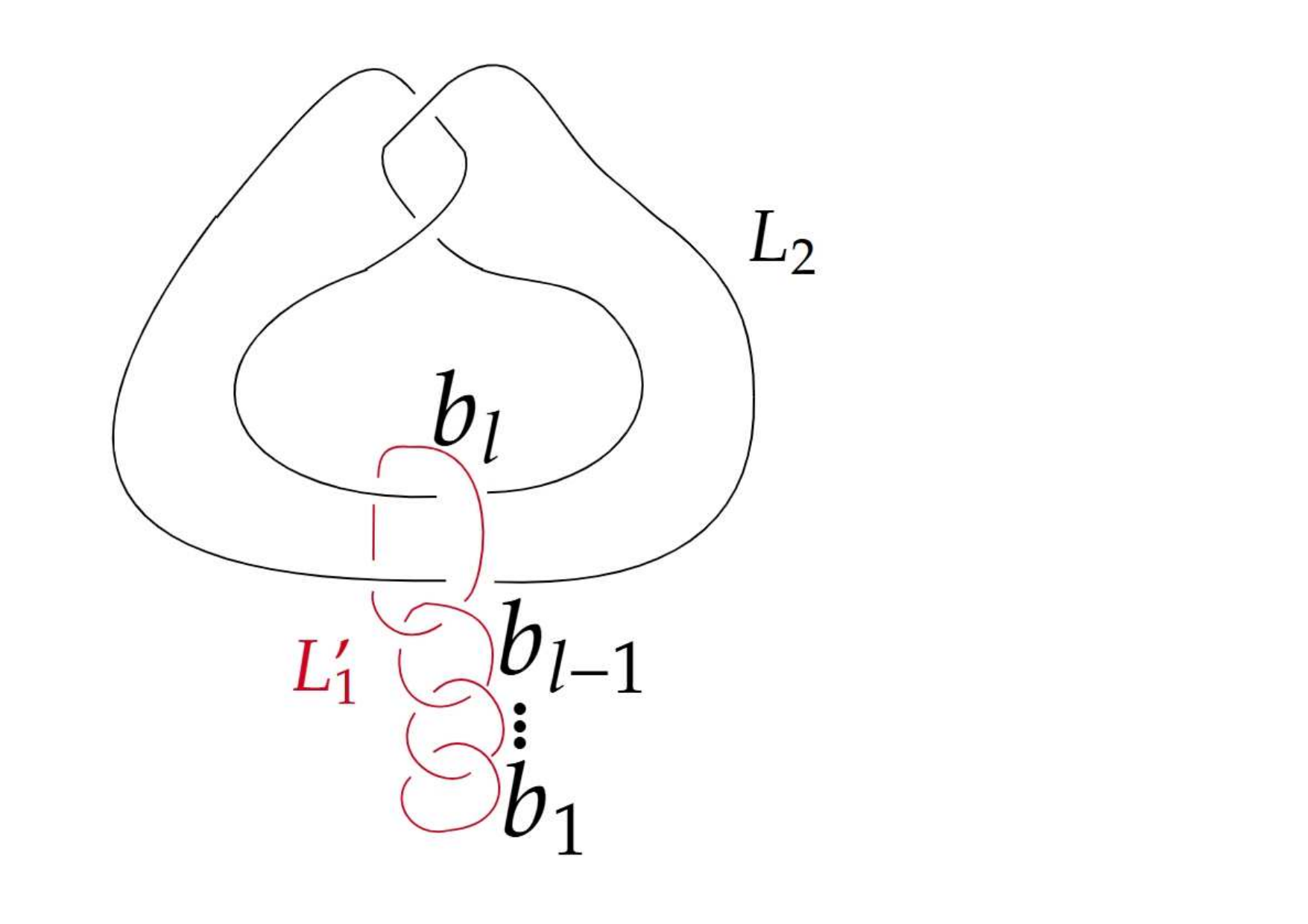}}
\caption{Doing integral surgery along $L'_1$}
\label{figurefraction} 
\end{figure}

A direct computation shows that 
\begin{align}
\langle \Omega_r\rangle_{U_+}=e^{-\left(\frac{3}{r}+\frac{r+1}{4}\right)\pi\sqrt{-1}}.    
\end{align}
Let $\sigma=\sigma(L'_1)$ be the signature of the linking matrix of the framing link $L'_1$.

Then, by using formula (\ref{formula-relativeRTformula}), 
we obtain
\begin{align}
    &RT_r(L(p,q),L_2;N-1)\\\nonumber
    &=\langle e_{N-1},\Omega_r,...,\Omega_r\rangle_{D_{L_2\cup L'_1}}\langle\Omega_r\rangle_{U_+}^{-\sigma(L'_1)}\\\nonumber
    &=\left(\frac{\sin \frac{2\pi }{r}}{\sqrt{r}}\right)^{l}e^{\sigma\left(\frac{3}{r}+\frac{r+1}{4}\right)\pi \sqrt{-1}}\sum_{n_1,...n_l=0}^{r-2}(-1)^{n_l+\sum_{j=1}^lb_jn_j}t^{\sum_{j=1}^{l}\frac{b_jn_j(n_j+2)}{4}}\\\nonumber
    &\cdot[n_1+1]\prod_{j=1}^{l-1}[(n_j+1)(n_{j+1}+1)]\langle e_{N-1},e_{n_l}\rangle_{W},
\end{align}
where the second equality comes from the fact that $e_n$ is an eigenvector of the positive and the negative twist operator with eigenvalue $(-1)^nt^{\pm \frac{n(n+2)}{4}}$, and also is an eigenvector of the circle operator $c(e_m)$ (defined by enclosing $e_n$ by $e_m$) with eigenvalue $(-1)^m\frac{[(n+1)(m+1)]}{[n+1]}$.

By Habiro's formula \cite{Hab00}, we have  
\begin{align}
    \langle e_{N-1},e_{n_l}\rangle_{W}=(-1)^{N-1+n_l}\sum_{i=0}^{\min(N-1,n_l)}(-1)^i\frac{t^{\frac{1}{4}i(i+3)}}{\{1\}}\frac{\{N+i\}!\{n_l+i+1\}!\{i\}!}{\{N-1-i\}!\{n_l-i\}!\{2i+1\}!}.
\end{align}

Note that $W\left(-1,p\right)$ is homeomorphic to the complement $S^3\setminus \mathcal{K}_p$ of the twist knot $\mathcal{K}_p$ in $S^3$. The relative Reshetikhin-Turaev invariant is equal to the (unnormalized) colored Jones polynomial of twist knot $\mathcal{K}_p$ at $t$, i.e.  $RT_r(L(-1,p),L_2;N)=\mu_r\bar{J}_{N+1}(\mathcal{K}_p;t)$.  That's why we introduce the notation
\begin{align}
\bar{J}_{N}(W(p,q);t)&=RT_{r}(L(p,q),L_2;N-1).
\end{align}
 Moreover, we let
\begin{align}
J_{N}(W(p,q);t)&=(-1)^{N-1}\frac{\{1\}}{\{N\}}RT_{r}(L(p,q),L_2;N-1)\\\nonumber
&=(-1)^{N-1}\frac{\{1\}}{\{N\}}\bar{J}_{N}(W(p,q);t)
\end{align}
and we call $J_{N}(W(p,q);t)$  the normalized relative Reshetikhin-Turaev invariant.

Hence
\begin{align}  \label{formula-relativeRT0}
    &J_{N}(W(p,q);t)\\\nonumber
    &=\frac{1}{\{N\}}\left(\frac{\sin \frac{2\pi }{r}}{\sqrt{r}}\right)^le^{\sigma\left(\frac{3}{r}+\frac{r+1}{4}\right)\pi \sqrt{-1}}\sum_{n_1,...n_l=0}^{r-2}(-1)^{\sum_{j=1}^lb_jn_j}t^{\sum_{j=1}^l\frac{b_jn_j(n_j+2)}{4}}[n_1+1]\\\nonumber
    &\cdot\prod_{j=1}^{l-1}[(n_j+1)(n_{j+1}+1)]\sum_{i=0}^{\min(N-1,n_l)}(-1)^it^{\frac{1}{4}i(i+3)}\frac{\{N+i\}!\{n_l+i+1\}!\{i\}!}{\{N-1-i\}!\{n_l-i\}!\{2i+1\}!}\\\nonumber
    &=\frac{1}{\{N\}}\left(\frac{\sin \frac{2\pi }{r}}{\sqrt{r}}\right)^le^{\sigma\left(\frac{3}{r}+\frac{r+1}{4}\right)\pi \sqrt{-1}}\sum_{n'_1,...n'_l=1}^{r-1}(-1)^{\sum_{j=1}^lb_j(n'_j-1)}t^{\sum_{j=1}^l\frac{b_j(n'^2_j-1))}{4}}\\\nonumber
    &\cdot[n'_1]\prod_{j=1}^{l-1}[n'_jn'_{j+1}]\sum_{i=0}^{\min(N-1,n'_l-1)}(-1)^it^{\frac{1}{4}i(i+3)}\frac{\{N+i\}!\{n'_l+i\}!\{i\}!}{\{N-1-i\}!\{n'_l-i-1\}!\{2i+1\}!},
\end{align}
where the last equality is obtained by changing the variables $n_i$ to $n'_i=n_i+1$ for $i\in \{1,..,l\}$. 

Let  \begin{align}
    \kappa'_N=\left(\frac{\sin \frac{2\pi}{r}}{\sqrt{r}}\right)^{l+1}e^{\sigma\left(\frac{3}{r}+\frac{r+1}{4}\right)\pi \sqrt{-1}},
\end{align}
and reordering the summations, we obtain 
\begin{align}
    J_{N}(W(p,q);t)&=\frac{2\sqrt{-1}\sqrt{r}}{\{N\}\{1\}^{l+1}}\kappa'_N\sum_{i=0}^{\min(N-1,n'_l-1)}\sum_{n'_l=1}^{r-1}S(n'_l)t^{\frac{1}{4}i(i+3)-\sum_{j=1}^l\frac{b_j}{4}}(-1)^{i-\sum_{j=1}^lb_j}\\\nonumber
    &\cdot(-1)^{b_ln'_l}t^{\frac{b_l(n'_l)^{2}}{2}}\frac{\{N+i\}!\{n'_l+i\}!\{i\}!}{\{N-1-i\}!\{n'_l-i-1\}!\{2i+1\}!}\\\nonumber
    &=-\kappa''_N\frac{\sqrt{r}}{\sin \frac{\pi}{r}}\sum_{n'_l=1}^{r-1}\sum_{i=0}^{\min(N-1,n'_l-1)}(-1)^it^{\frac{1}{4}i(i+3)}S(n'_l)(-1)^{b_ln'_l}t^{\frac{b_l (n'_l)^2}{4}}\\\nonumber
    &\cdot\frac{\{N+i\}!\{n'_l+i\}!\{i\}!}{\{N-1-i\}!\{n'_l-i-1\}!\{2i+1\}!},
\end{align}
where 
\begin{align}
\kappa''_N=\frac{(-1)^{\sum_{j=1}^lb_j}t^{-\sum_{j=1}^l\frac{b_j}{4}}}{\{1\}^{l+1}}\kappa'_N.
\end{align}
By Lemma \ref{lemma-s(n)}, we obtain
\begin{align}
    &J_{N}(W(p,q);t)\\\nonumber
    &=-\kappa''_N\tau'\frac{\sqrt{r}}{\sin \frac{\pi}{r}}\sum_{s=0}^{|q|-1}\sum_{n'_l=1}^{r-1}\sum_{i=0}^{\min(N-1,n'_l-1)}(-1)^it^{\frac{1}{4}i(i+3)}e^{-\frac{\pi \sqrt{-1}}{r}\frac{C_{l-1}}{q}\left(n'_l+sr+\frac{K_{l-1}r}{2}\right)^2}\\\nonumber
    &
    \cdot\sin \left(-\pi\left(\frac{(-1)^l}{rq}(2n'_l+2sr)+\sum_{i=1}^{l-1}\frac{(-1)^iK_i}{C_{i+1}}\right)\right)(-1)^{b_ln'_l}t^{\frac{b_ln'^{2}_l}{4}}\\\nonumber
    &\cdot\frac{\{N+i\}!\{n'_l+i\}!\{i\}!}{\{N-1-i\}!\{n'_l-i-1\}!\{2i+1\}!}\\\nonumber 
    &=-\kappa''_N\tau'\frac{\sqrt{r}}{\sin \frac{\pi}{r}}\sum_{s=0}^{|q|-1}\sum_{i=0}^{N-1}\sum_{n'_l=i+1}^{r-1-i}(-1)^it^{\frac{1}{4}i(i+3)}e^{-\frac{\pi \sqrt{-1}}{r}\frac{C_{l-1}}{q}\left(n'_l+sr+\frac{K_{l-1}r}{2}\right)^2}\\\nonumber
    &
    \cdot\sin \left(-\pi\left(\frac{(-1)^l}{rq}(2n'_l+2sr)+\sum_{i=1}^{l-1}\frac{(-1)^iK_i}
    {C_{i+1}}\right)\right)(-1)^{b_ln'_l}t^{\frac{b_ln'^{2}_l}{4}}\\\nonumber
    &\cdot\frac{\{N+i\}!\{n'_l+i\}!\{i\}!}{\{N-1-i\}!\{n'_l-i-1\}!\{2i+1\}!}.
\end{align}
We set
\begin{align}
    n'=\frac{r}{2}-n'_l, i'=\frac{r}{2}-1-i.
\end{align}
Then we have
\begin{align}
    &J_{N}(W(p,q);t)\\\nonumber
    &=-(-1)^{b_l(\frac{3}{2}N+\frac{3}{4})+l}e^{\frac{\pi\sqrt{-1}r}{4}\sum_{j=1}^{l-2}\frac{C_jK_j^2}{C_{j+1}}}\kappa''_N\tau'\frac{\sqrt{r}}{\sin \frac{\pi}{r}}\sum_{s=0}^{|q|-1}\sum_{i'=\frac{1}{2}}^{N-\frac{1}{2}}\sum_{n'=-i'}^{n'=i'} e^{\frac{\pi \sqrt{-1}}{r}(i'^2-i'-2)}\\\nonumber
    &\cdot(-1)^{-2b_ln'-2i'}e^{\frac{2\pi \sqrt{-1}}{r}\frac{b_ln'^2}{2}}e^{-\frac{\pi \sqrt{-1}}{r}\frac{C_{l-1}}{q}(-n'+(2s+1+K_{l-1})\frac{r}{2})^2-\frac{\pi \sqrt{-1}r}{4}\sum_{j=1}^{l-2}\frac{C_jK_j^2}{C_{j+1}}}\\\nonumber
    &\cdot\sin\left(\frac{2\pi n'}{rq}-\pi \left(\frac{2s+1}{q}+(-1)^l\sum_{j=1}^{l-1}\frac{(-1)^{j+1}K_j}{C_{j+1}}\right)\right)\\\nonumber
    &\cdot\frac{\{2N-i'-\frac{1}{2}\}!\{2N-n'-i'\}!\{N-i'-\frac{1}{2}\}!}{\{i'-\frac{1}{2}\}!\{i'-n'\}!\{2N-2i'\}!}.
\end{align}

By straightforward computations, we have
\begin{align}
    &e^{-\frac{\pi \sqrt{-1}}{r}\frac{C_{l-1}}{q}(-n'+(2s+1+K_{l-1})\frac{r}{2})^2-\frac{\pi \sqrt{-1}r}{4}\sum_{j=1}^{l-2}\frac{C_jK_j^2}{C_{j+1}}}\\\nonumber
    &=e^{\frac{r}{4\pi \sqrt{-1}}\left(\frac{C_{l-1}}{q}\left(\frac{2\pi n'}{r}\right)^2-\frac{2\pi C_{l-1}(2s+1+K_{l-1})}{q}\frac{2\pi n'}{r}+\pi^2 K(s)\right)},
\end{align}
where
\begin{align}
    K(s)=\frac{C_{l-1}(2s+1+K_{l-1})^2}{q}+\sum_{j=1}^{l-2}\frac{C_{j}K_j^2}{C_{j+1}},
\end{align}
and
\begin{align}
    &e^{\frac{r}{4\pi \sqrt{-1}}\left(-\frac{2\pi C_{l-1}(2s+1+K_{l-1})}{q}\frac{2\pi n'}{r}\right)}\\\nonumber
    &=e^{\frac{r}{4\pi \sqrt{-1}}\left(\frac{2\pi (I(s)+2P(s)|q|)\frac{2\pi n'}{r}}{q}\right)}\\\nonumber
    &=e^{\frac{r}{4\pi \sqrt{-1}}\left(\frac{2\pi I(s)}{q}\frac{2\pi n'}{r}\right)}e^{-2\pi \sqrt{-1}P(s)\left(\frac{|q|}{q}n'\right)}=e^{\frac{r}{4\pi \sqrt{-1}}\left(\frac{2\pi I(s)}{q}\frac{2\pi n'}{r}\right)}(-1)^{P(s)},
\end{align}
where $P(s)$ is an integer determined by 
\begin{align}
    I(s)+2P(s)|q|=-C_{l-1}(2s+1+K_{l-1})
\end{align}
as shown in Lemma \ref{Lemma-(a)}.

Moreover, we let
\begin{align}
    J(s)=\frac{2s+1}{q}+(-1)^l\sum_{j=1}^{l-1}\frac{(-1)^{j+1}K_j}{C_{j+1}}
\end{align}
and 
\begin{align}
    \kappa_N&=(-1)^{b_l(\frac{3}{2}N+\frac{3}{4})+l}e^{\frac{\pi\sqrt{-1}r}{4}\sum_{j=1}^{l-2}\frac{C_jK_j^2}{C_{j+1}}}\kappa''_N\tau'\\\nonumber
    &=(-1)^{b_l(\frac{3}{2}N+\frac{3}{4})+\sum_{j=1}^lb_j+\frac{3(l+1)}{4}}\frac{1}{2r\sqrt{q}}e^{-\frac{\pi \sqrt{-1}}{r}(\sum_{i=1}^lb_i+\sum_{i=1}^{l-1}\frac{1}{C_iC_{i+1}})}e^{\sigma\left(\frac{3}{r}+\frac{r+1}{4}\right)\pi \sqrt{-1}}.
\end{align}

Finally,  we obtain
\begin{align}
    &J_{N}(W(p,q);t)\\\nonumber
    &=-\kappa_N\frac{(-1)^{\frac{3N}{2}+1/4}\sqrt{r}}{\sin\frac{\pi }{r}}\sum_{s=0}^{|q|-1}\sum_{i'=\frac{1}{2}}^{N-\frac{1}{2}}(-1)^{-2i'}\\\nonumber
    &
    \cdot e^{\frac{\pi i(i'^2-i'-2)}{r}}\sum_{n'=-i'}^{i'}(-1)^{b_l+P(s)}e^{\frac{r}{4\pi \sqrt{-1}}\left(\frac{C_{l-1}}{q}\left(\frac{2\pi n'}{r}\right)^2+\frac{2\pi I(s)}{q}\frac{2\pi n'}{r}+\pi^2K(s)\right)}\\\nonumber
    &\cdot\sin\left(\frac{2\pi n'}{rq}-J(s)\pi\right)e^{\frac{2\pi i}{r}\frac{b_ln'^2}{2}}\frac{\{2N-i'-\frac{1}{2}\}!\{2N-n'-i'\}!\{N-i'-\frac{1}{2}\}!}{\{i'-\frac{1}{2}\}!\{i'-n'\}!\{2N-2i'\}!}\\\nonumber
    &=-\kappa_N\frac{(-1)^{\frac{3N}{2}+1/4+b_l}\sqrt{r}}{\sin\frac{\pi }{r}}\sum_{s=0}^{|q|-1}\sum_{n'=-N+\frac{1}{2}}^{N-\frac{1}{2}}\sum_{i'=\max\{-n',n'\}}^{N-\frac{1}{2}}(-1)^{-2i'}\\\nonumber
    &
    \cdot (-1)^{P(s)}e^{-\frac{r\pi K(s)\sqrt{-1}}{4}}e^{\frac{r}{4\pi \sqrt{-1}}\left((\frac{C_{l-1}}{q}-b_l)\left(\frac{2\pi n'}{r}\right)^2+\frac{2\pi I(s)}{q}\frac{2\pi n'}{r}-(\frac{2\pi i'}{r})^2+\frac{2\pi}{r}\frac{2\pi i'}{r}+2(\frac{2\pi}{r})^2\right)}\\\nonumber
    &\cdot \sin\left(\frac{2\pi n'}{rq}-J(s)\pi\right)\frac{\{2N-i'-\frac{1}{2}\}!\{2N-n'-i'\}!\{N-i'-\frac{1}{2}\}!}{\{i'-\frac{1}{2}\}!\{i'-n'\}!\{2N-2i'\}!}.
\end{align}

In conclusion, we have the formula
\begin{align}
   &J_{N}(W(p,q);t)\\\nonumber
   &=-\kappa_N\frac{(-1)^{\frac{3N}{2}+\frac{1}{4}+b_l}\sqrt{2N+1}}{\sin\frac{\pi }{2N+1}}\sum_{s=0}^{|q|-1}(-1)^{P(s)}\sum_{n'=-N+\frac{1}{2}}^{N-\frac{1}{2}}\sum_{i'=\max\{-n',n'\}}^{N-\frac{1}{2}}(-1)^{-2i'}\\\nonumber
    &\cdot
    e^{-\frac{(2N+1)\pi K(s)\sqrt{-1}}{4}}e^{\frac{2N+1}{4\pi \sqrt{-1}}\left((\frac{C_{l-1}}{q}-b_l)\left(\frac{2\pi n'}{2N+1}\right)^2+\frac{2\pi I(s)}{q}\frac{2\pi n'}{2N+1}-(\frac{2\pi i'}{2N+1})^2+\frac{2\pi}{2N+1}\frac{2\pi i'}{2N+1}+2(\frac{2\pi}{2N+1})^2\right)}\\\nonumber
    &\cdot\sin\left(\frac{2\pi n'}{(2N+1)q}-J(s)\pi\right)\frac{\{2N-\frac{1}{2}-i'\}!\{2N-n'-i'\}!\{N-i'-\frac{1}{2}\}!}{\{i'-\frac{1}{2}\}!\{i'-n'\}!\{2N-2i'\}!}.
\end{align}
By straightforward computations, we have
\begin{align}
&e^{\frac{2N+1}{4\pi \sqrt{-1}}\left((\frac{C_{l-1}}{q}-b_l)\left(\frac{2\pi n'}{2N+1}\right)^2+\frac{2\pi I(s)}{q}\frac{2\pi n'}{2N+1}-(\frac{2\pi i'}{2N+1})^2+\frac{2\pi}{2N+1}\frac{2\pi i'}{2N+1}+2(\frac{2\pi}{2N+1})^2\right)}\\\nonumber
    &\cdot (-1)^{-2i'}e^{-\frac{(2N+1)\pi K(s)\sqrt{-1}}{4}}\cdot\frac{\{2N-\frac{1}{2}-i'\}!\{2N-n'-i'\}!\{N-i'-\frac{1}{2}\}!}{\{i'-\frac{1}{2}\}!\{i'-n'\}!\{2N-2i'\}!}\\\nonumber
    &=e^{\frac{N+\frac{1}{2}}{2\pi\sqrt{-1}}\left(\frac{2\pi}{N+\frac{1}{2}}\frac{\pi i'}{N+\frac{1}{2}}+4\frac{\pi n'}{N+\frac{1}{2}}\frac{\pi i'}{N+\frac{1}{2}}-4(\frac{\pi i'}{N+\frac{1}{2}})^2-\frac{p}{q}(\frac{\pi n'}{N+\frac{1}{2}})^2+\frac{2\pi I(s)}{q}(\frac{\pi n'}{N+\frac{1}{2}})+\pi^2 K(s)\right)}\\\nonumber
    &\cdot e^{\pi\sqrt{-1}\left(\frac{1}{2}N-2i'-\frac{1}{4}-\frac{\frac{1}{2}}{N+\frac{1}{2}}-\frac{n'}{N+\frac{1}{2}}-\frac{1}{N+\frac{1}{2}}\right)}\cdot \frac{(t)_{2N-\frac{1}{2}-i'}(t)_{2N-n'-i'}(t)_{N-i'-\frac{1}{2}}}{(t)_{i'-\frac{1}{2}}(t)_{i'-n'}(t)_{2N-2i'}}.
\end{align}

Based on Lemma \ref{lemma-varphixi}, for $0\leq i'\leq \frac{N}{2}$, $0\leq i'\pm n'\leq N$, we have
\begin{align}
    (t)_{2N-i'-\frac{1}{2}}=\exp \left(\varphi_N\left(\frac{\frac{1}{2}}{N+\frac{1}{2}}\right)-\varphi_N\left(\frac{2N-i'}{N+\frac{1}{2}}-1\right)+\log 2\right),
\end{align}
\begin{align}
    (t)_{N-i'-\frac{1}{2}}=\exp \left(\varphi_N\left(\frac{\frac{1}{2}}{N+\frac{1}{2}}\right)-\varphi_N\left(\frac{(N-i'-\frac{1}{2})+\frac{1}{2}}{N+\frac{1}{2}}\right)\right),
\end{align}

\begin{align}
    (t)_{2N-n'-i'}=\exp \left(\varphi_N\left(\frac{\frac{1}{2}}{N+\frac{1}{2}}\right)-\varphi_N\left(\frac{(2N-n'-i')+\frac{1}{2}}{N+\frac{1}{2}}-1\right)+\log 2\right),
\end{align}

\begin{align}
    (t)_{i'-\frac{1}{2}}=\exp \left(\varphi_N\left(\frac{\frac{1}{2}}{N+\frac{1}{2}}\right)-\varphi_N\left(\frac{i'}{N+\frac{1}{2}}\right)\right),
\end{align}

\begin{align}
    (t)_{i'-n'}=\exp \left(\varphi_N\left(\frac{\frac{1}{2}}{N+\frac{1}{2}}\right)-\varphi_N\left(\frac{(i'-n')+\frac{1}{2}}{N+\frac{1}{2}}\right)\right).
\end{align}

\begin{align}
    (t)_{2N-2i'}=\exp \left(\varphi_N\left(\frac{\frac{1}{2}}{N+\frac{1}{2}}\right)-\varphi_N\left(\frac{(2N-2i')+\frac{1}{2}}{N+\frac{1}{2}}-1\right)+\log 2\right),
\end{align}

Hence  we have

(1) If $\frac{1}{2}\leq i'\leq \frac{N}{2}$, $0\leq i'\pm n'\leq N$, then

\begin{align}
    &\frac{(t)_{2N-i'-\frac{1}{2}}(t)_{N-i'-\frac{1}{2}}(t)_{2N-n'-i'}}{(t)_{i'-\frac{1}{2}}(t)_{i'-n'}(t)_{2N-2i'}}\\\nonumber
    &=\exp\left(\log2-\varphi_N\left(\frac{N-i'-\frac{1}{2}}{N+\frac{1}{2}}\right)-\varphi_N\left(\frac{N-i'}{N+\frac{1}{2}}\right)-\varphi_N\left(\frac{N-n'-i'}{N+\frac{1}{2}}\right) \right)\\\nonumber
    &\cdot\exp\left(\varphi_N\left(\frac{i'-n'+\frac{1}{2}}{N+\frac{1}{2}}\right)+\varphi_N\left(\frac{i'}{N+\frac{1}{2}}\right)+\varphi_N\left(\frac{N-2i'}{N+\frac{1}{2}}\right)\right).
\end{align}

(2) If $\frac{N}{2}\leq i'\leq N-\frac{1}{2}$, $0\leq i'\pm n'\leq N$, then

\begin{align}
    &\frac{(t)_{2N-i'-\frac{1}{2}}(t)_{N-i'-\frac{1}{2}}(t)_{2N-n'-i'}}{(t)_{i'-\frac{1}{2}}(t)_{i'-n'}(t)_{2N-2i'}}\\\nonumber
    &=\exp\left(\log 4-\varphi_N\left(\frac{N-i'-\frac{1}{2}}{N+\frac{1}{2}}\right)-\varphi_N\left(\frac{N-i'}{N+\frac{1}{2}}\right)-\varphi_N\left(\frac{N-n'-i'}{N+\frac{1}{2}}\right) \right)\\\nonumber
    &\cdot\exp\left(\varphi_N\left(\frac{i'-n'+\frac{1}{2}}{N+\frac{1}{2}}\right)+\varphi_N\left(\frac{i'}{N+\frac{1}{2}}\right)+\varphi_N\left(\frac{N-2i'}{N+\frac{1}{2}}+1\right)\right).
\end{align}

(3) If $\frac{N}{2}\leq i'\leq N-\frac{1}{2}$, $N\leq i'+ n'\leq 2N$ and $0\leq i'- n'\leq N$, then

\begin{align}
    &\frac{(t)_{2N-i'-\frac{1}{2}}(t)_{N-i'-\frac{1}{2}}(t)_{2N-n'-i'}}{(t)_{i'-\frac{1}{2}}(t)_{i'-n'}(t)_{2N-2i'}}\\\nonumber
    &=\exp\left(\log2-\varphi_N\left(\frac{N-i'-\frac{1}{2}}{N+\frac{1}{2}}\right)-\varphi_N\left(\frac{N-i'}{N+\frac{1}{2}}\right)-\varphi_N\left(\frac{N-n'-i'}{N+\frac{1}{2}}+1\right) \right)\\\nonumber
    &\cdot\exp\left(\varphi_N\left(\frac{i'-n'+\frac{1}{2}}{N+\frac{1}{2}}\right)+\varphi_N\left(\frac{i'}{N+\frac{1}{2}}\right)+\varphi_N\left(\frac{N-2i'}{N+\frac{1}{2}}+1\right)\right).
\end{align}

(4) If $\frac{N}{2}\leq i'\leq N-\frac{1}{2}$, $0\leq i'+ n'\leq N$ and $N\leq i'- n'\leq 2N$, then

\begin{align}
    &\frac{(t)_{2N-i'-\frac{1}{2}}(t)_{N-i'-\frac{1}{2}}(t)_{2N-n'-i'}}{(t)_{i'-\frac{1}{2}}(t)_{i'-n'}(t)_{2N-2i'}}\\\nonumber
    &=\exp\left(\log2-\varphi_N\left(\frac{N-i'-\frac{1}{2}}{N+\frac{1}{2}}\right)-\varphi_N\left(\frac{N-i'}{N+\frac{1}{2}}\right)-\varphi_N\left(\frac{N-n'-i'}{N+\frac{1}{2}}\right) \right)\\\nonumber
    &\cdot\exp\left(\varphi_N\left(\frac{i'-n'+\frac{1}{2}}{N+\frac{1}{2}}-1\right)+\varphi_N\left(\frac{i'}{N+\frac{1}{2}}\right)+\varphi_N\left(\frac{N-2i'}{N+\frac{1}{2}}+1\right)\right).
\end{align}

%therefore, 
%\begin{align}
%   &J_{N}(WL([p,q]);t)\\\nonumber
%   &=-\kappa_N\frac{(-1)^{\frac{3N}{2}+\frac{1}{4}+b_l}\sqrt{2N+1}}{\sin\frac{\pi }{2N+1}}\sum_{s=0}^{|q|-1}(-1)^{P(s)}\sum_{n'=-N+\frac{1}{2}}^{N-\frac{1}{2}}\sum_{i'=\max\{-n',n'\}}^{N-\frac{1}{2}}\sin\left(\frac{2\pi n'}{(2N+1)q}-J(s)\pi\right)\\\nonumber
 %   &\cdot e^{\pi\sqrt{-1}\left(\frac{1}{2}N-2i'-\frac{1}{4}-\frac{\frac{1}{2}}{N+\frac{1}{2}}-\frac{n'}{N+\frac{1}{2}}-\frac{1}{N+\frac{1}{2}}\right)}e^{\frac{N+\frac{1}{2}}{2\pi\sqrt{-1}}\left(\frac{2\pi}{N+\frac{1}{2}}\frac{\pi i'}{N+\frac{1}{2}}+4\frac{\pi n'}{N+\frac{1}{2}}\frac{\pi i'}{N+\frac{1}{2}}-4(\frac{\pi i'}{N+\frac{1}{2}})^2-\frac{p}{q}(\frac{\pi n'}{N+\frac{1}{2}})^2+\frac{2\pi I(s)}{q}(\frac{\pi n'}{N+\frac{1}{2}})+\pi^2 K(s)\right)}\\\nonumber
  %  &\cdot\exp\left(\log2-\varphi_N\left(\frac{N-i'-\frac{1}{2}}{N+\frac{1}{2}}\right)+\varphi_N\left(\frac{i'}{N+\frac{1}{2}}\right) \right)\\\nonumber
   % &\exp\left(-\varphi_N\left(\frac{N-n'-i'}{N+\frac{1}{2}}\right)+\varphi_N\left(\frac{i'-n'+\frac{1}{2}}{N+\frac{1}{2}}\right)-\varphi_N\left(\frac{N-i'}{N+\frac{1}{2}}\right)+\varphi_N\left(\frac{N-2i'}{N+\frac{1}{2}}\right)\right).
%\end{align}

We introduce the variables
\begin{align}
 \theta_1=\frac{n'}{N+\frac{1}{2}}, \
    \theta_2=\frac{i'}{N+\frac{1}{2}}
\end{align}
and define the function $V_N(p,q,s,\theta_1,\theta_2)$ as follows:

(1) If $0<\theta_2<\frac{1}{2}$ and $0<\theta_2\pm \theta_1<1$, then 
\begin{align} \label{formula-VN(1)}
    &V_{N}(p,q,s,\theta_1,\theta_2)\\\nonumber
    &=\pi \sqrt{-1}\left(\frac{1}{2}-2\theta_2+2\theta_2^2-I(s)\frac{\theta
    _1}{q}-2\theta_1\theta_2+\frac{p}{2q}\theta_1
    ^2-\frac{K(s)}{2}-\frac{\frac{1}{2}}{N+\frac{1}{2}}\right.\\\nonumber
    &\left.-\frac{\theta_1}{N+\frac{1}{2}}-\frac{\theta_2}{N+\frac{1}{2}}-\frac{3}{2(N+\frac{1}{2})^2}\right)\\\nonumber
    &+\frac{1}{N+\frac{1}{2}}\left(\log2-\varphi_N\left(1-\theta_2-\frac{1}{N+\frac{1}{2}}\right)+\varphi_N\left(\theta_2 \right)-\varphi_N\left(1-\theta_1-\theta_2-\frac{\frac{1}{2}}{N+\frac{1}{2}}\right)\right.\\\nonumber
    &\left.+\varphi_N\left(\theta_2-\theta_1+\frac{\frac{1}{2}}{N+\frac{1}{2}}\right)-\varphi_N\left(1-\theta_2-\frac{\frac{1}{2}}{N+\frac{1}{2}}\right)+\varphi_N\left(1-2\theta_2-\frac{\frac{1}{2}}{N+\frac{1}{2}}\right)\right).
\end{align}

(2) If $\frac{1}{2}\leq \theta_2<1$ and $0<\theta_2\pm \theta_1<1$, then 
\begin{align}
    &V_{N}(p,q,s,\theta_1,\theta_2)\\\nonumber
    &=\pi \sqrt{-1}\left(\frac{1}{2}-2\theta_2+2\theta_2^2-I(s)\frac{\theta_1}{q}-2\theta_1\theta_2+\frac{p}{2q}\theta_1^2-\frac{K(s)}{2}-\frac{\frac{1}{2}}{N+\frac{1}{2}}\right.\\\nonumber
    &\left.-\frac{\theta_1}{N+\frac{1}{2}}-\frac{\theta_2}{N+\frac{1}{2}}-\frac{3}{2(N+\frac{1}{2})^2}\right)\\\nonumber
    &+\frac{1}{N+\frac{1}{2}}\left(\log 4-\varphi_N\left(1-\theta_2-\frac{1}{N+\frac{1}{2}}\right)+\varphi_N\left(\theta
    _2\right)-\varphi_N\left(1-\theta_1-\theta_2-\frac{\frac{1}{2}}{N+\frac{1}{2}}\right)\right.\\\nonumber
    &\left.+\varphi_N\left(\theta_2-\theta_1+\frac{\frac{1}{2}}{N+\frac{1}{2}}\right)-\varphi_N\left(1-\theta_2-\frac{\frac{1}{2}}{N+\frac{1}{2}}\right)+\varphi_N\left(2-2\theta_2-\frac{\frac{1}{2}}{N+\frac{1}{2}}\right)\right).
\end{align}

(3) If $\frac{1}{2}\leq \theta_2<1$,  $1< \theta_2+ \theta_1< 2$ and $0< \theta_2-\theta_1< 1$, then 
\begin{align}
    &V_{N}(p,q,s,\theta_1,\theta_2)\\\nonumber
    &=\pi \sqrt{-1}\left(\frac{1}{2}-2\theta_2+2\theta_2^2-I(s)\frac{\theta
    _1}{q}-2\theta_1\theta_2+\frac{p}{2q}\theta_1^2-\frac{K(s)}{2}-\frac{\frac{1}{2}}{N+\frac{1}{2}}\right.\\\nonumber
    &\left.-\frac{\theta_1}{N+\frac{1}{2}}-\frac{\theta_2}{N+\frac{1}{2}}-\frac{3}{2(N+\frac{1}{2})^2}\right)\\\nonumber
    &+\frac{1}{N+\frac{1}{2}}\left(\log 2-\varphi_N\left(1-\theta_2-\frac{1}{N+\frac{1}{2}}\right)+\varphi_N\left(
\theta_2\right)-\varphi_N\left(2-\theta_1-\theta_2-\frac{\frac{1}{2}}{N+\frac{1}{2}}\right)\right.\\\nonumber
    &\left.+\varphi_N\left(\theta_2-\theta_1+\frac{\frac{1}{2}}{N+\frac{1}{2}}\right)-\varphi_N\left(1-\theta_2-\frac{\frac{1}{2}}{N+\frac{1}{2}}\right)+\varphi_N\left(2-2\theta_2-\frac{\frac{1}{2}}{N+\frac{1}{2}}\right)\right).
\end{align}

(4) If $\frac{1}{2}\leq \theta_2<1$,  $0< \theta_2+\theta_1<1$ and $1< \theta_2-\theta_1<2$, then 
\begin{align}
    &V_{N}(p,q,s,\theta_1,\theta_2)\\\nonumber
    &=\pi \sqrt{-1}\left(\frac{1}{2}-2\theta_2+2\theta_2^2-I(s)\frac{\theta_1}{q}-2\theta_1\theta_2+\frac{p}{2q}\theta_1^2-\frac{K(s)}{2}-\frac{\frac{1}{2}}{N+\frac{1}{2}}\right.\\\nonumber
    &\left.-\frac{\theta_1}{N+\frac{1}{2}}-\frac{\theta_2}{N+\frac{1}{2}}-\frac{3}{2(N+\frac{1}{2})^2}\right)\\\nonumber
    &+\frac{1}{N+\frac{1}{2}}\left(\log 2-\varphi_N\left(1-\theta_2-\frac{1}{N+\frac{1}{2}}\right)+\varphi_N\left(\theta_2 \right)-\varphi_N\left(1-\theta_2-\theta_2-\frac{\frac{1}{2}}{N+\frac{1}{2}}\right)\right.\\\nonumber
    &\left.+\varphi_N\left(\theta_2-\theta_1-\frac{\frac{1}{2}}{N+\frac{1}{2}}\right)-\varphi_N\left(1-\theta_2-\frac{\frac{1}{2}}{N+\frac{1}{2}}\right)+\varphi_N\left(2-2\theta_2-\frac{\frac{1}{2}}{N+\frac{1}{2}}\right)\right).
\end{align}

Combining the above formulas together, we obtain 
\begin{proposition} \label{proposition-potential}
For an odd integer $r=2N+1$ with $N\geq 1$ and at the root of unity $t=e^{\frac{2\pi\sqrt{-1}}{N+\frac{1}{2}}}$, the (normalized) relative Reshetikhin-Turaev invariant $J_N(W(p,q);t)$ is given by 

    \begin{align} \label{formula-relativeRT}
     J_{N}(W(p,q);t)
      &=\kappa_N\frac{(-1)^{\frac{3N}{2}+\frac{1}{4}+b_l}\sqrt{r}}{\sin\frac{\pi }{r}}\sum_{s=0}^{|q|-1}\sum_{n'=-N+\frac{1}{2}}^{N-\frac{1}{2}}\sum_{i'=\max\{-n',n'\}}^{N-\frac{1}{2}}g_{N}(s,n',i'),
\end{align}
where 
\begin{align} \label{formula-KappaN}
    \kappa_N=(-1)^{b_l(\frac{3}{2}N+\frac{3}{4})+\sum_{j=1}^lb_j+\frac{3(l+1)}{4}}\frac{1}{2r\sqrt{q}}e^{-\frac{\pi \sqrt{-1}}{r}(\sum_{i=1}^lb_i+\sum_{i=1}^{l-1}\frac{1}{C_iC_{i+1}})}e^{\sigma\left(\frac{3}{r}+\frac{r+1}{4}\right)\pi \sqrt{-1}},
\end{align}
\begin{align}
    g_{N}(s,n',i')&=(-1)^{P(s)}
      \sin\left(\frac{n'\pi }{(N+\frac{1}{2})q}-J(s)\pi\right)e^{\left((N+\frac{1}{2})V_N\left(p,q,s,\frac{n'}{N+\frac{1}{2}},\frac{i'}{N+\frac{1}{2}}\right)\right)},
\end{align}
with the function $V_N(p,q,s,\theta_1,\theta_2)$ defined above. 
\end{proposition}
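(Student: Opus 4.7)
The plan is to follow the straightforward (if lengthy) computation that the section up to this point has been setting up. Starting from the skein-theoretic definition \eqref{formula-relativeRTformula} applied to the surgery presentation of $W(p,q)$ shown in Figure \ref{figurefraction}, I would first expand the Kirby colors $\Omega_r$ placed on each of the $l$ components $b_1,\ldots,b_l$ of the framing chain. Since $e_n$ is simultaneously an eigenvector of the framing twist (eigenvalue $(-1)^n t^{n(n+2)/4}$) and of the circle operator $c(e_m)$ (eigenvalue $(-1)^m[(n+1)(m+1)]/[n+1]$), the chain contracts to a single sum over $n_1,\ldots,n_l$. The Whitehead link Hopf pairing $\langle e_{N-1}, e_{n_l}\rangle_W$ is then replaced by Habiro's cyclotomic expansion, which introduces the inner summation index $i$ together with the characteristic ratio of quantum factorials.

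Second, I would shift indices by $n_j' = n_j + 1$ to obtain a cleaner form, factor out the constants into $\kappa_N'$ and $\kappa_N''$, and apply Lemma \ref{lemma-s(n)} to convert the inner sum $S(n_l')$ over $n_1',\ldots,n_{l-1}'$ into a single sum over $s \in \{0,\ldots,|q|-1\}$. This is where the quantities $I(s)$, $J(s)$, $K(s)$ from Lemma \ref{Lemma-(a)} enter the picture, along with the sign $(-1)^{P(s)}$ that comes from rewriting $C_{l-1}(2s+1+K_{l-1}) = -I(s) - 2P(s)|q|$ inside the Gaussian exponential. At this stage I would also perform the symmetrizing substitution $n' = r/2 - n_l'$ and $i' = r/2 - 1 - i$ so that the range of $n'$ becomes symmetric about zero and $i' \geq \max\{n', -n'\}$; this substitution is essential for converting $b_l n_l'^2$ into $(C_{l-1}/q - b_l) n'^2 + \text{linear in }n'$, which combined with the Gaussian from Lemma \ref{lemma-s(n)} produces the clean coefficient $-p/q$ after using $C_{l-1}/q - b_l = -A_{l-1}/q = -p/(qC_l)\cdot\text{(sign)}$ together with the continued fraction identities.

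Third, the quantum factorials appearing in the Habiro term are to be rewritten via Lemma \ref{lemma-varphixi} as exponentials of $\varphi_N$. Because the arguments of the $\varphi_N$'s have the form $(k+\tfrac12)/(N+\tfrac12)$ where $k \in \{i'-\tfrac12, i'-n', 2N-2i', 2N-\tfrac12-i', 2N-n'-i', N-i'-\tfrac12\}$, the dichotomy $k \leq N$ versus $N < k \leq 2N$ in Lemma \ref{lemma-varphixi} produces four distinct regimes corresponding exactly to the four cases stated, according to whether $\theta_2 = i'/(N+\tfrac12)$ lies below or above $\tfrac12$ and whether $\theta_2 \pm \theta_1$ exceed $1$. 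In each regime the extra $\log 2$ contributions assemble into the $\log 2$ or $\log 4$ constant visible in the formulas for $V_N$. Collecting all exponentials (the Gaussian from $\kappa_N$-bookkeeping, the explicit phase $e^{\pi\sqrt{-1}(\tfrac12 N - 2i' - \tfrac14 - \cdots)}$, and the $\varphi_N$-terms) into a single exponential $e^{(N+\tfrac12) V_N}$ yields precisely the four formulas for $V_N$ given in \eqref{formula-VN(1)} and its siblings, with the prefactor $\sin\bigl(\tfrac{n'\pi}{(N+\tfrac12)q} - J(s)\pi\bigr)$ coming from the sine factor in Lemma \ref{lemma-s(n)}.

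The proof is essentially bookkeeping, and accordingly the main obstacle is neither conceptual nor requires estimates — it is the careful tracking of signs, phases, and case boundaries. In particular, the scalar $\kappa_N$ in \eqref{formula-KappaN} absorbs several layers of prefactors: the $(\sin(2\pi/r)/\sqrt{r})^l$ from the Kirby colors, the $e^{\sigma(3/r + (r+1)/4)\pi\sqrt{-1}}$ from the signature correction $\langle\Omega_r\rangle_{U_+}^{-\sigma}$, the factor $\tau'$ from Lemma \ref{lemma-s(n)} (which contains $e^{-\pi\sqrt{-1}r\sum C_j K_j^2/(4 C_{j+1})}$ that must cancel against the $-\pi\sqrt{-1} K(s)/2$ term in $V_N$ up to the $s$-dependent piece), and the extra sign $(-1)^{b_l(3N/2 + 3/4) + l}$ produced when $n_l'$ is replaced by $r/2 - n'$ inside $(-1)^{b_l n_l'}$ and inside $t^{b_l n_l'^2 / 4}$. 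Once this bookkeeping is done correctly, the four case expressions for $V_N$ follow directly and the proposition is established.
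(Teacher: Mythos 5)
Your proposal follows essentially the same route as the paper's own derivation: expand the Kirby colors via the eigenvector properties of $e_n$, insert Habiro's formula, shift $n_j\mapsto n_j'=n_j+1$, apply Lemma \ref{lemma-s(n)} to collapse the $n_1',\dots,n_{l-1}'$ sums into the $s$-sum (introducing $I(s),J(s),K(s),P(s)$), perform the symmetrizing substitution $n'=\tfrac{r}{2}-n_l'$, $i'=\tfrac{r}{2}-1-i$, and then translate the quantum factorial ratio into $\varphi_N$-exponentials via Lemma \ref{lemma-varphixi}, with the four regimes of $(\theta_1,\theta_2)$ producing the four branches of $V_N$. One small slip in your bookkeeping: the continued-fraction identity you invoke should read $C_{l-1}/q - b_l = (b_l q - p)/q - b_l = -p/q = -A_l/C_l$ (using $A_{l-1}=C_l=q$ and $A_l = b_l A_{l-1}-C_{l-1}$), not $-A_{l-1}/q$, though your stated conclusion that the quadratic coefficient becomes $-p/q$ is correct.
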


%\begin{definition}
%The potential function of $WL(p,q)$ is given by 
%\begin{align}
%    V(s,y,z)&=\lim_{N\rightarrow \infty}V_N(s,y,z)\\\nonumber
%    &=\pi \sqrt{-1}\left(\frac{1}{2}-2z-2yz+2z^2+\frac{p}{2q}y^2-\frac{I(s)}{q}y-\frac{1}{2}K(s)\right)\\\nonumber
%    &+\frac{1}{2\pi\sqrt{-1}}\left(\text{Li}_2(e^{2\pi\sqrt{-1}z})-\text{Li}_2(e^{2\pi\sqrt{-1}(-y-z)})\right.\\\nonumber
 %   &\left.+\text{Li}_2(e^{2\pi\sqrt{-1}(z-y)})-2\text{Li}_2(e^{-2\pi\sqrt{-1}z})+\text{Li}_2(e^{-2\pi\sqrt{-1}2z}) \right). 
%\end{align}
%\end{definition}

%Note that the functions $V_N(s,y,z)$ and $V(s,y,z)$ are defined on the region 
 %   \begin{align}
 %   D=\{(y,z)\in \mathbb{R}^2| z+y>0, z-y>0, 0<z<\frac{1}{2}\}.
%\end{align}

\section{Poisson summation formula} \label{Section-Poissonsummation}
In this section, we write the formula (\ref{formula-relativeRT}) as a sum of integrals with the help of Poisson summation formula. Combining formulas (\ref{formula-relativeRT0}) and (\ref{formula-relativeRT}) together,  we have
\begin{align}
&|g_N(s,n',i')|&\\\nonumber
&=\left|\sin\left(\frac{\pi n'}{(N+\frac{1}{2})q}-J(s)\pi\right)\right|\left|\frac{\{2N-i'-\frac{1}{2}\}!\{2N-n'-i'\}!\{N-i'-\frac{1}{2}\}!}{\{i'-\frac{1}{2}\}!\{i'-n'\}!\{2N-2i'\}!}\right|. 
\end{align}

By Lemmas \ref{lemma-varphixi}, \ref{lemma-varphixi2}, \ref{lemma-varphixi3} and formula (\ref{formula-Lambda(t)}), we obtain 
\begin{align}
    \log \left|\{n\}!\right|=-(N+\frac{1}{2})\Lambda\left(\frac{2n+1}{2N+1}\right)+O(\log (2N+1))
\end{align}  
for any integer $0<n<2N+1$ and at $t=e^{\frac{2\pi \sqrt{-1}}{N+\frac{1}{2}}}$.
Then we have
\begin{align}
    &\log \left|g_r(s,m',n')\right|\\\nonumber
    &=-\frac{r}{2}\Lambda\left(\frac{2(2N-n'-\frac{1}{2})}{r}\right)-\frac{r}{2}\Lambda\left(\frac{2(2N-m'-n')}{r}\right)-\frac{r}{2}\Lambda\left(\frac{2(N-n'-\frac{1}{2})}{r}\right)\\\nonumber
    &+\frac{r}{2}\Lambda\left(\frac{2(n'-\frac{1}{2})}{r}\right)+\frac{r}{2}\Lambda\left(\frac{2(n'-m')}{r}\right)+\frac{r}{2}\Lambda\left(\frac{2(2N-2n')}{r}\right)+O(\log r)\\\nonumber
    &=\frac{r}{2}\Lambda\left(\frac{2n'}{r}+\frac{3}{r}\right)+\frac{r}{2}\Lambda\left(\frac{2m'}{r}+\frac{2n'}{r}+\frac{2}{r}\right)+\frac{r}{2}\Lambda\left(\frac{2n'}{r}+\frac{2}{r}\right)\\\nonumber
    &+\frac{r}{2}\Lambda\left(\frac{2n'}{r}-\frac{1}{r}\right)+\frac{r}{2}\Lambda\left(\frac{2n'}{r}-\frac{2m'}{r}\right)-\frac{r}{2}\Lambda\left(\frac{4n'}{r}+\frac{2}{r}\right)+O(\log r)
\end{align}
where in the second ``=" , we have used the properties of the function $\Lambda(\tau)$.

We introduce the function
\begin{align}
    v_N(\theta_1,\theta_2)&=\Lambda\left(\theta_2+\frac{3}{2(N+\frac{1}{2})}\right)+\Lambda\left(\theta_1+\theta_2+\frac{1}{N+\frac{1}{2}}\right)+\Lambda\left(\theta_2+\frac{1}{N+\frac{1}{2}}\right)\\\nonumber
    &+\Lambda\left(\theta_2-\frac{1}{2(N+\frac{1}{2})}\right)+\Lambda\left(\theta_2-\theta_1\right)-\Lambda\left(2\theta_2+\frac{1}{(N+\frac{1}{2})}\right),
\end{align}
then we have
\begin{align} \label{formula-gN}
\left|g_N(s,n',i')\right|=e^{(N+\frac{1}{2})v_N\left(\frac{n'}{N+\frac{1}{2}},\frac{i'}{N+\frac{1}{2}}\right)+O(\log N)}. 
\end{align}
Set
 \begin{align}
v(\theta_1,\theta_2)=\lim_{N\rightarrow \infty}v_{N}(\theta_1,\theta_2)=3\Lambda(\theta_2)+\Lambda(\theta_2+\theta_1)+\Lambda(\theta_2-\theta_1)-\Lambda(2\theta_2).
    \end{align}  
It is easy to see that $v(\theta_1,\theta_2)\leq 0$  for $-1<\theta_1<1$ and $\frac{1}{2}\leq \theta_2\leq 1$. 
Set the region 
\begin{align}
   D=\{(\theta_1,\theta_2)\in \mathbb{R}^2| \theta_2+\theta_1>0, \theta_2-\theta_1>0, 0<\theta_2<\frac{1}{2}\},
\end{align}
which is illustrated in Figure \ref{figure:D}.

\begin{figure}[!htb] 
\begin{align*} 
\raisebox{-15pt}{
\includegraphics[width=230 pt]{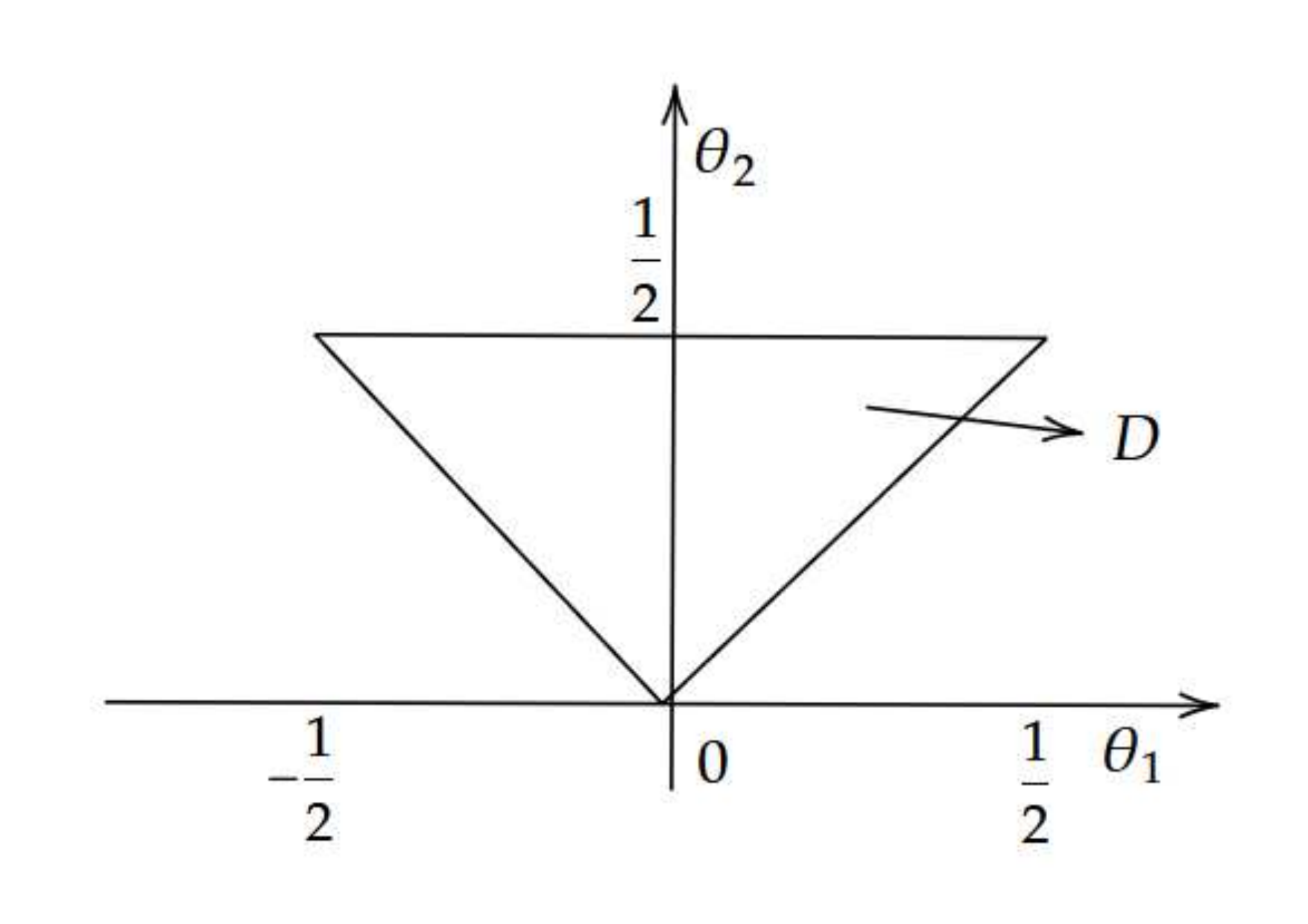}}.
\end{align*}
\caption{The region $D$}
\label{figure:D} 
\end{figure}
Note that $\left(\frac{n'}{N+\frac{1}{2}},\frac{i'}{N+\frac{1}{2}}\right)\in D$ when $i'\pm n'>0$ and $0<i'\leq \frac{N}{2}$. Let $c_0=0.122532$,  we define $D_0=D'_0\cup D''_0$, where
\begin{align}
  D'_0  =\{(\pm \theta_1,\theta_2)\in D|\theta_1\leq \theta_2\leq \frac{1}{2}-\theta_1, 0\leq \theta_1\leq c_0\}
\end{align}
and
\begin{align}
    D''_0= \{(\pm \theta_1,\theta_2)\in D|\theta_1\leq \theta_2\leq \frac{1}{2}, c_0\leq  \theta_1\leq \frac{1}{4}\}.
\end{align}
Note that the regions $D_0'$ and $D''_0$ are symmetric with respect to the $\theta_2$-axis as shown in Figure \ref{figure:D'D''}

\begin{figure}[!htb]  
\begin{align*} 
\raisebox{-15pt}{
\includegraphics[width=230 pt]{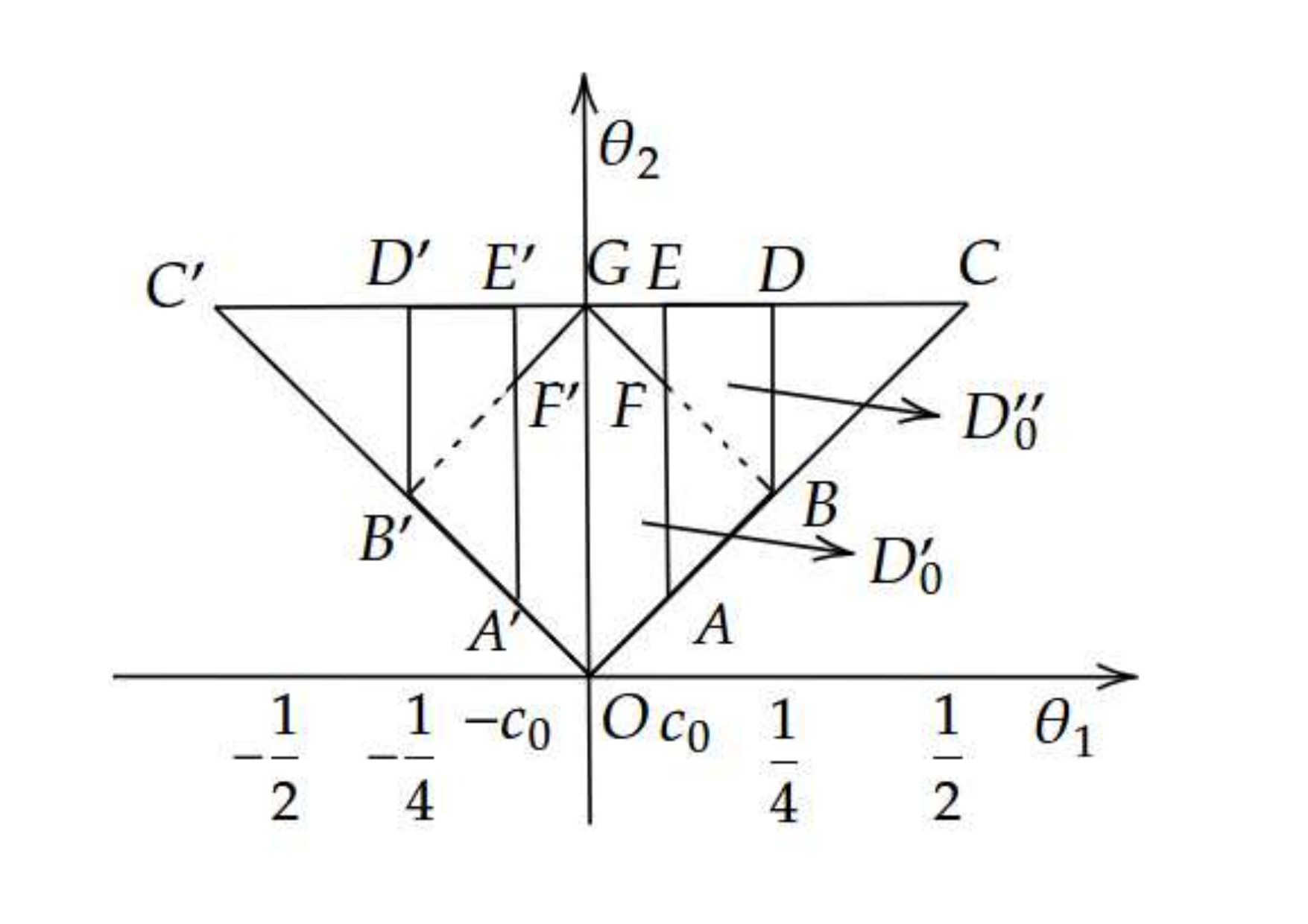}}.
\end{align*}
\caption{The region $D'_0$ and $D''_0$}
\label{figure:D'D''}
\end{figure}

 Then we have
\begin{lemma} \label{lemma-v}
  The region 
  \begin{align} \label{formula-domain}
      \{(\theta_1,\theta_2)\in D|  v(\theta_1,\theta_2)>\frac{3.374482}{2\pi }\}
  \end{align}
  is included in the region $D_0$. 
\end{lemma}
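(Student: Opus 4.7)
The plan is to prove the contrapositive: $v(\theta_1,\theta_2) \le 3.374482/(2\pi)$ for every $(\theta_1,\theta_2) \in D \setminus D_0$. Since $\Lambda$ is odd, $v$ is even in $\theta_1$, so it suffices to restrict to the half $\theta_1 \ge 0$. There the complement of $D_0$ inside $D$ decomposes into two explicit pieces,
\begin{align*}
A = \{0 \le \theta_1 \le c_0,\ \tfrac12-\theta_1 < \theta_2 < \tfrac12\},\qquad B = \{\tfrac14 < \theta_1 < \theta_2 < \tfrac12\}.
\end{align*}

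To rule out large interior maxima I would compute
\begin{align*}
\partial_{\theta_1} v = \log\Big|\tfrac{\sin\pi(\theta_2-\theta_1)}{\sin\pi(\theta_2+\theta_1)}\Big|,\qquad \partial_{\theta_2} v = \log\Big|\tfrac{(2\sin 2\pi\theta_2)^2}{(2\sin\pi\theta_2)^3\,2\sin\pi(\theta_2+\theta_1)\,2\sin\pi(\theta_2-\theta_1)}\Big|.
\end{align*}
On $D$ both $\theta_2\pm\theta_1$ lie in $(0,1)$, so $\partial_{\theta_1} v = 0$ forces either $\theta_1 = 0$ or $\theta_2 = 1/2$ (the latter on $\partial D$). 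On $\theta_1 = 0$ the equation $\partial_{\theta_2} v = 0$ reduces to $\cos^2\pi\theta_2 = 2\sin^3\pi\theta_2$, whose unique root $\theta_2^{\ast}\approx 0.228$ sits in $D_0'$, not in $A \cup B$. Hence $v$ has no interior critical point in $A \cup B$, and its supremum on $\overline{A\cup B}$ is attained on the boundary.

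The boundary pieces are all one-dimensional and tractable: on the diagonal $\theta_2 = \tfrac12 - \theta_1$ one has $\Lambda(\theta_2+\theta_1) = \Lambda(1/2) = 0$ and $v$ reduces to a function of $\theta_1 \in [0,c_0]$ alone; on the vertical segment $\theta_1 = 1/4$ it depends only on $\theta_2 \in [1/4,1/2]$; on the diagonal $\theta_2 = \theta_1$ bounding $B$ one has $\Lambda(\theta_2-\theta_1) = 0$; and on the edge $\theta_2 = 1/2$ both $\Lambda(\theta_2)$ and $\Lambda(2\theta_2)$ vanish. On each restriction I would locate the one-variable critical points via $\Lambda'(\theta) = -\log|2\sin\pi\theta|$ and evaluate $v$ there. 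The value $c_0 = 0.122532$ is selected precisely so that the maximum of $v$ on the diagonal segment $\theta_2 = \tfrac12 - \theta_1$, $\theta_1 \in [0,c_0]$, just reaches the threshold $3.374482/(2\pi)$, making the estimate sharp.

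The main obstacle will be the rigorous numerical verification on these one-variable boundary pieces, since $\Lambda$ has no elementary closed form. I would handle it either by interval arithmetic on a fine subdivision of each boundary segment, or by combining explicit evaluations of $\Lambda$ at sample points with a Lipschitz bound coming from $|\Lambda'(\theta)| = |\log|2\sin\pi\theta||$. Once the one-variable bound is established along each boundary piece, the two-variable conclusion on $A \cup B$ follows immediately from the absence of interior critical points, completing the proof.
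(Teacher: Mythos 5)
Your plan follows the same high-level strategy as the paper — reduce by the evenness of $v$ in $\theta_1$ to the half-plane $\theta_1\ge 0$, split $D\setminus D_0$ there into the top triangle $A$ and the right triangle $B$, and control $v$ via monotonicity and critical-point arguments — and your derivative formulas, the observation that on $D$ the equation $\partial_{\theta_1}v=0$ forces $\theta_1=0$ or $\theta_2=\tfrac12$, and the computation of the one-variable critical point $\theta_2^\ast\approx 0.228$ on $\theta_1=0$ are all correct.

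The genuine gap is in your explicit list of boundary pieces of $\overline{A\cup B}$: you name the antidiagonal $\theta_2=\tfrac12-\theta_1$, the vertical $\theta_1=\tfrac14$, the diagonal $\theta_2=\theta_1$, and the top edge $\theta_2=\tfrac12$, but you omit the vertical segment $\theta_1=c_0$, $\tfrac12-c_0\le\theta_2\le\tfrac12$, which is the right edge of $A$ (the cut between $A$ and $D''_0$). This is not a cosmetic omission: the supremum of $v$ on $\overline A$ is attained at the corner $(c_0,\tfrac12-c_0)$, where the paper records $v(c_0,\tfrac12-c_0)=3.3744816/(2\pi)$, barely below the declared threshold $3.374482/(2\pi)$. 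So a larger value somewhere on that omitted edge would wreck the bound, and your argument as written says nothing about it. The paper avoids the issue by proving a two-dimensional monotonicity statement — $\partial_{\theta_2}v(c,\theta_2)<0$ for all $c\in[0,c_0]$ and $\theta_2\in(\tfrac12-c,\tfrac12]$, which already covers $c=c_0$ — combined with $v(\theta_1,\tfrac12-\theta_1)$ increasing on $[0,c_0]$; this handles the interior of $A$ and its $\theta_1=c_0$ edge simultaneously, so the paper never needs to treat that edge as a separate boundary piece. For $B$ the paper uses $\partial_{\theta_1}v<0$ on $D$ to reduce to $\theta_1=\tfrac14$, matching your treatment. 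To complete your version, add the $\theta_1=c_0$ edge to the boundary list and verify $\partial_{\theta_2}v(c_0,\cdot)<0$ there (the paper's sign computation of $dv/d\theta_2$ gives this directly), so its one-variable maximum sits at the corner.
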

\begin{proof}
We consider the region $R^+=EFG\cup BCD$, and $R^-=E'F'G\cup B'C'D'$ as show in Figure \ref{figure:D'D''}.   We will show that when $(\theta_1,\theta_2)\in R^\pm $, then $v(\theta_1,
\theta_2)<\frac{3.374482}{2\pi }$. 
Since the function $v(\theta_1,\theta_2)$ is symmetric with respect to the $\theta_2$-axis, we only need to show that $v(\theta_1,\theta_2)<\frac{3.374482}{2\pi }$ for $(\theta_1,\theta_2)\in R^+$.

Given a constant $c\in (0,\frac{1}{2})$, as a function of $\theta_2$, we have
\begin{align}
 \frac{dv}{d\theta_2}(c,\theta_2)
 &=-3\log(2\sin(\pi \theta_2))-\log(2\sin(\pi (\theta_2+c)))\\\nonumber
 &- \log(2\sin(\pi (\theta_2-c)))+2\log(2\sin(2\pi \theta_2))\\\nonumber
 &=-\log\left(\frac{2^3\sin^3(\pi z)\sin(\pi(\theta_2+c))\sin(\pi(\theta_2-c))}{\sin^2(2\pi \theta_2)}\right)\\\nonumber
 &=-\log\left(\tan(\pi \theta_2)\left(\frac{\cos(2\pi c)-\cos(2\pi z)}{\cos(\pi \theta_2)}\right)\right).
\end{align}
For $0\leq c\leq c_0$ and $\frac{1}{2}-c<\theta_2\leq \frac{1}{2}$,  we have $\frac{1}{4}<\theta_2<\frac{1}{2}$. Hence
$
\tan(\pi \theta_2)>1.
$
Furthermore, as function of $\theta_2$, for $\frac{1}{2}-c<\theta_2\leq \frac{1}{2}$,
\begin{align}
   \frac{\cos(2\pi c)-\cos(2\pi \theta_2)}{\cos(\pi \theta_2)}&>\frac{\cos(2\pi c)-\cos(2\pi(\frac{1}{2}-c))}{\cos(\pi(\frac{1}{2}-c))}\\\nonumber
   &=\frac{2\cos(2\pi c)}{\sin(\pi c)}>\frac{2\cos(2\pi\cdot  c_0)}{\sin(\pi\cdot c_0)}>1. 
\end{align}
So we have 
\begin{align}
 \frac{dv}{d\theta_2}(c,\theta_2)<0.    
\end{align}
Hence 
\begin{align}
v(c,\theta_2)<v\left(c,\frac{1}{2}-c\right).    
\end{align}
It is easy to compute straightforward that 
\begin{align}
     v\left(c_0,\frac{1}{2}-c_0\right)<\frac{3.374482}{2\pi }.
\end{align}

On the other hand, as function of $\theta_1$,  it is easy to see 
\begin{align}
v(\theta_1,\frac{1}{2}-\theta_1)=-3\Lambda(\theta_1-\frac{1}{2})-\Lambda(2\theta_1-\frac{1}{2})+\Lambda(2\theta_1)    
\end{align}
is an increasing function for $0<\theta_1<c_0$.
Hence, for $\theta_1+\theta_2\geq \frac{1}{2}$ and $0\leq \theta_1\leq c_0$, we have
\begin{align}
 v(\theta_1,\theta_2)<v\left(\theta_1,\frac{1}{2}-\theta_1\right)<v\left(c_0,\frac{1}{2}-c_0\right)=\frac{3.3744816}{2\pi}<\frac{3.374482}{2\pi }.   
\end{align}
For $(\theta_1,\theta_2)\in D$ with $\theta_1\geq \frac{1}{4}$, it is easy to show that for a fixed $\theta_2\in [0,\frac{1}{2}]$,  $v(\theta_1,\theta_2)$ is an decreasing function as a function of $\theta_1\in (0,\frac{1}{2})$, hence 
\begin{align}
    v(\theta_1,\theta_2)\leq v\left(\frac{1}{4},\theta_2\right)\leq \max_{\theta_2\in [0,\frac{1}{2}]}v\left(\frac{1}{4},\theta_2\right)<\frac{3.374482}{2\pi }.
\end{align}

\end{proof}

\begin{remark}
We can take $\varepsilon>0$ small enough (for example $\varepsilon=0.00001$), and set 
\begin{align}
    D_\varepsilon=\{(\theta_1,\theta_2)\in D_0|d((\theta_1,\theta_2),\partial D_0)>\varepsilon \},
\end{align}
then the region (\ref{formula-domain}) can also be included in the region $D_{\varepsilon}$.
\end{remark}

Let $\zeta_{\mathbb{R}}(p,q)$ be the real part of the critical value $V^\pm(p,q;\pm\theta_1^0,\theta_2^0)$.

\begin{proposition}
When $(p,q)\in S$, there exists $\epsilon>0$, for $s\in \{1,...,|q|-1\}$  and $(\frac{n'}{N+\frac{1}{2}},\frac{i'}{N+\frac{1}{2}})$ is not in $D_0$, then 
\begin{align}
    |g_N(s,n',i')|<O\left(e^{(N+\frac{1}{2})\left(\zeta_{\mathbb{R}}(p,q)-\epsilon\right)}\right)
\end{align}
for $N$ large enough. 
\end{proposition}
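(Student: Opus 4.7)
The plan is to reduce the bound to the explicit inequality in Lemma \ref{lemma-v} combined with the defining property of $S$. Inspecting the definition of $V_N(p,q,s,\theta_1,\theta_2)$, the only $s$-dependent terms are $I(s)$ and $K(s)$, both appearing inside the $\pi\sqrt{-1}(\cdot)$ prefix and therefore contributing only to the imaginary part; the remaining $s$-dependence in $g_N$ sits in the bounded factors $(-1)^{P(s)}$ and $\sin\!\bigl(\tfrac{n'\pi}{(N+\frac12)q}-J(s)\pi\bigr)$. Hence by (\ref{formula-gN}),
\[
|g_N(s,n',i')|=\exp\!\left((N+\tfrac12)\,v_N\!\left(\tfrac{n'}{N+\frac12},\tfrac{i'}{N+\frac12}\right)+O(\log N)\right)
\]
uniformly in $s$, and it suffices to show that $v_N(\theta_1^N,\theta_2^N)\leq \zeta_{\mathbb{R}}(p,q)-2\epsilon$ whenever $(\theta_1^N,\theta_2^N):=\bigl(\tfrac{n'}{N+1/2},\tfrac{i'}{N+1/2}\bigr)\notin D_0$.

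I would split the estimate of $v_N$ by the four cases in the piecewise definition of $V_N$. In case (1), where $(\theta_1^N,\theta_2^N)\in D\setminus D_0$, Lemma \ref{lemma-v} directly gives $v(\theta_1^N,\theta_2^N)\leq \tfrac{3.374482}{2\pi}$. In cases (2)--(4), where $\theta_2\geq \tfrac12$, the elementary inequality $v\leq 0$ already observed in the text before Lemma \ref{lemma-v} supplies an even stronger bound. By part (3) of Lemma \ref{lemma-varphixi3}, $v_N\to v$ uniformly on compact subsets of the interior of the admissible domain, so the same bound transfers to $v_N$ up to an $O(1/N)$ loss away from $\partial D$. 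In the $O(1/N)$-thin boundary layer of $D$, where the uniform convergence fails because of the logarithmic singularities of $\varphi'_N$, Lemma \ref{lemma-Li2} provides a linear-in-$X$ comparison for $\mathrm{Re}\bigl(\tfrac{1}{2\pi\sqrt{-1}}\mathrm{Li}_2(e^{2\pi\sqrt{-1}(\theta+X\sqrt{-1})})\bigr)$ that absorbs the boundary loss into the $O(\log N)$ error; the $\varepsilon$-shrinkage $D_\varepsilon$ recorded in the remark after Lemma \ref{lemma-v} makes this rigorous.

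Finally, by the definition of $S$ in (\ref{set-S}), the constraints cutting out $S$ are precisely those needed to guarantee $\zeta_{\mathbb{R}}(p,q)>\tfrac{3.374482}{2\pi}$. Choosing $\epsilon>0$ with $2\epsilon<\zeta_{\mathbb{R}}(p,q)-\tfrac{3.374482}{2\pi}$ and combining with the bounds above yields
\[
|g_N(s,n',i')|\leq O\!\left(e^{(N+\frac12)(\zeta_{\mathbb{R}}(p,q)-\epsilon)}\right)
\]
for all $s\in\{1,\ldots,|q|-1\}$ and all $(n',i')$ whose normalised ratio lies outside $D_0$, uniformly for $N$ large.

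The main obstacle I anticipate is controlling the boundary layer of $D$ uniformly in $N$: the limiting function $v$ extends continuously but not smoothly to $\partial D$, and the uniform convergence $v_N\to v$ from Lemma \ref{lemma-varphixi3} breaks down there. The linear envelope provided by Lemma \ref{lemma-Li2} is the decisive ingredient that turns this awkward boundary behaviour into a clean exponential estimate, so that the strict gap between $\zeta_{\mathbb{R}}(p,q)$ and $\tfrac{3.374482}{2\pi}$ encoded in the definition of $S$ survives the passage from $v$ to $v_N$.
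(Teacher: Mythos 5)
Your proposal takes essentially the same route as the paper: reduce $|g_N|$ to the exponential of $(N+\tfrac12)v_N$ via formula (\ref{formula-gN}), invoke Lemma \ref{lemma-v} for the bound $v\leq \tfrac{3.374482}{2\pi}$ on $D\setminus D_0$, combine with the volume estimate (Theorem \ref{theorem-volestimate}, Corollary \ref{corollary-pq}) guaranteeing $\zeta_{\mathbb R}(p,q)>\tfrac{3.374482}{2\pi}$ for $(p,q)\in S$, and absorb the rest into the uniform convergence $v_N\to v$. You are somewhat more careful than the paper in two places, and both points are worth recording. First, you explicitly treat the region $\tfrac12\leq\theta_2<1$ (cases (2)--(4)), where the lattice points also range, using the observation $v\leq 0$ there; the paper's proof only mentions $D\setminus D_0$ and leaves that part implicit in the sentence preceding the definition of $D$. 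Second, you flag the boundary layer of $D$ as a potential obstacle. That concern is, however, not actually needed here: unlike the complex potential $V_N$, whose derivative involves $\varphi_N'$ with logarithmic singularities at $\partial D$, the function $v_N$ is a finite sum of values of the Lobachevsky function $\Lambda$ (equation defining $v_N$), and $\Lambda$ is continuous and periodic on $\mathbb R$, hence uniformly continuous; since the shifts in the arguments of $v_N$ are $O(1/N)$, the convergence $v_N\to v$ is uniform on all of $\mathbb R^2$, with no special treatment required near $\partial D$, so the invocation of Lemma \ref{lemma-Li2} for the boundary layer is superfluous. With that correction, your argument is a clean, slightly more complete write-up of the paper's proof.
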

\begin{proof}
Since $v_N(\theta_1,\theta_{2})$ is uniformly convergent to $v(\theta_1,\theta_2)$ on $D$,  for $(\frac{n'}{N+\frac{1}{2}},\frac{i'}{N+\frac{1}{2}})\in D\setminus D_0$,  by Lemma \ref{lemma-v}, we obtain that 
\begin{align}
    |v_{N}\left(\frac{n'}{N+\frac{1}{2}},\frac{i'}{N+\frac{1}{2}}\right)|\leq \frac{3.374482}{2\pi },
\end{align}
when $N$ is large enough. 

By Theorem \ref{theorem-volestimate}  and Corollary \ref{corollary-pq}, we have
\begin{align}
\frac{3.374482}{2\pi}<\frac{Vol\left(W\left(p,q\right)\right)}{2\pi}=\zeta_{\mathbb{R}}(p,q).    
\end{align}

By formula (\ref{formula-gN}), there exists small $\epsilon>0$, such that 
\begin{align}
|g_{N}(s,n',i')|<O\left(e^{(N+\frac{1}{2})\left(\zeta_{\mathbb{R}}(p,q)-\epsilon\right)}\right)    
\end{align}
\end{proof}

Now we construct a smooth bump function $\psi$ on $\mathbb{R}^2$ such that 
$\psi(\theta_1,\theta_2)=1$ on $(\theta_1,\theta_2)\in D_{\varepsilon}$,  $0<\psi(\theta_1,\theta_2)<1$ on $(\theta_1,\theta_2)\in D_0\setminus D_{\varepsilon}$, $\psi(\theta_1,\theta_2)=0$ for $(\theta_1,\theta_2)\notin D_0$.  Let 
 \begin{align}
     h_N(s,n',i')=\psi\left(\frac{n'}{N+\frac{1}{2}},\frac{i'}{N+\frac{1}{2}}\right)g_{N}(s,n',i'),
 \end{align}
then
\begin{align}
    J_{N}(W(p,q);t)
    &=-\kappa_N\frac{(-1)^{\frac{3N}{2}+\frac{1}{4}+b_l}\sqrt{2N+1}}{\sin\frac{\pi }{2N+1}}\sum_{s=0}^{|q|-1}\sum_{(n',i')\in (\mathbb{Z}+\frac{1}{2})^2}h_N(s,n',i')\\\nonumber&+O\left(e^{(N+\frac{1}{2})\left(\zeta_{\mathbb{R}}(p,q)-\epsilon\right)}\right).
\end{align}   
Recall the Poisson summation formula 
\begin{align}
    \sum_{(n,i)\in \mathbb{Z}^2}f(n,i)=\sum_{(m_1,m_2)\in \mathbb{Z}^2}\hat{f}(m_1,m_2)
\end{align}
where 
\begin{align}
    \hat{f}(m_1,m_2)=\int_{\mathbb{R}^2}f(u,v)e^{2\pi \sqrt{-1}m_1u+2\pi \sqrt{-1}m_2v}dudv.
\end{align}

Let $n=n'-\frac{1}{2}$ and $i=i'-\frac{1}{2}$, then 
\begin{align}
    \sum_{(n',i')\in (\mathbb{Z}+\frac{1}{2})^2}h_N(s,n',i')&=\sum_{(n,i)\in \mathbb{Z}^2}h_{N}(s,n+\frac{1}{2},i+\frac{1}{2})\\\nonumber
    &=\sum_{(m_1,m_2)\in \mathbb{Z}^2}\int_{\mathbb{R}^2}h_N(s,u+\frac{1}{2},v+\frac{1}{2})e^{2\pi \sqrt{-1}(m_1u+m_2v)}dudv.
\end{align}
Since $\theta_1=\frac{n'}{N+\frac{1}{2}}=\frac{1}{N+\frac{1}{2}}(n+\frac{1}{2})$, $\theta_2=\frac{i'}{N+\frac{1}{2}}=\frac{1}{N+\frac{1}{2}}(i+\frac{1}{2})$, we obtain 
\begin{align}
    &\int_{\mathbb{R}^2}h_N(s,u+\frac{1}{2},v+\frac{1}{2})e^{2\pi \sqrt{-1}(m_1u+m_2v)}dudv\\\nonumber
    &=(-1)^{m_1+m_2}\left(N+\frac{1}{2}\right)^2\\\nonumber
    &\cdot\int_{\mathbb{R}^2}h_N\left(s,\left(N+\frac{1}{2}\right)\theta_1,\left(N+\frac{1}{2}\right)\theta_2\right)e^{2\pi \sqrt{-1}\left((N+\frac{1}{2})m_1\theta_1+(N+\frac{1}{2})m_2\theta_2\right)}d\theta_1 d\theta_2\\\nonumber
    &=(-1)^{m_1+m_2}\left(N+\frac{1}{2}\right)^2\int_{\mathbb{R}^2}\psi(\theta_1,\theta_2)(-1)^{P(s)}\sin\left(\frac{\theta_1\pi }{q}-J(s)\pi\right)\\\nonumber
    &\cdot\exp\left(\left(N+\frac{1}{2}\right)\left(V_N(s,\theta_1,
    \theta_2)+2\pi \sqrt{-1}m_1\theta_1+2\pi \sqrt{-1}m_2\theta_2)\right)\right)d\theta_1 d\theta_2.
\end{align}

In conclusion, we have
\begin{proposition}  \label{prop-fouriercoeff}
There exists $\epsilon>0$, such that
\begin{align}
J_{N}(W(p,q);t)&=  -\kappa_N\frac{(-1)^{\frac{3N}{2}+\frac{1}{4}+b_l}\sqrt{2N+1}}{\sin\frac{\pi }{2N+1}}\sum_{s=0}^{|q|-1}  \sum_{(m_1,m_2)\in \mathbb{Z}^2}\hat{h}_N(s,m_1,m_2)\\\nonumber
&+O\left(e^{(N+\frac{1}{2})\left(\zeta_{\mathbb{R}}(p,q))-\epsilon\right)}\right),
\end{align}
with
 \begin{align}
    \hat{h}_N(s,m_1,m_2)&=(-1)^{P(s)}(-1)^{m_1+m_2}(N+\frac{1}{2})^2\\\nonumber
    &\cdot\int_{D_{0}}\psi(\theta_1,\theta_2)\sin\left(\frac{\theta_1\pi }{q}-J(s)\pi\right) e^{(N+\frac{1}{2})V_N(p,q,s,\theta_1,\theta_2;m_1,m_2)}d\theta_1d\theta_2.
\end{align}   
where $\kappa_N$ is given by (\ref{formula-KappaN}) and
\begin{align}
    V_{N}(p,q,s,\theta_1,\theta_2;m_1,m_2)=V_{N}(p,q,s,\theta_1,\theta_2)+2m_1\pi \sqrt{-1} \theta_1+2m_2\pi\sqrt{-1} \theta_2,
\end{align}
$V_{N}(p,q,s,\theta_1,\theta_2)$ is given by formula (\ref{formula-VN(1)}) since we focus on the region $D_0$. 
\end{proposition}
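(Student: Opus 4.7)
The plan is to assemble the pieces already laid out in the excerpt: first restrict the triple sum over $(s,n',i')$ to those triples whose rescaling lies in $D_0$, at the cost of an exponentially small error, and then apply the Poisson summation formula to convert the restricted lattice sum into a sum of Fourier integrals over $\mathbb{R}^2$.

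First I would begin with the formula for $J_N(W(p,q);t)$ given in Proposition \ref{proposition-potential} and split the inner sum $\sum_{n',i'}$ into two pieces according to whether $\bigl(\tfrac{n'}{N+1/2},\tfrac{i'}{N+1/2}\bigr)$ lies in $D_0$ or not. By the Proposition preceding this one, all terms with $\bigl(\tfrac{n'}{N+1/2},\tfrac{i'}{N+1/2}\bigr)\notin D_0$ contribute at most $O\bigl(e^{(N+\frac{1}{2})(\zeta_{\mathbb{R}}(p,q)-\epsilon)}\bigr)$ after accounting for the polynomial prefactors and the $|q|$ values of $s$. The bump function $\psi$, chosen to equal $1$ on $D_{\varepsilon}$, to vanish off $D_0$, and to take values in $[0,1]$ on $D_0\setminus D_{\varepsilon}$, then allows me to replace $g_N(s,n',i')$ by $h_N(s,n',i')=\psi\bigl(\tfrac{n'}{N+1/2},\tfrac{i'}{N+1/2}\bigr)g_N(s,n',i')$ everywhere, since the difference consists entirely of terms controlled by the same exponential bound.

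Next I would perform the Poisson summation step. After the substitution $n=n'-\tfrac12$, $i=i'-\tfrac12$, the sum runs over $(n,i)\in\mathbb{Z}^2$ and the summand is a Schwartz-class function of $(n,i)$ because $\psi$ is smooth and compactly supported in $(\theta_1,\theta_2)$. Applying Poisson summation term by term in $s$ gives
\begin{align}
\sum_{(n',i')\in(\mathbb{Z}+\tfrac12)^2}h_N(s,n',i')
=\sum_{(m_1,m_2)\in\mathbb{Z}^2}\int_{\mathbb{R}^2}h_N\bigl(s,u+\tfrac12,v+\tfrac12\bigr)e^{2\pi\sqrt{-1}(m_1u+m_2v)}\,du\,dv,
\end{align}
and the change of variables $\theta_1=\tfrac{1}{N+1/2}(u+\tfrac12)$, $\theta_2=\tfrac{1}{N+1/2}(v+\tfrac12)$ produces the factor $(N+\tfrac12)^2$, the sign $(-1)^{m_1+m_2}$ coming from the half-integer shifts, and confines the integrand to the support of $\psi$, namely $D_0$. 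Substituting the explicit form of $g_N$ from Proposition \ref{proposition-potential} then yields exactly the expression for $\hat h_N(s,m_1,m_2)$ claimed.

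The only subtlety to watch for is that the Poisson summation formula requires decay of $h_N$ (which is immediate since $\psi$ is compactly supported), and that the error estimate from dropping the $(1-\psi)$ part is truly $O\bigl(e^{(N+\frac{1}{2})(\zeta_{\mathbb{R}}(p,q)-\epsilon)}\bigr)$ after being multiplied by the prefactor $\kappa_N\tfrac{\sqrt{2N+1}}{\sin(\pi/(2N+1))}$, which is of polynomial order in $N$ and hence absorbed into the exponentially smaller remainder by slightly shrinking $\epsilon$. The rest is a direct computation assembling these pieces into the stated formula.
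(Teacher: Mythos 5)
Your proposal is correct and follows essentially the same route the paper itself takes: restrict the lattice sum over $(n',i')$ to those points whose rescalings land in $D_0$ (using the preceding proposition for the exponentially small remainder), insert the bump function $\psi$ to smooth the cutoff, apply Poisson summation to the resulting compactly supported smooth summand, and then rescale $u=(N+\tfrac12)\theta_1-\tfrac12$, $v=(N+\tfrac12)\theta_2-\tfrac12$ to produce the factor $(N+\tfrac12)^2$ and the sign $(-1)^{m_1+m_2}$. Your observation that the polynomial prefactor $\kappa_N\sqrt{2N+1}/\sin\frac{\pi}{2N+1}$ is harmless (being absorbed by a slight decrease of $\epsilon$) is exactly the implicit bookkeeping in the paper, so the argument is complete.
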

We set
\begin{align} \label{formula-Vpqs}
    &V(p,q,s,\theta_1,\theta_2;m_1,m_2)\\\nonumber
    &=\lim_{N\rightarrow \infty}V_{N}(p,q,s,\theta_1,\theta_2;m_1,m_2)\\\nonumber
    &=\pi \sqrt{-1}\left(\frac{p}{2q}\theta_1^2-I(s)\frac{\theta_1}{q}-2\theta_1\theta_2-2\theta_2+2\theta_2^2+2m_1\theta_1+2m_2\theta_2-\frac{K(s)}{2}+\frac{1}{2}\right)\\\nonumber
    &+\frac{1}{2\pi\sqrt{-1}}\left(\text{Li}_2(e^{2\pi \sqrt{-1}(\theta_2-\theta_1)})-\text{Li}_2(e^{-2\pi \sqrt{-1}(\theta_2+\theta_1)})-2\text{Li}_2(e^{-2\pi \sqrt{-1}\theta_2})\right. \\\nonumber &\left.+\text{Li}_2(e^{2\pi\sqrt{-1}\theta_2})+\text{Li}_2(e^{-4\pi\sqrt{-1}\theta_2})\right).
\end{align}

\section{The geometry of the critical point} \label{Section-Critical} 
The goal of this section is to interpret the geometric meaning of the critical points and the critical values for the functions  $V^\pm(p,q;\theta_1,\theta_2)$ defined as follows, see formula (\ref{formula-V+-0}). We compute the critical point equations in Section \ref{subsection-critical}. Then, following the work \cite{NR90}, we present the hyperbolic gluing equations and the Dehn filling equations for the geometry of $W(p,q)$ in section \ref{subsection-geomequation}. We show the equivalence of the critical point equation and the geometric equation.  Finally, we prove Proposition \ref{prop-crit=volume} which is the main result of this section. 

\begin{align}
    &V_{N}(s^\pm,\theta_1,\theta_2;m_1^\pm,1)\\\nonumber
    &=\pi \sqrt{-1}\left(\frac{p}{2q}\theta_1^2+\left(1\mp \frac{1}{q}\right)\theta_1-2\theta_1\theta_2+2\theta_2^2+\frac{1}{2}-\frac{K(s^\pm)}{2}\right.\\\nonumber
    &\left.-\frac{\frac{1}{2}}{N+\frac{1}{2}}-\frac{\theta_1}{N+\frac{1}{2}}-\frac{\theta_2}{N+\frac{1}{2}}-\frac{3}{2(N+\frac{1}{2})^2}\right)\\\nonumber
    &+\frac{1}{N+\frac{1}{2}}\left(\log2-\varphi_N\left(1-\theta_2-\frac{1}{N+\frac{1}{2}}\right)+\varphi_N\left(\theta_2 \right)-\varphi_N\left(1-\theta_1-\theta_2-\frac{\frac{1}{2}}{N+\frac{1}{2}}\right)\right.\\\nonumber
    &\left.+\varphi_N\left(\theta_2-\theta_1+\frac{\frac{1}{2}}{N+\frac{1}{2}}\right)-\varphi_N\left(1-\theta_2-\frac{\frac{1}{2}}{N+\frac{1}{2}}\right)+\varphi_N\left(1-2\theta_2-\frac{\frac{1}{2}}{N+\frac{1}{2}}\right)\right),
\end{align}
\begin{align}
&V(s^\pm,\theta_1,\theta_2;m_1^\pm,1)\\\nonumber
&=\pi \sqrt{-1}\left(\frac{p}{2q}\theta_1^2+\left(1\mp \frac{1}{q}\right)\theta_1-2\theta_1\theta_2+2\theta_2^2-\frac{K(s^\pm)}{2}+\frac{1}{2}\right)\\\nonumber &+\frac{1}{2\pi\sqrt{-1}}\left(\text{Li}_2(e^{2\pi \sqrt{-1}(\theta_2-\theta_1)})-\text{Li}_2(e^{-2\pi \sqrt{-1}(\theta_2+\theta_1)})-2\text{Li}_2(e^{-2\pi \sqrt{-1}\theta_2})\right.\\\nonumber &\left.+\text{Li}_2(e^{2\pi\sqrt{-1}\theta_2})+\text{Li}_2(e^{-4\pi\sqrt{-1}\theta_2})\right).    
\end{align}
We define
\begin{align} \label{formula-V+-0}
V^\pm(p,q;\theta_1,\theta_2)&=V(p,q,s^\pm,\theta_1,\theta_2;m_1^\pm,1)+\left(\frac{K(s^\pm)}{2}+\frac{p'}{2q}\right)\pi\sqrt{-1} \\\nonumber
&=\pi \sqrt{-1}\left(\frac{p}{2q}\theta_1^2+\left(1\mp \frac{1}{q}\right)\theta_1-2\theta_1\theta_2+2\theta_2^2+\frac{1}{2}+\frac{p'}{2q}\right)\\\nonumber &+\frac{1}{2\pi\sqrt{-1}}\left(\text{Li}_2(e^{2\pi \sqrt{-1}(\theta_2-\theta_1)})-\text{Li}_2(e^{-2\pi \sqrt{-1}(\theta_2+\theta_1)})-2\text{Li}_2(e^{-2\pi \sqrt{-1}\theta_2})\right.\\\nonumber &\left.+\text{Li}_2(e^{2\pi\sqrt{-1}\theta_2})+\text{Li}_2(e^{-4\pi\sqrt{-1}\theta_2})\right).    
\end{align}

By Lemma \ref{lemma-varphixi2}, we have
\begin{align}
\varphi_{N}(1-\theta)=2\pi \sqrt{-1}\left(-\frac{2N+1}{4}\left(\theta^2-\theta+\frac{1}{6}\right)+\frac{1}{12(2N+1)}\right)-\varphi_{N}(\theta).
\end{align}
Then we obtain 
\begin{align}
&V_{N}(s^\pm,m_1^\pm,1,\theta_1,\theta_2)\\\nonumber
&=\pi\sqrt{-1}\left(\frac{5}{6}+\theta_2^2-\theta_2+\left(\frac{p}{2q}+1\right)\theta_1^2\mp \frac{\theta_1}{q}-\frac{K(s^+)}{2}-\frac{2}{N+\frac{1}{2}}+\frac{\theta_2}{N+\frac{1}{2}}-\frac{15}{2(N+\frac{1}{2})^2}\right)\\\nonumber
&+\frac{1}{N+\frac{1}{2}}\left(\log 2+\varphi_{N}\left(\theta_1+\theta_2+\frac{\frac{1}{2}}{N+\frac{1}{2}}\right)+\varphi_{N}\left(-\theta_1+\theta_2+\frac{\frac{1}{2}}{N+\frac{1}{2}}\right)\right.\\\nonumber
&\left. \varphi_N\left(\theta_2\right)+\varphi_{N}\left(\theta_2+\frac{\frac{1}{2}}{N+\frac{1}{2}}\right)+\varphi_{N}\left(\theta_2+\frac{1}{N+\frac{1}{2}}\right)-\varphi_{N}\left(2\theta_2+\frac{\frac{1}{2}}{N+\frac{1}{2}}\right)\right),
\end{align}
which implies that
\begin{align} \label{formula-V+-}
&V^\pm(p,q;\theta_1,\theta_2)\\\nonumber
&=V(s^{\pm},m_1^{\pm},1,\theta_1,\theta_2)+\left(\frac{K(s^{\pm})}{2}+\frac{p'}{2q}\right)\pi\sqrt{-1}\\\nonumber
&=\pi\sqrt{-1}\left(\left(\frac{p}{2q}+1\right)\theta_1^2\mp\frac{\theta_1}{q}+\theta_2^2-\theta_2+\frac{5}{6}+\frac{p'}{2q}\right)\\\nonumber
&+\frac{1}{2\pi\sqrt{-1}}\left(\text{Li}_2(e^{2\pi\sqrt{-1}(\theta_1+\theta_2)})+\text{Li}_2(e^{2\pi\sqrt{-1}(-\theta_1+\theta_2)})+3\text{Li}_2(e^{2\pi\sqrt{-1}\theta_2})-\text{Li}_2(e^{4\pi\sqrt{-1}\theta_2})\right).
\end{align}
Obviously, we have the following symmetry 
\begin{align} \label{formula-Vsym}
V^+(p,q;\theta_1,\theta_2)=V^{-}(p,q;-\theta_1,\theta_2).    
\end{align}

\subsection{Critical point equations} \label{subsection-critical}
Now, we consider the critical point equations for functions $V^{\pm}(p,q;\theta_1,\theta_2)$ that are given by

\begin{align} \label{equation-crit1}
    V^\pm_{\theta_1}&=\pi \sqrt{-1}\left(2\left(\frac{p}{2q}+1\right)\theta_1\mp \frac{1}{q}\right)+\log\left(1-e^{2\pi \sqrt{-1}(\theta_2-\theta_1)}\right)\\\nonumber
    &-\log\left(1-e^{2\pi \sqrt{-1}(\theta_2+\theta_1)}\right)=0,
\end{align}
\begin{align} \label{equation-crit2}
    V^\pm_{\theta_2}&=\pi \sqrt{-1}\left(2\theta_2-1\right)-\log\left(1-e^{2\pi \sqrt{-1}(\theta_2-\theta_1)}\right)-\log\left(1-e^{2\pi \sqrt{-1}(\theta_2+\theta_1)}\right)\\\nonumber
    &-3\log(1-e^{2\pi \sqrt{-1}\theta_2})+2\log\left(1-e^{4\pi\sqrt{-1}\theta_2}\right)=0.
\end{align}

We let $z_1=e^{2\pi\sqrt{-1}\theta_1}$, $z_2=e^{2\pi\sqrt{-1}\theta_2}$. Then we obtain

\begin{equation}  
\left\{ \begin{aligned}
         & z_1^p(z_1-z_2)^{2q}=(z_1z_2-1)^{2q}, \\
          &      (z_1z_2-1)(z_1-z_2)(1-z_2)=z_1z_2(z_2+1)^2,
                          \end{aligned} \right.
                          \end{equation}

which implies
\begin{equation}  \label{equation-critform}
\left\{ \begin{aligned}
         &z_2^2\left(z_1+\frac{1}{z_1}+3\right)-(z_1+\frac{1}{z_1})z_2+1=0, \\
          &       z_1^p\left(\frac{z_1-z_2}{z_1z_2-1}\right)^{2q}=1.
                          \end{aligned} \right.
                          \end{equation}

\begin{example} \label{example1}
When $p=1$, $q=-2$, the critical point equations $V^+_{\theta_1}=0, V^+_{\theta_2}=0$ has a unique solution $(\theta_1^0,\theta_2^0)$  lies in region $D_0$, and
the equations $V^-_{\theta_1}=0, V^-_{\theta_2}=0$ has a unique solution $(-\theta_1^0,\theta_2^0)$  lies in region $D_0$, where
\begin{align} \label{formula-theta}
\theta_1^0&=-0.1038205182+0.1790172070\sqrt{-1} \\\nonumber
\theta_2^0&=0.1308066000+0.09218763785\sqrt{-1}. 
\end{align}
\begin{align}
z_1^0&=0.2580453976-0.19711501\sqrt{-1}, \\\nonumber
z_2^0&=0.3814962624+0.4104006092\sqrt{-1}.   
\end{align}
\begin{align}
V^+(\theta_1^0,\theta_2^0)=V^-(-\theta_1^0,\theta_2^0)=2.828122086+6.845476024\sqrt{-1}.    
\end{align}
\end{example}

\subsection{Geometric equation} \label{subsection-geomequation}
Let $W$ be the Whitehead link as shown in Figure \ref{figureWL}, 
Let $W((p_1,q_1),(p_2,q_2))$ be the hyperbolic 3-manifold obtained by doing $(\frac{p_1}{q_1},\frac{p_2}{q_2})$-Dehn surgery along the two components of $W$. 
Following the work \cite{NR90}, the edge gluing equations and Dehn filling equations for $M$ are given by
\begin{equation} \label{equation-gluing} 
\left\{ \begin{aligned}
         &\log w+\log x+\log y+\log z = -2\pi\sqrt{-1} \\
          &        \log(1-w)+\log(1-x)-\log(1-y)-\log(1-z)=0
                          \end{aligned} \right.
                          \end{equation}
and 
\begin{equation}   \label{equation-dehnfilling}
\left\{ \begin{aligned}
         & p_1u_1+q_1v_1 = \pm 2\pi\sqrt{-1} \\
          &  p_2u_2+q_2v_2 = \pm 2\pi\sqrt{-1}      
                          \end{aligned} \right.
                          \end{equation}
where 
\begin{align}
    u_1&=\log (w-1)+\log x+\log y-\log(y-1)+\pi\sqrt{-1},\\\nonumber
    u_2&=\log (w-1)+\log x+\log z-\log(z-1)+\pi\sqrt{-1},\\\nonumber
    v_1&=2\log x +2\log y+2\pi\sqrt{-1},\\\nonumber
    v_2&=2\log x +2\log z+2\pi\sqrt{-1}.
\end{align}
\begin{remark} \label{remark-conjugate}
    Note that the above equations can be viewed as the conjugated version of equations (6.1a), (6.1b), (6.3), (6.2a), (6.2b),(6.2c), (6.2d) given in \cite{NR90}. In other words, if $(w_0,x_0,y_0,z_0)$ is a solution of the above equations (\ref{equation-gluing}) and (\ref{equation-dehnfilling}), then $(\bar{w}_0,\bar{x}_0,\bar{y}_0,\bar{z}_0)$ is a solution of the corresponding equations in \cite{NR90}.   
\end{remark}

In particular, the geometric equations for  $W(p,q)$ can be written as follows
\begin{equation} \label{equation-geom1} 
\left\{ \begin{aligned}
         &\log w+\log x+\log y+\log z = -2\pi\sqrt{-1}, \\
          &        \log(1-w)+\log(1-x)-\log(1-y)-\log(1-z)=0,\\
          &2\log x +2\log y+2\pi\sqrt{-1}=0, \\
          &pu+qv=\pm 2\pi\sqrt{-1},
                          \end{aligned} \right.
                          \end{equation}
where 
\begin{align}
    u&=\log (w-1)+\log x+\log z-\log(z-1)+\pi\sqrt{-1},\\\nonumber
    v&=2\log x +2\log z+2\pi\sqrt{-1}.
\end{align}

From the first three equations of (\ref{equation-geom1}), we obtain 
\begin{align}
\left\{\begin{aligned}
    &y=-\frac{1}{x}, \\
    &w=-\frac{1}{z}, \\
    &z=x.
    \end{aligned}\right.
\end{align}
It turns out that 
\begin{align} \label{equation-u}
    u&=\log z+\log(z+1)-\log(z-1),\\ \label{equation-v}
v&=4\log \left(z\right)+2\pi \sqrt{-1}.
\end{align}

Then, the geometric equations (\ref{equation-geom1}) for $W\left(p,q\right)$ are reduced to one single equation
\begin{align} \label{equation:geometry-one}
p(\log z+\log (z+1)-\log(z-1))+q(4\log z+2\pi\sqrt{-1})=2\pi\sqrt{-1}.
\end{align}
Here we choose the equation with the left hand  $+2\pi\sqrt{-1}$, otherwise, we only need to change $(p,q)\rightarrow (-p,-q)$. 

Obviously, the equation \ref{equation:geometry-one} gives
\begin{align} \label{equation-z4p}
   z^{4q}\left(\frac{z(z+1)}{z-1}\right)^p=1.
\end{align}
\begin{example}
For $p=1$, $q=-2$,  the equation (\ref{equation:geometry-one}) has a unique solution 
\begin{align}
z^0=-0.6623589786-0.5622795125\sqrt{-1}.
\end{align}
\end{example}

\subsection{Comparison of two equations}

\begin{proposition}
The equation (\ref{equation-critform}) and the equation (\ref{equation-z4p}) are equivalent. 
\end{proposition}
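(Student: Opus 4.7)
The plan is to exhibit a change of variables that directly transforms one equation system into the other. The guiding observation is that the second critical equation $z_1^p\bigl(\tfrac{z_1-z_2}{z_1z_2-1}\bigr)^{2q}=1$ and the geometric equation $z^{4q}\bigl(\tfrac{z(z+1)}{z-1}\bigr)^p=1$ have the same $p$-exponent on a factor, and the exponent on the other factor differs by a factor of two. This suggests introducing $z$ via the Mobius substitution
\begin{align*}
z^2 \;=\; \frac{z_1-z_2}{z_1z_2-1}, \qquad\text{equivalently}\qquad z_2 \;=\; \frac{z_1+z^2}{z^2 z_1+1}.
\end{align*}

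The first step will be to substitute this expression for $z_2$ into the first critical equation $(z_1z_2-1)(z_1-z_2)(1-z_2)=z_1z_2(z_2+1)^2$, clear the common denominator $(z^2z_1+1)^3$, and factor the resulting polynomial identity. Using the simple identities $z_1z_2-1=(z_1^2-1)/(z^2z_1+1)$, $1-z_2=(z_1-1)(z^2-1)/(z^2z_1+1)$, and $z_2+1=(z_1+1)(z^2+1)/(z^2z_1+1)$, one finds after cancelling a common factor of $(z_1+1)^2$ that the equation reduces to a cubic in $z_1$ whose factorization is
\begin{align*}
\bigl(z_1(z-1)-z(z+1)\bigr)\bigl(z_1(z+1)+z(z-1)\bigr)(z^2z_1+1)=0.
\end{align*}
The factor $z^2z_1+1$ is extraneous (it corresponds to the pole of the substitution), and the two remaining linear factors are interchanged by $z\mapsto -z$. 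Since the target equation (\ref{equation-z4p}) is invariant under $z\mapsto -z$ (the exponent $4q$ is even, and the second factor transforms into its negative), we may restrict to the branch $z_1=\frac{z(z+1)}{z-1}$ without loss of generality.

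The second step is then immediate. Given $z_1=\frac{z(z+1)}{z-1}$ and $\bigl(\frac{z_1-z_2}{z_1z_2-1}\bigr)^{2q}=z^{4q}$, the second critical equation in (\ref{equation-critform}) reads
\begin{align*}
\left(\frac{z(z+1)}{z-1}\right)^{\!p} z^{4q}\;=\;1,
\end{align*}
which is precisely (\ref{equation-z4p}). The converse direction is the same computation run backwards: starting from a solution $z$ of (\ref{equation-z4p}), define $z_1=\frac{z(z+1)}{z-1}$ and $z_2=\frac{z_1+z^2}{z^2z_1+1}$, and both equations in (\ref{equation-critform}) are satisfied.

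The main obstacle will be verifying the polynomial factorization in step one. The naive expansion produces a degree-five polynomial in $z_1$ (degree three after dividing by $(z_1+1)^2$), and one must confirm that its three roots are exactly $z(z+1)/(z-1)$, $-z(z-1)/(z+1)$, and $-1/z^2$; this can be checked efficiently using Vieta's formulas (the product of the roots equals one, and two of the roots have product $-z^2$) rather than by fully multiplying out. A sanity check against the numerical values in Example \ref{example1} confirms the substitution: with $z^0=-0.6624-0.5623\sqrt{-1}$, one verifies $z^0(z^0+1)/(z^0-1)\approx 0.2581-0.1971\sqrt{-1}=z_1^0$ to the displayed precision.
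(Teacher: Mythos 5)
Your approach is substantively the same as the paper's: exhibit an explicit rational change of variables between the two systems and verify directly. The extra derivation via the cubic factorization is a nice piece of motivation, and the factorization you state is correct, as is the observation that the factor $z^2z_1+1$ is extraneous (it is the pole of $z_2 = \frac{z_1+z^2}{z^2z_1+1}$).

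However, your justification for restricting to the branch $z_1 = \frac{z(z+1)}{z-1}$ is incorrect. You assert that equation (\ref{equation-z4p}) is invariant under $z\mapsto -z$ because ``the second factor transforms into its negative.'' In fact, under $z\mapsto -z$ the factor $\frac{z(z+1)}{z-1}$ becomes $\frac{z(1-z)}{z+1}$, which is the other linear branch — as you yourself noted — and this is \emph{not} equal to $-\frac{z(z+1)}{z-1}$ (equality would force $(z-1)^2+(z+1)^2=0$). Consequently (\ref{equation-z4p}) is not invariant under $z\mapsto -z$, and moreover the invariance of (\ref{equation-z4p}) is not actually what is needed here. The relevant (and correct) justification is simpler: the substitution defining $z_2$ depends on $z$ only through $z^2$, and since the two linear branches are interchanged by $z\mapsto -z$, one may always replace $z$ by $-z$ — which leaves $z^2$ and hence $z_2$ unchanged — to land on the branch $z_1 = \frac{z(z+1)}{z-1}$, after which (\ref{equation-z4p}) follows from the second equation of (\ref{equation-critform}) exactly as you compute. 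The paper sidesteps this sign ambiguity entirely by specifying the inverse map explicitly as $z^0=\frac{z_2^0(z_1^0+1)}{z_1^0z_2^0-1}$ and checking (using the first equation of (\ref{equation-critform})) that this particular square root of $\frac{z_1^0-z_2^0}{z_1^0z_2^0-1}$ satisfies $\frac{z^0(z^0+1)}{z^0-1}=z_1^0$; you may wish to adopt that formula to remove the case analysis.
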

\begin{proof}
Suppose $z^0$ is a solution to equation (\ref{equation-z4p}), let 
\begin{align} \label{formula-transform}
z_1^0=\frac{z^0(z^0+1)}{z^0-1}, \  z_2^0=\frac{z^0}{(z^0)^2+z^0-1},
\end{align}
by straightforward computations, we obtain $(z_1^0,z_2^0)$ satisfies the first equation in (\ref{equation-critform}). Moreover, we have
\begin{align}
\frac{z_1^0-z_2^0}{z_1^0z_2^0-1}=(z^0)^2,    
\end{align}
it follows that
\begin{align}
(z_1^0)^p\left(\frac{z_1^0-z_2^0}{z_1^0z_2^0-1}\right)^{2q}=\left(\frac{z^0(z^0+1)}{z^0-1}\right)^{p}(z^0)^{4q}=1,    
\end{align}
which is the second equation of (\ref{equation-critform}). 

Conversely, suppose $(z_1^0,z_2^0)$ is a solution to equation (\ref{equation-z4p}),  let 
\begin{align}
z^0=\frac{z_2^0(z_1^0+1)}{z_1^0z_2^0-1}, 
\end{align}
then  we have 
\begin{align}
(z^0)^2=\left(\frac{z_2^0(z_1^0+1)}{z_1^0z_2^0-1}\right)^2=\frac{z_1^0-z_2^0}{z_1^0z_2^0-1}    
\end{align}
and 
\begin{align}
\frac{z^0(z^0+1)}{z^0-1}=\frac{(2z_1^0z_2^0+z_2^0-1)z_2^0(z_1^0+1)}{(z_2^0+1)(z_1^0z_2^0-1)}=z_1^0, 
\end{align}
where we have used that $(z_1^0,z_2^0)$ satisfies the first equation in (\ref{equation-critform}).

Therefore, 
\begin{align}
(z^0)^{4q}\left(\frac{z^0(z^0+1)}{z^0-1}\right)^{p}=\left(\frac{z_1^0-z_2^0}{z_1^0z_2^0-1}\right)^{2q}(z_1^0)^p=1,  
\end{align}
it follows that $z_0$ satisfies the equation (\ref{equation-z4p}). 
\end{proof}

\begin{example}
When $p=1,q=-2$, comparing with the computations in previous Example \ref{example1}, we have
\begin{align}
    z_1^0=\frac{z^0(z^0+1)}{z^0-1}, \   z_2^0=\frac{z^0}{(z^0)^2+z^0-1}.
\end{align}
and
\begin{align}
z^0=\frac{z_2^0(z_1^0+1)}{z_1^0z_2^0-1}.    
\end{align}
\end{example}
\begin{remark} \label{remark-solution}
Note that the equations (\ref{equation-critform}) and the equation (\ref{equation-z4p}) are the exponential forms of the critical point equations (\ref{equation-crit1}), (\ref{equation-crit2})   and geometric equation (\ref{equation:geometry-one}) respectively.
\end{remark}

According to \cite{NR90}, there is a unique solution $z^0$ with $\text{Im}(z^0)<0$ (note that here we use the conjugation geometric equation, see Remark \ref{remark-conjugate}) for the geometric equation (\ref{equation:geometry-one}). 
\begin{corollary} \label{coro-critunique}
   The critical point equations (\ref{equation-crit1}), (\ref{equation-crit2}) have a unique solution $(\theta_1^0,\theta_2^0)$ with $(\text{Re}(\theta_1^0),\text{Re}(\theta_2^0))$ lies in $D_0$. 
\end{corollary}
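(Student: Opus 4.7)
The plan is to exploit the explicit bijection between solutions of the exponential critical point equation (\ref{equation-critform}) and solutions of the geometric equation (\ref{equation-z4p}) that was just established, combined with the uniqueness of the geometric solution provided by \cite{NR90}. The key input is that, by \cite{NR90} and Remark \ref{remark-conjugate}, the geometric equation (\ref{equation:geometry-one}) has a unique solution $z^0$ with $\text{Im}(z^0)<0$, corresponding to the complete hyperbolic structure on $W(p,q)$.

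First, starting from this unique $z^0$, I would apply the transformation (\ref{formula-transform}) to produce
\begin{align*}
z_1^0 = \frac{z^0(z^0+1)}{z^0-1}, \qquad z_2^0 = \frac{z^0}{(z^0)^2+z^0-1},
\end{align*}
which, by the preceding proposition, satisfies the exponential critical point equation (\ref{equation-critform}). I would then set $\theta_i^0 = \frac{1}{2\pi\sqrt{-1}}\log z_i^0$ with a branch chosen so that the \emph{logarithmic} equations (\ref{equation-crit1}) and (\ref{equation-crit2}) (not merely their exponentiated forms) are satisfied. The test case of Example \ref{example1} pins down the correct branch, and the numerical values there confirm that $(\text{Re}(\theta_1^0),\text{Re}(\theta_2^0)) \in D_0$. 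For general $(p,q) \in S$, one continues this branch by a deformation/continuity argument along a path in the parameter space, using that the relevant logarithmic arguments $1-e^{2\pi\sqrt{-1}(\theta_2\pm\theta_1)}$, $1-e^{2\pi\sqrt{-1}\theta_2}$, $1-e^{4\pi\sqrt{-1}\theta_2}$ stay away from the negative real axis throughout $D_0$.

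For uniqueness, suppose $(\theta_1,\theta_2)$ is any solution of (\ref{equation-crit1}), (\ref{equation-crit2}) with $(\text{Re}(\theta_1),\text{Re}(\theta_2)) \in D_0$. Setting $z_i = e^{2\pi\sqrt{-1}\theta_i}$, the pair $(z_1,z_2)$ solves (\ref{equation-critform}), so the inverse transformation $z = \frac{z_2(z_1+1)}{z_1 z_2 - 1}$ yields a solution of the geometric equation (\ref{equation-z4p}). The step needed here is that membership in $D_0$ forces $\text{Im}(z) < 0$, so by the \cite{NR90} uniqueness we must have $z = z^0$, and inverting the bijection recovers $(z_1,z_2) = (z_1^0,z_2^0)$; the branch constraint on the logs in (\ref{equation-crit1}), (\ref{equation-crit2}) then forces $(\theta_1,\theta_2) = (\theta_1^0,\theta_2^0)$.

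The main obstacle is the branch-tracking: the map $(\theta_1,\theta_2) \mapsto (z_1,z_2)$ is only injective modulo $\mathbb{Z}^2$, and the logarithmic critical point equations are genuinely stronger than their exponentiated forms. Thus one must carefully verify that the region $D_0$ selects exactly one lift from each $\mathbb{Z}^2$-orbit, and that the correspondence with $z$ respects the sign condition $\text{Im}(z^0)<0$ used in \cite{NR90}. The symmetry $V^+(p,q;\theta_1,\theta_2)=V^-(p,q;-\theta_1,\theta_2)$ from (\ref{formula-Vsym}) handles the $\pm$ case at once, reducing the analysis to $V^+$.
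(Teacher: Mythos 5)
Your approach is essentially the one implicit in the paper: the paper derives the Corollary directly from the exponential-form equivalence established in Proposition \ref{prop-crit=volume}'s preceding comparison proposition together with the Neumann--Reid uniqueness of $z^0$ with $\text{Im}(z^0)<0$, and gives no further argument. Where you differ is that you explicitly surface the two logical gaps that this shortcut leaves open, namely (i) that the logarithmic critical point equations (\ref{equation-crit1}), (\ref{equation-crit2}) are genuinely stronger than their exponentiated form (\ref{equation-critform}), so after lifting $z_i = e^{2\pi\sqrt{-1}\theta_i}$ one must still verify that the derivatives $V^\pm_{\theta_1},V^\pm_{\theta_2}$ vanish exactly rather than equalling a nonzero multiple of $2\pi\sqrt{-1}$; and (ii) that membership of $(\text{Re}(\theta_1),\text{Re}(\theta_2))$ in $D_0$ implies $\text{Im}(z)<0$ for the corresponding $z=\frac{z_2(z_1+1)}{z_1z_2-1}$, which is what ties the unique $D_0$-lift to the \cite{NR90} solution. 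Flagging these is a real strength, since the paper itself silently glosses over both points (cf.\ its own Remark \ref{remark-solution} acknowledging that only the exponential forms have been compared). However, you do not actually close either gap: the branch-tracking is delegated to the single numerical check in Example \ref{example1} plus a continuity-along-a-path argument whose hypotheses (that the arguments of the relevant $\log(1-\cdot)$ terms never cross the cut as $(p,q)$ varies) are asserted but not established, and the claim that $D_0$ forces $\text{Im}(z)<0$ is stated as a "step needed" rather than proved. So the proposal is a faithful and somewhat more self-aware reconstruction of the paper's argument, but it would need those two verifications to be a complete proof.
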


\subsection{The critical value gives the complex volume}
Now, we reformulate a result due to Neumann-Zagier \cite{NZ85} and Yoshida \cite{Yoshida85}. We follow the notations and the statements in \cite{WongYang20-1} by Wong-Yang in our setting.  
Let $M$ be a hyperbolic 3-manifold obtained by doing a hyperbolic $\frac{p}{q}$-Dehn filling from a component $T$ of a hyperbolic link $\mathcal{L}$ in $S^3$. Suppose $m$ and $l$ are respectively the meridian and longitude of the boundary of a tubular neighborhood of this component $T$, and $u,v$  are respectively the holonomy of $m,l$, then a solution to 
\begin{align}
    pu+qv=2\pi\sqrt{-1}
\end{align}
near the complete structure gives a hyperbolic structure on the result manifold $M$. 

Let $\Phi(u)$ be the Neumann-Zagier potential function defined on the deformation space of hyperbolic structures on $S^3\setminus \mathcal{L}$ parametrized by the holonomy of the meridians $u$, which is characterized by the following differential equation
\begin{equation} \label{equation:NZpotentialfunction}
\left\{ \begin{aligned}
         \frac{\partial \Phi(u)}{\partial u} &= \frac{v}{2} \\
                  \Phi(0)&=\sqrt{-1}(Vol(S^3\setminus \mathcal{L})+\sqrt{-1}CS(S^3\setminus \mathcal{L})) \ \mod \pi^2\mathbb{Z},
                          \end{aligned} \right.
                          \end{equation}
where $S^3\setminus \mathcal{L}$ is with the complete hyperbolic metric.

We choose a curve $\Gamma$ on the boundary of a tubular neighborhood of $T$, that
is isotopic to the core curve of the filled solid torus. We choose the orientation of $\Gamma$ such that the intersection number $(pu+qv)\cdot \Gamma=1$, and let $\gamma$ be the holonomy of $\Gamma$. Then we have
\begin{align}  \label{equation-vol=NZpotential}
    Vol(M)+\sqrt{-1}CS(M)=\frac{\Phi(u)}{\sqrt{-1}}-\frac{uv}{4\sqrt{-1}}+\frac{\pi \gamma}{2}  \mod \sqrt{-1}\pi^2\mathbb{Z}.
\end{align}

\begin{proposition} \label{prop-crit=volume}
Let $(\pm \theta_1^0,\theta_2^0)$ with $(\text{Re}(\theta_1^0,\text{Re}(\theta_2^0)) \in D_{0}$ be the unique critical point of the potential function $V^\pm (p,q,\theta_1,\theta_2)$, then we have
    \begin{align}
    2\pi V^\pm (p,q;\pm\theta_1^0,\theta_2^0)\equiv Vol(W(p,q))+\sqrt{-1}CS(W(p,q)) \mod (\pi^2 \sqrt{-1}\mathbb{Z}).
\end{align}
\end{proposition}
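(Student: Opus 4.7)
The plan is to derive the identification $2\pi V^\pm(p,q;\pm\theta_1^0,\theta_2^0) \equiv \text{Vol}(W(p,q)) + \sqrt{-1}\,\text{CS}(W(p,q)) \pmod{\pi^2\sqrt{-1}\mathbb{Z}}$ by matching $V^\pm$ against the Neumann--Zagier formula (\ref{equation-vol=NZpotential}). The main bridge is the explicit change of variables (\ref{formula-transform}), which sends the critical point $(z_1^0,z_2^0)$ to the shape parameter $z^0$ of the two ideal tetrahedra in the standard decomposition of $S^3 \setminus W$. By the symmetry (\ref{formula-Vsym}) it suffices to treat $V^+$.

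First I would use the critical point equations (\ref{equation-crit1}) and (\ref{equation-crit2}) to re-express the critical value. Multiplying (\ref{equation-crit1}) by $\theta_1^0$ and (\ref{equation-crit2}) by $\theta_2^0$ and adding yields an identity that lets us trade the quadratic-in-$\theta$ prefactor of $V^+$ for logarithms at $z_1^0,z_2^0$. Combined with the inversion and reflection formulas for $\text{Li}_2$, this reduces $V^+(p,q;\theta_1^0,\theta_2^0)$ to a sum of dilogarithms of rational functions of $z^0$, plus an explicit quadratic expression in $\log z^0,\log(z^0-1),\log(z^0+1)$.

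Next, using (\ref{formula-transform}) one rewrites $\text{Li}_2(z_1^0z_2^0),\text{Li}_2(z_2^0/z_1^0),\text{Li}_2(z_2^0),\text{Li}_2((z_2^0)^2)$ in terms of $\text{Li}_2$ evaluated at $z^0, 1-z^0, -z^0, (z^0)^{-1}$ and the like, via the five-term and inversion relations. The resulting dilogarithm sum should coincide with the Neumann--Zagier potential $\Phi(u)$ for $S^3\setminus W$ (restricted to the slice $u_2=0$ corresponding to the unfilled cusp) evaluated at $u = \log z^0 + \log(z^0+1) - \log(z^0-1)$, up to explicit boundary terms that one matches with $-uv/(4\sqrt{-1}) + \pi\gamma/2$. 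Here the completing curve $\Gamma$ producing $\gamma$ is determined by the continued-fraction data $s^+, K(s^+), p'/q$ entering $V^+$, consistent with Lemma \ref{Lemma-(a)}.

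The hard part will be keeping the log-branch and $2\pi\sqrt{-1}$ bookkeeping consistent modulo $\pi^2\sqrt{-1}\mathbb{Z}$. The product $uv$ generates cross terms such as $(2\pi\sqrt{-1})\log z^0$ that must be absorbed into the polynomial prefactor of $V^+$, and the Dehn filling equation (\ref{equation:geometry-one}), $pu+qv=2\pi\sqrt{-1}$, is needed to reshuffle these into the exact rational combination $\pi\sqrt{-1}\bigl(\frac{p'}{2q}+\cdots\bigr)$ that appears in (\ref{formula-V+-}). A clean way to organize the computation is to verify the differential identity $\partial\bigl(2\pi V^+ - \Phi(u)/\sqrt{-1} + uv/(4\sqrt{-1}) - \pi\gamma/2\bigr)/\partial u \equiv 0$ along the critical locus using (\ref{equation:NZpotentialfunction}) together with (\ref{equation-crit1})--(\ref{equation-crit2}), and then pin down the constant of integration at a convenient base point, namely the complete hyperbolic structure on $S^3\setminus W$ realized as a limit $(p,q)\to(\infty,\infty)$ for which both sides are independently known.
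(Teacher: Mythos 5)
Your ``clean way'' in the final paragraph is in essence the strategy the paper adopts: split $V^+$ into a dilogarithm piece (the function $\tilde{V}$ in the paper's proof) plus an explicit quadratic remainder, define $\Psi(u)=2\pi\sqrt{-1}\,\tilde{V}(\theta_1,\theta_2(\theta_1))$ along the locus cut out by the single equation (\ref{formula-partialVtheta2}) (\emph{not} both critical equations), show $\partial\Psi/\partial u=v/2$, and then invoke the Neumann--Zagier formula (\ref{equation-vol=NZpotential}). But as written your plan has two defects and leaves out one genuine step.

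First, the base point. Your one-parameter family is parametrized by $u=2\pi\sqrt{-1}\theta_1$ with $(p,q)$ \emph{fixed}: the critical-in-$\theta_2$ locus $\theta_2=\theta_2(\theta_1)$ lets $u$ sweep the deformation variety of $S^3\setminus W$ while $V^+$ is held constant. Sending $(p,q)\to(\infty,\infty)$ changes the function $V^+$ itself and therefore falls outside the family over which you are integrating your differential identity. The correct base point is $u=0$, i.e.\ $\theta_1=0$, which is the complete structure on $S^3\setminus W$ and is exactly where the initial condition of (\ref{equation:NZpotentialfunction}) applies. (Incidentally, at $u=0$ the term $-\frac{uv}{4\sqrt{-1}}+\frac{\pi\gamma}{2}$ reduces to $\pi^2\sqrt{-1}\frac{p'}{q}$, which vanishes mod $\pi^2\sqrt{-1}\mathbb{Z}$.)

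Second, the base-point identification is not ``independently known'' for free. At $\theta_1=0$ the equation (\ref{formula-partialVtheta2}) has the unique solution $\theta_2(0)=\frac{1}{2\pi\sqrt{-1}}\log\frac{1+2\sqrt{-1}}{5}$, and one must then prove the dilogarithm identity showing that $\Psi(0)=2\pi\sqrt{-1}\tilde V(0,\theta_2(0))$ equals $\sqrt{-1}\bigl(\mathrm{Vol}(S^3\setminus W)+\sqrt{-1}\,\mathrm{CS}(S^3\setminus W)\bigr)$ modulo $\sqrt{-1}\pi^2\mathbb{Z}$; the paper quotes the identity established in \cite{CZ23-1} for this. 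Your proposal does not address this step. Finally, your opening manipulation (multiplying (\ref{equation-crit1}), (\ref{equation-crit2}) by $\theta_1^0,\theta_2^0$ and adding) is a dead end: it does not eliminate the dilogarithms, and the paper's productive move is instead the substitution $z_1=\frac{z(z+1)}{z-1}$, $z_2=\frac{z}{z^2+z-1}$, which yields $u=\log z_1$, makes $\frac{d\Psi}{du}=\log\frac{z_1-z_2}{1-z_1z_2}=2\log z+\pi\sqrt{-1}=\frac{v}{2}$, and matches the quadratic remainder $V^+-\tilde V=\pi\sqrt{-1}\bigl(\frac{p}{2q}\theta_1^2-\frac{\theta_1}{q}+\frac{p'}{2q}\bigr)$ to $\frac{1}{2\pi}\bigl(-\frac{uv}{4\sqrt{-1}}+\frac{\pi\gamma}{2}\bigr)$ after inserting $v=\frac{2\pi\sqrt{-1}-pu}{q}$ and $\gamma=-q'u+p'v$.
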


\begin{proof}
By using the symmetry (\ref{formula-Vsym}),  we only prove the case $V^+$.  We introduce the function
\begin{align}
\tilde{V}(\theta_1,\theta_2)&=\pi\sqrt{-1}\left(\theta_1^2+\theta_2^2-\theta_2+\frac{5}{6}\right)+\frac{1}{2\pi\sqrt{-1}}\left(\text{Li}_2(e^{2\pi \sqrt{-1}(\theta_2-\theta_1)})\right.\\\nonumber &\left.+\text{Li}_2(e^{2\pi \sqrt{-1}(\theta_1+\theta_2)})+3\text{Li}_2(e^{2\pi \sqrt{-1}\theta_2})-\text{Li}_2(e^{4\pi\sqrt{-1}\theta_2})\right).     
\end{align}

Recall that
$z_1=e^{2\pi\sqrt{-1}\theta_1}$ and $u=\log \frac{z(z+1)}{z-1}$ by formula (\ref{equation-u}), under the variable transformation $z_1=\frac{z(z+1)}{z-1}$, we obtain 
\begin{align}
    u=\log\frac{z(z+1)}{z-1}=\log z_1=2\pi\sqrt{-1}\theta_1.
\end{align}

We suppose that $\theta_2(\theta_1)$ is  determined by the following equation
\begin{align} \label{formula-partialVtheta2}
&\frac{\partial \tilde{V}}{\partial \theta_2}=\pi \sqrt{-1}\left(2\theta_2-1\right)-\log\left(1-e^{2\pi \sqrt{-1}(\theta_2-\theta_1)}\right)-\log\left(1-e^{2\pi \sqrt{-1}(\theta_2+\theta_1)}\right)\\\nonumber
    &-3\log(1-e^{2\pi \sqrt{-1}\theta_2})+2\log\left(1-e^{4\pi\sqrt{-1}\theta_2}\right)=0, 
\end{align}
and define the function 
\begin{align}
\Psi(u)=2\pi\sqrt{-1}\tilde{V}(\theta_1,\theta_2(\theta_1)).    
\end{align}

Since
\begin{align}
    &\frac{d\tilde{V}(\theta_1,\theta_2(\theta_1))   }{d\theta_1}\\\nonumber
    &=\frac{\partial \tilde{V}}{\partial \theta_1}|_{(\theta_1,\theta_2(\theta_1))}+\frac{\partial \tilde{V}}{\partial \theta_2}\frac{\partial \theta_2}{\partial \theta_1}|_{(\theta_1,\theta_2(\theta_1))}\\\nonumber
    &=2\pi\sqrt{-1}\theta_1+\log(1-e^{2\pi\sqrt{-1}(\theta_2(\theta_1)-\theta_1)})-\log(1-e^{2\pi\sqrt{-1}(\theta_2(\theta_1)+\theta_1)}),
\end{align}
it follows that
\begin{align}  \label{formula-dpsiu}
\frac{d \Psi(u)}{d u}&=\frac{d\Psi(u)}{d \theta_1}\frac{d\theta_1}{du}=\frac{1}{2\pi\sqrt{-1}}\frac{d\Psi(u)}{d\theta_1}\\\nonumber
&=\frac{d\tilde{V}}{d\theta_1}=\log\left(\frac{1-e^{2\pi\sqrt{-1}(\theta_2(\theta_1)-\theta_1)}}{1-e^{2\pi\sqrt{-1}(\theta_2(\theta_1)+\theta_1)}}e^{2\pi\sqrt{-1}\theta_1}\right)\\\nonumber
&=\log\left(\frac{e^{2\pi\sqrt{-1}\theta_1}-e^{2\pi\sqrt{-1}(\theta_2(\theta_1))}}{1-e^{2\pi\sqrt{-1}(\theta_2(\theta_1)+\theta_1)}}\right).
\end{align}

Recall that $z_1=e^{2\pi\sqrt{-1}\theta_1}$ and $z_2=e^{2\pi\sqrt{-1}\theta_2(\theta_1)}$, 
under the transformation 
\begin{align}
z=\frac{z_2(z_1+1)}{z_1z_2-1},     
\end{align}
we obtain
\begin{align}
z^2=\left(\frac{z_2(z_1+1)}{z_1z_2-1}\right)^2=\frac{z_1-z_2}{z_1z_2-1},    
\end{align}
where the second ``=" is from the equality (\ref{formula-partialVtheta2}). Hence 
\begin{align}
 2\log z=\log\left(\frac{z_1-z_2}{z_1z_2-1}\right)=\log\left(\frac{z_1-z_2}{1-z_1z_2}\right)-\pi\sqrt{-1}.   
\end{align}
Therefore, by formula (\ref{formula-dpsiu}), we have
\begin{align}
\frac{d\Psi(u)}{du}
&=\log\left(\frac{z_1-z_2}{1-z_1z_2}\right)\\\nonumber
&=2\log z+\pi\sqrt{-1}\\\nonumber
&= 
\frac{v}{2}
\end{align}
where the last "=" is from formula (\ref{equation-v}).

At the initial value $u=0$, i.e. $\theta_1=0$, solving the equation (\ref{formula-partialVtheta2}) in this case, we get the unique solution given by 
\begin{align}
\theta_2(0)=\frac{\log(\frac{1+2\sqrt{-1}}{5})}{2\pi\sqrt{-1}}.    
\end{align}
So we have
\begin{align}
\Psi(0)&=2\pi\sqrt{-1}\left(\pi\sqrt{-1}\left(\theta_2(0)^2-\theta_2(0)+\frac{5}{6}\right)+\frac{1}{2\pi\sqrt{-1}}\left(\text{Li}_2(e^{2\pi \sqrt{-1}\theta_2(0)})\right.\right.\\\nonumber &\left.\left.+\text{Li}_2(e^{2\pi \sqrt{-1}\theta_2(0)})+3\text{Li}_2(e^{2\pi \sqrt{-1}\theta_2(0)})-\text{Li}_2(e^{4\pi\sqrt{-1}\theta_2(0)})\right)   \right).   
\end{align}

By using the useful dilogarithm identity shown in \cite{CZ23-1}, we obtain
\begin{align}
\Psi(0)\equiv \sqrt{-1}\left(Vol(S^3\setminus W)+\sqrt{-1}CS(S^3\setminus W)\right)  \mod \sqrt{-1}\pi^2\mathbb{Z}.    
\end{align}
Furthermore, we have 
\begin{align}
uv=u\frac{2\pi\sqrt{-1}-pu}{q}=\frac{(2\pi\sqrt{-1})^2\theta_1}{q}-\frac{p}{q}(2\pi\sqrt{-1})^2\theta_1^2.     
\end{align}
and
\begin{align}
 \gamma=-q'u+p'v=-q'u+p'\frac{ 2\pi\sqrt{-1}-pu}{q}= 2\pi\sqrt{-1}\frac{p'}{q}-\frac{u}{q}= 2\pi\sqrt{-1}\left(\frac{p'- \theta_1}{q}\right).   
\end{align}
Finally, we obtain
\begin{align}
&\frac{\Psi(u)}{\sqrt{-1}}-\frac{uv}{4\sqrt{-1}}+\frac{\pi \gamma}{2}\\\nonumber
&=2\pi\left(\pi\sqrt{-1}\left(\frac{p}{2q}+1\right)\theta_1^2-\frac{1}{q}\theta_1+\theta_2^2-\theta_2+\frac{5}{6}+\frac{p'}{2q}\right)\\\nonumber
&+\frac{1}{\sqrt{-1}}\left(\text{Li}_2(e^{2\pi \sqrt{-1}(\theta_2-\theta_1)})-\text{Li}_2(e^{-2\pi \sqrt{-1}(\theta_2+\theta_1)})+3\text{Li}_2(e^{2\pi \sqrt{-1}\theta_2})\right.\\\nonumber &\left.-\text{Li}_2(e^{4\pi\sqrt{-1}\theta_2})\right)
\end{align}    
which is equal to $2\pi V^{+}(p,q;\theta_1,\theta_2(\theta_1))$. By formula (\ref{equation-vol=NZpotential}) and (\ref{formula-Vsym}), we complete the proof of Proposition \ref{prop-crit=volume}.  
\end{proof}

\begin{lemma} \label{lemma-imtheta}
 We have
 \begin{align}
  \text{Im}(\theta_1^0)\neq 0.   
 \end{align}
\end{lemma}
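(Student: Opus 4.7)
The plan is to derive a contradiction by combining the geometric interpretation of the critical point $\theta_1^0$ from the proof of Proposition \ref{prop-crit=volume} with the fact that the core curve of a hyperbolic Dehn filling is a closed geodesic of positive length. Specifically, I would first recall that under the correspondence between critical points of $V^+(p,q;\theta_1,\theta_2)$ and solutions of the geometric gluing/Dehn filling equations for $W(p,q)$, one has $u=2\pi\sqrt{-1}\,\theta_1^0$, where $u$ is the meridian holonomy of the cusp of $S^3\setminus W$ associated to $L_1$. Using $pu+qv=2\pi\sqrt{-1}$ together with $pp'+qq'=1$, the holonomy of the curve $\Gamma$ isotopic to the core of the filled solid torus in $W(p,q)$ is
\begin{align*}
\gamma = -q'u+p'v = \frac{2\pi\sqrt{-1}(p'-\theta_1^0)}{q},
\end{align*}
exactly as computed at the end of the proof of Proposition \ref{prop-crit=volume}.

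Taking real parts of this identity gives $\mathrm{Re}(\gamma) = \frac{2\pi}{q}\,\mathrm{Im}(\theta_1^0)$, so it suffices to show $\mathrm{Re}(\gamma)\neq 0$. Since $(p,q)\in S$ implies $W(p,q)$ is hyperbolic, Thurston's hyperbolic Dehn surgery theorem guarantees that the core curve of the filled solid torus is a closed geodesic in $W(p,q)$ whose complex length is $\gamma$. In particular, $\gamma$ is represented by a hyperbolic (not parabolic) element of $\mathrm{PSL}(2,\mathbb{C})$: indeed, the only cusp of $W(p,q)$ is the one coming from $L_2$, and the core curve of the filled solid torus is not homotopic into this cusp. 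Hence the translation length $|\mathrm{Re}(\gamma)|$ is strictly positive, and we conclude $\mathrm{Im}(\theta_1^0)\neq 0$.

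The main potential obstacle is to ensure that the critical point $(\theta_1^0,\theta_2^0)\in D_0$ singled out in Corollary \ref{coro-critunique} really corresponds to the geometric solution producing the hyperbolic structure on $W(p,q)$. This identification is precisely what Corollary \ref{coro-critunique} (uniqueness of the critical point in $D_0$) together with the exponential-form bijection between the solution $z^0$ of the geometric equation (\ref{equation:geometry-one}) with $\mathrm{Im}(z^0)<0$ and the critical point of $V^{+}(p,q;\theta_1,\theta_2)$ via the transformation (\ref{formula-transform}) provides. Once this identification is in place, the desired inequality reduces to the standard geometric fact that the core of a non-exceptional Dehn filling inherits a closed geodesic representative, which is built into the hyperbolicity hypothesis on $W(p,q)$.
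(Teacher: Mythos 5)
Your proof is correct and follows essentially the same route as the paper: both arguments observe that $u = 2\pi\sqrt{-1}\theta_1^0$, so $\mathrm{Im}(\theta_1^0)=0$ would force $u$, hence $v$, hence $\gamma$ to be purely imaginary, contradicting the positivity of the real length of the core geodesic. Your write-up is a bit more explicit about the formula for $\gamma$ and the appeal to hyperbolicity of $W(p,q)$, but the underlying idea and the contradiction invoked are identical to the paper's.
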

\begin{proof}
   Since  $u=\log\frac{z(z+1)}{z-1}=\log z_1=2\pi\sqrt{-1}\theta_1$. Suppose $\text{Im}(\theta_1^0)= 0$,  then $u$ is purely imaginary. As a consequence, $v=\frac{2\pi\sqrt{-1}-pu}{q}$ is also purely imaginary, which implies that $\gamma=q'u-p'v$ is purely imaginary, i.e. the core curve of the filled solid torus $\Gamma$ has length zero. It is a contradiction.  
\end{proof}

\section{Asymptotic expansion of the relative Reshetikhin-Turaev invariants}
\label{Section-asympticexpansion}

The goal of this section is to estimate  each Fourier coefficients $\hat{h}_N(s,m_1,m_2)$ appearing in Proposition \ref{prop-fouriercoeff}. In Section \ref{subsection-preparation}, we establish some results which will be used in the later subsections.  In Section \ref{subsection-nneq1} 
we consider the Fourier coefficients that can be neglected. In Sections \ref{section-m_2=1}, we estimate the remained Fourier coefficients and find out that only two terms will contribute to the final form of the asymptotic expansion. At last,  we finish the proof of Theorem \ref{theorem-main} in Section \ref{subsection-final}.

\subsection{Preparations} \label{subsection-preparation}
We write the complex variables $\theta_1,\theta_2$ as $\theta_1=\theta_{1R}+\sqrt{-1}X_1$, $\theta_2=\theta_{2R}+\sqrt{-1}X_2$. 
We introduce the following function
\begin{align}
    &f(s,\theta_{1R},X_1,\theta_{2R},X_2;m_1,m_2)\\\nonumber&=\text{Re}V(s,\theta_1,\theta_2;m_1,m_2)\\\nonumber
    &=\text{Re}\left(\pi \sqrt{-1}\left(\frac{p}{2q}\theta_1^2-I(s)\frac{\theta_1}{q}-2\theta_1\theta_2-2\theta_1+2\theta_2^2+2m_1\theta_1+2m_2\theta_2\right)\right)\\\nonumber
    &+\text{Re}\left(\frac{1}{2\pi\sqrt{-1}}\left(\text{Li}_2(e^{2\pi \sqrt{-1}(\theta_2-\theta_1)})-\text{Li}_2(e^{-2\pi \sqrt{-1}(\theta_2+\theta_1)})-2\text{Li}_2(e^{-2\pi \sqrt{-1}\theta_2})\right.\right. \\\nonumber &\left.\left.+\text{Li}_2(e^{2\pi\sqrt{-1}\theta_2})+\text{Li}_2(e^{-4\pi\sqrt{-1}\theta_2})\right)\right). 
\end{align}

Then we have 
\begin{align}
    \frac{\partial f}{\partial X_1}&=\text{Re}\left(\sqrt{-1}\frac{\partial V}{\partial \theta_1}\right)=-\text{Im}\left(\frac{\partial V}{\partial \theta_1}\right)\\\nonumber
    &=-\pi\left(\frac{p}{q}\theta_{1R}-\frac{I(s)}{q}-2\theta_{2R}+2m_1\right)-\text{arg}\left(1-e^{2\pi\sqrt{-1}(\theta_2-\theta_2)}\right)\\\nonumber
    &+\text{arg}\left(1-e^{-2\pi\sqrt{-1}(\theta_2+\theta_1)}\right),
\end{align}

\begin{align}
    \frac{\partial f}{\partial X_2}&=-\pi\left(-2\theta_{1R}-2+4\theta_{2R}+2m_2\right)+\text{arg}\left(1-e^{2\pi \sqrt{-1}(\theta_2-\theta_1)}\right)\\\nonumber
    &+\text{arg}\left(1-e^{-2\pi \sqrt{-1}(\theta_2+\theta_1)}\right)+\text{arg}(1-e^{2\pi \sqrt{-1}\theta_2})-2\text{arg}\left(1+e^{-2\pi\sqrt{-1}\theta_2}\right).
\end{align}

\begin{align}
    \frac{\partial^2 f}{\partial X_1^2}=
    &=-\text{Im}\left(\sqrt{-1}V_{\theta_1\theta_1}\right)=-\text{Re} \left(V_{\theta_1\theta_1}\right)\\\nonumber
    &=2\pi\left( \text{Im}\left(\frac{1}{1-e^{2\pi\sqrt{-1}(\theta_2-\theta_1)}}\right)-\text{Im}\left(\frac{1}{1-e^{-2\pi\sqrt{-1}(\theta_2+\theta_1)}}\right)\right)\\\nonumber
    &=2\pi\left( \text{Im}\left(\frac{1}{1-e^{2\pi\sqrt{-1}(\theta_2-\theta_1)}}\right)+\text{Im}\left(\frac{1}{1-e^{2\pi\sqrt{-1}(\theta_2+\theta_1)}}\right)\right),
\end{align}

\begin{align}
    \frac{\partial^2 f}{\partial X_1\partial X_2}=2\pi\left( -\text{Im}\left(\frac{1}{1-e^{2\pi\sqrt{-1}(\theta_2-\theta_1)}}\right)+\text{Im}\left(\frac{1}{1-e^{2\pi\sqrt{-1}(\theta_2+\theta_1)}}\right)\right),
\end{align}

\begin{align} \label{formula-fzz}
    \frac{\partial^2 f}{\partial X_2^2}&=2\pi\left(\text{Im}\left(\frac{1}{1-e^{2\pi\sqrt{-1}(\theta_2-\theta_1)}}\right)+\text{Im}\left(\frac{1}{1-e^{2\pi\sqrt{-1}(\theta_2+\theta_1)}}\right)\right.\\\nonumber
    &\left.+\text{Im}\left(\frac{1}{1-e^{2\pi\sqrt{-1}\theta_2}}\right)-2\text{Im}\left(\frac{1}{1+e^{2\pi\sqrt{-1}\theta_2}}\right)\right).
\end{align}

Let 
\begin{align}
 a=\text{Im}\left(\frac{1}{1-e^{2\pi\sqrt{-1}(\theta_2-\theta_1)}}\right)   =\frac{\sin(2\pi(\theta_{2R}-\theta_{1R}) )}{e^{2\pi(X_2-X_1)}+e^{-2\pi(X_2-X_1)}-2\cos(2\pi(\theta_{2R}-\theta_{1R}))}
\end{align}
\begin{align}
b=\text{Im}\left(\frac{1}{1-e^{2\pi\sqrt{-1}(\theta_2+\theta_1)}}\right)=\frac{\sin(2\pi(\theta_{2R}+\theta_{1R}) )}{e^{2\pi(X_2+X_1)}+e^{-2\pi(X_2+X_1)}-2\cos(2\pi(\theta_{2R}+\theta_{1R}))}    
\end{align}
\begin{align}
  c=\text{Im}\left(\frac{1}{1-e^{2\pi\sqrt{-1}\theta_2}}\right)=\frac{\sin(2\pi \theta_{1R} )}{e^{2\pi X_2}+e^{-2\pi X_2}-2\cos(2\pi \theta_{2R})}  
\end{align}
\begin{align}
    d=-\text{Im}\left(\frac{1}{1+e^{2\pi\sqrt{-1}\theta_2}}\right)=\frac{\sin(2\pi \theta_{1R} )}{e^{2\pi X_2}+e^{-2\pi X_2}+2\cos(2\pi \theta_{2R})}  
\end{align}

Then the Hessian matrix of $f$ is given by 
\begin{align}
  Hess(f)= 2\pi\begin{pmatrix}
     a+b & b-a \\
     b-a & a+b+c+2d
    \end{pmatrix}.  
\end{align}

For $0<\text{Re}(\theta_2\pm \theta_1)<\frac{1}{2}$, $0<\text{Re} (\theta_2)<\frac{1}{2}$, we have $a>0, b>0, c>0$, $d>0$.  
Then it is easy to get that $a+b>0$ and $\det(\text{Hess}(f))>0$. 
Hence, we let 
\begin{align}
D_{H}=\{(\theta_{1R},\theta_{2R})\in \mathbb{R}^2| 0<\theta_{2R}\pm \theta_{1R}<\frac{1}{2} \},    
\end{align}
which is shown in Figure \ref{figure:DH}. 

We have
\begin{proposition} \label{Prop-Hessian}
The Hessian matrix of $f$ is positive on $D_H$.      
\end{proposition}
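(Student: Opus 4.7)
\medskip

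\noindent\textbf{Proof plan.} Since the scalar factor $2\pi>0$ is irrelevant, the task reduces to showing that the symmetric $2\times 2$ matrix
\[
M=\begin{pmatrix} a+b & b-a \\ b-a & a+b+c+2d\end{pmatrix}
\]
is positive definite on $D_H$. By Sylvester's criterion it suffices to verify that the $(1,1)$-entry $a+b$ is strictly positive and that
\[
\det M=(a+b)(a+b+c+2d)-(b-a)^2=\bigl[(a+b)^2-(b-a)^2\bigr]+(a+b)(c+2d)=4ab+(a+b)(c+2d)
\]
is strictly positive. The plan is therefore to show that each of the four quantities $a,b,c,d$ is strictly positive throughout $D_H$ (for all values of $X_1,X_2\in\mathbb{R}$), after which both inequalities are immediate.

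The strict positivity of $a,b,c,d$ is a direct consequence of the explicit formulas displayed just above the statement. The key observation is that in each case the denominator has the form $e^{\alpha}+e^{-\alpha}\pm 2\cos(2\pi\tau)$ for some real $\alpha$ and some real $\tau\in(0,\tfrac12)$, and $e^{\alpha}+e^{-\alpha}\ge 2$ with equality iff $\alpha=0$. I will use this with the constraints defining $D_H$, namely $0<\theta_{2R}\pm\theta_{1R}<\tfrac12$ (which, by adding and subtracting, also force $0<\theta_{2R}<\tfrac12$), to check positivity case by case:
\begin{itemize}
\item For $a$ and $b$: the numerators $\sin(2\pi(\theta_{2R}\mp\theta_{1R}))$ are positive because the arguments lie in $(0,\pi)$, and the denominators are bounded below by $2-2\cos(2\pi(\theta_{2R}\mp\theta_{1R}))>0$.
\item For $c$: the numerator $\sin(2\pi\theta_{2R})$ is positive since $2\pi\theta_{2R}\in(0,\pi)$, and the denominator satisfies $e^{2\pi X_2}+e^{-2\pi X_2}-2\cos(2\pi\theta_{2R})\ge 2-2\cos(2\pi\theta_{2R})>0$ (here I am reading the formula for $c$ with the evident typo $\theta_{1R}\to\theta_{2R}$ in the numerator, as is required by the definition $c=\mathrm{Im}(1/(1-e^{2\pi\sqrt{-1}\theta_2}))$).
\item For $d$: the same argument gives $\sin(2\pi\theta_{2R})>0$ and denominator $e^{2\pi X_2}+e^{-2\pi X_2}+2\cos(2\pi\theta_{2R})\ge 2+2\cos(2\pi\theta_{2R})>0$, using $\cos(2\pi\theta_{2R})>-1$ on $(0,\tfrac12)$.
\end{itemize}

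Once positivity of $a,b,c,d$ is established, the conclusion $a+b>0$ is immediate, and the determinant formula $\det M=4ab+(a+b)(c+2d)$ is a sum of strictly positive quantities, hence positive. This proves that $\mathrm{Hess}(f)$ is positive definite on $D_H$. No step is really an obstacle here; the only mild subtlety is purely bookkeeping, namely correctly identifying the numerator of $c$ (and of $d$) with $\sin(2\pi\theta_{2R})$ rather than $\sin(2\pi\theta_{1R})$, which is forced by the definition $c=\mathrm{Im}\bigl(1/(1-e^{2\pi\sqrt{-1}\theta_2})\bigr)$ and a direct real/imaginary decomposition.
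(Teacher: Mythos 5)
Your proof is correct and takes essentially the same route as the paper: both arguments reduce the claim to the strict positivity of $a,b,c,d$ on $D_H$ (with $0<\theta_{2R}<\tfrac12$ following from adding and subtracting the two defining inequalities) and then conclude via Sylvester's criterion, using $\det = 4ab + (a+b)(c+2d)$. You are also right that the displayed numerators for $c$ and $d$ should read $\sin(2\pi\theta_{2R})$ rather than $\sin(2\pi\theta_{1R})$; this is a typo in the paper, forced by the stated definitions $c=\text{Im}\bigl(1/(1-e^{2\pi\sqrt{-1}\theta_2})\bigr)$ and $d=-\text{Im}\bigl(1/(1+e^{2\pi\sqrt{-1}\theta_2})\bigr)$.
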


\begin{figure}[!htb]
\begin{align*} 
\raisebox{-15pt}{
\includegraphics[width=230 pt]{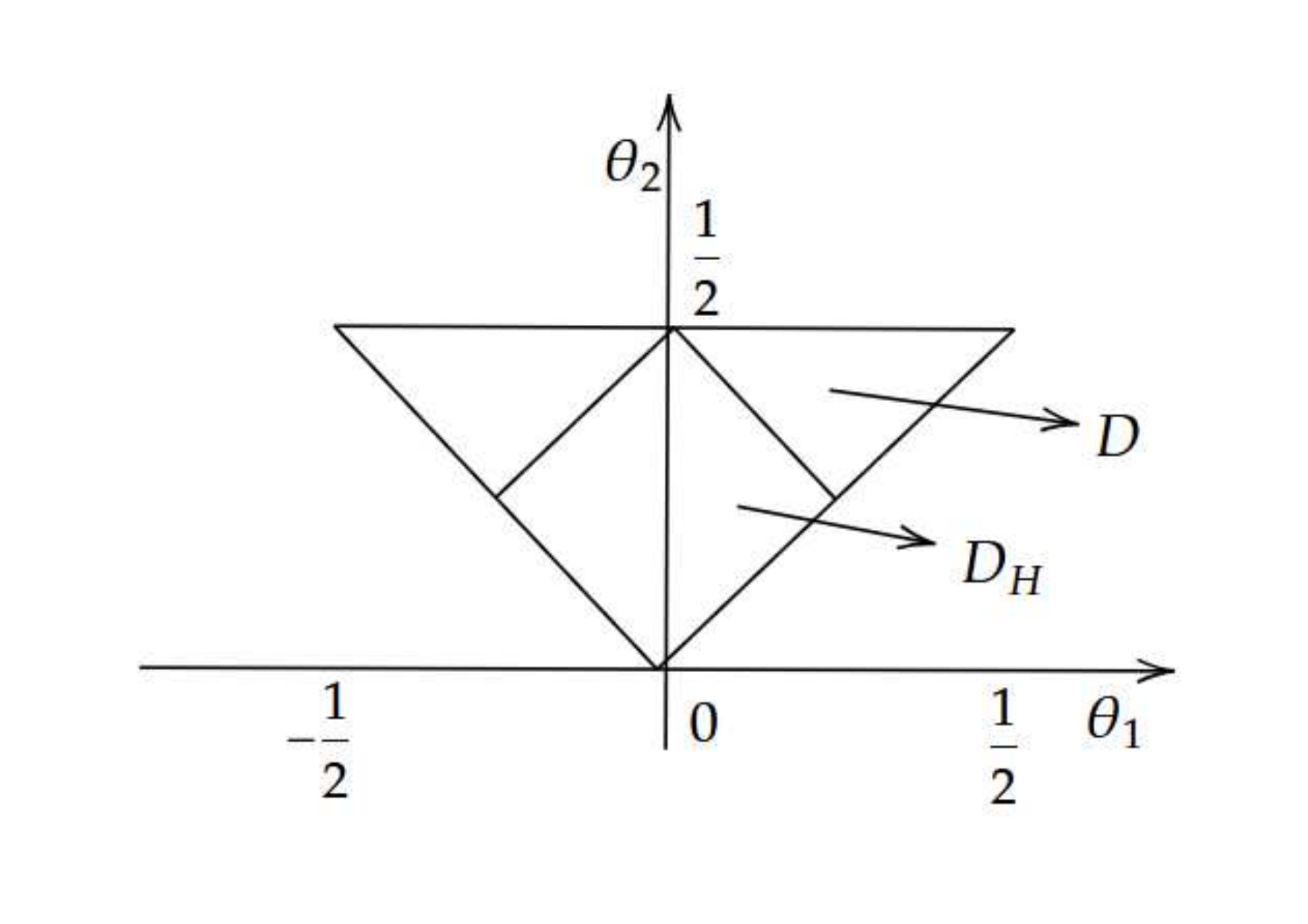}}.
\end{align*}
\caption{The region $D_H$}  \label{figure:DH} 
\end{figure}

\subsection{Fourier coefficients that can be neglected} \label{subsection-nneq1}
Motivated by Lemma \ref{lemma-Li2}, we introduce the following  function for $(\theta_{1R},\theta_{2R})\in D$.

\begin{equation} 
F(X_1,X_2;m_1,m_2)=\left\{ \begin{aligned}
         &0  &  \ (\text{if} \ X_2-X_1\geq 0) \\
         &\left(\theta_{2R}-\theta_{1R}-\frac{1}{2}\right)(X_2-X_1) & \ (\text{if} \ X_2-X_1<0)
                          \end{aligned} \right.
                          \end{equation}
\begin{equation*}
 +\left\{ \begin{aligned}
         &\left(\frac{1}{2}-(\theta_{2R}+\theta_{1R})\right)(X_2+X_1)  &  \ (\text{if} \ X_2+X_1\geq 0) \\
         &0 & \ (\text{if} \ X_2+X_1<0)
                          \end{aligned} \right.
                          \end{equation*}
\begin{equation*}
 +\left\{ \begin{aligned}
         &2\left(\frac{1}{2}-\theta_{2R}\right)X_2  &  \ (\text{if} \ X_2\geq 0) \\
         &0 & \ (\text{if} \ X_2<0)
                          \end{aligned} \right.
                          \end{equation*}
                          
                          \begin{equation*}
 +\left\{ \begin{aligned}
         &0  &  \ (\text{if} \ X_2\geq 0) \\
         &\left(\theta_{2R}-\frac{1}{2}\right)X_2 & \ (\text{if} \ X_2<0)
                          \end{aligned} \right.
                          \end{equation*}
\begin{equation*}
 +\left\{ \begin{aligned}
         &\left(4\theta_{2R}-1\right)X_2  &  \ (\text{if} \ X_2\geq 0) \\
         &0 & \ (\text{if} \ X_2<0)
                          \end{aligned} \right.
                          \end{equation*}        
\begin{equation*}
  -\frac{p}{2q}\theta_{1R} X_2+\frac{I(s)}{2q}X_1+(\theta_{1R}X_2+\theta_{2R}X_1)+X_2-2\theta_{2R}X_2-m_1X_1-m_2X_2
                          \end{equation*}

So we have
\begin{equation} 
F(X_1,X_2;m_1,m_2)=\left\{ \begin{aligned}
         &0  &  \ (\text{if} \ X_2-X_1\geq 0) \\
         &\left(\theta_{2R}-\theta_{1R}-\frac{1}{2}\right)(X_2-X_1) & \ (\text{if} \ X_2-X_1<0)
                          \end{aligned} \right.
                          \end{equation}
\begin{equation*}
 +\left\{ \begin{aligned}
         &\left(\frac{1}{2}-(\theta_{2R}+\theta_{1R})\right)(X_2+X_1)  &  \ (\text{if} \ X_2+X_1\geq 0) \\
         &0 & \ (\text{if} \ X_2+X_1<0)
                          \end{aligned} \right.
                          \end{equation*}
\begin{equation*}
 +\left\{ \begin{aligned}
         &2\theta_{2R}X_2  &  \ (\text{if} \ X_2\geq 0) \\
         &\left(\theta_{1R}-\frac{1}{2}\right)X_2 & \ (\text{if} \ X_2<0)
                          \end{aligned} \right.
                          \end{equation*}
     
\begin{equation*}
  \left(-\frac{p}{2q}\theta_{1R}+\theta_{2R}+\frac{I(s)}{2q}-m_1\right) X_1+(1+\theta_{1R}-2\theta_{2R}-m_2)X_2
                          \end{equation*}

  Note that $F(X_1,X_2;m_1,m_2)$ is a piecewise linear function,  we subdivide the plane $\{(X_1,X_2)\in \mathbb{R}^2\}$ into six regions to discuss the asymptotic property of this function.

(I) When $X_2\geq 0$ and $X_2-X_1\leq 0$, then 
\begin{align}
    &F(X_1,X_2;m_1,m_2)\\\nonumber
    &=(1-2\theta_{2R})X_1+(2\theta_{2R}-2\theta_{1R})X_2\\\nonumber
     &+\left(-\frac{p}{2q}\theta_{1R}+\theta_{2R}+\frac{I(s)}{2q}-m_1\right) X_1+(1+\theta_{1R}-2\theta_{2R}-m_2)X_2\\\nonumber
     &=\left(-\frac{p}{2q}\theta_{1R}-\theta_{2R}+1-m_1+\frac{I(s)}{2q}\right)X_1+(1-\theta_{2R}-m_2)X_2.
\end{align}

(II) When $X_2-X_1\geq 0$ and $X_2+X_1\geq 0$, then 
\begin{align}
    &F(X_1,X_2;m_1,m_2)\\\nonumber
    &=\left(\frac{1}{2}-(\theta_{2R}+\theta_{1R})\right)X_1+\left(\frac{1}{2}+\theta_{2R}-\theta_{1R}\right)X_2\\\nonumber
     &+\left(-\frac{p}{2q}\theta_{1R}+\theta_{2R}+\frac{I(s)}{2q}-m_1\right) X_1+(1+\theta_{1R}-2\theta_{2R}-m_2)X_2\\\nonumber
     &=\left(-\left(\frac{p}{2q}+1\right)\theta_{1R}+ \frac{I(s)}{2q}+\frac{1}{2}-m_1\right)X_1+\left(\frac{3}{2}-\theta_{2R}-m_2\right)X_2.
\end{align}

Hence, for $m_2\geq 2$, then $\frac{3}{2}-\theta_{2R}-m_2<0$, it follows that $F(\theta_1,\theta_2;m_1,m_2)\rightarrow -\infty$ as $\theta_2\rightarrow +\infty$.

(III) When $X_2\geq 0$ and $X_2+X_1\leq 0$, then 
\begin{align}
    &F(X_1,X_2;m_1,m_2)\\\nonumber
    &=2\theta_{2R}X_2+\left(-\frac{p}{2q}\theta_{1R}+\theta_{2R}+\frac{I(s)}{2q}-m_1\right) X_1+(1+\theta_{1R}-2\theta_{2R}-m_2)X_2\\\nonumber
     &=\left(-\frac{p}{2q}\theta_{1R}+\theta_{2R}+\frac{I(s)}{2q}-m_1\right)X_1+(1+\theta_{2R}-m_2)X_2
\end{align}

(IV) When $X_2\leq 0$ and $X_2-X_1\geq 0$, then 
\begin{align}
    &F(X_1,X_2;m_1,m_2)\\\nonumber
    &=\left(\theta_{2R}-\frac{1}{2}\right)X_2+\left(-\frac{p}{2q}\theta_{1R}+\theta_{2R}+\frac{I(s)}{2q}-m_1\right) X_1+(1+\theta_{1R}-2\theta_{2R}-m_2)X_2\\\nonumber
     &=\left(-\frac{p}{2q}\theta_{1R}+\theta_{2R}+\frac{I(s)}{2q}-m_1\right)X_1+\left(\frac{1}{2}+\theta_{1R}-\theta_{2R}-m_2\right)X_2
\end{align}

(V) When $X_2+X_1\leq 0$ and $X_2-X_1\leq 0$, then 
\begin{align}
    &F(X_1,X_2;m_1,m_2)\\\nonumber
    &=\left(\frac{1}{2}-\theta_{2R}+\theta_{1R}\right)X_1+\left(2\theta_{2R}-1-\theta_{1R}\right)X_2\\\nonumber
     &+\left(-\frac{p}{2q}\theta_{1R}+\theta_{2R}+\frac{I(s)}{2q}-m_1\right)X_1+(1+\theta_{1R}-2\theta_{2R}-m_2)X_2\\\nonumber
     &=\left(\left(1-\frac{p}{2q}\right)\theta_{1R}+\frac{I(s)}{2q}+\frac{1}{2}-m_1\right)X_1-m_2X_2
\end{align}
Hence, for $m_2\leq -1$, then $F(X_1,X_2;m_1,m_2)\rightarrow -\infty$ as $X_2\rightarrow -\infty$. 

(VI) When $X_2\leq 0$ and $X_2+X_1\geq 0$, then 
\begin{align}
    &F(X_1,X_2;m_1,m_2)\\\nonumber
    &=\left(1-2\theta_{2R}\right)X_1+\left(\theta_{2R}-2\theta_{1R}-\frac{1}{2}\right)X_2\\\nonumber
     &+\left(-\frac{p}{2q}\theta_{1R}+\theta_{2R}+\frac{I(s)}{2q}-m_1\right)X_1+(1+\theta_{1R}-2\theta_{2R}-m_2)X_2\\\nonumber
     &=\left(-\frac{p}{2q}\theta_{1R}-\theta_{2R}+1+\frac{I(s)}{2q}-m_1\right)X_1+\left(-\theta_{1R}-\theta_{2R}+\frac{1}{2}-m_2\right)X_2
\end{align}

Therefore, by cases (II) and (V), we obtain
\begin{lemma}
    For any $(\theta_{1R},\theta_{2R})\in D$, when $m_2\geq 2$, for a fixed $X_1$, $F(X_1,X_2;m_1,m_2)$ is a decreasing function with respect to $X_2$, and 
    \begin{align}
        F(X_1,X_2;m_1,m_2)\rightarrow -\infty, \ \text{as} \ X_2\rightarrow +\infty.
    \end{align}
    when $m_2\leq -1$, for a fixed $X_1$, $F(X_1,X_2;m_1,m_2)$ is a decreasing function with respect to $X_2$, and 
      \begin{align}
        F(X_1,X_2;m_1,m_2)\rightarrow -\infty, \ \text{as} \ X_2\rightarrow -\infty.
    \end{align}
\end{lemma}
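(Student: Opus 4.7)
The plan is to exploit the piecewise-linear structure of $F$ already catalogued in cases (I)--(VI) above. The three lines $X_2=0$, $X_1+X_2=0$, $X_2-X_1=0$ partition the $(X_1,X_2)$-plane into six open sectors, and on each sector $F$ is affine. For a fixed $X_1$, the vertical line $\{X_1\}\times\mathbb{R}$ meets at most three of these sectors, so both the sign of $\partial F/\partial X_2$ and the behavior as $X_2\to\pm\infty$ are controlled by the $X_2$-coefficient in each sector, read directly off the formulas (I)--(VI).

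For the first claim ($m_2\geq 2$), I would read off the six $X_2$-coefficients:
\begin{align*}
&\text{(I)}: 1-\theta_{2R}-m_2,\quad \text{(II)}: \tfrac{3}{2}-\theta_{2R}-m_2,\quad \text{(III)}: 1+\theta_{2R}-m_2,\\
&\text{(IV)}: \tfrac{1}{2}+\theta_{1R}-\theta_{2R}-m_2,\quad \text{(V)}: -m_2,\quad \text{(VI)}: -\theta_{1R}-\theta_{2R}+\tfrac{1}{2}-m_2,
\end{align*}
and bound each above by $\tfrac{3}{2}-m_2\leq -\tfrac{1}{2}$ using the defining constraints $0<\theta_{2R}<\tfrac{1}{2}$ and $|\theta_{1R}|<\theta_{2R}$ for $(\theta_{1R},\theta_{2R})\in D$. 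Sectors (IV) and (VI) are the only ones that require $|\theta_{1R}|<\theta_{2R}$, to control the mixed sign of $\pm\theta_{1R}-\theta_{2R}$. Strict negativity of the slope in every sector, combined with continuity of $F$ in $X_2$, yields strict global monotonicity in $X_2$, hence $F\to-\infty$ as $X_2\to+\infty$.

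For the second claim ($m_2\leq -1$), the same list shows that each $X_2$-coefficient is now bounded below by a positive constant, for example $-m_2\geq 1$ in sector (V), with the same mild use of $|\theta_{1R}|<\theta_{2R}$ to handle sectors (IV) and (VI). The strictly positive slope in the unbounded sector (V) reached as $X_2\to-\infty$ then forces $F(X_1,X_2;m_1,m_2)\to-\infty$ in that direction, which is the conclusion actually needed to discard this Fourier coefficient in the Poisson-sum analysis of Section \ref{Section-Poissonsummation}.

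The lemma is thus essentially sign-tracking on six affine functions, so there is no substantive analytic obstacle; the only care needed is the systematic use of $|\theta_{1R}|\leq\theta_{2R}<\tfrac{1}{2}$ valid on $D$ (and a fortiori on $D_0$) to neutralize the terms in which $\theta_{1R}$ enters with indefinite sign.
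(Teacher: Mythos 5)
Your proposal is correct and in fact more systematic than the paper's own argument, which simply points to cases (II) and (V): case (II) controls $X_2\to+\infty$ when $m_2\geq 2$, and case (V) controls $X_2\to-\infty$ when $m_2\leq -1$. You verify the sign of the $X_2$-slope in all six sectors, which is what is actually needed to justify the global monotonicity-in-$X_2$ claim in the lemma (the paper leaves this implicit). Your bound $\frac{3}{2}-m_2\leq-\frac{1}{2}$ for the six slopes in the $m_2\geq 2$ case, and the companion positive lower bounds in the $m_2\leq -1$ case, all check out against the constraints $0<\theta_{2R}<\frac{1}{2}$, $|\theta_{1R}|<\theta_{2R}$ defining $D$.

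One small point worth making explicit: your sector-by-sector analysis shows that for $m_2\leq -1$ \emph{every} slope is positive, so $F$ is in fact \emph{increasing} in $X_2$, not decreasing as the lemma statement literally says; that wording in the paper must be a typo, since a function decreasing in $X_2$ cannot tend to $-\infty$ as $X_2\to-\infty$. Your argument quietly corrects this, but it would strengthen the write-up to flag it, since the increasing-versus-decreasing distinction is exactly what determines whether the decay used in Proposition \ref{prop-neglected} is obtained by flowing $X_2$ upward or downward.
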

As a consequence of Lemma \ref{lemma-Li2}, we obtain
\begin{corollary} \label{corollary-m_2m_1}
  For any $(\theta_{1R},\theta_{2R})\in D$ and $s\in \{0,...,2|q|-1\}$,
  
  (i) when $m_2\geq 2$,  for a fixed $X_1$,
    \begin{align}
        \text{Re}V(s,\theta_{1R}+\sqrt{-1}X_1,\theta_{2R}+\sqrt{-1}X_2;m_1,m_2)\rightarrow -\infty, \ \text{as} \ X_2\rightarrow +\infty.
    \end{align}
    
   (ii) when $m_2\leq -1$,  for a fixed $X_1$,
      \begin{align}
       \text{Re}V(s,\theta_{1R}+\sqrt{-1}X_1,\theta_{2R}+\sqrt{-1}X_2;m_1,m_2)\rightarrow -\infty, \ \text{as} \ X_2\rightarrow -\infty.
    \end{align}   
   
\end{corollary}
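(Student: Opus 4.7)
\textbf{Proof proposal for Corollary \ref{corollary-m_2m_1}.}
The plan is to show that for each fixed $(\theta_{1R},\theta_{2R}) \in D$ and each $s$, there is a constant $C = C(s,\theta_{1R},\theta_{2R}) > 0$ such that
\begin{align*}
\bigl|\,\text{Re}\,V(s,\theta_{1R}+\sqrt{-1}X_1,\theta_{2R}+\sqrt{-1}X_2;m_1,m_2) - 2\pi F(X_1,X_2;m_1,m_2)\,\bigr| \leq C,
\end{align*}
uniformly in $(X_1,X_2) \in \mathbb{R}^2$. Once this uniform asymptotic equivalence is established, both parts of the corollary follow immediately from the preceding lemma, which states that $F \to -\infty$ under the indicated regimes on $m_2$ and $X_2$.

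First, I would separate $\text{Re}\,V$ into the polynomial contribution coming from the prefactor $\pi\sqrt{-1}(\cdots)$ in formula (\ref{formula-Vpqs}), and the five dilogarithm contributions. The real part of the polynomial piece is just $-\pi$ times the imaginary part of the bracketed expression; a direct expansion produces exactly the terms on the last displayed line in the definition of $F(X_1,X_2;m_1,m_2)$, namely
\begin{align*}
2\pi\Bigl[\Bigl(-\tfrac{p}{2q}\theta_{1R}+\theta_{2R}+\tfrac{I(s)}{2q}-m_1\Bigr)X_1 + \bigl(1+\theta_{1R}-2\theta_{2R}-m_2\bigr)X_2\Bigr],
\end{align*}
plus bounded terms.

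Next, I would apply Lemma \ref{lemma-Li2} to each of the five dilogarithm terms after rewriting the argument in the canonical form $e^{2\pi\sqrt{-1}(\theta+X\sqrt{-1})}$ with $0 < \theta < 1$:
\begin{itemize}
\item[(i)] $\text{Li}_2\bigl(e^{2\pi\sqrt{-1}(\theta_2-\theta_1)}\bigr)$ with $\theta = \theta_{2R}-\theta_{1R}$, $X = X_2-X_1$;
\item[(ii)] $-\text{Li}_2\bigl(e^{-2\pi\sqrt{-1}(\theta_2+\theta_1)}\bigr)$ with $\theta = 1-(\theta_{2R}+\theta_{1R})$, $X = -(X_2+X_1)$, where the overall minus sign flips the sign of the piecewise linear bound;
\item[(iii)] $-2\text{Li}_2\bigl(e^{-2\pi\sqrt{-1}\theta_2}\bigr)$ with $\theta = 1-\theta_{2R}$, $X = -X_2$, contributing the bound $2(\tfrac{1}{2}-\theta_{2R})X_2$ when $X_2 \geq 0$ and $0$ otherwise;
\item[(iv)] $\text{Li}_2\bigl(e^{2\pi\sqrt{-1}\theta_2}\bigr)$ with $\theta = \theta_{2R}$, $X = X_2$, contributing $(\theta_{2R}-\tfrac{1}{2})X_2$ when $X_2 < 0$ and $0$ otherwise;
\item[(v)] $\text{Li}_2\bigl(e^{-4\pi\sqrt{-1}\theta_2}\bigr)$ with $\theta = 1-2\theta_{2R} \in (0,1)$, $X = -2X_2$, yielding $(4\theta_{2R}-1)X_2$ when $X_2 \geq 0$ and $0$ otherwise.
\end{itemize}
Summing these five contributions with the appropriate signs reproduces, up to an additive constant bounded by the sum of the five $C$'s from Lemma \ref{lemma-Li2}, the first five piecewise linear pieces in the definition of $F(X_1,X_2;m_1,m_2)$. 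Combining this with the polynomial contribution computed in the previous step yields the claimed asymptotic equivalence.

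The main obstacle is not any deep analysis but rather the bookkeeping: one must verify that in each of the six sign regions for $(X_1,X_2)$ the five piecewise bounds from Lemma \ref{lemma-Li2} combine, with the correct signs and the correct $\theta \mapsto 1-\theta$ substitutions for terms with negative exponents, to give precisely the linear form prescribed in the definition of $F$. Once this matching is verified in any one region (say region (II) where all imaginary parts are nonnegative or in a single sign pattern), the other regions follow by the same mechanical calculation, and the corollary is immediate from the preceding lemma together with the uniform $O(1)$ control.
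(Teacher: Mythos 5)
Your proposal is correct and is essentially the argument the paper intends: the function $F$ is constructed precisely by applying the piecewise-linear bound of Lemma \ref{lemma-Li2} to each of the five dilogarithm terms (with the $\theta \mapsto 1-\theta$, $X \mapsto -X$ substitutions for the terms with negative exponent) and adding the exact imaginary part of the polynomial prefactor, so that $\text{Re}\,V = 2\pi F + O(1)$ uniformly, and the corollary then follows from the preceding lemma on $F$. You have simply spelled out the bookkeeping that the paper compresses into the phrase ``As a consequence of Lemma \ref{lemma-Li2}''.

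One minor remark for completeness: the polynomial contribution is in fact \emph{exactly} $2\pi$ times the last line in the definition of $F$ (no additional bounded error from that piece, since $\text{Im}(\theta_j^2)=2\theta_{jR}X_j$ is already linear in $X_j$); the $O(1)$ error comes only from the five dilogarithm terms, each contributing at most the constant in Lemma \ref{lemma-Li2}. Your phrase ``plus bounded terms'' after the polynomial expansion is therefore slightly misleading but harmless. Also note that the displayed definition of $f$ at the start of Section \ref{Section-asympticexpansion} and the first version of the last line of $F$ each contain an evident typo ($-2\theta_1$ in place of $-2\theta_2$, and $\theta_{1R}X_2$ in place of $\theta_{1R}X_1$ respectively); you implicitly worked with the corrected formulas, matching (\ref{formula-Vpqs}) and the simplified second displayed form of $F$, which is the right thing to do.
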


Furthermore, we have
\begin{lemma} \label{lemma-m2=0}
For any $(\theta_{1R},\theta_{2R})\in D$ and $s\in \{0,...,2|q|-1\}$, 
when $m_2=0$, for a fixed $X_1=0$,        \begin{align}
       \text{Re}V(s,\theta_{1R},\theta_{2R}+\sqrt{-1}X_2;m_1,0)\rightarrow 0, \ \text{as} \ X_2\rightarrow -\infty.
    \end{align}  
\end{lemma}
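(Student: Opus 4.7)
The plan is to specialize the explicit formula (\ref{formula-Vpqs}) for $V(s,\theta_1,\theta_2;m_1,m_2)$ at $X_1=0$ (so $\theta_1 = \theta_{1R}$ is real) and $m_2=0$, then split $\text{Re}\,V$ into a polynomial contribution and a dilogarithm contribution. For the polynomial part, a direct computation using $\theta_2=\theta_{2R}+\sqrt{-1}X_2$ shows that every monomial with a real coefficient in front of $\pi\sqrt{-1}$ contributes only to the imaginary part, while the cross term $-2\theta_1\theta_2$, the linear term $-2\theta_2$, and the quadratic $2\theta_2^2$ produce a real part equal to $2\pi(1+\theta_{1R}-2\theta_{2R})X_2$, with no $X_2$-independent piece surviving (the $\theta_{1R}$-dependent constants all lie in $\sqrt{-1}\mathbb{R}$).

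For the dilogarithm part, the five arguments have moduli $e^{-2\pi X_2}$, $e^{2\pi X_2}$, $e^{2\pi X_2}$, $e^{-2\pi X_2}$ and $e^{4\pi X_2}$. As $X_2 \to -\infty$ the three arguments of modulus $e^{2\pi X_2}$ or $e^{4\pi X_2}$ tend to $0$, so the corresponding $\text{Li}_2$ values tend to $\text{Li}_2(0)=0$ and their contributions to $\text{Re}\,V$ vanish. For the two remaining arguments $e^{2\pi\sqrt{-1}(\theta_2-\theta_1)}$ and $e^{2\pi\sqrt{-1}\theta_2}$, whose moduli blow up, I would invoke the key identity (\ref{formula-key}), which gives
\begin{align*}
\text{Re}\!\left(\tfrac{1}{2\pi\sqrt{-1}}\text{Li}_2(e^{2\pi\sqrt{-1}(\theta+X\sqrt{-1})})\right) = 2\pi(\theta-\tfrac{1}{2})X - \text{Re}\!\left(\tfrac{1}{2\pi\sqrt{-1}}\text{Li}_2(e^{-2\pi\sqrt{-1}(\theta+X\sqrt{-1})})\right).
\end{align*}
Applying this with $(\theta,X)=(\theta_{2R}-\theta_{1R},X_2)$ and $(\theta,X)=(\theta_{2R},X_2)$, both parameters lying in $(0,\tfrac12)\subset(0,1)$ by virtue of $(\theta_{1R},\theta_{2R})\in D$, the paired $\text{Li}_2$ terms on the right-hand side have moduli tending to $0$ and hence are $o(1)$. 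This converts the dilogarithm real part into $2\pi(\theta_{2R}-\theta_{1R}-\tfrac12)X_2 + 2\pi(\theta_{2R}-\tfrac12)X_2 + o(1) = 2\pi(2\theta_{2R}-\theta_{1R}-1)X_2 + o(1)$.

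Combining the two contributions gives
\begin{align*}
\text{Re}\,V(s,\theta_{1R},\theta_{2R}+\sqrt{-1}X_2;m_1,0) = 2\pi\bigl[(1+\theta_{1R}-2\theta_{2R})+(2\theta_{2R}-\theta_{1R}-1)\bigr]X_2 + o(1),
\end{align*}
in which the bracketed coefficient is identically $0$, so the whole expression is $o(1)$ as $X_2\to-\infty$. The only real subtlety is justifying that the ``bounded'' terms arising from the $\text{Li}_2$ values with vanishing arguments actually tend to $0$ (not merely stay bounded as in Lemma \ref{lemma-Li2}); this follows immediately from continuity of $\text{Li}_2$ at $0$ with $\text{Li}_2(0)=0$. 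That cancellation of the linear-in-$X_2$ coefficients is the one nontrivial algebraic check, and the rest is bookkeeping.
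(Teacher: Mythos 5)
Your proposal is correct and takes essentially the same route as the paper's proof: both apply the pairing identity (\ref{formula-key}) to the two dilogarithm terms whose arguments blow up as $X_2\to-\infty$, then observe that the resulting linear-in-$X_2$ terms exactly cancel the polynomial contribution $2\pi(\theta_{1R}-2\theta_{2R}+1)X_2$, and finally use continuity of $\mathrm{Li}_2$ at $0$ to kill the remaining terms. One small slip: you assert that $\theta_{2R}-\theta_{1R}\in(0,\tfrac12)$, but on $D$ one only has $\theta_{2R}-\theta_{1R}\in(0,1)$ (take, e.g., $\theta_{2R}=0.4$, $\theta_{1R}=-0.3$); this is harmless because (\ref{formula-key}) requires only $\theta\in(0,1)$, which is the inclusion you actually invoke.
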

\begin{proof}
Since
\begin{align}
&\text{Re}V(s,\theta_{1R},\theta_{2R}+\sqrt{-1}X_2;m_1,0)\\\nonumber
  &=2\pi (\theta_{1R}-2\theta_{2R}+1)X_2+\text{Re}\left(\frac{1}{2\pi\sqrt{-1}}\left(\text{Li}_2(e^{2\pi\sqrt{-1}(\theta_{2R}-\theta_{1R}+\sqrt{-1}X_2)})\right.\right.\\\nonumber
  &-\left.\left.\text{Li}_2(e^{-2\pi\sqrt{-1}(\theta_{2R}+\theta_{1R}+\sqrt{-1}X_2)})-2\text{Li}_2(e^{-2\pi\sqrt{-1}(\theta_{2R}+\sqrt{-1}X_2)})\right.\right.
  \\\nonumber
  &+\left.\left.\text{Li}_2(e^{2\pi\sqrt{-1}(\theta_{2R}+\sqrt{-1}X_2)})+\text{Li}_2(e^{-4\pi\sqrt{-1}(\theta_{2R}+\sqrt{-1}X_2)})
  \right)\right).
\end{align}
By using the formula (\ref{formula-key}),  we obtain
\begin{align}
    &\text{Re}V(s,\theta_{1R},\theta_{2R}+\sqrt{-1}X_2;m_1,0)\\\nonumber
    &=\text{Re}\left(\frac{1}{2\pi\sqrt{-1}}\left(\text{Li}_2(e^{2\pi X_2}e^{-2\pi\sqrt{-1}(\theta_{2R}-\theta_{1R})})\text{Li}_2(e^{2\pi X_2}e^{-2\pi\sqrt{-1}(\theta_{2R}+\theta_{1R})})\right.\right.
  \\\nonumber
  &-2\text{Li}_2(e^{2\pi X_2}e^{-2\pi\sqrt{-1}\theta_{2R}})+\left.\left.\text{Li}_2(e^{2\pi X_2}e^{-2\pi\sqrt{-1}\theta_{2R}})+\text{Li}_2(e^{4\pi X_2}e^{-4\pi\sqrt{-1}\theta_{2R}})
  \right)\right).
\end{align}
Then it is easy to see that 
\begin{align}
\lim_{X_2\rightarrow \infty}\text{Re} V(s,\theta_{1R},\theta_{2R}+\sqrt{-1}X_2;m_1,0)=0.    
\end{align}

\end{proof}

\begin{proposition} \label{prop-neglected}
    When $m_2\geq 2$ or $m_2\leq 0$, then for any $s\in\{0,...,|q|-1\}$ and $m_1\in \mathbb{Z}$, there exists $\epsilon>0$ such that
    \begin{align}
        \hat{h}_N(m_1,m_2)=O(e^{\left(N+\frac{1}{2}\right)(\zeta_\mathbb{R}(p,q))-\epsilon)}).
    \end{align}
\end{proposition}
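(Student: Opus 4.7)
The plan is to show that for each Fourier index $(m_1,m_2)$ with $m_2\neq 1$ the integral defining $\hat{h}_N(s,m_1,m_2)$ is exponentially suppressed relative to the dominant $m_2=1$ contribution, by deforming the $\theta_2$-contour of integration into the complex plane. The needed pointwise estimates on $\mathrm{Re}\,V_N(s,\theta_1,\theta_2+\sqrt{-1}X_2;m_1,m_2)$ as $X_2\to\pm\infty$ are already provided by Corollary \ref{corollary-m_2m_1} and Lemma \ref{lemma-m2=0}, together with the piecewise-linear function $F(X_1,X_2;m_1,m_2)$ of cases (I)--(VI) and Lemma \ref{lemma-Li2}.

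Explicitly, for fixed $\theta_1$ in the real projection of $\mathrm{supp}(\psi)\subset D_0$, I would apply Cauchy's theorem on a rectangular contour to the holomorphic function of $\theta_2$ obtained by freezing $\theta_1$ in the integrand, shifting the real $\theta_2$-slice to the horizontal line $\{\theta_{2R}+\sqrt{-1}T\}$ for an appropriate $T=T(m_2)\in\mathbb{R}$. When $m_2\geq 2$ I would choose $T>0$ large and invoke Corollary \ref{corollary-m_2m_1}(i); when $m_2\leq -1$, $T<0$ large in magnitude and invoke Corollary \ref{corollary-m_2m_1}(ii); and when $m_2=0$, $T<0$ large and invoke Lemma \ref{lemma-m2=0}, noting that the limiting value $0$ is strictly below $\zeta_{\mathbb{R}}(p,q)=\mathrm{Vol}(W(p,q))/(2\pi)>0$ by Corollary \ref{corollary-pq}. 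In each case $\mathrm{Re}\,V_N$ on the shifted contour becomes less than $\zeta_{\mathbb{R}}(p,q)-\epsilon$ for some $\epsilon>0$ depending only on $T$, so the integrand has size at most $e^{(N+\frac{1}{2})(\zeta_{\mathbb{R}}(p,q)-\epsilon)}$ and the prefactor $(N+\tfrac{1}{2})^2\cdot\mathrm{Area}(D_0)$ is absorbed into the error.

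The main technical obstacle is justifying the contour deformation given that the bump function $\psi$ is only smooth, not holomorphic: the vertical sides of the rectangular contour pass through a region where $\overline{\partial}\psi\neq 0$, and these contributions must be controlled either by a Stokes-theorem argument producing a correction that is itself exponentially small in $N$, or by approximating $\psi$ by compactly supported holomorphic cutoffs and passing to the limit. For the case $m_2=0$ one additionally has to interpolate between Lemma \ref{lemma-m2=0} (stated at $\theta_1=0$) and the bulk behaviour dictated by $F(X_1,X_2;m_1,0)$ in regions (I)--(VI), ensuring a uniform lower bound on $\zeta_{\mathbb{R}}(p,q)-\mathrm{Re}\,V_N$ over $\mathrm{supp}(\psi)$ rather than a merely pointwise bound at $\theta_1=0$.
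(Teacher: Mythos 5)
Your strategy matches the paper's: deform the $\theta_2$-contour into the imaginary direction with $X_1$ held at $0$, and invoke Corollary \ref{corollary-m_2m_1} (for $m_2\geq 2$ or $m_2\leq -1$) and Lemma \ref{lemma-m2=0} (for $m_2=0$, using $0<\zeta_{\mathbb{R}}(p,q)$) to show the shifted contour lies in the suppressed region. The paper organizes the deformation as a homotopy $D_{\delta}$ driven by the flow $(0,-\partial f/\partial X_2)$, so that $\mathrm{Re}\,V$ decreases monotonically along the homotopy, which yields the two required bounds (\ref{formula-Ddelta}) and (\ref{formula-partialD}) simultaneously; your constant shift to $\theta_{2R}+\sqrt{-1}T$ would instead require you to control the rectangular side walls separately, which can be done using the positivity of $\mathrm{Hess}(f)$ from Proposition \ref{Prop-Hessian} (convexity of $f$ in $X_2$ forces $f$ to stay below its $X_2=0$ value on the walls). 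One misreading to correct: Lemma \ref{lemma-m2=0} is stated for $X_1=0$, not $\theta_1=0$, and it already holds for every $(\theta_{1R},\theta_{2R})\in D$; since your deformation keeps $X_1=0$ throughout, the interpolation you raise for the $m_2=0$ case is unnecessary. Your observation that the bump function $\psi$ is smooth but not holomorphic is a fair point the paper treats only implicitly; the construction is arranged so that the $\overline{\partial}\psi$ contribution is supported where $\mathrm{Re}\,V<\zeta_{\mathbb{R}}(p,q)-\epsilon$ already holds, which is what absorbs it.
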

    \begin{proof}
    We note that $V_N(s,\theta_1,\theta_2;m_1,m_2)$ uniformly converges to $V(s,\theta_1,\theta_2;m_1,m_2)$, we show the existence of a homotopy $D_{\delta}$ ($0\leq \delta\leq \delta_0$) between $D_0$ and $D_{\delta_0}$ and such that 
    \begin{align}
        D_{\delta_0}\subset \{(\theta_1,\theta_2)\in \mathbb{C}^2| \text{Re} V(s,\theta_1,\theta_2;m_1,m_2)<\zeta_{\mathbb{R}}(p,q)-\epsilon\},  \label{formula-Ddelta}\\ 
        \partial D_{\delta}\subset \{(\theta_1,\theta_2)\in \mathbb{C}^2| \text{Re} V(s,\theta_1,\theta_2;m_1,m_2)<\zeta_{\mathbb{R}}(p,q)-\epsilon\}, \label{formula-partialD}
    \end{align}

    For each fixed $(\theta_{1},\theta_{2})\in D_0$,  we move $(X_1,X_2)$ from $(0,0)$ along the flow $(0,-\frac{\partial f}{\partial X_2})$. Then  the value of $\text{Re} V(s,\theta_{1R}+0\sqrt{-1},\theta_{2R}+X_2\sqrt{-1};m_1,m_2)$ monotonically decreases. As for (\ref{formula-partialD}), since $\partial D_0\subset \{(\theta_{1},\theta_{2})\in \mathbb{C}^2| \text{Re} V(s,\theta_{1},\theta_{2})<\zeta_{\mathbb{R}}(p,q)-\epsilon\}$ and the value of $\text{Re} V$ monotonically decreases, hence (\ref{formula-partialD}) holds. As for (\ref{formula-Ddelta}), since the value of $\text{Re} V$ uniformly goes to $-\infty$ by Corollary \ref{corollary-m_2m_1} or goes to $0$ by Lemma \ref{lemma-m2=0}, (\ref{formula-Ddelta}) holds for sufficiently large $\delta_0$. Therefore, such a required homotopy exists.  
\end{proof}

Proposition \ref{prop-neglected} 
shows that the Fourier coefficients $\hat{h}_{N}(s,m_1,m_2)$  with $m_2\neq 1$  can be neglected when we study the asymptotic expansion for $\bar{J}_{N}(W(p,q);t)$. Hence in the following, we focus on the case $m_2=1$.

\subsection{The cases $m_2=1$}
\label{section-m_2=1}
\subsubsection{The cases $(s,m_1)=(s^\pm,m_1^{\pm})$}
Recall that
\begin{align}
&V_{N}(s^\pm,m_1^\pm,1,\theta_1,\theta_2)\\\nonumber
&=\pi\sqrt{-1}\left(\frac{5}{6}+\theta_2^2-\theta_2+\left(\frac{p}{2q}+1\right)\theta_1^2\mp \frac{\theta_1}{q}-\frac{K(s^+)}{2}-\frac{2}{N+\frac{1}{2}}+\frac{\theta_2}{N+\frac{1}{2}}-\frac{15}{2(N+\frac{1}{2})^2}\right)\\\nonumber
&+\frac{1}{N+\frac{1}{2}}\left(\log 2+\varphi_{N}\left(\theta_1+\theta_2+\frac{\frac{1}{2}}{N+\frac{1}{2}}\right)+\varphi_{N}\left(-\theta_1+\theta_2+\frac{\frac{1}{2}}{N+\frac{1}{2}}\right)\right.\\\nonumber
&\left. \varphi_N\left(\theta_2\right)+\varphi_{N}\left(\theta_2+\frac{\frac{1}{2}}{N+\frac{1}{2}}\right)+\varphi_{N}\left(\theta_2+\frac{1}{N+\frac{1}{2}}\right)-\varphi_{N}\left(2\theta_2+\frac{\frac{1}{2}}{N+\frac{1}{2}}\right)\right),
\end{align}
and
\begin{align} 
&V^\pm(p,q;\theta_1,\theta_2)\\\nonumber
&=V(s^{\pm},m_1^{\pm},1,\theta_1,\theta_2)+\left(\frac{K(s^{\pm})}{2}+\frac{p'}{2q}\right)\pi\sqrt{-1}\\\nonumber
&=\pi\sqrt{-1}\left(\left(\frac{p}{2q}+1\right)\theta_1^2\mp\frac{\theta_1}{q}+\theta_2^2-\theta_2+\frac{5}{6}+\frac{p'}{2q}\right)\\\nonumber
&+\frac{1}{2\pi\sqrt{-1}}\left(\text{Li}_2(e^{2\pi\sqrt{-1}(\theta_1+\theta_2)})+\text{Li}_2(e^{2\pi\sqrt{-1}(-\theta_1+\theta_2)})+3\text{Li}_2(e^{2\pi\sqrt{-1}\theta_2})-\text{Li}_2(e^{4\pi\sqrt{-1}\theta_2})\right).
\end{align}

\begin{proposition} \label{prop-VNe-xpansion}
For any $L>0$, in the region
\begin{align}
\{(\theta_1,\theta_2)\in \mathbb{C}^2|(\text{Re}(\theta_1),\text{Re}(\theta_1))\in D'_{0}, |\text{Im}(\theta_1)|<L, |\text{Im}(\theta_2)|<L\},    
\end{align}
we have 
\begin{align} \label{formula-VNexpansion}
    &V_{N}(s^\pm,\theta_1,\theta_2,m_1^\pm,1)\\\nonumber
    &=V(s^\pm,\theta_1,\theta_2,m_1^\pm,1)+\frac{1}{N+\frac{1}{2}}\left(\pi\sqrt{-1}\left(-2+\theta_2\right)+\log 2 -\frac{3}{2}\log(1-e^{2\pi\sqrt{-1}\theta_2})\right.\\\nonumber
    &\left.-\frac{1}{2}\log(1-e^{2\pi\sqrt{-1}(\theta_2+\theta_1)})-\frac{1}{2}\log(1-e^{2\pi\sqrt{-1}(\theta_2-\theta_1)})+\frac{1}{2}\log(1-e^{4\pi\sqrt{-1}\theta_2})\right)\\\nonumber
    &+\frac{1}{(N+\frac{1}{2})^2}w_{N}(\theta_1,\theta_2)
\end{align}
with $|w_{N}(\theta_1,\theta_2)|$  bounded from above by a constant independent of $N$. 
\end{proposition}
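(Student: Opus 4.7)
The plan is to compare the explicit formula for $V_N(s^\pm,\theta_1,\theta_2,m_1^\pm,1)$ displayed just before the proposition with the limit $V(s^\pm,\theta_1,\theta_2,m_1^\pm,1)$, and extract the subleading term by Taylor expanding each shifted quantum dilogarithm around its unshifted argument. Each of the six $\varphi_N$ contributions in $V_N$ has the form $\frac{1}{N+\frac{1}{2}}\varphi_N\!\left(\theta+\frac{c}{N+\frac{1}{2}}\right)$ for some constant $c\in\{0,\tfrac12,1\}$ and some $\theta\in\{\theta_2,\,\theta_2\pm\theta_1,\,2\theta_2\}$, and by Lemma~\ref{lemma-varphixi3}(1)--(2) I can write
\begin{align*}
\frac{\varphi_N(\theta+c/(N+\tfrac12))}{N+\tfrac12}
&=\frac{1}{2\pi\sqrt{-1}}\mathrm{Li}_2(e^{2\pi\sqrt{-1}\theta})
-\frac{c}{N+\tfrac12}\log(1-e^{2\pi\sqrt{-1}\theta})+\frac{1}{(N+\tfrac12)^{2}}R_N(\theta,c),
\end{align*}
with $R_N$ the Taylor remainder (involving $\varphi_N''$ and the error terms supplied by Lemma~\ref{lemma-varphixi3}).

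First I would sum these six expansions, together with the $\frac{\log 2}{N+\frac12}$ and the $\pi\sqrt{-1}\bigl(-\frac{2}{N+\frac12}+\frac{\theta_2}{N+\frac12}-\frac{15}{2(N+\frac12)^{2}}\bigr)$ pieces already present in the polynomial part of $V_N$. The leading-order sum reproduces exactly the dilogarithm cluster
$\tfrac{1}{2\pi\sqrt{-1}}\bigl(\mathrm{Li}_2(e^{2\pi\sqrt{-1}(\theta_1+\theta_2)})+\mathrm{Li}_2(e^{2\pi\sqrt{-1}(\theta_2-\theta_1)})+3\mathrm{Li}_2(e^{2\pi\sqrt{-1}\theta_2})-\mathrm{Li}_2(e^{4\pi\sqrt{-1}\theta_2})\bigr)$
appearing in $V$, while the polynomial parts of $V_N$ and $V$ coincide after cancelling the $K(s^\pm)/2$ constant. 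The $\frac{1}{N+\frac12}$ coefficients from the six shifts are $(\tfrac12,\tfrac12,0,\tfrac12,1,-\tfrac12)$ with signs inherited from the $\pm$ in front of each $\varphi_N$; collecting them produces precisely
$-\tfrac{1}{2}\log(1-e^{2\pi\sqrt{-1}(\theta_1+\theta_2)})-\tfrac{1}{2}\log(1-e^{2\pi\sqrt{-1}(\theta_2-\theta_1)})-\tfrac{3}{2}\log(1-e^{2\pi\sqrt{-1}\theta_2})+\tfrac{1}{2}\log(1-e^{4\pi\sqrt{-1}\theta_2})$,
which, together with $\log 2 + \pi\sqrt{-1}(-2+\theta_2)$, is the stated $\frac{1}{N+\frac12}$-correction.

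The remaining task is to verify that the aggregated error $w_N(\theta_1,\theta_2)$ is uniformly bounded on the closed region
$\{(\theta_1,\theta_2)\in\mathbb{C}^2\mid(\mathrm{Re}\,\theta_1,\mathrm{Re}\,\theta_2)\in D'_0,\ |\mathrm{Im}\,\theta_i|<L\}$.
On this region the points $\theta_2\pm\theta_1$, $\theta_2$ and $2\theta_2$ all lie in an open strip $\{\varepsilon<\mathrm{Re}(z)<1-\varepsilon\}$ (using that $D'_0$ keeps $0<\theta_2\pm\theta_1<\tfrac12$ and $0<2\theta_2<1$), so Lemma~\ref{lemma-varphixi3} applies in a compact subset where $\varphi_N$, $\varphi_N'$, $\varphi_N''$ have the asymptotics described above with $O$-constants independent of $N$. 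The main technical obstacle is precisely this uniformity: the shifts of size $\frac{c}{N+\frac12}$ might a priori push arguments beyond the strip if $(\theta_1,\theta_2)$ were allowed to approach $\partial D'_0$, but since $D'_0$ is closed and bounded away from the loci $\{2\theta_2=1\}$, $\{\theta_2\pm\theta_1\in\{0,1\}\}$ (after intersecting with the horizontal strip $|\mathrm{Im}\,\theta_i|<L$), and $N$ can be taken large, the Taylor remainders and the $O((N+\tfrac12)^{-1})$ error of Lemma~\ref{lemma-varphixi3}(1) combine into a single function bounded by a constant depending only on $L$ and $D'_0$. This finishes the expansion.
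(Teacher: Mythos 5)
Your proposal follows essentially the same route as the paper's proof: Taylor-expand each shifted $\varphi_N$ about its unshifted argument via Lemma \ref{lemma-varphixi3}, collect the order-$(N+\tfrac12)^{-1}$ contributions $-c\log(1-e^{2\pi\sqrt{-1}\theta})$ from the shifts together with the explicit $\log 2$ and $\pi\sqrt{-1}(-2+\theta_2)$ pieces, and absorb everything else into $w_N$; your coefficient bookkeeping reproduces the stated correction exactly as the paper does. The only quibble is your assertion that $D'_0$ is closed and bounded away from the loci $\{\theta_2\pm\theta_1=0\}$ (it is not, near the diagonal $\theta_2=\theta_1$ and the corner at the origin), but the paper's own proof does not treat this uniformity issue any more carefully, so your argument is on the same footing as theirs.
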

\begin{proof}
By using Taylor expansion, together with Lemma \ref{lemma-varphixi3}, we have

\begin{align}
    &\varphi_N\left(\theta_2+\frac{1}{N+\frac{1}{2}}\right)\\\nonumber
    &=\varphi_N(\theta_2)+\varphi'_N(\theta_2)\frac{1}{N+\frac{1}{2}}+\frac{\varphi''_{N}(\theta_2)}{2}\frac{1}{(N+\frac{1}{2})^2}+O\left(\frac{1}{(N+\frac{1}{2})^2}\right)\\\nonumber
    &=\frac{N+\frac{1}{2}}{2\pi\sqrt{-1}}\text{Li}_2(e^{2\pi\sqrt{-1}\theta_2})-\frac{\pi\sqrt{-1}}{6(2N+1)}\frac{e^{2\pi\sqrt{-1}\theta_2}}{1-e^{2\pi\sqrt{-1}\theta_2}}\\\nonumber
    &-\log(1-e^{2\pi\sqrt{-1}\theta_2})+\frac{\pi\sqrt{-1}}{(N+\frac{1}{2})}\frac{e^{2\pi\sqrt{-1}\theta_2}}{1-e^{2\pi\sqrt{-1}\theta_2}}+O\left(\frac{1}{(N+\frac{1}{2})^2}\right). 
    \end{align}
Then, we expand $\varphi_{N}\left(\theta_1+\theta_2+\frac{\frac{1}{2}}{N+\frac{1}{2}}\right)$, $\varphi_{N}\left(-\theta_1+\theta_2+\frac{\frac{1}{2}}{N+\frac{1}{2}}\right)$, $\varphi_{N}\left(\theta_2\right)$, $\varphi_{N}\left(\theta_2+\frac{\frac{1}{2}}{N+\frac{1}{2}}\right)$ and $\varphi_{N}\left(2\theta_2+\frac{\frac{1}{2}}{N+\frac{1}{2}}\right)$ similarly, we obtain 
the formula (\ref{formula-VNexpansion}), where
\begin{align}
    w_{N}(\theta_1,\theta_2)&=\frac{\pi\sqrt{-1}}{6}\left(6\frac{e^{2\pi\sqrt{-1}\theta_2}}{1-e^{2\pi\sqrt{-1}\theta_2}}-\frac{e^{4\pi\sqrt{-1}\theta_2}}{1-e^{4\pi\sqrt{-1}\theta_2}}\right.\\\nonumber
    &\left.+\frac{e^{2\pi\sqrt{-1}(\theta_2+\theta_1)}}{1-e^{2\pi\sqrt{-1}(\theta_2+\theta_1)}}+\frac{e^{2\pi\sqrt{-1}(\theta_2-\theta_1)}}{1-e^{2\pi\sqrt{-1}(\theta_2-\theta_1)}}-45+O\left(\frac{1}{N+\frac{1}{2}}\right)\right).
\end{align}

\end{proof}

\begin{proposition} \label{prop-D0''}
There exists $\epsilon>0$, such that
\begin{align}
  \int_{D''_{0}}\psi(\theta_1,\theta_2)\sin\left(\frac{\theta_1\pi }{q}-J(s^\pm)\pi\right) e^{(N+\frac{1}{2})\left(V_N(s^\pm,\theta_1,\theta_2;m^\pm_1,1)\right)}d\theta_1d\theta_2=O\left(e^{(N+\frac{1}{2})\left(\zeta_{\mathbb{R}}(p,q)-\epsilon\right)}\right).  
\end{align}
\end{proposition}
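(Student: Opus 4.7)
The plan is to bound the integrand exponentially below the target order by showing that $\mathrm{Re}\,V_N(s^\pm,\theta_1,\theta_2;m_1^\pm,1) < \zeta_{\mathbb{R}}(p,q) - \epsilon$ uniformly on the compact set $\overline{D_0''}$. Given such a gap, the integral over $D_0''$ is bounded by $\mathrm{Area}(D_0'')\cdot e^{(N+1/2)(\zeta_{\mathbb{R}}(p,q)-\epsilon)}$ up to the bounded factors $|\psi|$ and $|\sin(\theta_1\pi/q - J(s^\pm)\pi)|$, yielding the statement.

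The first step is to pass from $V_N$ to the limiting potential $V$. By Proposition~\ref{prop-VNe-xpansion}, $V_N = V(s^\pm,\theta_1,\theta_2;m_1^\pm,1) + O(1/(N+\tfrac{1}{2}))$ uniformly on $\overline{D_0''}$ (which lies away from the boundary singularities of the $\varphi_N$ functions), so it suffices to prove $\mathrm{Re}\,V < \zeta_{\mathbb{R}}(p,q) - 2\epsilon$ on $\overline{D_0''}$. The second step is to evaluate $\mathrm{Re}\,V$ at real arguments. The $\pi\sqrt{-1}(\cdots)$ prefactor in (\ref{formula-V+-0}) is purely imaginary on $\mathbb{R}^2$, while the dilogarithm terms produce real parts via $\mathrm{Re}(\mathrm{Li}_2(e^{2\pi\sqrt{-1}\theta})/(2\pi\sqrt{-1})) = \Lambda(\theta)$, giving
\[
\mathrm{Re}\,V^\pm(p,q;\theta_1,\theta_2)\big|_{\mathbb{R}^2} = 3\Lambda(\theta_2) + \Lambda(\theta_2+\theta_1) + \Lambda(\theta_2-\theta_1) - \Lambda(2\theta_2) = v(\theta_1,\theta_2).
\]
So the problem reduces to bounding the Lobachevsky-type function $v$ on $\overline{D_0''}$.

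The third step is the analytical heart: show $\max_{\overline{D_0''}} v < \zeta_{\mathbb{R}}(p,q)$. I exploit the monotonicity statements inside the proof of Lemma~\ref{lemma-v}: for fixed $\theta_2$, $v$ is strictly decreasing in $|\theta_1|$, which reduces the problem to bounding $v$ on the inner edge $\{\theta_1 = c_0,\,\theta_2 \in [c_0,1/2]\}$ shared with $D_0'$. On the upper subinterval $\theta_2\in[1/2-c_0,1/2]$, the $\theta_2$-monotonicity from that same proof gives $v(c_0,\theta_2)\le v(c_0,1/2-c_0) < 3.374482/(2\pi)$. On the remaining subinterval, one combines the explicit boundary values $v(\theta_1,1/2)=0$ (via periodicity and oddness of $\Lambda$) and $v(\theta_1,\theta_1)=3\Lambda(\theta_1)$ with a one-variable analysis of $dv/d\theta_2$ along $\theta_1=c_0$ to get the same bound. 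Finally, by Theorem~\ref{theorem-volestimate} and Corollary~\ref{corollary-pq}, for $(p,q)\in S$ we have $3.374482/(2\pi) < \mathrm{Vol}(W(p,q))/(2\pi) = \zeta_{\mathbb{R}}(p,q)$, producing the required gap $\epsilon > 0$.

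The main obstacle is the case analysis in step three for $\theta_2 \in [c_0,\,1/2-c_0]$ along the edge $\theta_1 = c_0$: the monotonicity shown in Lemma~\ref{lemma-v} does not a priori cover this subinterval, and a direct single-variable estimate may be delicate near where $v$ achieves its real maximum in $D$. If a purely real analysis proves insufficient, the fallback is a local contour deformation at each $(\theta_{1R},\theta_{2R})\in D_0''$: using the expression for $\partial f/\partial X_2$ from Section~\ref{subsection-preparation} and the positive-definiteness of $\mathrm{Hess}(f)$ on $D_H$ (Proposition~\ref{Prop-Hessian}), move $(\theta_1,\theta_2)$ a small distance into imaginary space along a flow that decreases $\mathrm{Re}\,V$, and invoke Cauchy's theorem; the boundary-of-homotopy terms are controlled exactly as in Proposition~\ref{prop-neglected}.
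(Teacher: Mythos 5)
There is a genuine gap: the pointwise bound in your third step is false, so the primary plan cannot work. The maximum of $v(\theta_1,\theta_2)=3\Lambda(\theta_2)+\Lambda(\theta_2+\theta_1)+\Lambda(\theta_2-\theta_1)-\Lambda(2\theta_2)$ over $\overline{D_0''}$ is \emph{not} below $\zeta_{\mathbb{R}}(p,q)$. Along the inner edge $\theta_1=c_0$, one computes $v(c_0,1/4)\approx 0.68$, whereas $\zeta_{\mathbb{R}}(p,q)=\mathrm{Vol}(W(p,q))/(2\pi)<\mathrm{Vol}(S^3\setminus W)/(2\pi)\approx 0.583$ for every $(p,q)$, and the threshold $3.374482/(2\pi)\approx 0.537$ is even smaller. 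You can also see the failure from the paper's own numbers in Lemma~\ref{lemma-v}: $v(c_0,1/2-c_0)=3.3744816/(2\pi)$, and the derivative $\frac{dv}{d\theta_2}(c_0,1/2-c_0)$ is strictly negative (as shown in that proof), so $v(c_0,\theta_2)$ continues to \emph{increase} as $\theta_2$ decreases below $1/2-c_0$, i.e.\ the real one-variable maximum is attained in the interior of $D_0''$ at a value strictly exceeding the threshold. The point of the whole construction of $D_0$ is that inside it the crude bound $|e^{(N+\frac12)V_N}|\le e^{(N+\frac12)v}$ is not good enough; only outside $D_0$ (Lemma~\ref{lemma-v}) is it. On $D_0''$ the apparent excess must be killed by oscillatory cancellation, which a modulus bound on the real slice cannot see.

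Your fallback is the right idea in spirit, but as stated it does not give the argument. The paper's Proposition~\ref{prop-D0'c} does a genuine \emph{one-dimensional} saddle point deformation: fix $\theta_1=c\in[c_0,1/4]$, and push the $\theta_2$-contour from the real segment $D_0''(c)$ to the contour $\Pi^c=\Pi^c_{top}\cup\Pi^c_{side}$ through the \emph{complex} critical point $\theta_2(c)$ (formula (\ref{formula-theta2c}); e.g.\ $\theta_2(c_0)\approx 0.1946+0.1185\sqrt{-1}$, not a "small" imaginary displacement). The saddle value obeys $2\pi\,\mathrm{Re}\,V^+(c,\theta_2(c))\le 2\pi\,\mathrm{Re}\,V^+(c_0,\theta_2(c_0))=3.3744812\le 3.374482$ by the explicit monotonicity in $c$, and the maximum over $\Pi^c_{top}$ is attained at the saddle by the concavity from Lemma~\ref{lemma-onehess}. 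Crucially, because the deformation is one-dimensional, the side contours $\Pi^c_{side}$ start from the endpoints $\theta_2=c$ and $\theta_2=1/2$, which lie on $\partial D_0$, where $v<\zeta_\mathbb{R}(p,q)-\epsilon$ by Lemma~\ref{lemma-v}; the inner edge $\theta_1=c_0$ of $D_0''$, where $v$ is large, never appears as a side. If instead one attempts a two-dimensional deformation of $D_0''$ (as your fallback suggests), the side at $\theta_1=c_0$ does appear and is \emph{not} controlled by Lemma~\ref{lemma-v}, and the control in Proposition~\ref{prop-neglected} (which relies on the behaviour at $X_2\to\pm\infty$ for $m_2\ne1$) is also unavailable since here $m_2=1$. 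So the slice-wise one-dimensional saddle point argument of Proposition~\ref{prop-D0'c}, followed by integrating in $\theta_1$ with a constant $C$ uniform in $c$, is essential and cannot be replaced by either a pointwise bound on the real slice or a vague local deformation.
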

\begin{proof}
By the definition of the region $D''_0$, we have
\begin{align} \label{formula-D0''}
&|\int_{D''_{0}}\psi(\theta_1,\theta_2)\sin\left(\frac{\theta_1\pi }{q}-J(s^\pm)\pi\right) e^{(N+\frac{1}{2})\left(V_N(s^+,\theta_1,\theta_2;m_1^\pm,1)\right)}d\theta_1d\theta_2|\\\nonumber
&=|\int_{-\frac{1}{4}}^{-c_0}\int_{D_0''(c)} \psi(\theta_1,\theta_2)\sin\left(\frac{\theta_1\pi }{q}-J(s^\pm)\pi\right) e^{(N+\frac{1}{2})\left(V_N(s^\pm,\theta_1,\theta_2;m_1^\pm,1)\right)} d\theta_2d\theta_1|\\\nonumber
&+|\int_{c_0}^{\frac{1}{4}}\int_{D_0''(c)} \psi(\theta_1,\theta_2)\sin\left(\frac{\theta_1\pi }{q}-J(s^\pm)\pi\right) e^{(N+\frac{1}{2})\left(V_N(s^\pm,\theta_1,\theta_2;m_1^\pm,1)\right)} d\theta_2d\theta_1|\\\nonumber
&\leq \int_{-\frac{1}{4}}^{-c_0}|\int_{D_0''(c)} \psi(\theta_1,\theta_2)\sin\left(\frac{\theta_1\pi }{q}-J(s^\pm)\pi\right) e^{(N+\frac{1}{2})\left(V_N(s^\pm,\theta_1,\theta_2;m_1^\pm,1)\right)} d\theta_2|d\theta_1\\\nonumber
&+\int_{c_0}^{\frac{1}{4}}|\int_{D_0''(c)} \psi(\theta_1,\theta_2)\sin\left(\frac{\theta_1\pi }{q}-J(s^\pm)\pi\right) e^{(N+\frac{1}{2})\left(V_N(s^\pm,\theta_1,\theta_2;m_1^\pm,1)\right)} d\theta_2|d\theta_1
\end{align}     
By Proposition \ref{prop-D0'c}, we have 
\begin{align}
|\int_{D''_{0}(c)}\psi(\theta_1,\theta_2)\sin\left(\frac{\theta_1\pi }{q}-J(s^\pm)\pi\right) e^{(N+\frac{1}{2})\left(V_N(s^\pm,\theta_1,\theta_2;m_1^\pm,1)\right)}d\theta_2|<C\left(e^{(N+\frac{1}{2})\left(\zeta_{\mathbb{R}}(p,q)-\epsilon\right)}\right),   
\end{align}
for some constant $C$ independent of $c$. By formula (\ref{formula-D0''}), Proposition \ref{prop-D0''} is proved.
\end{proof}

Proposition \ref{prop-D0''} implies that the integral over $D''$ can be neglected. So in the following, we consider the integral over the region $D'_0$.  We use the construction shown in \cite{WongYang20-1}. Let $(\pm \theta_1^0,\theta_2^0)$ be the unique critical point of $V^\pm$ in $D_0$. Let $S^\pm =S_{top}^\pm \cup S_{side}^\pm \cup (D'_{\frac{\delta}{2}}\setminus D'_{\delta})$ be the union of $D'_{\frac{\delta}{2}}\setminus D_{\delta}$ with two surfaces 
\begin{align}
S^\pm_{top}=\{(\theta_1,\theta_2)\in D'_{\delta\mathbb{C}}|(\text{Im}(\theta_1),\text{Im}(\theta_2))=(\pm \text{Im}(\theta_1^0),\text{Im}(\theta_2^0))\} 
\end{align}
and 
\begin{align}
S_{side}^\pm=\{(\theta_{1}\pm \sqrt{-1}t\text{Im}(\theta_1^0),\theta_{2}+\sqrt{-1}t\text{Im}(\theta_2^0))|(\theta_{1},\theta_{2})\in \partial D'_{\delta},t\in [0,1]\}.    
\end{align}

By Proposition \ref{Prop-Hessian}, the Hessian matrix $\text{Hess}(\text{Re}V^\pm)$ of the function $\text{Re}V^\pm$ in $(\text{Re}(\theta_1),\text{Re}(\theta_2))$  is positive definite in the region $D'_0$,  since $\text{Re}V^\pm$ is harmonic,  $\text{Hess}(\text{Re}V^\pm)$ of the function $\text{Re}V^\pm$ in $(\text{Im}(\theta_1),\text{Im}(\theta_2))$ is negative definite. Hence $\text{Re}V^\pm$ is strictly concave down in $(\text{Re}(\theta_1),\text{Re}(\theta_2))$. Therefore, $(\pm\theta_1^0,\theta_2^0)$ is the only absolute maximum on $S_{top}^\pm$. 

We introduce the following saddle point theorem which will be used to calculate the asymptotic expansion of $\hat{h}_{N}(s^\pm, m_1^\pm,1)$. 
\begin{theorem} \cite{AGP} \label{Theorem-Fedo}
    Let $m \geq 1$ be an integer, and $S$
 an $m$-dimensional smooth
compact real sub-manifold of $\mathbb{C}^m$ with connected boundary. We denote $z = (z^1,..., z^m)\in \mathbb{C}^m$ and
and $dz = dz^1\cdots dz^m$. Let $g(z)$ and $V(z)$ be two complex-valued functions analytic
on a domain $D$ such that $S\subset D\subset \mathbb{C}^m$. We consider the integral
\begin{align}
    \int_{S}g(z)e^{\lambda V(z)}dz
\end{align}
with parameter $\lambda\in \mathbb{R}$. 

Assume that $\max_{z\in S}ReV(z)$ is attained only at a point $z_0$, which is an interior point of $S$ and a simple saddle point of $S$ (i.e. $\nabla V(z_0)=0$, $\det Hess V(z_0)\neq 0$ ), then as $\lambda\rightarrow +\infty$, there is the asymptotic expansion 
\begin{align} \label{formula-integral}
    \int_{S}g(z)e^{\lambda V(z)}dz=\left(\frac{2\pi }{\lambda}\right)^{\frac{m}{2}}\frac{e^{\lambda V(z_0)}}{\sqrt{-\det \text{Hess}(V)(z_0)}}\left(g(z_0)+\sum_{k=1}c_k\lambda^{-k}\right)
\end{align}
where the $c_k$ are complex numbers and the choice of branch for the root $\sqrt{-\det \text{Hess}(V)(z_
0)}$ depends on the orientation of the contour $S$.
\end{theorem}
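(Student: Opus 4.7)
The plan is to prove Theorem \ref{Theorem-Fedo} by the standard multidimensional complex saddle point method: localize the integral near $z_0$, reduce the phase to a standard quadratic form via the holomorphic Morse lemma, deform the contour to the steepest descent locus, and finally perform Gaussian integration with a Taylor expansion of the amplitude. As a first step I would use the hypothesis that $\text{Re}\,V$ attains its maximum on $S$ only at $z_0$ to pick a small open ball $U$ around $z_0$ and a constant $\eta>0$ with $\text{Re}\,V(z)\leq \text{Re}\,V(z_0)-\eta$ for all $z\in S\setminus U$. Then
\begin{align*}
\int_{S\setminus U} g(z)\,e^{\lambda V(z)}\,dz \;=\; O\!\left(e^{\lambda(\text{Re}\,V(z_0)-\eta)}\right),
\end{align*}
which is exponentially smaller than the claimed leading term and may be absorbed into the remainder, so all of the asymptotic content is concentrated in $S\cap U$.

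Next I would apply the holomorphic Morse lemma at $z_0$: since $\det\text{Hess}\,V(z_0)\neq 0$, there exist local biholomorphic coordinates $w=(w^1,\ldots,w^m)$ centered at $z_0$ such that
\begin{align*}
V(z(w)) - V(z_0) \;=\; -\tfrac{1}{2}\bigl((w^1)^2+\cdots+(w^m)^2\bigr),
\end{align*}
with Jacobian satisfying $\det(\partial z/\partial w)(0)^2 = (-\det\text{Hess}\,V(z_0))^{-1}$. Viewing $g(z)\,dz^1\wedge\cdots\wedge dz^m$ as a closed holomorphic $m$-form and invoking Cauchy's theorem in several complex variables, I would homotope the real contour $S\cap U$ to the image under $z(w)$ of a small real ball in $\mathbb{R}^m_w$, arranging the homotopy so that the boundary sweep lies in $\{\text{Re}\,V<\text{Re}\,V(z_0)-\eta/2\}$, so that the extra piece swept out is again exponentially negligible and can be extended to all of $\mathbb{R}^m$ at the same cost.

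After these two reductions the integral becomes a genuine Gaussian,
\begin{align*}
e^{\lambda V(z_0)}\int_{\mathbb{R}^m} g(z(w))\,\det(\partial z/\partial w)\,e^{-\lambda(w\cdot w)/2}\,dw,
\end{align*}
up to exponentially small errors. Taylor expanding the amplitude $g(z(w))\det(\partial z/\partial w)$ around $w=0$, all odd Gaussian moments vanish by symmetry while the even moments $\int_{\mathbb{R}^m}w^{\alpha}e^{-\lambda|w|^2/2}\,dw$ are proportional to $\lambda^{-(m+|\alpha|)/2}$; collecting terms produces the asymptotic series $(2\pi/\lambda)^{m/2}e^{\lambda V(z_0)}\bigl(g(z_0)+\sum_{k\geq 1}c_k\lambda^{-k}\bigr)/\sqrt{-\det\text{Hess}\,V(z_0)}$. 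The main obstacle is the contour deformation together with the determination of the correct branch of $\sqrt{-\det\text{Hess}\,V(z_0)}$: in complex dimension $m\geq 2$ the homotopy from $S\cap U$ to a steepest descent disk is not automatic and must be constructed so that it stays within the domain of holomorphy and within the half space where $\text{Re}\,V<\text{Re}\,V(z_0)$ off $z_0$; and the sign ambiguity of the square root is precisely the orientation dependence referenced in the statement, which in practice is resolved by choosing an explicit steepest descent parametrization or by computing a Maslov-type index from the relative position of $dS|_{z_0}$ and the negative eigendirections of $\text{Hess}\,V(z_0)$.
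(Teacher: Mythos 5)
The paper does not prove Theorem \ref{Theorem-Fedo}: it is cited verbatim from \cite{AGP} and used as a black box in the derivation of Proposition \ref{prop-s+-}. So there is no in-paper proof to compare your proposal against, and I can only judge it on its own terms.

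Your outline — localize near $z_0$ via the strict maximum hypothesis, apply the holomorphic Morse lemma $V(z(w))-V(z_0)=-\tfrac{1}{2}\sum(w^j)^2$, deform the contour to the steepest-descent slice $\{w\in\mathbb{R}^m\}$ by Cauchy's theorem, and Gaussian-integrate the Taylor-expanded amplitude — is the standard and correct framework, and you correctly identify the contour deformation in dimension $m\geq 2$ as the substantive step. However, you flag the deformation rather than prove it, and there is one point that a careful proof cannot leave implicit. Writing $w=u+\sqrt{-1}v$, one has $\text{Re}\,V(z(w))-\text{Re}\,V(z_0)=-\tfrac{1}{2}(|u|^2-|v|^2)$. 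The hypothesis that $\text{Re}\,V|_S$ attains its maximum only at $z_0$ yields only that the Hessian of $\text{Re}\,V|_S$ at $z_0$ is negative \emph{semi}-definite, i.e.\ that $T_{z_0}S$ satisfies $|u|^2\geq|v|^2$; it does not rule out a degenerate direction (a strict but quartic-order maximum along some tangent direction is compatible with all the stated hypotheses, as a one-variable example like $S=\{y=x-x^3\}$ for $V=-z^2/2$ already shows). Your homotopy to the real $w$-slice, and the stated $\lambda^{-m/2}$ rate with leading constant $g(z_0)/\sqrt{-\det\text{Hess}\,V(z_0)}$, require strict negative definiteness of $\text{Hess}(\text{Re}\,V|_S)(z_0)$ — equivalently that $T_{z_0}S$ is a strictly contracting graph over $\{v=0\}$ — which must either be added as a hypothesis (as the phrase ``simple saddle point of $S$'' presumably intends in \cite{AGP}) or derived from a sharper quantitative version of the maximum condition. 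Your sketch is silent on this, and closing it is precisely the nontrivial content of the theorem; once it is in place, the rest of your argument (linear interpolation of tangent graphs gives the homotopy through allowable contours, extension to $\mathbb{R}^m$ at exponentially small cost, Gaussian moment expansion) goes through.
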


\begin{proposition}  \label{prop-s+-}
We have the following asymptotic expansion: 
\begin{align}
&|\int_{D'_{0}}\psi(\theta_1,\theta_2)\sin\left(\frac{\theta_1\pi }{q}-J(s^\pm)\pi\right) e^{(N+\frac{1}{2})\left(V_N(s^\pm,\theta_1,\theta_2;m_1^\pm,1)\right)}d\theta_1d\theta_2|\\\nonumber
&=\frac{2\sin\left(\frac{\pm\theta_1^0\pi}{q}-J(s^\pm)\pi\right)e^{-(N+\frac{1}{2})\left(\frac{K(s^\pm)}{2}+\frac{p'}{2q}\right)\pi\sqrt{-1}}}{(N+\frac{1}{2})\sqrt{1-(z_2^0)^2}\sqrt{H(p,q;z_1^0,z_2^0)}}e^{(N+\frac{1}{2})\zeta(p,q)}\left(1+O\left(\frac{1}{N+\frac{1}{2}}\right)\right).
\end{align}
\end{proposition}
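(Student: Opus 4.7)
The proof proceeds by the saddle point method (Theorem \ref{Theorem-Fedo}) applied after a suitable contour deformation. First, using Proposition \ref{prop-VNe-xpansion}, I rewrite the integrand in the form
\begin{align*}
&\psi(\theta_1,\theta_2)\sin\left(\tfrac{\theta_1\pi}{q}-J(s^\pm)\pi\right)\,e^{(N+\frac{1}{2})V(s^\pm,\theta_1,\theta_2;m_1^\pm,1)}\cdot e^{g_\pm(\theta_1,\theta_2)}\Bigl(1+O\bigl(\tfrac{1}{N+\frac{1}{2}}\bigr)\Bigr),
\end{align*}
where $g_\pm$ is the explicit $1/(N+\tfrac{1}{2})$-term read off from (\ref{formula-VNexpansion}). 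Then I invoke the identity $V(s^\pm,\cdot;m_1^\pm,1)=V^\pm(p,q;\cdot)-\bigl(\tfrac{K(s^\pm)}{2}+\tfrac{p'}{2q}\bigr)\pi\sqrt{-1}$ from (\ref{formula-V+-0})/(\ref{formula-V+-}), so the leading exponential becomes $e^{(N+\frac{1}{2})(\zeta(p,q)-(\frac{K(s^\pm)}{2}+\frac{p'}{2q})\pi\sqrt{-1})}$ times a factor whose real part decreases away from $(\pm\theta_1^0,\theta_2^0)$.

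Next, I deform the domain $D'_0$ (or rather its interior $D'_\delta$ with a small collar) to the surface $S^\pm=S^\pm_{top}\cup S^\pm_{side}\cup(D'_{\delta/2}\setminus D'_\delta)$ constructed just before the proposition, which passes through the saddle $(\pm\theta_1^0,\theta_2^0)$. Holomorphicity of the integrand in the interior of $D'_{0,\mathbb{C}}$ justifies the deformation, and the contributions of $S^\pm_{side}$ and $D'_{\delta/2}\setminus D'_\delta$ are bounded by $O(e^{(N+\frac{1}{2})(\zeta_{\mathbb{R}}(p,q)-\epsilon)})$ via the same argument as in the proof of Proposition \ref{prop-neglected} (on $\partial D'_\delta$ the real part of $V$ is strictly less than $\zeta_{\mathbb{R}}(p,q)$, and the flow in the $X_2$-direction keeps it so). Proposition \ref{Prop-Hessian} together with harmonicity of $\mathrm{Re}\,V^\pm$ guarantees that $(\pm\theta_1^0,\theta_2^0)$ is the unique maximum of $\mathrm{Re}\,V^\pm$ on $S^\pm_{top}$, so the hypotheses of Theorem \ref{Theorem-Fedo} are verified with $m=2$, $\lambda=N+\tfrac{1}{2}$. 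Proposition \ref{prop-D0''} simultaneously disposes of the integral over $D''_0$.

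Applying Theorem \ref{Theorem-Fedo} produces the leading term
\begin{align*}
\frac{2\pi}{N+\frac{1}{2}}\cdot\frac{\sin\!\left(\tfrac{\pm\theta_1^0\pi}{q}-J(s^\pm)\pi\right)e^{g_\pm(\theta_1^0,\theta_2^0)}}{\sqrt{-\det\mathrm{Hess}\,V^\pm(\theta_1^0,\theta_2^0)}}\cdot e^{(N+\frac{1}{2})\left(\zeta(p,q)-\left(\tfrac{K(s^\pm)}{2}+\tfrac{p'}{2q}\right)\pi\sqrt{-1}\right)}\Bigl(1+O\bigl(\tfrac{1}{N+\frac{1}{2}}\bigr)\Bigr).
\end{align*}
The Hessian is computed directly by differentiating (\ref{equation-crit1}) and (\ref{equation-crit2}), and a straightforward rearrangement (using $z_i^0=e^{2\pi\sqrt{-1}\theta_i^0}$ and the identity $(1-z_1z_2)(1-z_2/z_1)=(1-z_2^2)-z_2(z_1+z_1^{-1}-2z_2)$) identifies $-\det\mathrm{Hess}\,V^\pm(\theta_1^0,\theta_2^0)$ with $(2\pi)^2 H(p,q;z_1^0,z_2^0)$ up to a controlled unimodular phase. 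Evaluating $g_\pm$ at the critical point via (\ref{formula-VNexpansion}) gives the factor $2\sqrt{(1-(z_2^0)^2)}/[(1-z_2^0)^{3/2}(1-z_1^0z_2^0)^{1/2}(1-z_2^0/z_1^0)^{1/2}]\cdot e^{\pi\sqrt{-1}(-2+\theta_2^0)}$; combining with the Hessian denominator and taking absolute values produces the claimed $1/(\sqrt{1-(z_2^0)^2}\sqrt{H(p,q;z_1^0,z_2^0)})$, together with the prefactor $2$ and the phase $e^{-(N+\frac{1}{2})(\frac{K(s^\pm)}{2}+\frac{p'}{2q})\pi\sqrt{-1}}$.

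The main technical obstacle is the algebraic identification of $\det\mathrm{Hess}\,V^\pm(\theta_1^0,\theta_2^0)$ and the simplification of the product with $e^{g_\pm(\theta_1^0,\theta_2^0)}$ into the clean combination $\sqrt{1-(z_2^0)^2}\sqrt{H(p,q;z_1^0,z_2^0)}$; this requires repeatedly substituting the critical point relations (\ref{equation-critform}) to eliminate $z_1$-dependence from $(1-z_1z_2)(1-z_2/z_1)$ in favor of the quadratic in $z_2$ appearing in $H$. A secondary subtlety is fixing the correct square root branch, which is determined by the orientation of $S^\pm_{top}$ through the Morse lemma; the fact that the statement is given in absolute value relaxes this to verifying only that the modulus matches, which will follow from the concavity of $\mathrm{Re}\,V^\pm$ in the real directions established in Proposition \ref{Prop-Hessian}.
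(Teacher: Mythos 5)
Your proposal follows essentially the same route as the paper's proof: deform the integral over $D'_0$ to the surface $S^\pm = S^\pm_{top}\cup S^\pm_{side}$ constructed just above the proposition, use Proposition \ref{prop-VNe-xpansion} to isolate the $O(1/(N+\frac{1}{2}))$ exponential correction, bound the side contribution, and apply Theorem \ref{Theorem-Fedo} at the saddle $(\pm\theta_1^0,\theta_2^0)$, with the Hessian determinant identified with $H(p,q;z_1^0,z_2^0)$ (up to the factor $(2\pi\sqrt{-1})^2$) and the amplitude simplified, via the critical point equation $V^\pm_{\theta_2}=0$, to $\sqrt{-1}/\sqrt{1-(z_2^0)^2}$. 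One small imprecision: you justify the negligibility of $S^\pm_{side}$ by the $X_2$-flow argument of Proposition \ref{prop-neglected}, but $S^\pm_{side}$ is swept out in the fixed diagonal direction $(\pm\mathrm{Im}\,\theta_1^0,\mathrm{Im}\,\theta_2^0)$ rather than along the gradient flow, so a monotonicity statement does not follow directly; the paper instead invokes the one-dimensional saddle point estimate from Appendix \ref{subsection-onedim}, which is the cleaner route here.
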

\begin{proof}
By the analyticity, the integral on the region $D'_0$ is equal to the integral on $S^\pm=S^\pm_{top}\cup S^\pm_{side}$. By Proposition \ref{prop-VNe-xpansion}, we have
\begin{align}
&e^{(N+\frac{1}{2})V_N(s^+,\theta_1,\theta_2,m_1^+,1)}\\\nonumber
&= 2e^{(N+\frac{1}{2})\left(V^\pm (\theta_1,\theta_2;m_1^\pm ,1)+\frac{w_N(\theta_1,\theta_2)}{(N+\frac{1}{2})^2}\right)}\\\nonumber
&\cdot e^{\left(\pi\sqrt{-1}\theta_2-\frac{3}{2}\log(1-e^{2\pi\sqrt{-1}\theta_2})-\frac{1}{2}\log(1-e^{2\pi\sqrt{-1}(\theta_2+\theta_1)})-\frac{1}{2}\log(1-e^{2\pi\sqrt{-1}(\theta_2-\theta_1)})+\frac{1}{2}\log(1-e^{4\pi\sqrt{-1}\theta_2})\right)}.
\end{align}
We introduce
\begin{align}
&g(\theta_1,\theta_2)=2\psi(\theta_1,\theta_2)\sin\left(\frac{\theta_1\pi }{q}-J(s^\pm)\pi\right)\\\nonumber
&\cdot e^{\left(\pi\sqrt{-1}\theta_2-\frac{3}{2}\log(1-e^{2\pi\sqrt{-1}\theta_2})-\frac{1}{2}\log(1-e^{2\pi\sqrt{-1}(\theta_2+\theta_1)})-\frac{1}{2}\log(1-e^{2\pi\sqrt{-1}(\theta_2-\theta_1)})+\frac{1}{2}\log(1-e^{4\pi\sqrt{-1}\theta_2})\right)}.
    \end{align}

Let $(\pm \theta_1^0,\theta_2^0)$ be the unique critical point of $V^\pm$ in $D'_0$, by Proposition \ref{Prop-Hessian}, $\det(\text{Hess} V^\pm)(\pm \theta_1^0,\theta_2^0)\neq 0$.

By using the one-dimensional saddle point method as shown in appendix, we can show that 
\begin{align}
 \int_{S^\pm_{side}}\psi(\theta_1,\theta_2)\sin\left(\frac{\theta_1\pi }{q}-J(s^+)\pi\right) e^{(N+\frac{1}{2})\left(V_N(s^\pm,\theta_1,\theta_2;m_1^\pm,1)\right)}d\theta_1d\theta_2 =O\left(e^{(N+\frac{1}{2})(\zeta_{\mathbb{R}}(p,q)-\epsilon)}\right)  
\end{align}
for some $\epsilon>0$.

Now we compute the integral
\begin{align}
 |\int_{S^\pm _{top}}g(\theta_1,\theta_2) e^{(N+\frac{1}{2})V(s^\pm,\theta_1,\theta_2;m_1^\pm,1)}d\theta_1d\theta_2|  
\end{align}
by using Theorem \ref{Theorem-Fedo}. 

By previous discussions, $\text{Re} V^\pm$
 attains its maximal value at $(\pm \theta_1^0,\theta_2^0)$ which is an interior point of $S^\pm_{top}$. 
Therefore,  by applying formula (\ref{formula-integral}), we obtain
\begin{align}
 &|\int_{S^\pm_{top}}g(\theta_1,\theta_2) e^{(N+\frac{1}{2})V (s^\pm,\theta_1,\theta_2,m_1^{\pm},1)}d\theta_1d\theta_2|\\\nonumber
 &=\left(\frac{\pi }{N+\frac{1}{2}}\right)\frac{g(\pm\theta_1^0,\theta_2^0)e^{(N+\frac{1}{2})V(s^\pm,\pm\theta_1^0,\theta_2^0,m_1^\pm,1)}}{\sqrt{\det \left( -\frac{1}{2}\text{Hess}(V^\pm)(\theta_1^0,\theta_2^0)\right)}}\left(1+O\left(\frac{1}{N+\frac{1}{2}}\right)\right), 
\end{align}
with
\begin{align}
&g(\pm\theta_1^0,\theta_2^0)=2\sin\left(\frac{\theta_1^0\pi }{q}-J(s^\pm)\pi\right)\\\nonumber
&\cdot e^{\left(\pi\sqrt{-1}\theta_2^0-\frac{3}{2}\log(1-e^{2\pi\sqrt{-1}\theta_2^0})-\frac{1}{2}\log(1-e^{2\pi\sqrt{-1}(\theta_2^0+\theta_1^0)})-\frac{1}{2}\log(1-e^{2\pi\sqrt{-1}(\theta_2^0-\theta_1^0)})+\frac{1}{2}\log(1-e^{4\pi\sqrt{-1}\theta_2^0})\right)}\\\nonumber
&=2\sin\left(\frac{\pm\theta_1^0\pi }{q}-J(s^\pm)\pi\right)\frac{\sqrt{-1}}{\sqrt{1-(z_2^0)^2}},    
\end{align}
where in the second $``="$, we have used 
\begin{align}
&V_{\theta_2}(s^\pm,\pm\theta_1^0,\theta_2^0,m_1^{\pm},1)\\\nonumber
&=\pi\sqrt{-1}(2\theta_2^0-1)-\log(1-e^{2\pi\sqrt{-1}(\theta_1^0+\theta_2^0)})-\log(1-e^{2\pi\sqrt{-1}(\theta_2^0-\theta_1^0)})\\\nonumber
&-3\log(1-e^{2\pi\sqrt{-1}\theta_2^0})+2\log(1-e^{4\pi\sqrt{-1}\theta_2^0})=0.    
\end{align}
Moreover, the determinant of the Hessian matrix at $(\pm \theta_1^0,\theta_2^0)$ is given by 
\begin{align}
    &\det \left(-\frac{1}{2}\text{Hess}(\hat{V})(\pm \theta_1^0,\theta_2^0)\right)=(-\frac{1}{2})^2(2\pi\sqrt{-1})^2H(p,q;z_1^0,z_2^0),
\end{align}
where 
\begin{align}
&H(p,q;z_1^0,z_2^0)\\\nonumber
&=\left(\frac{p}{2q}+1\right)\left(1+\frac{3z_2^0}{1-z_2^0}-\frac{4(z_2^0)^2}{1-(z_2^0)^2}\right)+\left(\frac{p}{2q}+2+\frac{3z_2^0}{1-z_2^0}-\frac{4(z_2^0)^2}{1-(z_2^0)^2}\right)\\\nonumber
&\cdot\left(\frac{z_1^0z_2^0}{1-z_1^0z_2^0}+\frac{z_2^0}{z_1^0-z_2^0}\right)+\frac{4z_1^0(z_2^0)^2}{(1-z_2^0z_1^0)(z_1^0-z_2^0)}    
\end{align}

Therefore, we have
\begin{align}
    &\int_{S^\pm_{top}}g(\theta_1,\theta_2) e^{(N+\frac{1}{2})V(s^\pm,\theta_1,\theta_2,m_1^{\pm},1)}d\theta_1d\theta_2\\\nonumber
    &=\frac{2\sin\left(\frac{\pm\theta_1^0\pi}{q}-J(s^\pm)\pi\right)e^{-(N+\frac{1}{2})\left(\frac{K(s^\pm)}{2}+\frac{p'}{2q}\right)\pi\sqrt{-1}}}{(N+\frac{1}{2})\sqrt{1-(z_2^0)^2}\sqrt{H(p,q;z_1^0,z_2^0)}}e^{(N+\frac{1}{2})\zeta(p,q)}\left(1+O\left(\frac{1}{N+\frac{1}{2}}\right)\right). 
\end{align}
\end{proof}
Therefore,
\begin{align}
 &\hat{h}_{N}(s^{\pm}, m_1^{\pm},1)\\\nonumber
 &=(-1)^{P(s^{\pm})+m_1^{\pm}+1+\frac{3N}{2}+\frac{1}{4}+b_l}\kappa_N\frac{\sqrt{2N+1}}{\sin\frac{\pi}{2N+1}}(N+\frac{1}{2})\\\nonumber
 &\cdot\frac{2\sin\left(\frac{\pm\theta_1^0\pi}{q}-J(s^\pm)\pi\right)e^{-(N+\frac{1}{2})\left(\frac{K(s^\pm)}{2}+\frac{p'}{2q}\right)\pi\sqrt{-1}}}{\sqrt{1-(z_2^0)^2}\sqrt{H(p,q;z_1^0,z_2^0)}}e^{(N+\frac{1}{2})\zeta(p,q)}\left(1+O\left(\frac{1}{N+\frac{1}{2}}\right)\right)\\\nonumber
        &=(-1)^{P(s^{\pm})+m_1^{\pm}+\frac{3l}{4}+\frac{3N}{2}+\sum_{j=1}^{l-1}b_j}e^{(N+\frac{1}{2})\pi\sqrt{-1}\left(\frac{3}{2}b_l-(\frac{K(s^\pm)}{2}+\frac{p'}{2q})\right)}e^{\sigma\left(\frac{3}{r}+\frac{r+1}{4}\right)\pi \sqrt{-1}}\\\nonumber
        &\cdot\frac{\sqrt{2N+1}}{2\sin\frac{\pi}{2N+1}\sqrt{q}}(\pm \omega(p,q))e^{(N+\frac{1}{2})\zeta(p,q)}e^{-\frac{\pi \sqrt{-1}}{r}(\sum_{i=1}^lb_i+\sum_{i=1}^{l-1}\frac{1}{C_iC_{i+1}})}\left(1+O\left(\frac{1}{N+\frac{1}{2}}\right)\right)\\\nonumber
        &=(-1)^{P(s^{\pm})+m_1^{\pm}+\frac{3l}{4}+\frac{3N}{2}+\sum_{j=1}^{l-1}b_j}e^{(N+\frac{1}{2})\pi\sqrt{-1}\left(\frac{3}{2}b_l-(\frac{K(s^\pm)}{2}+\frac{p'}{2q})\right)}e^{\sigma\left(\frac{3}{r}+\frac{r+1}{4}\right)\pi \sqrt{-1}}\\\nonumber
        &\cdot\frac{\sqrt{2N+1}}{2\sin\frac{\pi}{2N+1}\sqrt{q}}(\pm \omega(p,q))e^{(N+\frac{1}{2})\zeta(p,q)}\left(1+O\left(\frac{1}{N+\frac{1}{2}}\right)\right),
\end{align}
where 
\begin{align} \label{formula-omegapq}
\omega(p,q)=\frac{\sin\left(\frac{\theta_1^0\pi}{q}-J(s^+)\pi\right)}{\sqrt{1-(z_2^0)^2}\sqrt{H(p,q;z_1^0,z_2^0)}}.    
\end{align}

\subsubsection{The cases $(s,m_1)\neq (s^\pm,m_1^\pm)$}
\begin{proposition} \label{prop-sneq+-}
For $(s,m_1)\neq (s^\pm,m_1^\pm)$, there exists $\epsilon>0$, such that
\begin{align}
|\int_{D_{0}}\psi(\theta_1,\theta_2)\sin\left(\frac{\theta_1\pi }{q}-J(s)\pi\right) e^{(N+\frac{1}{2})\left(V_N(s,\theta_1,\theta_2;m_1,1)\right)}d\theta_1d\theta_2|<Ce^{(N+\frac{1}{2})(\zeta_{\mathbb{R}}(p,q)-\epsilon)}.   
\end{align}
\end{proposition}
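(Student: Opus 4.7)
My plan is to mirror the contour-deformation strategy of Proposition \ref{prop-neglected}: I will construct, for each $(s, m_1) \neq (s^\pm, m_1^\pm)$, a continuous deformation of the integration cycle $D_0 \subset \mathbb{R}^2$ to a $2$-surface $\Sigma_{s, m_1} \subset \mathbb{C}^2$ on which $\text{Re}\,V(s, \theta_1, \theta_2; m_1, 1) < \zeta_\mathbb{R}(p, q) - \epsilon$ holds uniformly. Since $V_N \to V$ uniformly on compact subsets and the integrand carries only polynomial prefactors, this bound translates directly into the claimed exponential estimate on $\hat{h}_N(s, m_1, 1)$.

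To set up the deformation, I write
\begin{align*}
V(s, \theta_1, \theta_2; m_1, 1) = V^+(p, q; \theta_1, \theta_2) + \pi\sqrt{-1}\,\alpha_{s, m_1}\,\theta_1 + \text{const},
\end{align*}
where $\alpha_{s, m_1} = -\frac{I(s) - I(s^+)}{q} + 2(m_1 - m_1^+)$ and $q\alpha_{s, m_1} \in \mathbb{Z}$. The critical equations for $V(s, \cdot, \cdot; m_1, 1)$ become $V^+_{\theta_2} = 0$ and $V^+_{\theta_1} = -\pi\sqrt{-1}\alpha_{s, m_1}$. Exponentiating the $\theta_1$-equation and raising it to the $2q$-th power eliminates the $\alpha_{s, m_1}$-dependence, and the reduced polynomial system is exactly (\ref{equation-critform}); by Corollary \ref{coro-critunique}, its only solution lifting to $D_0$ is $(\theta_1^0, \theta_2^0)$ (with its $V^-$ mirror $(-\theta_1^0, \theta_2^0)$). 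The unreduced $\theta_1$-equation, evaluated at this lift, forces $e^{-\pi\sqrt{-1}\alpha_{s, m_1}} = 1$, i.e.\ $\alpha_{s, m_1} \in 2\mathbb{Z}$; combined with the range $I(s) \in \{0, \ldots, 2|q|-1\}$ and the injectivity of $I$ in Lemma \ref{Lemma-(a)}(a), this pins down $(s, m_1) = (s^\pm, m_1^\pm)$. Hence $V(s, \cdot, \cdot; m_1, 1)$ admits no critical point in $D_0$ for $(s, m_1) \neq (s^\pm, m_1^\pm)$.

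With no interior critical point, I construct $\Sigma_{s, m_1}$ by flowing each real point of $D_0$ along the gradient-descent vector field $(-\partial_{X_1} f, -\partial_{X_2} f)$ of $f := \text{Re}\,V(s, \cdot, \cdot; m_1, 1)$, starting from the real slice $X_1 = X_2 = 0$. Along the flow $f$ strictly decreases. Far from the origin, Lemma \ref{lemma-Li2} implies that $f$ is approximated by the piecewise linear $F(X_1, X_2; m_1, 1)$ from Section \ref{subsection-nneq1}, and a case-by-case inspection of regions (I)--(VI) shows that, since no critical point obstructs the flow, at least one asymptotic direction drives $F \to -\infty$, so each trajectory reaches the sublevel set $\{f < \zeta_\mathbb{R}(p, q) - \epsilon\}$ in finite time. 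On the boundary $\partial D_0$, Lemma \ref{lemma-v} together with Corollary \ref{corollary-pq} already gives $f < \zeta_\mathbb{R}(p, q) - \epsilon$, so trajectories starting near $\partial D_0$ require only a small deformation. Truncating each trajectory at its first entry into the sublevel set and invoking the bump function $\psi$ supported in $D_\varepsilon \subset D_0$ produces the required homotopy.

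The main obstacle I foresee is the combinatorial step in the previous paragraph: verifying that $F(X_1, X_2; m_1, 1)$ admits a descent direction to $-\infty$ in at least one region for every $(s, m_1) \neq (s^\pm, m_1^\pm)$. The linear coefficients across the six cases depend explicitly on $m_1$, $I(s)/q$, and $(\theta_{1R}, \theta_{2R}) \in D_0$, and one must rule out the possibility that $F$ is bounded below in all asymptotic sectors simultaneously. I expect the non-vanishing of $\alpha_{s, m_1}$, together with the explicit description of $(s^\pm, m_1^\pm)$ in Lemma \ref{Lemma-(a)}, to preclude this; the detailed sign analysis across regions (I)--(VI) will likely be the most laborious part of the proof.
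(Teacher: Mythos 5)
The paper's own ``proof'' of this statement is a single-line citation to Proposition~6.6 of Wong--Yang \cite{WongYang20-1}, so there is no in-paper argument to compare against; your outline is the standard contour-deformation strategy of that reference and is in the right spirit. That said, two points in your sketch are not yet correct.

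First, the claim that $e^{-\pi\sqrt{-1}\alpha_{s,m_1}}=1$ (i.e.\ $\alpha_{s,m_1}\in 2\mathbb{Z}$) pins down $(s,m_1)=(s^\pm,m_1^\pm)$ is false: if $s=s^+$ and $m_1\neq m_1^+$, then $\alpha_{s^+,m_1}=2(m_1-m_1^+)$ is still an even integer, so the condition $\alpha_{s,m_1}\in 2\mathbb{Z}$ determines $s$ but leaves $m_1$ free. The critical point equations for $V(s,\cdot,\cdot;m_1,1)$ use a fixed branch of $\log$, so the equation $V^+_{\theta_1}(\theta_1^0,\theta_2^0)=-\pi\sqrt{-1}\,\alpha_{s,m_1}$ must hold exactly, not merely modulo $2\pi\sqrt{-1}$. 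Since $V^+_{\theta_1}(\theta_1^0,\theta_2^0)=0$, this forces $\alpha_{s,m_1}=0$; combined with the range $I(s)\in\{0,\dots,2|q|-1\}$ and the injectivity of $I$ in Lemma~\ref{Lemma-(a)}(a), this \emph{does} uniquely give $(s,m_1)=(s^+,m_1^+)$. For the mirror lift $(-\theta_1^0,\theta_2^0)$ one computes $V^+_{\theta_1}(-\theta_1^0,\theta_2^0)=-\frac{2\pi\sqrt{-1}}{q}$, so the requirement is $\alpha_{s,m_1}=2/q$, which similarly pins down $(s,m_1)=(s^-,m_1^-)$. You should make these exact (not merely modular) constraints explicit.

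Second, the inference ``since no critical point obstructs the flow, at least one asymptotic direction drives $F\to-\infty$'' is a non-sequitur. The absence of a critical point of $\text{Re}\,V$ on a slice does not imply the piecewise-linear upper bound $F$ tends to $-\infty$ in some direction: a strictly convex function on $\mathbb{R}^2$ can have no critical point while remaining bounded below, and $F$ agrees with $\text{Re}\,V$ only up to a bounded error (Lemma~\ref{lemma-Li2}). The decisive step, which your sketch defers, is the direct sign analysis of the linear coefficients of $X_1$ (and combinations with $X_2$) across regions (I)--(VI) with $m_2=1$: one must show that for every $(s,m_1)\neq(s^\pm,m_1^\pm)$ and every $(\theta_{1R},\theta_{2R})$ in the support of $\psi$, either $F\to-\infty$ along some admissible ray, or $F$ is bounded below at a level already $<\zeta_{\mathbb{R}}(p,q)-\epsilon$. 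That analysis, not the absence of critical points per se, is the actual content that the cited Proposition~6.6 of \cite{WongYang20-1} supplies, and it is what would make your contour deformation rigorous.
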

The proof of this Proposition follows directly from the proof of Proposition 6.6 in \cite{WongYang20-1}.  We omit here.

\subsection{Final proof} \label{subsection-final}
Now we can finish the proof of Theorem \ref{theorem-main} as follows.
\begin{proof}
Combining Propositions \ref{prop-fouriercoeff}, \ref{prop-neglected}, \ref{prop-sneq+-} and \ref{prop-s+-} together, we obtain
    \begin{align}
        &J_{N}(W(p,q);t)\\\nonumber
        &=\hat{h}_N(s^+,m^+,1)+\hat{h}_N(s^-,m^-,1)+O(e^{(N+\frac{1}{2})(\zeta_\mathbb{R}(p,q))-\epsilon}),
        \\\nonumber
        &=(-1)^{P(s^{+})+m_1^{+}+\frac{3l}{4}+\frac{3N}{2}+\sum_{j=1}^{l-1}b_j}e^{(N+\frac{1}{2})\pi\sqrt{-1}\left(\frac{3}{2}b_l-(\frac{K(s^+)}{2}+\frac{p'}{2q})\right)}e^{\sigma\left(\frac{3}{r}+\frac{r+1}{4}\right)\pi \sqrt{-1}}\\\nonumber
        &\cdot\frac{\sqrt{2N+1}}{\sin\frac{\pi}{2N+1}\sqrt{q}}\omega(p,q)e^{(N+\frac{1}{2})\zeta(p,q)}\left(1+O\left(\frac{1}{N+\frac{1}{2}}\right)\right),
    \end{align}
    with $\omega(p,q)$  given by formula (\ref{formula-omegapq}), where in the last $``="$, we have used the symmetry $\hat{h}_{N}(s^+,m^+,1)=\hat{h}_{N}(s^-,m^-,1)$ as shown in \cite{WongYang20-1}. For brevity, we set 
    \begin{align}
      C_N(p,q)=(-1)^{P(s^{+})+m_1^{+}+\frac{3l}{4}+\frac{3N}{2}+\sum_{j=1}^{l-1}b_j}e^{(N+\frac{1}{2})\pi\sqrt{-1}\left(\frac{3}{2}b_l-(\frac{K(s^+)}{2}+\frac{p'}{2q})\right)}e^{\sigma\left(\frac{3}{r}+\frac{r+1}{4}\right)\pi \sqrt{-1}}
    \end{align}
    which is a constant of norm $1$ independent of the geometric structure. 
\end{proof}

\section{Asymptotic expansion of the Turaev-Viro invariant} \label{Section-TV}

By formula (\ref{formula-TV=RT}) , the Turaev-Viro invariant for $W(p,q)$ (we omit the constant $C$ here) is given by 
\begin{align}
 TV_r(W(p,q),t)=\mu_r^2\sum_{a=0}^{N-1}|\bar{J}_{N-a}(W(p,q);t)|^2.   
\end{align}

In order to compute the asymptotic expansion for $TV_{r}(W(p,q),t)$, we divide the whole procedure into two steps. The first step is to use the two-dimensional saddle point method to derive an asymptotic expansion formula for each term 
\begin{align}
\bar{J}_{N-a}(W(p,q);t)    
\end{align}
where $a\in \{0,..,N-1\}$. We can check the conditions for using the saddle point method hold when the parameter $x=\frac{a+\frac{1}{2}}{N+\frac{1}{2}}$ is small, and we present the asymptotic expansion of $J_{N-a}$ in the form of Proposition \ref{proposition-JN-a}.   

\subsection{Basic computations}
For $m\in \{1,..,N\}$, we have
\begin{align}
    &J_{m}(W(p,q);t)\\\nonumber
    &=\frac{(-1)^{m-1}\{1\}}{\{m\}}\left(\frac{\sin \frac{2\pi }{r}}{\sqrt{r}}\right)^{l+1}e^{\sigma\left(\frac{3}{r}+\frac{r+1}{4}\right)\pi i}\sum_{n_1,...n_l=0}^{r-2}(-1)^{n_l+\sum_{j=1}^lb_jn_j}t^{\sum_{j=1}^l\frac{b_jn_j(n_j+2)}{4}}\\\nonumber
    &\cdot[n_1+1]\prod_{j=1}^{l-1}[(n_j+1)(n_{j+1}+1)]\langle e_{m-1},e_{n_l}\rangle_{W}.
\end{align}
Let $m=N-a$, with the similar method as presented in Section \ref{Section-Potentialfunction},  we obtain
\begin{proposition}  \label{proposition-JN-a0}
\begin{align}
   &J_{N-a}(W(p,q);t)\\\nonumber
   &=-\kappa_N\frac{(-1)^{\frac{3N}{2}+\frac{1}{4}+b_l}\sqrt{2N+1}}{\sin\frac{2\pi(a+\frac{1}{2})}{2N+1}}\sum_{s=0}^{|q|-1}(-1)^{P(s)}\sum_{n'=-N+\frac{1}{2}}^{N-\frac{1}{2}}\sum_{i'=\max\{-n',n',a+\frac{1}{2}\}}^{N-\frac{1}{2}}\\\nonumber
   &\cdot\sin\left(\frac{\pi n'}{(N+\frac{1}{2})q}-J(s)\pi\right)
   e^{(N+\frac{1}{2})V_{N}\left(p,q;s,\frac{a+\frac{1}{2}}{N+\frac{1}{2}},\frac{n'}{N+\frac{1}{2}},\frac{i'}{N+\frac{1}{2}}\right)},   
   \end{align}
where
\begin{align}
    &V_N(p,q;s,x,
    \theta_1,\theta_2)\\\nonumber
    &=\pi \sqrt{-1}\left(\frac{1}{2}-2\theta_2-2x\theta_2-2\theta_1\theta_2+2\theta_2^2+\frac{p}{2q}\theta_1^2-\frac{I(s)}{q}\theta_1-\frac{1}{2}K(s)\right.\\\nonumber
    &\left.-\frac{x+\theta_1+\frac{1}{2}}{N+\frac{1}{2}}-\frac{1}{(N+\frac{1}{2})^2}\right)+\frac{1}{N+\frac{1}{2}}\left(\log 2-\varphi_N\left(1-x-\theta_2-\frac{\frac{1}{2}}{N+\frac{1}{2}}\right)\right.\\\nonumber
    &\left.+\varphi_N\left(-x+\theta_2+\frac{\frac{1}{2}}{N+\frac{1}{2}}\right)-\varphi_{N}\left(1-\theta_1-\theta_2-\frac{\frac{1}{2}}{N+\frac{1}{2}}\right)\right.\\\nonumber
    &\left.+\varphi_N\left(\theta_2-\theta_1+\frac{\frac{1}{2}}{N+\frac{1}{2}}\right)-\varphi_{N}\left(1-\theta_2-\frac{\frac{1}{2}}{N+\frac{1}{2}}\right)+\varphi_{N}\left(1-2\theta_2-\frac{\frac{1}{2}}{N+\frac{1}{2}}\right)\right).
\end{align}
Note that we have introduced the variables
\begin{align}
    x=\frac{a+\frac{1}{2}}{N+\frac{1}{2}}, \ \theta_1=\frac{n'}{N+\frac{1}{2}}, \
    \theta_2=\frac{i'}{N+\frac{1}{2}},
\end{align}
with
\begin{align}
    \frac{\frac{1}{2}}{N+\frac{1}{2}}\leq x&=\frac{a+\frac{1}{2}}{N+\frac{1}{2}}\leq \frac{N-\frac{1}{2}}{N+\frac{1}{2}}, \\\nonumber
    -\theta_2\leq \theta_1&=\frac{n'}{N+\frac{1}{2}}\leq \theta_2,
    \\\nonumber
        x \leq \frac{i'}{N+\frac{1}{2}}&=\theta_2\leq \frac{N-\frac{1}{2}}{N+\frac{1}{2}}.     
\end{align}
\end{proposition}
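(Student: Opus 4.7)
The plan is to mimic, step by step, the computation carried out in Section~\ref{Section-Potentialfunction} that produced Proposition~\ref{proposition-potential}, but now with the Habiro color set to $N-a-1$ rather than $N-1$ and with $r=2N+1$ kept fixed. Concretely, I would start from the same surgery presentation of $L(p,q)$ by the framed chain link with framings $b_1,\dots,b_l$ from the continued fraction of $p/q$, write
\begin{align*}
J_{N-a}(W(p,q);t) \;=\; \tfrac{(-1)^{N-a-1}\{1\}}{\{N-a\}}\,RT_r(L(p,q),L_2;N-a-1),
\end{align*}
and expand $RT_r$ as in (\ref{formula-relativeRT0}) with $e_{m-1}=e_{N-a-1}$ in place of $e_{N-1}$. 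Habiro's formula then produces a sum over $i$ with $0\le i\le \min(N-a-1,n_l')$, and after the shift $i'=\frac{r}{2}-1-i$ this turns into the lower bound $i'\ge \frac{r}{2}-N+a=a+\tfrac12$, which combined with the previous constraints $i'\ge \max\{-n',n'\}$ yields $i'\ge \max\{-n',n',a+\tfrac12\}$. This is the origin of the only structural change in the summation range compared with Proposition~\ref{proposition-potential}.

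Next I would perform exactly the same continued–fraction reduction via Lemma~\ref{lemma-s(n)} to collapse the $n_1,\dots,n_{l-1}$–sums into $S(n_l')$, then apply the change of variables $n'=\frac{r}{2}-n_l'$ and collect the Gaussian and the $P(s)$–sign factors as before. The prefactor $\sin\frac{\pi}{2N+1}$ in the $N$-case gets replaced by $\sin\frac{2\pi(a+\frac12)}{2N+1}$: this comes from
\begin{align*}
\{N-a\}=2\sqrt{-1}\sin\tfrac{2\pi(N-a)}{2N+1}=2\sqrt{-1}\sin\tfrac{2\pi(a+\frac12)}{2N+1},
\end{align*}
using the complementary-angle identity. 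The overall constants $\kappa_N$, $(-1)^{b_l}$, $(-1)^{3N/2+1/4}$ and $e^{\sigma(\ldots)\pi\sqrt{-1}}$ remain the same because they depend only on the framing data of $L_1'$ and on $r=2N+1$, not on the color of $L_2$.

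The final step is converting the ratio of $q$-factorials
\begin{align*}
\frac{\{2(N-a)-i'-\tfrac12\}!\,\{2N-n'-i'\}!\,\{N-a-i'-\tfrac12\}!}{\{i'-\tfrac12\}!\,\{i'-n'\}!\,\{2(N-a)-2i'\}!}
\end{align*}
into a product of exponentials of $\varphi_N$ via Lemma~\ref{lemma-varphixi}. With the scaling $x=(a+\tfrac12)/(N+\tfrac12)$, $\theta_1=n'/(N+\tfrac12)$, $\theta_2=i'/(N+\tfrac12)$, each argument of $\{\,\cdot\,\}!$ falls (for the range relevant to $D_0$) into the window $0\le n\le N$ of Lemma~\ref{lemma-varphixi}, producing precisely the six $\varphi_N$-terms appearing in $V_N(p,q;s,x,\theta_1,\theta_2)$, with the new $x$-dependence entering through the two factorials $\{2(N-a)-i'-\tfrac12\}!$ and $\{N-a-i'-\tfrac12\}!$; these generate the terms $-\varphi_N(1-x-\theta_2-\tfrac{1/2}{N+1/2})$ and $\varphi_N(-x+\theta_2+\tfrac{1/2}{N+1/2})$. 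Combining this with the explicit polynomial and Gaussian exponents, and collecting the $1/(N+\tfrac12)$ and $1/(N+\tfrac12)^2$ corrections, gives exactly the stated potential $V_N(p,q;s,x,\theta_1,\theta_2)$.

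The main obstacle is bookkeeping: one has to verify that every integer $n'$, $i'$, $a$ appearing in a $q$-factorial lands in the regime where the first (rather than the second) formula of Lemma~\ref{lemma-varphixi} applies, and to check carefully that the fractional shifts $\pm\tfrac12/(N+\tfrac12)$ and $\pm 1/(N+\tfrac12)$ attached to each $\varphi_N$ match correctly. All other ingredients — the use of Lemma~\ref{lemma-s(n)}, the collection of the $K(s)$ and $I(s)$ terms, the sign $(-1)^{P(s)}$, and the overall constant $\kappa_N$ — go through verbatim from the proof of Proposition~\ref{proposition-potential}; no new ideas are needed, only the careful tracking of the extra variable $x$ and the modified summation lower bound.
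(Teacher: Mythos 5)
Your overall strategy is the right one and matches what the paper intends (the paper itself just says to repeat Section~\ref{Section-Potentialfunction} with the color changed to $N-a$): track the new color through Habiro's formula, observe that the lower bound on $i$ shifts the $i'$-range to $i'\ge a+\tfrac12$, use $\{N-a\}=2\sqrt{-1}\sin\frac{2\pi(a+\frac12)}{2N+1}$ for the new prefactor, and note that $\kappa_N$ and the other constants do not depend on the color of $L_2$. These observations are all correct.

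However, your ratio of $q$-factorials is wrong, and the error propagates into your explanation of where the $x$-dependence in $V_N$ comes from. In Habiro's formula for $\langle e_{m-1},e_{n_l}\rangle_W$, only $\{m+i\}!$ and $\{m-1-i\}!$ carry the color $m$; the factors $\{n_l+i+1\}!$, $\{i\}!$, $\{n_l-i\}!$, $\{2i+1\}!$ do not. After the paper's change of variables $i'=\tfrac{r}{2}-1-i$, $n'=\tfrac{r}{2}-n'_l$ with $m=N-a$, the correct ratio is
\begin{align*}
\frac{\{2N-a-i'-\tfrac12\}!\,\{2N-n'-i'\}!\,\{N-i'-\tfrac12\}!}{\{i'-a-\tfrac12\}!\,\{i'-n'\}!\,\{2N-2i'\}!},
\end{align*}
whereas you wrote
\begin{align*}
\frac{\{2(N-a)-i'-\tfrac12\}!\,\{2N-n'-i'\}!\,\{N-a-i'-\tfrac12\}!}{\{i'-\tfrac12\}!\,\{i'-n'\}!\,\{2(N-a)-2i'\}!}.
\end{align*}
You shifted $N\to N-a$ in the factorials coming from $\{i\}!$ and $\{2i+1\}!$ (which should not change) and missed the shift in $\{m-1-i\}!$, which should become $\{i'-a-\tfrac12\}!$. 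Consequently, your claim that the two factorials $\{2(N-a)-i'-\tfrac12\}!$ and $\{N-a-i'-\tfrac12\}!$ generate $-\varphi_N\bigl(1-x-\theta_2-\tfrac{1/2}{N+1/2}\bigr)$ and $\varphi_N\bigl(-x+\theta_2+\tfrac{1/2}{N+1/2}\bigr)$ is not right; those two $\varphi_N$-terms actually come from $\{2N-a-i'-\tfrac12\}!$ (via Lemma~\ref{lemma-varphixi}(b), since $1-x-\theta_2-\tfrac{1/2}{N+1/2}=\tfrac{N-a-i'-1/2}{N+1/2}$) and from $\{i'-a-\tfrac12\}!$ (via Lemma~\ref{lemma-varphixi}(a), since $-x+\theta_2+\tfrac{1/2}{N+1/2}=\tfrac{i'-a}{N+1/2}$). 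With the correct factorials, the rest of the bookkeeping you describe goes through and does reproduce the stated $V_N(p,q;s,x,\theta_1,\theta_2)$.
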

Note that the expression of $V_{N}$ is actually  different when $(x,\theta_1,\theta_2)$ lies in different regions, see Proposition \ref{proposition-potential}, which is the special case of the above Proposition with $a=\frac{1}{2}$. 

We also define
\begin{align}
    &V(p,q;s,x,\theta_1,\theta_2)\\\nonumber
    &=\lim_{N\rightarrow \infty}V_N(p,q;s,x,\theta_1,\theta_2)\\\nonumber
    &=\pi \sqrt{-1}\left(\frac{1}{2}-2\theta_2-2x\theta_2-2\theta_1\theta_2+2\theta_2^2+\frac{p}{2q}\theta_1^2-\frac{I(s)}{q}\theta_1-\frac{1}{2}K(s)\right)\\\nonumber
    &+\frac{1}{2\pi\sqrt{-1}}\left(-\text{Li}_2(e^{2\pi\sqrt{-1}(-x-\theta_2)})+\text{Li}_2(e^{2\pi\sqrt{-1}(\theta_2-x)})-\text{Li}_2(e^{2\pi\sqrt{-1}(-\theta_1-\theta_2)})\right.\\\nonumber
    &\left.+\text{Li}_2(e^{2\pi\sqrt{-1}(\theta_2-\theta_1)})-\text{Li}_2(e^{-2\pi\sqrt{-1}\theta_2})+\text{Li}_2(e^{-4\pi\sqrt{-1}\theta_2}) \right). 
\end{align}

In particular, when $x=0$, $V(p,q;s,0,\theta_1,\theta_2)$ is just the function $V(p,q;s,\theta_1,\theta_2,0,0)$ shown in formula $(\ref{formula-Vpqs})$. 

\subsection{Hessian matrix}
The Hessian matrix plays important roles in proving the asymptotic expansion formula by using saddle point method.  We consider the Hessian matrix 
\begin{align}
f^{\pm}(p,q;x,\theta_1,X_1,\theta_2,X_2)=\text{Re}V^{\pm}(p,q;x,\theta_1+\sqrt{-1}X_1,\theta_2+\sqrt{-1}X_2).    
\end{align} 

Since $I(s^\pm)=\pm 1-q+2m^{\pm}q$, we have
\begin{align}
    &V(p,q;s^\pm,m^{\pm},1,x,
    \theta_1,\theta_2)\\\nonumber
    &=\pi \sqrt{-1}\left(\frac{1}{2}-2x\theta_2-2\theta_1\theta_2+2\theta_2^2+\frac{p}{2q}\theta_1^2+(1\pm \frac{1}{q})\theta_1-\frac{1}{2}K(s^\pm)\right)\\\nonumber
    &+\frac{1}{2\pi\sqrt{-1}}\left(-\text{Li}_2(e^{-2\pi\sqrt{-1}(x+\theta_2)})+\text{Li}_2(e^{2\pi\sqrt{-1}(-x+\theta_2)})\right.\\\nonumber
    &\left.-\text{Li}_2(e^{2\pi\sqrt{-1}(-\theta_1-\theta_2)})+\text{Li}_2(e^{2\pi\sqrt{-1}(\theta_2-\theta_1)})-\text{Li}_2(e^{-2\pi\sqrt{-1}\theta_2})+\text{Li}_2(e^{-4\pi\sqrt{-1}\theta_2})\right).
\end{align}
We define
\begin{align}
&V^{\pm}(p,q;x,\theta_1,\theta_2)\\\nonumber
&=V(p,q;s^\pm,m^{\pm},1,x,
    \theta_1,\theta_2)+\left(\frac{K(s^{\pm})}{2}+\frac{p'}{2q}\right)\pi\sqrt{-1}\\\nonumber
    &=\pi \sqrt{-1}\left(\frac{1}{2}+\frac{p'}{2q}^2-2x\theta_2-2\theta_1\theta_2+2\theta_2^2+\frac{p}{2q}\theta_1^2+(1\pm \frac{1}{q})\theta_1\right)\\\nonumber
    &+\frac{1}{2\pi\sqrt{-1}}\left(-\text{Li}_2(e^{-2\pi\sqrt{-1}(x+\theta_2)})+\text{Li}_2(e^{2\pi\sqrt{-1}(-x+\theta_2)})\right.\\\nonumber
    &\left.-\text{Li}_2(e^{2\pi\sqrt{-1}(-\theta_1-\theta_2)})+\text{Li}_2(e^{2\pi\sqrt{-1}(\theta_2-\theta_1)})-\text{Li}_2(e^{-2\pi\sqrt{-1}\theta_2})+\text{Li}_2(e^{-4\pi\sqrt{-1}\theta
    _2})\right),
\end{align}
then we have
\begin{align}
V^{\pm}_{\theta_1}=\pi\sqrt{-1}\left(\mp \frac{1}{q}+1-2\theta_2+\frac{p}{q}\theta
_1\right)-\log(1-e^{2\pi\sqrt{-1}(-\theta_1-\theta_2)})+\log(1-e^{2\pi\sqrt{-1}(\theta_2-\theta_1)}),    
\end{align}
and 
\begin{align}
V^{\pm}_{\theta_2}&=\pi\sqrt{-1}(-2x-2\theta_1+4\theta_2)-\log(1-e^{2\pi\sqrt{-1}(-x-\theta_2)})-\log(1-e^{2\pi\sqrt{-1}(\theta_2-x)})\\\nonumber 
&-\log(1-e^{2\pi\sqrt{-1}(-\theta_1-\theta_2)})-\log(1-e^{2\pi\sqrt{-1}(\theta_2-\theta_1)})\\\nonumber
&-\log(1-e^{-2\pi\sqrt{-1}\theta_2})+2\log(1-e^{-4\pi\sqrt{-1}\theta_2}). 
\end{align}
Set $z_1=e^{2\pi\sqrt{-1}\theta_1}$ and $z_2=e^{2\pi\sqrt{-1}\theta_2}$, 
then we have
\begin{align}
V^{\pm}_{\theta_1\theta_1}&=\pi\sqrt{-1}\frac{p}{q}-\frac{2\pi\sqrt{-1}e^{2\pi\sqrt{-1}(-\theta
_1-\theta_2)}}{1-e^{2\pi\sqrt{-1}(-\theta_1-\theta_2)}}+\frac{2\pi\sqrt{-1}e^{2\pi\sqrt{-1}(-\theta
_1+\theta_2)}}{1-e^{2\pi\sqrt{-1}(-\theta_1+\theta_2)}}\\\nonumber
&=2\pi\sqrt{-1}\left(\frac{p}{2q}+\frac{1}{1-z_1z_2}-\frac{1}{1-z_1/z_2}\right),
\end{align}

\begin{align}
V^{\pm}_{\theta_1\theta_2}&=-2\pi\sqrt{-1}-\frac{2\pi\sqrt{-1}e^{2\pi\sqrt{-1}(-\theta
_1-\theta_2)}}{1-e^{2\pi\sqrt{-1}(-\theta_1-\theta_2)}}-\frac{2\pi\sqrt{-1}e^{2\pi\sqrt{-1}(-\theta
_1+\theta_2)}}{1-e^{2\pi\sqrt{-1}(-\theta_1+\theta_2)}}\\\nonumber
&=2\pi\sqrt{-1}\left(-1+\frac{1}{1-z_1z_2}+\frac{1}{1-z_1/z_2}\right),
\end{align}
and
\begin{align}
V^{\pm}_{\theta_2\theta_2}&=4\pi\sqrt{-1}-\frac{2\pi\sqrt{-1}e^{2\pi\sqrt{-1}(-x-\theta_2)}}{1-e^{2\pi\sqrt{-1}(-x-\theta_2)}}+\frac{2\pi\sqrt{-1}e^{2\pi\sqrt{-1}(-x+\theta_2)}}{1-e^{2\pi\sqrt{-1}(-x+\theta_2)}}\\\nonumber
&-\frac{2\pi\sqrt{-1}e^{2\pi\sqrt{-1}(-\theta_1-\theta_2)}}{1-e^{2\pi\sqrt{-1}(-\theta_1-\theta_2)}}+\frac{2\pi\sqrt{-1}e^{2\pi\sqrt{-1}(-\theta_1+\theta_2)}}{1-e^{2\pi\sqrt{-1}(-\theta_1+\theta_2)}}\\\nonumber
&-\frac{2\pi\sqrt{-1}e^{-2\pi\sqrt{-1}\theta_2}}{1-e^{-2\pi\sqrt{-1}\theta_2}}+\frac{8\pi\sqrt{-1}e^{-4\pi\sqrt{-1}\theta_2}}{1-e^{-4\pi\sqrt{-1}\theta_2}}\\\nonumber
&=2\pi\sqrt{-1}\left(2+\frac{1}{1-e^{2\pi\sqrt{-1}x}z_2}-\frac{1}{1-e^{2\pi\sqrt{-1}x}/z_2}+\frac{1}{1-z_1z_2}\right.\\\nonumber
&\left.-\frac{1}{1-z_1/z_2}+\frac{1}{1-z_2}-\frac{4}{1-z_2^2}\right).
\end{align}

We introduce 
\begin{align}
f^{\pm}(p,q;x,\theta_1,X_1,\theta_2,X_2)=\text{Re}V^{\pm}(p,q;x,\theta_1+\sqrt{-1}X_1,\theta_2+\sqrt{-1}X_2)    
\end{align}
then the Hessian matrix of $f$ is given by 
\begin{align}
\text{Hess}(f)=2\pi \begin{pmatrix}
    b+c & b-c \\
    b-c & b+c+d
\end{pmatrix}
\end{align}
where 
\begin{align}
b=\text{Im}\frac{1}{1-z_1z_2}=\frac{\sin(2\pi(\theta_1+\theta_2))}{e^{2\pi(X_1+X_2)}+e^{-2\pi(X_1+X_2)}-2\cos(2\pi(\theta_1+\theta_2))}    
\end{align}
\begin{align}
c=-\text{Im}\frac{1}{1-z_1/z_2}=\frac{\sin(2\pi(\theta_2-\theta_1))}{e^{2\pi(X_2-X_1)}+e^{-2\pi(X_2-X_1)}-2\cos(2\pi(\theta_2-\theta_1))}    
\end{align}
\begin{align}
d&=\text{Im}\left(\frac{1}{1-e^{2\pi\sqrt{-1}x}z_2}-\frac{1}{1-e^{2\pi\sqrt{-1}x}/z_2}+\frac{1}{1-z_2}-\frac{4}{1-z_2^2}\right)\\\nonumber
&=\text{Im}\left(\frac{2-2\cos(2\pi x)z_2}{1-2\cos(2\pi x)z_2+z_2^2}-\frac{1}{1-z_2}+\frac{2z_2-2}{1-z_2^2}\right).
\end{align}

\begin{lemma}
Given an $\epsilon>0$, if $\epsilon<\theta_2<\frac{1}{2}$ and $0<x<\min\{\frac{1}{\pi}\arccos{\sqrt{\frac{3+\sqrt{9+16\cos^2(2\pi\epsilon)}}{8}}},\frac{1}{6}\}$, then we have $d>0$.
\end{lemma}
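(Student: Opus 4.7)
The plan is to evaluate $d$ explicitly on the unit circle $|z_2|=1$ and thereby reduce positivity to an algebraic inequality in $C := \cos(2\pi\theta_2)$ and $D := \cos(2\pi x)$. First I would substitute $z_2 = e^{2\pi\sqrt{-1}\theta_2}$ with real $\theta_2$, use the partial-fraction identity
$$\frac{2-2\cos(2\pi x)z_2}{1-2\cos(2\pi x)z_2+z_2^2} = \frac{1}{1-e^{2\pi\sqrt{-1}x}z_2} + \frac{1}{1-e^{-2\pi\sqrt{-1}x}z_2},$$
and rewrite $\frac{2z_2-2}{1-z_2^2} = -\frac{2}{1+z_2}$. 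Applying the elementary identity $\text{Im}\,(1-e^{\sqrt{-1}\alpha})^{-1} = \tfrac{1}{2}\cot(\alpha/2)$ to each of the resulting simple fractions yields
$$d = \tfrac{1}{2}\big[\cot(\pi(\theta_2+x)) + \cot(\pi(\theta_2-x))\big] - \tfrac{1}{2}\cot(\pi\theta_2) + \tan(\pi\theta_2).$$

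Next I would apply $\cot(\alpha+\beta)+\cot(\alpha-\beta) = \sin(2\alpha)/[\sin(\alpha+\beta)\sin(\alpha-\beta)]$ together with $\sin(\alpha+\beta)\sin(\alpha-\beta) = \tfrac{1}{2}(\cos 2\beta - \cos 2\alpha)$, and use $-\tfrac{1}{2}\cot(\pi\theta_2)+\tan(\pi\theta_2) = (1-3C)/(2S)$ where $S := \sin(2\pi\theta_2)$. Collecting everything over a common denominator gives
$$d = \frac{S}{D-C} + \frac{1-3C}{2S} = \frac{N(C,D)}{2S(D-C)}, \qquad N(C,D) := C^2 - (3D+1)C + (D+2).$$

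For the denominator, clearly $S > 0$ on $\theta_2 \in (0,1/2)$. The hypothesis $x < \tfrac{1}{\pi}\arccos\sqrt{(3+\sqrt{9+16\cos^2(2\pi\epsilon)})/8}$ rearranges, via $\cos(2\pi x) = 2\cos^2(\pi x)-1$, to $D > D^* := (\sqrt{9+16E^2}-1)/4$ with $E := \cos(2\pi\epsilon)$. A short squaring argument ($\sqrt{9+16E^2} > 4E+1 \Leftrightarrow 8 > 8E$) shows $D^* > E$ whenever $\epsilon > 0$; since $\theta_2 > \epsilon$ implies $C < E$, we conclude $D - C > D^* - E > 0$.

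For the numerator I treat $N$ as linear in $D$: $N(C,D) = (1-3C)D + (C^2-C+2)$. When $C \le 1/3$, the coefficient $1-3C$ is non-negative and $C^2-C+2$ is strictly positive (its discriminant is $1-8<0$), so $N > 0$. When $C > 1/3$, $N$ is decreasing in $D$, and since $D \le 1$ we have $N(C,D) \ge N(C,1) = C^2-4C+3 = (C-1)(C-3) > 0$ for $C < 1$, which holds on $\theta_2 < 1/2$. Combining the two displays yields $d > 0$. The only delicate step is the algebraic simplification above --- in particular, the collapse of $\sin(\pi(\theta_2+x))\sin(\pi(\theta_2-x))$ into $\tfrac{1}{2}(\cos 2\pi x - \cos 2\pi\theta_2)$, which is precisely what produces the factor $D-C$ in the denominator; all subsequent positivity arguments are routine case-checks on the size of $C$.
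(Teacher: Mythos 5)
Your proposal contains a genuine gap: it proves the inequality only on the real slice $X_2 = 0$, i.e., for $|z_2| = 1$, whereas the lemma must hold for all $X_2 \in \mathbb{R}$. In the paper's setup, $d$ is one entry of the Hessian $\mathrm{Hess}(f)$ of $f = \mathrm{Re}\,V^\pm(p,q;x,\theta_1 + \sqrt{-1}X_1,\theta_2 + \sqrt{-1}X_2)$ taken with respect to the imaginary parts $(X_1,X_2)$, and the $z_2$ that enters the definition of $d$ is $z_2 = e^{2\pi\sqrt{-1}(\theta_2+\sqrt{-1}X_2)}$, a point of modulus $e^{-2\pi X_2}$ that is not on the unit circle when $X_2\ne 0$. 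Positivity of this Hessian is needed at points of the deformed contour $S_{top}$, where the imaginary parts are pinned to $(\mathrm{Im}(\theta_1(x)),\mathrm{Im}(\theta_2(x)))\neq(0,0)$, so the claim at $X_2=0$ alone is insufficient. This is visible in the paper's own proof, which keeps the $X_2$-dependence throughout and ultimately reduces positivity to that of a quadratic polynomial in $t := e^{2\pi X_2}+e^{-2\pi X_2}\ge 2$, namely
\begin{align*}
(2\cos(2\pi x)-1)\,t^2 - 4\cos(2\pi\theta_2)\,t + 4\cos^2(2\pi\theta_2) + 4\sin^2(2\pi x) > 0,
\end{align*}
whose discriminant is controlled exactly by the hypothesis $\cos^2(\pi x) > \tfrac{3+\sqrt{9+16\cos^2(2\pi\theta_2)}}{8}$. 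The restriction to $X_2=0$ is not a removable convenience: your computation shows $d>0$ at $X_2=0$, and direct inspection shows $d\to 0$ as $X_2\to\pm\infty$ (since $z_2\to 0$ or $\infty$ and all imaginary parts vanish), but nothing you wrote rules out $d$ dipping below zero at intermediate $X_2$.

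What you did get right, and it is worth noting, is that your trigonometric reduction on the unit circle is correct, and the threshold you derive, $D > D^* := \tfrac{\sqrt{9+16E^2}-1}{4}$ with $E = \cos(2\pi\epsilon)$, is precisely the condition under which the paper's quadratic has non-positive discriminant; your identity $\sin(\pi(\theta_2+x))\sin(\pi(\theta_2-x)) = \tfrac{1}{2}(\cos 2\pi x - \cos 2\pi\theta_2)$ is the $t=2$ specialization of the factorization $e^{4\pi X_2}+e^{-4\pi X_2}-2\cos(4\pi\theta_2)=(t-2\cos 2\pi\theta_2)(t+2\cos 2\pi\theta_2)$ that the paper uses. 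To repair the proof, you would need to replace your scalar inequality $N(C,D)>0$ by the corresponding quadratic-in-$t$ inequality and verify it for all $t\ge 2$: check the leading coefficient $2\cos(2\pi x)-1>0$ (from $x<\tfrac{1}{6}$), and then either the discriminant is non-positive (your threshold condition) or the vertex lies at $t<2$ with the value at $t=2$ positive.
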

\begin{proof}
First, note that when $0<\theta_2<\frac{1}{2}$, we have 
\begin{align}
    \text{Im}\left(\frac{2z_2-2}{1-z_2^2}\right)=2\sin(2\pi \theta_2)\frac{e^{2\pi X_2}+e^{-2\pi X_2 }-2\cos(2\pi \theta_2)}{e^{4\pi X_2}+e^{-4\pi X_2 }-2\cos(4\pi \theta_2)}>0.
\end{align}

On the other hand, if $\epsilon<\theta_2<\frac{1}{2}$ and $0<x<\frac{1}{\pi}\arccos{\sqrt{\frac{3+\sqrt{9+16\cos^2(2\pi\epsilon)}}{8}}}$, then we have 
\begin{align}
    \cos^2(\pi x)>\frac{3+\sqrt{9+16\cos^2(2\pi \theta_2)}}{8}
\end{align}
which implies $4\cos^4(\pi x)-3\cos^2(\pi x)-\cos^2(2\pi\theta_2)>0$, i.e.
\begin{align}
    \cos^2(2\pi \theta_2)-(2\cos(2\pi x)-1)\cos^2(\pi x)<0
\end{align}
Hence 
\begin{align}
    &\cos^2(2\pi \theta_2)-(2\cos(2\pi x)-1)(\cos^2(2\pi \theta_2)+\sin^2(2\pi x))\\\nonumber
    &=4\sin^2(\pi x)(\cos^2(2\pi \theta_2)-(2\cos(2\pi x)-1)\cos^2(\pi x))<0. 
\end{align}

If $x<\frac{1}{6}$, then $2\cos(2\pi x)-1>0$. Therefore, as a quadratic polynomial of $e^{2\pi X_2}+e^{-2\pi X_2}$,  
\begin{align}
    &(2\cos(2\pi x)-1)(e^{2\pi X_2}+e^{-2\pi X_2})^2-4\cos(2\pi \theta_2)(e^{2\pi X_2}+e^{-2\pi X_2})\\\nonumber&+4\cos^2(2\pi \theta_2)+4\sin^2(2\pi x)>0
\end{align}
it implies that 
\begin{align}
    &2\frac{-2\cos(2\pi \theta_2)+\cos(2\pi x)(e^{2\pi X_2}+e^{-2\pi X_2})}{e^{4\pi X_2}+2\cos(4\pi \theta_2)+e^{-4\pi X_2}-4\cos(2\pi x)\cos(2\pi\theta_2)(e^{2\pi X_2}+e^{-2\pi X_2})+4\cos^2(2\pi x)}\\\nonumber&>\frac{1}{e^{2\pi X_2}-2\cos(2\pi \theta_2)+e^{-2\pi X_2}}
\end{align}
hence
\begin{align} \label{formula-inequ}
   &2\frac{-\sin(4\pi \theta_2)+\cos(2\pi x)\sin(2\pi \theta_2)e^{2\pi X_2}+\sin(2\pi \theta_2)e^{-2\pi X_2}}{e^{4\pi X_2}+2\cos(4\pi \theta_2)+e^{-4\pi X_2}-4\cos(2\pi x)\cos(2\pi\theta_2)(e^{2\pi X_2}+e^{-2\pi X_2})+4\cos^2(2\pi  x)}\\\nonumber & >\frac{\sin(2\pi \theta_2)}{e^{2\pi X_2}-2\cos(2\pi \theta_2)+e^{-2\pi X_2}}.
\end{align}
By straightforward computations, we have
\begin{align}
&\text{Im}\left(\frac{2-2\cos(2\pi x)z_2}{1-2\cos(2\pi x)z_2+z_2^2}-\frac{1}{1-z_2}\right)\\\nonumber
&=\text{Im}\left(\frac{2-2\cos(2\pi x)e^{2\pi\sqrt{-1}(\theta_2+\sqrt{-1}X_2)}}{1-2\cos(2\pi x)e^{2\pi\sqrt{-1}(\theta_2)}}-\frac{1}{1-e^{2\pi\sqrt{-1}(\theta_2+\sqrt{-1}X_2)}}\right)\\\nonumber
&=2\frac{-\sin(4\pi \theta_2)+\cos(2\pi x)\sin(2\pi \theta_2)e^{2\pi X_2}+\sin(2\pi \theta_2)e^{-2\pi X_2}}{e^{4\pi X_2}+2\cos(4\pi \theta_2)+e^{-4\pi X_2}-4\cos(2\pi x)\cos(2\pi\theta_2)(e^{2\pi X_2}+e^{-2\pi X_2})+4\cos^2(2\pi x)}\\\nonumber & -\frac{\sin(2\pi \theta_2)}{e^{2\pi X_2}-2\cos(2\pi \theta_2)+e^{-2\pi X_2}}>0
\end{align}
by formula (\ref{formula-inequ}). 

Therefore, 
\begin{align}
    d=\text{Im}\left(\frac{2-2\cos(2\pi x)z_2}{1-2\cos(2\pi x)z_2+z_2^2}-\frac{1}{1-z_2}+\frac{2z_2-2}{1-z_2^2}\right)>0.
\end{align}
\end{proof}

\begin{proposition} \label{proposition-HessianPositive}
When $0\leq \theta_1+\theta_2, \theta_2-\theta_1\leq \frac{1}{2}$,  $\epsilon \leq \theta_2\leq \frac{1}{2}$, and 
\begin{align}
    0<x<\min\{\frac{1}{\pi}\arccos{\sqrt{\frac{3+\sqrt{9+16\cos^2(2\pi\epsilon)}}{8}}},\frac{1}{6}\},
\end{align}
 then we have
\begin{align}
  \text{Hess}(f)=2\pi \begin{pmatrix}
    b+c & b-c \\
    b-c & b+c+d
\end{pmatrix} 
\end{align}
is positive definite.
\end{proposition}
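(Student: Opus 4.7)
The plan is to verify positive definiteness of the $2\times 2$ real symmetric matrix
\[
\mathrm{Hess}(f)=2\pi\begin{pmatrix} b+c & b-c \\ b-c & b+c+d\end{pmatrix}
\]
by checking its two leading principal minors, namely the $(1,1)$ entry $b+c$ and the determinant. The preceding lemma, which is the analytic heart of the matter, has already extracted the delicate fact $d>0$ under the stated range of $x$ and $\theta_2$; the remaining work is essentially bookkeeping.

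First I would show that both $b>0$ and $c>0$ strictly, so that the diagonal entry $b+c$ is positive. Using the explicit formulas
\[
b=\frac{\sin(2\pi(\theta_1+\theta_2))}{e^{2\pi(X_1+X_2)}+e^{-2\pi(X_1+X_2)}-2\cos(2\pi(\theta_1+\theta_2))},
\]
\[
c=\frac{\sin(2\pi(\theta_2-\theta_1))}{e^{2\pi(X_2-X_1)}+e^{-2\pi(X_2-X_1)}-2\cos(2\pi(\theta_2-\theta_1))},
\]
positivity of the numerators follows from $0<\theta_1+\theta_2<\tfrac12$ and $0<\theta_2-\theta_1<\tfrac12$, while the denominators are bounded below by $2-2\cos(2\pi(\theta_1+\theta_2))>0$ and $2-2\cos(2\pi(\theta_2-\theta_1))>0$ respectively via the elementary inequality $e^{2\pi Y}+e^{-2\pi Y}\geq 2$.

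Second, I would compute the determinant and simplify it algebraically:
\[
(b+c)(b+c+d)-(b-c)^2=(b+c)^2-(b-c)^2+(b+c)d=4bc+(b+c)d.
\]
Combined with $b,c>0$ from the first step and $d>0$ from the preceding lemma (which is precisely why the technical bound $x<\tfrac{1}{\pi}\arccos\sqrt{(3+\sqrt{9+16\cos^2(2\pi\epsilon)})/8}$ and $x<\tfrac16$ is imposed), this gives $\det\mathrm{Hess}(f)>0$. The two leading principal minors being positive, Sylvester's criterion yields positive definiteness.

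The only genuinely hard step has already been isolated into the previous lemma, whose proof required a careful trigonometric inequality reducing $d>0$ to a quadratic-in-$(e^{2\pi X_2}+e^{-2\pi X_2})$ discriminant analysis; the present proposition is essentially a clean packaging of that result through the identity $\det=4bc+(b+c)d$. No additional analytic difficulty is expected.
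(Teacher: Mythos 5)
Your proof is correct and supplies exactly the argument the paper leaves implicit: Proposition \ref{proposition-HessianPositive} is stated in the paper without an explicit proof, immediately after the lemma establishing $d>0$, so the intended route is precisely Sylvester's criterion with the identity $\det(\text{Hess}(f))/(2\pi)^2=(b+c)(b+c+d)-(b-c)^2=4bc+(b+c)d$ together with $b,c>0$ from the sign of $\sin(2\pi(\theta_2\pm\theta_1))$ and $d>0$ from the lemma. One small caveat worth noting: the proposition as written allows the closed ranges $0\le\theta_1+\theta_2\le\tfrac12$ and $0\le\theta_2-\theta_1\le\tfrac12$, on whose boundary $b$ or $c$ can vanish (e.g.\ $\theta_1+\theta_2\in\{0,\tfrac12\}$ gives $b=0$). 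If only one of $b,c$ vanishes, the determinant is still $bd$ or $cd>0$ and positive definiteness survives; but if both vanish (e.g.\ $\theta_1=\theta_2=\tfrac14$), the matrix degenerates to $2\pi\,\mathrm{diag}(0,d)$ and is only positive semidefinite. Your argument therefore requires strict inequalities, matching the open region used in the analogous Proposition \ref{Prop-Hessian} earlier in the paper; it would be cleaner to either restrict to the open region or add the one-line remark handling the boundary cases where exactly one of $b,c$ vanishes.
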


We set $x_0=0.01$ which is determined by the computations in the following. Suppose $a\in [0,N-1]$ is a sequence integers of $N$ satisfying $x=\frac{a+\frac{1}{2}}{N+\frac{1}{2}}$,   we will consider the asymptotic expansion of $J_{N-a}(W(p,q);t)$ for $x\in [0,x_0)$ and $x\in [x_0,1)$ respectively.

\subsection{Two-dimensional saddle point method}
Fix $x\in [0,x_0)$, we consider  the critical point equations 
\begin{align} \label{equation-crit1x}
V^{+}_{\theta_1}&=\pi\sqrt{-1}(-\frac{1}{q}+1-2\theta_2+\frac{p}{q}\theta_1)-\log(1-e^{2\pi\sqrt{-1}(-\theta_1-\theta_2)})\\\nonumber
&+\log(1-e^{2\pi\sqrt{-1}(\theta_2-\theta_1)})=0,    
\end{align}
and 
\begin{align} \label{equation-crit2x}
V^{+}_{\theta_2}&=\pi\sqrt{-1}(-2x-2\theta_1+4\theta_2)-\log(1-e^{2\pi\sqrt{-1}(-x-\theta_2)})-\log(1-e^{2\pi\sqrt{-1}(\theta_2-x)})\\\nonumber 
&-\log(1-e^{2\pi\sqrt{-1}(-\theta_1-\theta_2)})-\log(1-e^{2\pi\sqrt{-1}(\theta_2-\theta_1)})\\\nonumber
&-\log(1-e^{-2\pi\sqrt{-1}\theta_2})+2\log(1-e^{-4\pi\sqrt{-1}\theta_2})=0.
\end{align}

\begin{proposition} \label{proposition-existence-x}
For
$x\in [0,x_0)$ and  $p\geq 1000$ or $q\geq 1000$,  the above critical point equations (\ref{equation-crit1x}) and (\ref{equation-crit2x})  has a unique solution $(\theta_1(x),\theta_2(x))$  lies in the region $D(x)$, where $D(x)=\{(x,\theta_1,\theta_2)|(\theta_1,\theta_2)\in D\}$. 
\end{proposition}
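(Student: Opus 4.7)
Plan: The strategy is a continuation argument anchored at $x=0$. First, I would observe that the system (\ref{equation-crit1x})--(\ref{equation-crit2x}) at $x=0$ is structurally parallel to the RT critical system (\ref{equation-crit1})--(\ref{equation-crit2}). Adapting the algebraic manipulation that led to Corollary \ref{coro-critunique}, I would eliminate $\theta_1$ from the $\theta_1$-equation, reduce the $\theta_2$-equation to a single transcendental equation in one complex variable $z$, and appeal to Neumann--Zagier uniqueness (cf.\ Remark \ref{remark-conjugate}) to produce a unique solution $(\theta_1(0),\theta_2(0))$ in the interior of $D$. The hypothesis $p\geq 1000$ or $q\geq 1000$ is first used here to guarantee that this base point lies in a controlled compact subregion, bounded away from $\partial D$.

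With the base point in hand, I would apply the implicit function theorem. Proposition \ref{proposition-HessianPositive} shows that the Hessian $\text{Hess}(f^+)(x;\theta_1,\theta_2)$ is positive definite on all of $D(x)$ once $x\in[0,x_0)$ with $x_0=0.01$ and $\theta_2$ is bounded below by a fixed $\epsilon>0$. In particular $\det\text{Hess}(V^+)\neq 0$ along the candidate solution curve, so the implicit function theorem yields a unique smooth continuation $x\mapsto(\theta_1(x),\theta_2(x))$ of the base solution, defined on a maximal subinterval $[0,x^\ast)\subset[0,x_0)$ along which the curve remains in the interior of $D(x)$.

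Uniqueness on all of $D(x)$, not just along the continuation curve, follows from the strict convexity of $\text{Re}(-\sqrt{-1}V^+)$ in $(\text{Re}\,\theta_1,\text{Re}\,\theta_2)$ (equivalently, strict concavity in the imaginary variables via harmonicity of the real part), which is exactly the content of the positive definiteness asserted by Proposition \ref{proposition-HessianPositive}. This convexity makes the gradient map $(\theta_1,\theta_2)\mapsto(V^+_{\theta_1},V^+_{\theta_2})$ injective on $D(x)$, so the continuation curve accounts for every critical point.

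The main obstacle is verifying $x^\ast=x_0$, namely that the continuation does not collide with $\partial D$ before $x$ reaches $x_0$. To close this gap I would differentiate (\ref{equation-crit1x})--(\ref{equation-crit2x}) in $x$ to obtain the ODE $(\theta_1'(x),\theta_2'(x))^{\mathsf T}=-\text{Hess}(V^+)^{-1}\,\partial_x\nabla V^+$, and bound its right-hand side via the smallest eigenvalue of $\text{Hess}$. This is where the hypothesis $p\geq 1000$ or $q\geq 1000$ is used a second time: for such $(p,q)$ the complete-structure solution of (\ref{equation:geometry-one}) lies in a compact subset of $\{\text{Im}(z)<0\}$ uniformly bounded away from the degeneracy locus $\{z=0,\pm 1\}$, which via (\ref{formula-transform}) translates into a uniform lower bound on both $\text{dist}((\theta_1^0,\theta_2^0),\partial D)$ and the smallest eigenvalue of $\text{Hess}$. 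Integrating the ODE over $[0,x_0)$ with $x_0=0.01$ then yields a total displacement strictly smaller than this distance, which forces $x^\ast=x_0$ and completes the proof.
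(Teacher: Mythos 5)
Your proposal takes a genuinely different route from the paper's. The paper proves existence directly at each $x\in[0,x_0)$ via the Poincar\'e--Miranda theorem: writing $\gamma=1/q$ (or $\tilde\gamma=1/p$), it exhibits an explicit square in the $\theta_1$-plane with center of order $\gamma$ and side of order $\gamma^2$ and checks the boundary sign conditions, with $q\geq 1000$ (or $p\geq 1000$) serving precisely to make $\gamma$ small enough for those trigonometric estimates to close; uniqueness is then referred to the analogous argument of \cite{CZ23-1}. You instead anchor at $x=0$ --- where (\ref{equation-crit1x})--(\ref{equation-crit2x}) do reduce to (\ref{equation-crit1})--(\ref{equation-crit2}), so Corollary \ref{coro-critunique} supplies the base point --- and continue in $x$ by the implicit function theorem. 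The architecture is defensible and ties the $x$-family conceptually to the $x=0$ hyperbolic geometry, where the paper opts for a self-contained quantitative argument at each $x$.

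There are two gaps. First, the uniqueness step is mis-stated: Proposition \ref{proposition-HessianPositive} asserts positive definiteness of $\text{Hess}_{(X_1,X_2)}(\text{Re}\,V^+)$, not of $\text{Re}(-\sqrt{-1}V^+)=\text{Im}(V^+)$ in the real directions, and in any case convexity of a pluriharmonic function along a two-dimensional slice does not by itself make the full four-real-variable gradient injective, since the full real Hessian of $\text{Re}\,V^+$ is a saddle at every point. The conclusion \emph{can} be rescued: since $\partial^2_{X_jX_k}\text{Re}\,V^+=-\text{Re}(V^+_{\theta_j\theta_k})$, Proposition \ref{proposition-HessianPositive} is equivalent to $\text{Re}\,\text{Hess}(V^+)$ being negative definite, and then the monotonicity estimate $\text{Re}\langle\nabla V^+(\theta^1)-\nabla V^+(\theta^0),\theta^1-\theta^0\rangle<0$ along a segment gives injectivity of $\nabla V^+$ on any \emph{convex} set where that negativity holds. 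But Proposition \ref{proposition-HessianPositive} grants this only on the sub-tube $\{\theta_{2R}\geq\epsilon\}$ with $\epsilon$ tied to $x_0$, so you must separately rule out critical points with $\theta_{2R}<\epsilon$ (compare the role of Proposition \ref{prop-0dim}). Second, the ODE bound that keeps the continuation inside $D(x)$ for all $x<x_0$ is left at the level of a plan; turning $(\theta_1',\theta_2')^{\mathsf T}=-\text{Hess}(V^+)^{-1}\,\partial_x\nabla V^+$ into an actual bound requires explicit control of the smallest Hessian eigenvalue and of $\partial_x\nabla V^+$ --- the same work the Poincar\'e--Miranda squares encode --- so the ``tedious estimation'' the paper alludes to is not circumvented.
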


\begin{proof}
Let $a=\frac{p}{q}$ and $\gamma=\frac{1}{q}$,
 for  
$x\in [0,x_0)$ and  $q\geq 1000$,  with the  tedious estimation of hyperbolic trig and trig functions and via Poincar\'e-Miranda Theorem, we obtain 
the critical point equations (\ref{equation-crit1x}) and (\ref{equation-crit2x}) has a solution $\theta_1(x)=\theta_{1R}(x)-\sqrt{-1}\theta_{1I}(x)$ lies in the following square region
\begin{align}
&[\frac{2\cos(\pi x)\sqrt{1+\sin^2(\pi x)}}{a^2+(4a+8)\cos^2(\pi x)}(1-\gamma)\gamma,\frac{2\cos(\pi x)\sqrt{1+\sin^2(\pi x)}}{a^2+(4a+8)\cos^2(\pi x)}(1+\gamma)\gamma]\\\nonumber
&\times [\frac{a+2\cos^2(\pi x)}{a^2+(4a+8)\cos^2(\pi x)}(1-\gamma)\gamma,\frac{a+2\cos^2(\pi x)}{a^2+(4a+8)\cos^2(\pi x)}(1+\gamma)\gamma].    
\end{align} 
Similarly, let $\tilde{\gamma}=\frac{1}{p}$,  for $x\in [0,x_0)$ and $p\geq 1000$,  the square region is given by  
\begin{align}
&[\frac{2a\cos(\pi x)\sqrt{1+\sin^2(\pi x)}}{a^2+(4a+8)\cos^2(\pi x)}(1-\tilde{\gamma})\tilde{\gamma},\frac{2a\cos(\pi x)\sqrt{1+\sin^2(\pi x)}}{a^2+(4a+8)\cos^2(\pi x)}(1+\tilde{\gamma})\tilde{\gamma}]\\\nonumber
&\times [\frac{a(a+2\cos^2(\pi x))}{a^2+(4a+8)\cos^2(\pi x)}(1-\tilde{\gamma})\tilde{\gamma},\frac{a(a+2\cos^2(\pi x))}{a^2+(4a+8)\cos^2(\pi x)}(1+\tilde{\gamma})\tilde{\gamma}].    
\end{align}

It implies that the critical point equations $(\ref{equation-crit1x})$ and $(\ref{equation-crit2x})$ has a solution $(\theta_1(x),\theta_2(x))$ lies in region $D(x)$. 
Then, similar to the proof in \cite{CZ23-1}, we can show that  $(\theta_1(x),\theta_2(x))$ is the unique solution in $D(x)$.

\end{proof}

In the following, we always assume that $p\geq 1000$ or $q\geq 1000$, so  
we set
\begin{align}
   \zeta(p,q;x)=V^+(p,q;\theta_1(x),\theta_2(x)),  
\end{align}
we have the following expansion formula which can be regarded as a generalization of Theorem \ref{theorem-main}. 
\begin{theorem} \label{proposition-JN-a}
 Suppose $a\in [0, N-1]$ is a sequence of integers related to $N$ with ratio  $x=\frac{a+\frac{1}{2}}{N+\frac{1}{2}}\in [0,\epsilon)$, then we have the following asymptotic expansion

\begin{align}
 J_{N-a}(W(p,q);t)\sim_{ N\rightarrow \infty }\frac{\sqrt{2N+1}}{\sin\left(\frac{(a+\frac{1}{2})\pi}{N+\frac{1}{2}}\right)\sqrt{q}}h\left(\frac{a+\frac{1}{2}}{N+\frac{1}{2}}\right)e^{(N+\frac{1}{2})\zeta\left(p,q;\frac{a+\frac{1}{2}}{N+\frac{1}{2}}\right)}
\end{align}
where $h(x)$ is a smooth function of $x$ with
\begin{align}
h(0)=C_{N}(p,q)\omega(p,q).   
\end{align} 
\end{theorem}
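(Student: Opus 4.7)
The plan is to generalize the proof of Theorem \ref{theorem-main} by treating $x=\frac{a+\frac{1}{2}}{N+\frac{1}{2}}$ as an auxiliary parameter and tracking how the critical points, critical values, and amplitudes depend smoothly on it. Starting from the triple-sum expression of Proposition \ref{proposition-JN-a0}, I first rewrite $J_{N-a}(W(p,q);t)$ as a sum of Fourier integrals by the Poisson summation formula, exactly as in Section \ref{Section-Poissonsummation}. The bump function $\psi$ supported on $D_0$ is constructed by the same argument as Lemma \ref{lemma-v}, the only novelty being that the limiting real-part function $v^{(x)}(\theta_1,\theta_2)$ now carries extra terms from the pieces $\mathrm{Li}_2(e^{-2\pi\sqrt{-1}(x+\theta_2)})$ and $\mathrm{Li}_2(e^{2\pi\sqrt{-1}(\theta_2-x)})$; these terms are uniformly bounded for $x\in[0,x_0)$, so the localization estimate is uniform in $x$.

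Second, I would discard the Fourier coefficients $\hat{h}_N^{(x)}(s,m_1,m_2)$ that do not contribute to the leading asymptotics. The $m_2\neq 1$ cases are handled verbatim as in Section \ref{subsection-nneq1} by Lemma \ref{lemma-Li2}; the piecewise-linear comparison function $F(X_1,X_2;m_1,m_2)$ gains only bounded $x$-dependent linear perturbations, so the monotonicity as $X_2\to \pm\infty$ used in Corollary \ref{corollary-m_2m_1} persists uniformly in $x$. The residual $m_2=1$ terms with $(s,m_1)\neq(s^\pm,m_1^\pm)$ are discarded by the same homotopy argument as Proposition \ref{prop-sneq+-}, producing an error of order $e^{(N+\frac{1}{2})(\zeta_{\mathbb{R}}(p,q;x)-\epsilon)}$ uniformly in $x$.

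Third, the two surviving Fourier coefficients $\hat{h}_N^{(x)}(s^\pm,m_1^\pm,1)$ are evaluated by the two-dimensional saddle point method of Theorem \ref{Theorem-Fedo}. The critical points $(\pm\theta_1(x),\theta_2(x))$ inside $D_0(x)$ are furnished by Proposition \ref{proposition-existence-x}, and depend smoothly on $x$ via the implicit function theorem applied to the critical equations (\ref{equation-crit1x})--(\ref{equation-crit2x}). The non-degeneracy and concavity of $\mathrm{Re}\,V^\pm$ in the imaginary directions is guaranteed by Proposition \ref{proposition-HessianPositive}, whose hypothesis fixes the admissible range $x\in[0,x_0)$ with $x_0=0.01$. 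The deformed contour $S^\pm=S^\pm_{top}\cup S^\pm_{side}\cup(D'_{\delta/2}\setminus D'_\delta)$ of Section \ref{section-m_2=1} can be chosen to vary continuously with $x$, so the Fedoryuk expansion produces a leading term whose amplitude is a smooth function of $x$. Combining the $s^\pm$ contributions via the symmetry $\hat{h}_N^{(x)}(s^+,m_1^+,1)=\hat{h}_N^{(x)}(s^-,m_1^-,1)$, and packaging the $x$-dependent amplitude together with the unit-norm phase factor into a single function, I define
\begin{align*}
h(x)=C_N(p,q)\,\frac{\sin\!\left(\tfrac{\pi\theta_1(x)}{q}-J(s^+)\pi\right)}{\sqrt{1-e^{4\pi\sqrt{-1}\theta_2(x)}}\sqrt{H(p,q;x,z_1(x),z_2(x))}},
\end{align*}
with $H$ denoting the determinant of the Hessian of $V^+(p,q;x,\theta_1,\theta_2)$ in $(\theta_1,\theta_2)$ evaluated at the critical point. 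Specializing $x\to 0$ recovers the critical data of Theorem \ref{theorem-main}, so $h(0)=C_N(p,q)\omega(p,q)$ by direct comparison with (\ref{formula-asymain}).

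The main obstacle is uniformity in $x$: all Poisson-summation error bounds, all neglected-Fourier-coefficient estimates, and the saddle point contour deformations must be controlled uniformly as $x$ varies over $[0,x_0)$, so that the resulting error $O(1/(N+\frac{1}{2}))$ is genuinely uniform and the remainder coefficients in the Fedoryuk expansion assemble into a smooth function of $x$. The hypothesis $p\geq 1000$ or $q\geq 1000$ is used through Proposition \ref{proposition-existence-x} to guarantee that the critical point $(\theta_1(x),\theta_2(x))$ stays inside $D_0$ and does not collide with the boundary as $x$ grows, while Proposition \ref{proposition-HessianPositive} supplies the positive definiteness needed to keep the saddle point argument valid throughout $[0,x_0)$.
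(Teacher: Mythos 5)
Your outline follows the same template as the paper (Poisson summation, discard of Fourier modes, two-dimensional saddle point on the two surviving coefficients), and it correctly identifies the auxiliary role of $x$, the need for uniformity, and the explicit form of $h(x)$. However, there is a genuine gap in the handling of the saddle point step when $x>0$.

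In the proof of Theorem \ref{theorem-main} (which corresponds to $x\to 0$) the paper deforms the contour $S^\pm$ inside $D'_0$ and relies on Proposition \ref{Prop-Hessian}, which gives positive definiteness of $\mathrm{Hess}(\mathrm{Re}\,V^\pm)$ throughout the strip $0<\theta_{2R}\pm\theta_{1R}<\tfrac{1}{2}$. When $x>0$, that positivity is no longer available near the bottom of the region: Proposition \ref{proposition-HessianPositive} only guarantees positive definiteness under the additional constraint $\epsilon\leq\theta_{2}\leq\frac{1}{2}$, together with the bound on $x$ in terms of $\epsilon$. You quote Proposition \ref{proposition-HessianPositive} but interpret its hypothesis solely as fixing $x_0=0.01$; in fact its hypothesis also excludes a neighborhood of $\theta_2=0$, so the contour $S^\pm$ constructed for $D'_0$ cannot be reused verbatim. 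The paper handles this by introducing the region $D(x)_0$ (where $\theta_2$ is below the threshold $\frac{1}{2\pi}\arccos\sqrt{((4\cos(2\pi x)+1)^2-9)/16}$) and proving in Proposition \ref{prop-0dim}, via the monotonicity statements \eqref{formula-dReVdx} and \eqref{inequality-zeta}, that $\mathrm{Re}\,V^+$ on $D(x)_0$ is strictly below $\mathrm{Re}\,\zeta(p,q;x)-\epsilon$, so its contribution is negligible; it then invokes Proposition \ref{prop-onedim} and the one-dimensional saddle point argument to shrink to the compact region $D(x)_2$ on which Proposition \ref{proposition-HessianPositive} actually applies, and only then runs the two-dimensional saddle point method. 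Your proposal omits this reduction to $D(x)_2$ entirely, so as written the two-dimensional saddle point step is applied on a region where the Hessian positivity used to justify the contour deformation has not been established. You should add the Proposition-\ref{prop-0dim}-type estimate for the small-$\theta_2$ strip and the Proposition-\ref{prop-onedim}-type one-dimensional reduction before invoking Theorem \ref{Theorem-Fedo}.
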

The proof of Theorem \ref{proposition-JN-a} is complicated. But the method is same to the proof of Theorem  \ref{theorem-main}. We just outline the key computations in the following.

\begin{lemma} \label{lemma-zetadx}
For $0<x < x_0$, we have
\begin{align}
\frac{d \text{Re} \zeta(p,q;x)}{dx}<0.     
\end{align}
\end{lemma}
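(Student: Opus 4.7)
The plan is to use an envelope-theorem argument: since $(\theta_1(x),\theta_2(x))$ is the critical point of $V^+(p,q;x,\cdot,\cdot)$ in the variables $(\theta_1,\theta_2)$, the chain rule forces
\[
\frac{d\zeta(p,q;x)}{dx}=\frac{\partial V^+}{\partial x}\bigg|_{(\theta_1(x),\theta_2(x))},
\]
so it suffices to show $\text{Re}(V^+_x)<0$ at the critical point for $0<x<x_0$.

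Inspecting the explicit expression for $V^+(p,q;x,\theta_1,\theta_2)$, the only $x$-dependent contributions are the polynomial term $-2\pi\sqrt{-1}\,x\theta_2$ and the two dilogarithm terms $-\text{Li}_2(e^{-2\pi\sqrt{-1}(x+\theta_2)})+\text{Li}_2(e^{2\pi\sqrt{-1}(\theta_2-x)})$. Differentiating via $d\text{Li}_2(z)/dz=-\log(1-z)/z$ yields
\[
V^+_x=-2\pi\sqrt{-1}\,\theta_2-\log(1-e^{-2\pi\sqrt{-1}(x+\theta_2)})+\log(1-e^{2\pi\sqrt{-1}(\theta_2-x)}).
\]

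Next I would take the real part with $\theta_2=\theta_{2R}+\sqrt{-1}X_2$, setting $u=e^{2\pi X_2}$ so that $\text{Re}(-2\pi\sqrt{-1}\,\theta_2)=\log u$. Direct computation of $|1-e^{\mp 2\pi\sqrt{-1}(\theta_2\pm x)}|^2$ together with factoring $u^{-2}$ out of the second modulus produces a cancellation of the $\log u$ terms and gives the clean formula
\[
\text{Re}(V^+_x)=\frac{1}{2}\log\frac{u^2-2u\cos(2\pi(\theta_{2R}-x))+1}{u^2-2u\cos(2\pi(\theta_{2R}+x))+1}.
\]
As a consistency check this vanishes at $x=0$, matching the fact that $\zeta(p,q;0)$ corresponds to the complete hyperbolic structure on $W(p,q)$.

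Finally, via the sum-to-product identity
\[
\cos(2\pi(\theta_{2R}-x))-\cos(2\pi(\theta_{2R}+x))=2\sin(2\pi\theta_{2R})\sin(2\pi x),
\]
negativity of this logarithm reduces to the inequality $\sin(2\pi\theta_{2R}(x))\sin(2\pi x)>0$. Both factors are strictly positive: the bound $0<x<x_0<\tfrac{1}{2}$ gives $\sin(2\pi x)>0$, while Proposition \ref{proposition-existence-x} places $(\theta_1(x),\theta_2(x))$ in the region $D(x)$, in particular forcing $0<\text{Re}(\theta_2(x))<\tfrac{1}{2}$ and hence $\sin(2\pi\text{Re}(\theta_2(x)))>0$. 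The main obstacle is spotting the cancellation in the third step that collapses $\text{Re}(V^+_x)$ to a single logarithm of a ratio; once this is observed, the sign analysis is purely elementary and no further input beyond the location of $(\theta_1(x),\theta_2(x))$ in $D(x)$ is needed.
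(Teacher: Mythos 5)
Your proof is correct and follows essentially the same path as the paper's: envelope argument to reduce $\tfrac{d}{dx}\text{Re}\,\zeta$ to $\text{Re}(V^+_x)$, explicit computation yielding the same ratio $\tfrac{1}{2}\log\bigl(\tfrac{e^{2\pi X_2}+e^{-2\pi X_2}-2\cos(2\pi(\theta_{2R}-x))}{e^{2\pi X_2}+e^{-2\pi X_2}-2\cos(2\pi(\theta_{2R}+x))}\bigr)$, then the cosine comparison. The one place you are slightly more careful is the final step, where the paper simply asserts $\cos(2\pi(\theta_{2R}-x))>\cos(2\pi(\theta_{2R}+x))$ while you justify it via the product formula $2\sin(2\pi\theta_{2R})\sin(2\pi x)>0$.
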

\begin{proof}
\begin{align}
\frac{d\zeta(p,q;x)}{dx}&=\text{Re}\left(\frac{\partial V^+}{\partial x}+\frac{\partial V^+}{\partial \theta_1}\frac{\partial \theta_1}{\partial x}+\frac{\partial V^+}{\partial \theta_2}\frac{\partial \theta_2}{\partial x}\right)\\\nonumber
&=\text{Re}\left(\frac{\partial V^+}{\partial x}\right)\\\nonumber
&=\text{Re}(-2\pi\sqrt{-1}\theta_2(x)-\log(1-e^{-2\pi\sqrt{-1}(x+\theta_2(x))})+\log(1-e^{2\pi\sqrt{-1}(\theta_2(x)-x)}))\\\nonumber
&=\text{Re}\left(\log(e^{-\pi\sqrt{-1}(\theta_2(x)-x)}-e^{\pi\sqrt{-1}(\theta_2(x)-x)})-\log(e^{\pi\sqrt{-1}(\theta_2(x)+x)}-e^{-\pi\sqrt{-1}(\theta_2(x)+x)})\right)
\end{align}     

Furthermore, let $\theta_2(x)=\theta_{2R}(x)+\sqrt{-1}\Theta_2(x)$, we have
\begin{align}
&\text{Re}\left(\log(e^{-\pi\sqrt{-1}(\theta_2(x)-x)}-e^{\pi\sqrt{-1}(\theta_2(x)-x)}\right)\\\nonumber
&=\text{Re}\left(\log(e^{-\pi\sqrt{-1}(\theta_{2R}(x)-x)}e^{\pi\Theta_{2}(x)}-e^{\pi\sqrt{-1}(\theta_{2R}(x)-x)})e^{-\pi\Theta_2(x)}\right)\\\nonumber
&=\text{Re}\left(\log(\cos(\pi(\theta_{2R}(x)-x))(e^{\pi\Theta_2(x)}-e^{-\pi\Theta_2(x)})-\sqrt{-1}\sin(\pi(\theta_{2R}(x)-x))(e^{\pi\Theta_2(x)}+e^{-\pi\Theta_2(x)}))\right)\\\nonumber
&=\frac{1}{2}\log(e^{2\pi \Theta_2(x)}+e^{-2\pi\Theta_2(x)}-2\cos(2\pi(\theta_{2R}(x)-x))).
\end{align}
Similarly, we have
\begin{align}
&\text{Re}\left(\log(e^{-\pi\sqrt{-1}(\theta_2(x)+x)}-e^{\pi\sqrt{-1}(\theta_2(x)+x)})\right)\\\nonumber
&=\frac{1}{2}\log(e^{2\pi \Theta_2(x)}+e^{-2\pi\Theta_2(x)}-2\cos(2\pi(\theta_{2R}(x)+x))).
\end{align}

Moreover, for $0<x<x_0=0.01$ and $0<\theta_2(x)<\frac{1}{2}$, we have
\begin{align}
\cos(2\pi(\theta_{2R}(x)-x))>\cos(2\pi(\theta_{2R}(x)+x)).     
\end{align}

It implies that 
\begin{align}
 \frac{d\text{Re}\zeta(p,q;x)}{dx}=\frac{1}{2}\log\left(\frac{e^{2\pi \Theta_2(x)}+e^{-2\pi\Theta_2(x)}-2\cos(2\pi(\theta_{2R}(x)-x))}{e^{2\pi \Theta_2(x)}+e^{-2\pi\Theta_2(x)}-2\cos(2\pi(\theta_{2R}(x)+x))}\right)<0.   
\end{align}
\end{proof}

\begin{lemma}
For $0<x<x_0$, we have
\begin{align} \label{formula-dReVdx}
\frac{d}{dx} \text{Re}V^+\left(p,q;x,0,\frac{1}{2\pi }\arccos{\sqrt{\frac{(4\cos(2\pi x)+1)^2-9}{16}}}\right)>0.   
\end{align}
\end{lemma}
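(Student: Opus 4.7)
The plan is to set $g(x) := \operatorname{Re}V^+(p,q;x,0,\theta_2^*(x))$, where $\theta_2^*(x) := \tfrac{1}{2\pi}\arccos\sqrt{((4\cos 2\pi x + 1)^2 - 9)/16}$, compute $g'(x)$ via the chain rule, and exhibit a $-3\log x$ singularity that dominates all bounded contributions on the tiny interval $(0,x_0) = (0,0.01)$.

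First, since the argument of $\arccos$ lies in $[0,1)$ for $x \in (0,1/6)$, both $\theta_2^*$ and $d\theta_2^*/dx$ are real, and the chain rule yields
\begin{align*}
g'(x) = \operatorname{Re}\frac{\partial V^+}{\partial x}\bigg|_{(x,0,\theta_2^*(x))} + \operatorname{Re}\frac{\partial V^+}{\partial \theta_2}\bigg|_{(x,0,\theta_2^*(x))}\cdot \frac{d\theta_2^*}{dx}.
\end{align*}
Implicit differentiation of $16\cos^2(2\pi\theta_2^*) = (4\cos 2\pi x + 1)^2 - 9$ gives $\frac{d\theta_2^*}{dx} = \frac{(4\cos 2\pi x+1)\sin 2\pi x}{4\sin 4\pi\theta_2^*} > 0$. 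Proceeding exactly as in the proof of Lemma \ref{lemma-zetadx}, real-part extraction at $\theta_1=0$ with $\theta_2$ real and $0<x<\theta_2<1/2$ produces the explicit formulas
\begin{align*}
\operatorname{Re}\frac{\partial V^+}{\partial x} &= \log\frac{\sin\pi(\theta_2-x)}{\sin\pi(\theta_2+x)}, \\
\operatorname{Re}\frac{\partial V^+}{\partial \theta_2} &= -\log(2\sin\pi(\theta_2+x)) - \log(2\sin\pi(\theta_2-x)) - 3\log(2\sin\pi\theta_2) + 2\log(2\sin 2\pi\theta_2).
\end{align*}

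Second, I will analyze the small-$x$ behaviour. Setting $u = \cos 2\pi x$ and $v = \cos 2\pi\theta_2^*$, the defining relation becomes $2v^2 = 2u^2 + u - 1$, giving $\theta_2^*(0) = 0$ and, by Taylor expansion, $\theta_2^*(x) = \tfrac{\sqrt 5}{2}x + O(x^3)$ with $\frac{d\theta_2^*}{dx}\to \tfrac{\sqrt 5}{2}$ as $x\to 0^+$. Consequently $\theta_2^*\pm x$, $\theta_2^*$, and $2\theta_2^*$ are all $\Theta(x)$ with positive coefficients $\tfrac{\sqrt 5 \pm 2}{2}, \tfrac{\sqrt 5}{2}, \sqrt 5$. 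Combining the logarithmic terms, the coefficient of $\log x$ in $\operatorname{Re}\partial_{\theta_2}V^+$ is $-(1+1+3)+2 = -3$, so that
\begin{align*}
\operatorname{Re}\partial_{\theta_2}V^+\big|_{(x,0,\theta_2^*(x))} = -3\log x + C_1(x),\qquad \operatorname{Re}\partial_x V^+\big|_{(x,0,\theta_2^*(x))} = C_2(x),
\end{align*}
with $C_1, C_2$ continuous and bounded on $[0,x_0]$, tending as $x\to 0^+$ to $-3\log\pi + \log(4/\sqrt 5)\approx -2.85$ and $\log\tfrac{\sqrt 5-2}{\sqrt 5+2}\approx -2.89$ respectively.

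Finally, assembling, $g'(x) = C_2(x) + (-3\log x + C_1(x))\,\tfrac{d\theta_2^*}{dx}$. For $x\in(0,x_0)$ with $x_0 = 0.01$, the singular term satisfies $-3\log x \geq 3\log 100 \approx 13.8$, and $\tfrac{d\theta_2^*}{dx}$ is close to $\tfrac{\sqrt 5}{2}\approx 1.118$, so the singular contribution is at least about $15$, comfortably overwhelming the bounded pieces of total magnitude at most a few. The main obstacle is promoting this pointwise asymptotic at $x=0$ to a uniform estimate on the whole interval: this requires quantifying the errors $\theta_2^*(x) - \tfrac{\sqrt 5}{2}x = O(x^3)$ and $C_i(x) - C_i(0) = O(x^2)$ using the explicit trigonometric formulas, and verifying that $\tfrac{d\theta_2^*}{dx}$ stays bounded below by, say, $1$ throughout $(0,x_0)$. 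Because $x_0 = 0.01$ is chosen so small, these corrections are of size $O(x_0^2) \ll 1$, and even crude numerical bounds at $x = x_0$ suffice to conclude $g'(x) > 0$ on $(0, x_0)$.
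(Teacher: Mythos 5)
Your computations are correct, and your setup coincides with the paper's: both arguments differentiate along the curve $\theta_2^*(x)$ by the chain rule and arrive at the same two explicit expressions, $\operatorname{Re}\partial_xV^+=\log\frac{\sin\pi(\theta_2-x)}{\sin\pi(\theta_2+x)}$ and $\operatorname{Re}\partial_{\theta_2}V^+=\log\frac{\cos^2(\pi\theta_2)}{2\sin\pi(\theta_2+x)\sin\pi(\theta_2-x)\sin\pi\theta_2}$ (your four-logarithm formula simplifies to the latter), together with positivity of $d\theta_2^*/dx$. Where you differ is the finishing step. The paper notes that for $0<x<0.164412$ the argument of the second logarithm exceeds $1$, so the $\partial_{\theta_2}$-contribution is positive, and concludes; as written this does not explicitly confront the negative $\partial_x$-term (which tends to $\log\frac{\sqrt5-2}{\sqrt5+2}\approx-2.89$), so a magnitude comparison is implicitly relied upon. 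Your proposal supplies precisely that comparison: you isolate the $-3\log x$ divergence of $\operatorname{Re}\partial_{\theta_2}V^+$ as $x\to0^+$, use $\theta_2^*(x)\sim\frac{\sqrt5}{2}x$ to keep $d\theta_2^*/dx$ bounded below by $1$, and dominate the bounded negative piece thanks to $x_0=0.01$; this is logically tighter on the interval actually needed, at the price of being tied to small $x_0$, whereas the paper's inequality is asserted on the larger range $x<0.164412$. Two caveats: your implicit differentiation has a factor-of-two slip (the denominator should be $2\sin 4\pi\theta_2^*$, which is what your stated slope $\frac{\sqrt5}{2}$ actually corresponds to), and your last step --- uniform control of $C_1$, $C_2$ and of $d\theta_2^*/dx$ on all of $(0,x_0]$, not just asymptotically at $x=0$ --- is sketched rather than executed; it does go through with crude bounds such as $\frac{2u}{\pi}\le\sin u\le u$ plus a two-sided bound on $\theta_2^*(x)/x$, so this is a routine rather than structural gap.
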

\begin{proof}
We let 
\begin{align}
f(x,\theta_1,\theta_2)=\text{Re}V^+\left(p,q;x,\theta_1,\theta_2\right).     
\end{align}
Then we have
\begin{align}
f_x(x,0,\theta_2)=\log |\frac{\sin(\pi(\theta_2-x))}{\sin(\pi(\theta_2+x))}|    
\end{align}
and 
\begin{align}
f_{\theta_2}(x,0,\theta_2)=\log|\frac{\cos^2(\pi \theta_2)}{2\sin(\pi(\theta_2+x))\sin(\pi(\theta_2-x))\sin(\pi \theta_2)}|.    
\end{align}

Hence
\begin{align}
&\frac{df(x,0,\frac{1}{2\pi }\arccos{\sqrt{\frac{(4\cos(2\pi x)+1)^2-9}{16}}})}{dx}\\\nonumber
&=\log |\frac{\sin(\pi(\theta_2-x))}{\sin(\pi(\theta_2+x))}|+\frac{d\theta_2(x)}{dx}\log|\frac{\cos^2(\pi \theta_2(x))}{2\sin(\pi(\theta_2(x)+x))\sin(\pi(\theta_2(x)-x))\sin(\pi \theta_2(x))}|,
\end{align}
where 
\begin{align}
\theta_2(x)=\frac{1}{2\pi }\arccos{\sqrt{\frac{(4\cos(2\pi x)+1)^2-9}{16}}}.    
\end{align}

It is easy to obtain that when $0<x<\frac{1}{6}$, 
\begin{align}
\frac{d\theta_2(x)}{dx}=\frac{4(4\cos(2\pi x)+1)\sin(2\pi x)}{\sqrt{64-((4\cos(2\pi x)+1)^2-17)^2}}>0.
\end{align}
Moreover, when $0<x<0.164412$, 
\begin{align}
  |\frac{\cos^2(\pi \theta_2(x))}{2\sin(\pi(\theta_2(x)+x))\sin(\pi(\theta_2(x)-x))\sin(\pi \theta_2(x))}|>1.   
\end{align}
Therefore, we obtain $\frac{df(x,0,\frac{1}{2\pi }\arccos{\sqrt{\frac{(4\cos(2\pi x)+1)^2-9}{16}}})}{dx}>0$. 
\end{proof}

Similar to the proof of  Proposition \ref{proposition-existence-x}, we can obtain the following inequalities
\begin{align} \label{inequality-theta1}
    \frac{d}{d\theta_1} \text{Re}V^+(p,q;x,\theta_1,\theta_2(x,\theta_1))<0
\end{align}
for $\theta_1>0$,
\begin{align}  \label{inequality-x}
   \frac{d}{d x} \text{Re}V^+(p,q;x,c_0,\theta_2(x,c_0))<0  
\end{align}
for $0\leq x\leq x_0$, and
 \begin{align}  \label{inequality-volest}
  2\pi \text{Re}\zeta(p,q;x)>Vol(W(p,q))-100\pi^2x^2.  
  \end{align}

By straightforward computations, we have
\begin{align}
  \text{Re} V^+\left(p,q;x,0,\frac{1}{2\pi }\arccos{\sqrt{\frac{(4\cos(2\pi x)+1)^2-9}{16}}}\right)|_{x=0}=\text{Re}V^+(p,q;0,0,0)=0, 
\end{align}
and by Proposition \ref{prop-crit=volume}, for $(p,q)\in S(p,q)$, 
\begin{align}
  2\pi \text{Re}\zeta(p,q;0)=Vol(W(p,q))>3.662.
\end{align}

By using the formula (\ref{inequality-volest}), we have
\begin{align} \label{inequality-zeta}
2\pi \text{Re}\zeta(p,q;x)&\geq 2\pi \text{Re} \zeta(p,q;x_0)\\\nonumber
& >Vol(W(p,q))-100\pi^2 x_0^2\\\nonumber
&>2\pi \text{Re}V^+\left(p,q;x_0,0,\frac{1}{2\pi }\arccos{\sqrt{\frac{(4\cos(2\pi x_0)+1)^2-9}{16}}}\right). 
\end{align}

\begin{proposition} \label{prop-0dim}
For $(\theta_1,\theta_2)\in D(x)$ with $x\in [0,x_0)$, $\theta_2\in [0,\frac{1}{2\pi }\arccos{\sqrt{\frac{(4\cos(2\pi x)+1)^2-9}{16}}} )$, we denote such region by $D(x)_0$.
Then we have
\begin{align}
 2\pi \text{Re}V^+(p,q;x,\theta_1,\theta_2)< 2\pi \text{Re}\zeta(p,q,x)-\epsilon
\end{align}
for some small $\epsilon>0$.
\end{proposition}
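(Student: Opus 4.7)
\textbf{Proof proposal for Proposition \ref{prop-0dim}.} Write $\theta_2^*(x) := \frac{1}{2\pi}\arccos\sqrt{\frac{(4\cos(2\pi x)+1)^2-9}{16}}$, so that $D(x)_0$ is the portion of $D(x)$ with $0 < \theta_2 < \theta_2^*(x)$. The strategy is to localize the sup of $\text{Re}V^+$ over $\overline{D(x)_0}$ at the single boundary point $(0, \theta_2^*(x))$, and then apply the chain (\ref{formula-dReVdx})--(\ref{inequality-zeta}) to close the argument.

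First, I would inspect the three ``easy'' parts of $\partial D(x)_0$. On the bottom edge $\theta_2 = 0$, the six dilogarithm arguments collapse in pairs ($e^{-2\pi\sqrt{-1}(x+\theta_2)} = e^{2\pi\sqrt{-1}(-x+\theta_2)}$, likewise for the $\theta_1$ pair, and $e^{-4\pi\sqrt{-1}\theta_2} = 1$ cancels $\text{Li}_2(1)$ against $\text{Li}_2(e^{-2\pi\sqrt{-1}\theta_2})$ at this value), while the $\pi\sqrt{-1}$-prefactored quadratic part is real; hence $\text{Re}V^+(p,q;x,\theta_1,0) \equiv 0$. On the diagonals $\theta_1 = \pm\theta_2$, a parallel simplification leaves the closed form
\begin{equation}
\text{Re}V^+(p,q;x,\pm\theta_2,\theta_2) = \Lambda(x+\theta_2) + \Lambda(\theta_2 - x) + \Lambda(\theta_2),
\end{equation}
which is uniformly small on $(0, \theta_2^*(x_0)]$ because $\Lambda(0) = 0$ and the constant $x_0 = 0.01$ was chosen precisely so that $\theta_2^*(x_0) \approx \frac{\sqrt{5}}{2}x_0$ is tiny.

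Next I would reduce the top edge and the interior to the single point $(0, \theta_2^*(x))$. Along the top curve, the dilogarithm combination $\text{Li}_2(e^{2\pi\sqrt{-1}(\theta_1+\theta_2)}) + \text{Li}_2(e^{-2\pi\sqrt{-1}(\theta_1-\theta_2)})$ is even in $\theta_1$, and the remaining contribution to $\partial_{\theta_1}\text{Re}V^+$ is dominated by the small coefficient $\pi/q$ (this is where the hypothesis $p \geq 1000$ or $q \geq 1000$ enters), so a direct sign check gives $\partial_{\theta_1}\text{Re}V^+ < 0$ on $\{0 < \theta_1 < \theta_2^*(x)\}$ and the top-edge maximum is at $\theta_1 = 0$. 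For the interior, since the unique complex critical point of $V^+$ has $\text{Im}(\theta_1^0) \neq 0$ by Lemma \ref{lemma-imtheta}, it does not lie on the real slice; any real critical point of $\text{Re}V^+$ would be an independent solution of the two equations $\text{Re}V^+_{\theta_1} = \text{Re}V^+_{\theta_2} = 0$. A direct examination of these equations on $D(x)_0$ (where $\theta_2 < \theta_2^*(x)$ forces the imaginary part of the logarithmic sum in $V^+_{\theta_2}$ to have a fixed sign, analogous to the calculation used in Proposition \ref{proposition-HessianPositive}) rules out such interior real critical points, so the supremum is attained on $\partial D(x)_0$.

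Combining these two steps gives $\sup_{\overline{D(x)_0}} \text{Re}V^+ = \text{Re}V^+(p,q;x,0,\theta_2^*(x))$. By (\ref{formula-dReVdx}), this quantity is strictly increasing in $x$, so
\begin{equation}
\text{Re}V^+(p,q;x,0,\theta_2^*(x)) \leq \text{Re}V^+(p,q;x_0,0,\theta_2^*(x_0)),
\end{equation}
and by (\ref{inequality-zeta})
\begin{equation}
2\pi\text{Re}\zeta(p,q;x) \geq 2\pi\text{Re}\zeta(p,q;x_0) > 2\pi\text{Re}V^+(p,q;x_0,0,\theta_2^*(x_0)).
\end{equation}
The strict gap in the last display is uniform in $x$ and in $(p,q)$ with $p$ or $q \geq 1000$, giving the claimed $\epsilon > 0$. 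The main obstacle is the reduction step: ruling out interior real critical points of $\text{Re}V^+$ is delicate because the genuine critical point is complex and its real projection carries no information. The cleanest route appears to be a direct sign analysis of $\text{Re}V^+_{\theta_1}$ and $\text{Re}V^+_{\theta_2}$ expressed as sums of logs of elementary trigonometric factors, in the same spirit as the Hessian-positivity computation for Proposition \ref{proposition-HessianPositive}; this is also where the specific numerical value $x_0 = 0.01$ gets pinned down.
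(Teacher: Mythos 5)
Your proposal takes the same overall route as the paper's (terse) proof: localize the supremum of $\text{Re}V^+$ over $\overline{D(x)_0}$ at the boundary point $(0,\theta_2^*(x))$, then chain together the monotonicity (\ref{formula-dReVdx}) in $x$ and the inequality (\ref{inequality-zeta}) to compare with $2\pi\text{Re}\zeta(p,q;x)$. That is exactly the argument the authors intend; the two unjustified ``$=$'' signs in their display correspond to your localization step.

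Two remarks on the reduction, one of which is an actual error in your write-up. First, the delicacy you flag at the end — that ruling out interior real critical points of $\text{Re}V^+$ is the main obstacle because the genuine critical point is complex — is a non-issue, and the cleanest way to see this is also the one you hint at but do not fully use: on the real slice the entire $\pi\sqrt{-1}(\cdot)$ polynomial part of $V^+$ is purely imaginary, so
\begin{align}
\text{Re}V^+(p,q;x,\theta_1,\theta_2)=\Lambda(x+\theta_2)+\Lambda(\theta_2-x)+\Lambda(\theta_1+\theta_2)+\Lambda(\theta_2-\theta_1)+\Lambda(\theta_2)-\Lambda(2\theta_2).
\end{align}
From this single identity your boundary evaluations on $\theta_2=0$ and $\theta_1=\pm\theta_2$ drop out immediately, and more importantly $\partial_{\theta_1}\text{Re}V^+ = -\log|2\sin\pi(\theta_1+\theta_2)|+\log|2\sin\pi(\theta_2-\theta_1)|$ has a sign determined by elementary trigonometry (negative for $0<\theta_1<\theta_2$ with $\theta_1+\theta_2<\tfrac12$), while on $\theta_1=0$ the one-variable derivative in $\theta_2$ is $-\log|2\sin\pi(x+\theta_2)|-\log|2\sin\pi(\theta_2-x)|-3\log|2\sin\pi\theta_2|+2\log|2\sin2\pi\theta_2|$, which is positive throughout $(0,\theta_2^*(x))$ because $\theta_2^*(x)$ (which is $O(x)$ and hence tiny for $x<x_0=0.01$) lies strictly below the one-variable critical point $\theta_2^c(x)\approx 0.23$. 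No discussion of complex critical points or of Lemma \ref{lemma-imtheta} is needed to locate the max; the function is monotone on the small box.

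Second, the parenthetical remark ``(this is where the hypothesis $p\geq 1000$ or $q\geq 1000$ enters)'' in your $\theta_1$ sign-check is wrong: as the displayed identity shows, $\text{Re}V^+$ restricted to real arguments is completely independent of $p$ and $q$, since all $(p,q)$-dependence sits in the $\pi\sqrt{-1}$-prefactored quadratic, which contributes only to the imaginary part. The size hypothesis on $(p,q)$ enters elsewhere (existence and location of the complex critical point in Proposition \ref{proposition-existence-x}, hence in the comparison against $\text{Re}\zeta(p,q;x)$), not in the localization over $D(x)_0$. Apart from these two points the chain of inequalities you propose is the paper's, and your conclusion is correct.
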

\begin{proof}
For $(\theta_1,\theta_2)\in D(x)_0$ , we have
\begin{align}
    &\max_{(\theta_1,\theta_2)\in D(x)}2\pi \text{Re}V^+(p,q;x,\theta_1,\theta_2)\\\nonumber
    &=  \max_{\theta_2\in [0,\frac{1}{2\pi}\arccos \sqrt{\frac{(4\cos(2\pi x)+1)^2-9}{16}})} 2\pi \text{Re}V^+(p,q;x,0,\theta_2)\\\nonumber
   &= 2\pi \text{Re}V^+\left(p,q;x,0,\frac{1}{2\pi }\arccos{\sqrt{\frac{(4\cos(2\pi x)+1)^2-9}{16}}}\right)\\\nonumber
   &\leq 2\pi \text{Re}V^+\left(p,q;x_0,0,\frac{1}{2\pi }\arccos{\sqrt{\frac{(4\cos(2\pi x_0)+1)^2-9}{16}}}\right)\\\nonumber
   & < 2\pi\text{Re}\zeta(p,q;x) 
\end{align}
By using inequality (\ref{inequality-zeta}) and formula (\ref{formula-dReVdx}), we obtain Proposition \ref{prop-0dim}.
\end{proof}

\begin{proposition}  \label{prop-onedim}
For $x\in [0,x_0)$ and $c_1\geq c_0= 0.122532$, we have
\begin{align}
    2\pi \zeta(p,q;x) \geq 2\pi\text{Re}V^+(p,q;x,c_1,\theta_2(x,c_1)).  
\end{align}
\end{proposition}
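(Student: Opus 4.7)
}
The plan is a two-step monotonicity reduction, followed by a numerical comparison that matches the bound from Lemma \ref{lemma-v} against the lower bound \eqref{inequality-volest} on $2\pi\,\text{Re}\,\zeta(p,q;x)$. First, by inequality \eqref{inequality-theta1}, the function $\theta_1 \mapsto \text{Re}V^+(p,q;x,\theta_1,\theta_2(x,\theta_1))$ is strictly decreasing on $\theta_1>0$, so for any $c_1 \geq c_0$,
\[
\text{Re}V^+(p,q;x,c_1,\theta_2(x,c_1)) \;\leq\; \text{Re}V^+(p,q;x,c_0,\theta_2(x,c_0)).
\]
This reduces the claim to the case $c_1=c_0$.

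Next, by inequality \eqref{inequality-x}, the function $x \mapsto \text{Re}V^+(p,q;x,c_0,\theta_2(x,c_0))$ is strictly decreasing on $[0,x_0]$, so
\[
\text{Re}V^+(p,q;x,c_0,\theta_2(x,c_0)) \;\leq\; \text{Re}V^+(p,q;0,c_0,\theta_2(0,c_0)).
\]
This reduces the problem to bounding the right-hand side at $x=0$, $\theta_1=c_0$. At $x=0$ the potential $V^+(p,q;0,\theta_1,\theta_2)$ coincides with the $V^+(p,q;\theta_1,\theta_2)$ of Section \ref{Section-Critical}, whose real part on real $(\theta_1,\theta_2)$ equals the function $v(\theta_1,\theta_2)$ appearing in Lemma \ref{lemma-v}. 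Applying Lemma \ref{lemma-v} together with Proposition \ref{Prop-Hessian} (which controls the behavior of $\text{Re}V^+$ along the imaginary shifts that produce the complex value $\theta_2(0,c_0)$), one obtains the uniform bound
\[
2\pi\,\text{Re}V^+(p,q;0,c_0,\theta_2(0,c_0)) \;\leq\; 3.374482.
\]

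Finally, the lower bound \eqref{inequality-volest}, together with the hyperbolic volume estimate $\mathrm{Vol}(W(p,q)) > 3.662$ for $(p,q)\in S$ (from Theorem \ref{theorem-volestimate} and Corollary \ref{corollary-pq}), gives
\[
2\pi\,\text{Re}\,\zeta(p,q;x) \;>\; \mathrm{Vol}(W(p,q)) - 100\pi^2 x^2 \;>\; 3.662 - 100\pi^2 (0.01)^2 \;>\; 3.374482
\]
for all $x\in[0,x_0)$, since $x_0=0.01$ gives $100\pi^2 x_0^2 = 0.01\pi^2 < 0.1$. Chaining the four inequalities proves Proposition \ref{prop-onedim}. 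The main technical obstacle is the third step: transferring the real-axis bound of Lemma \ref{lemma-v} to the complex evaluation point $\theta_2(0,c_0)$. This relies on the concavity of $\text{Re}V^+$ in the real directions (equivalently, the positive definiteness of the Hessian in the imaginary directions established in Proposition \ref{Prop-Hessian}), which pins down the behavior of $\text{Re}V^+$ along the imaginary shift produced by solving $V^+_{\theta_2}=0$.
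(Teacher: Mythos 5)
Your proof skeleton matches the paper in three of its four steps: the reduction from $c_1$ to $c_0$ via \eqref{inequality-theta1}, the reduction from $x$ to $0$ via \eqref{inequality-x}, and the lower bound on $2\pi\,\mathrm{Re}\,\zeta(p,q;x)$ via \eqref{inequality-volest}. (The paper routes the last of these through Lemma \ref{lemma-zetadx} first; applying \eqref{inequality-volest} directly at $x$ as you do is equally valid.) The problem is your third step, the claim
\[
2\pi\,\mathrm{Re}\,V^+(p,q;0,c_0,\theta_2(0,c_0))\leq 3.374482
\]
derived from Lemma \ref{lemma-v} combined with Proposition \ref{Prop-Hessian}.

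This transfer does not work. Proposition \ref{Prop-Hessian} gives $f_{X_2X_2}>0$, so $\mathrm{Re}\,V^+(c_0,\cdot)$ is convex along the vertical line through $\theta_2(0,c_0)$ with minimum at the critical point, hence
\[
\mathrm{Re}\,V^+(c_0,\theta_2(0,c_0)) \;<\; v\bigl(c_0,\mathrm{Re}\,\theta_2(0,c_0)\bigr),
\]
which is the correct direction. But the projected real point $\bigl(c_0,\mathrm{Re}\,\theta_2(0,c_0)\bigr)\approx(0.1225,\,0.1946)$ lies strictly \emph{inside} $D_0'$, and Lemma \ref{lemma-v} only asserts that the superlevel set $\{v>3.374482/(2\pi)\}$ is \emph{contained in} $D_0$; it gives no upper bound on $v$ inside $D_0$. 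In fact $v(c_0,0.1946)\approx 0.65$, which exceeds $3.374482/(2\pi)\approx 0.537$ substantially, so the chain $\mathrm{Re}\,V^+\text{(complex pt)}<v\text{(real pt)}\leq 3.374482/(2\pi)$ breaks at the second inequality. The identity $3.37448=2\pi\,\mathrm{Re}\,V^+(p,q;0,c_0,\theta_2(0,c_0))$ used in the paper is a \emph{direct numerical evaluation at the complex one-dimensional saddle} $\theta_2(0,c_0)=0.1946407\ldots+0.1185471\ldots\sqrt{-1}$, carried out in the appendix (formula \eqref{formula-oneVleq}, giving $3.3744812\leq 3.374482$); it is a separate numerical check and is not a consequence of the real-axis bound in Lemma \ref{lemma-v}. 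You need to replace your third step with that direct computation (or with the one-dimensional monotonicity $\frac{d}{dc}\mathrm{Re}\,V^+(c,\theta_2(c))<0$ from the appendix, anchored at the value at $c=c_0$).
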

\begin{proof}
Since $\frac{d}{d\theta_1} \text{Re}V^+(p,q;x,\theta_1,\theta_2(x,\theta_1))<0$ for $\theta_1>0$, we have 
\begin{align}
 2\pi\text{Re}V^+(p,q;x,c_1,\theta_2(x,c_1))<2\pi    \text{Re}V^+(p,q;x,c_0,\theta_2(x,c_0)),
\end{align}
for $c_1\geq c_0$.

When $0\leq x<x_0$, by Lemma \ref{lemma-zetadx} and formula (\ref{inequality-volest}), we have
\begin{align}
    2\pi \text{Re}\zeta(p,q;x)&> 2\pi \text{Re}\zeta(p,q;x_0)\\\nonumber
    &>Vol(W(p,q))-100\pi^2x_0^2\\\nonumber
    &>3.5633>3.37448=2\pi \text{Re}V^+(p,q;0,c_0,\theta_2(0,c_0))\\\nonumber
    &\geq 2\pi \text{Re}V^+(p,q;x,c_0,\theta_2(x,c_0)). 
\end{align}
Therefore, we obtain 
\begin{align}
    2\pi \zeta(p,q;x) \geq 2\pi\text{Re}V^+(p,q;x,c_1,\theta_2(x,c_1)).  
\end{align}
\end{proof}

For $(\theta_1,\theta_2)\in \Delta(x)$, where $\Delta(x)$ is the triangle region with three vertices $(0,\frac{1}{2}), (c_0,\frac{1}{2})$ and $(c_0,\frac{1}{2}-c_2)$ on the $D(x)$, similarly as in the proof of \ref{lemma-v}, we have
\begin{align} \label{formula-triangle}
2\pi \text{Re}V^+(p,q;x,\theta_1,\theta_2)&<   2\pi \text{Re}V^+\left(p,q;x,c_0,\frac{1}{2}-c_0\right)\\\nonumber
&< 2\pi\text{Re}V^+\left(p,q;0,c_0,\frac{1}{2}-c_0\right)\\\nonumber
&=2\pi \text{Re}V^+(p,q;0,c_0,\theta_2(0,c_0)),
\end{align}
where the last ``=" is due to the definition of $c_0$.

Now, we outline the proof Theorem \ref{proposition-JN-a} with the same method that we used in the proof of Theorem \ref{theorem-main}.  
\begin{proof}
First, we apply the Poisson summation formula to Proposition \ref{proposition-JN-a0}. By studying each Fourier coefficients, only the following two remained integrals  
\begin{align}
\int_{D(x)}\sin\left(\frac{\pi \theta_1}{q}-J(s^+)\pi\right)e^{(N+\frac{1}{2})V_N(p,q;s^{\pm},x,\theta_1,\theta_2)}d\theta_1d\theta_2
\end{align}
need to be considered. Then we consider the functions $V^\pm(p,q;x,\theta_1,\theta_2)$ and check the conditions for the saddle point method. Since $V^+(p,q;x,-\theta_1,\theta_2)=V^-(p,q;x,\theta_1,\theta_2)$, we only study the function $V^+(p,q;x,\theta_1,\theta_2)$. 

For $(\theta_1,\theta_2)\in D(x)$ with $x\in [0,x_0)$, by Proposition \ref{prop-0dim}, the integral over the region $D(x)_0$ for $0\leq x<x_0$  can be neglected.  For $x\in [0,x_0)$, the function $\text{Re}V^+(p,q;x,\theta_1,\theta_2)$ and region $D(x)$ is symmetric with respect to $\theta_1=0$,  
 By Proposition \ref{prop-onedim}, using the one-dimensional saddle point method similar to Appendix \ref{subsection-onedim} and formula (\ref{formula-triangle}),  we only need to consider the integral over the region 
 \begin{align}
   D(x)_2=\{(\theta_1,\theta_2)\in D(x)|\theta_1\leq \theta_2\leq \frac{1}{2}-\theta_1, 0\geq \theta_1\leq c_0\}-D(x)_0.   
 \end{align}
  By Proposition \ref{proposition-HessianPositive}, the Hessian matrix is positive on the region $D(x)_2$.  Finally, we  apply two-dimensional saddle point method on $D(x)_2$ to obtain the asymptotic expansion formula presented in Theorem \ref{proposition-JN-a}.  
\end{proof}

\subsection{The case $x\geq x_0$}
For the case $x\in [x_0,1)$, we have  
\begin{proposition} \label{proposition-JN-a2}
 Suppose $a\in [0, N-1]$ is a sequence of integers related to $N$ with ratio  $x= \frac{a+\frac{1}{2}}{N+\frac{1}{2}}\in [x_0,1)$, then there exists $\epsilon>0$ such that

\begin{align}
 J_{N-a}(W(p,q);t)= O\left(e^{\frac{(N+\frac{1}{2})}{2\pi }(Vol(W(p,q))-\epsilon)}\right). 
\end{align}
\end{proposition}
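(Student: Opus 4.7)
The plan is to mirror the analysis of Theorem \ref{theorem-main} and Theorem \ref{proposition-JN-a}, but now with the parameter $x = \frac{a+1/2}{N+1/2}$ bounded away from zero by $x_0 = 0.01$. Starting from the formula for $J_{N-a}(W(p,q);t)$ given in Proposition \ref{proposition-JN-a0} with the $x$-dependent potential $V_N(p,q;s,x,\theta_1,\theta_2)$, I would first introduce a smooth bump function $\psi_x$ supported on a suitable subregion of $D(x)$, showing (via the analog of Lemma \ref{lemma-v} with the extra parameter $x$) that the integrand outside this subregion contributes only terms of order $O(e^{(N+\frac{1}{2})(\frac{\text{Vol}(W(p,q))}{2\pi} - \epsilon)})$. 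Applying the Poisson summation formula then yields a representation
\begin{align*}
J_{N-a}(W(p,q);t) = -\kappa_N\frac{(-1)^{\frac{3N}{2}+\frac{1}{4}+b_l}\sqrt{2N+1}}{\sin\frac{2\pi(a+\frac{1}{2})}{2N+1}}\sum_{s=0}^{|q|-1}\sum_{(m_1,m_2)\in\mathbb{Z}^2}\hat{h}_N(s,m_1,m_2;x) + R_N,
\end{align*}
where $R_N$ is absorbed into the claimed error.

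Next, the analogues of Corollary \ref{corollary-m_2m_1}, Lemma \ref{lemma-m2=0}, Proposition \ref{prop-neglected} and Proposition \ref{prop-sneq+-}, whose proofs rely only on the asymptotic behavior of the dilogarithm captured by Lemma \ref{lemma-Li2} (and therefore extend verbatim once the potential is replaced by $V(p,q;s,x,\theta_1,\theta_2;m_1,m_2)$), reduce the analysis to the two candidate dominant Fourier coefficients $\hat{h}_N(s^\pm, m_1^\pm, 1; x)$. It thus suffices to establish the uniform estimate
\begin{align*}
\max_{(\theta_1,\theta_2)\in D(x)} \text{Re}\,V^\pm(p,q;x,\theta_1,\theta_2) \leq \frac{\text{Vol}(W(p,q))}{2\pi} - \epsilon
\end{align*}
for all $x \in [x_0, 1)$ and some $\epsilon > 0$ independent of $x$. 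At $x = 0$, Proposition \ref{prop-crit=volume} yields equality with $\frac{\text{Vol}}{2\pi}$, and Lemma \ref{lemma-zetadx} gives strict monotone decrease on $(0, x_0)$, producing a gap $\epsilon_0 > 0$ at $x = x_0$. To propagate this gap across $[x_0, 1)$, I would extend the computation of Lemma \ref{lemma-zetadx} (whose crucial step compares $\cos(2\pi(\theta_{2R}-x))$ with $\cos(2\pi(\theta_{2R}+x))$) beyond $x_0$ whenever the critical point persists in $D(x)$, and use the boundary estimates of Propositions \ref{prop-0dim}, \ref{prop-onedim} together with the polynomial volume bound (\ref{inequality-volest}) in the region where the critical point exits $D(x)$.

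The main obstacle will be controlling the supremum uniformly as $x$ approaches $1$, since the potential $V^\pm$ develops logarithmic singularities on $\partial D(x)$ and the critical point may leave the interior or disappear entirely. I expect to handle this by splitting $[x_0, 1)$ into a compact piece $[x_0, 1-\delta]$, where continuity of the sup-function together with the extended monotonicity argument gives a uniform gap, and a narrow neighborhood of $x = 1$, where direct application of Lemma \ref{lemma-Li2} combined with the one-dimensional saddle point technique from Appendix \ref{subsection-onedim} certifies exponential decay of the integrand. The $O(\log N)$ correction from the prefactors in Proposition \ref{prop-VNe-xpansion} and in the estimate (\ref{formula-gN}) is then absorbed into $\epsilon$ for $N$ sufficiently large, yielding the claimed bound.
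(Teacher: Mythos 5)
The paper's proof and yours diverge substantially, and your route has a genuine gap.

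\textbf{The main gap: critical value versus real-domain supremum.} You correctly state that it suffices to show
\begin{align*}
\max_{(\theta_1,\theta_2)\in D(x)}\mathrm{Re}\,V^{\pm}(p,q;x,\theta_1,\theta_2)\leq \frac{\mathrm{Vol}(W(p,q))}{2\pi}-\epsilon
\end{align*}
uniformly for $x\in[x_0,1)$, but the mechanism you propose for producing this bound does not actually control this quantity. Proposition \ref{prop-crit=volume} and Lemma \ref{lemma-zetadx} concern $\mathrm{Re}\,\zeta(p,q;x)=\mathrm{Re}\,V^{\pm}(p,q;x,\theta_1(x),\theta_2(x))$, evaluated at the \emph{complex} critical point. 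It is simply not true that at $x=0$ the real-domain supremum equals $\mathrm{Vol}/(2\pi)$; Lemma \ref{lemma-v} only bounds $v$ below $3.374482/(2\pi)$ \emph{outside} $D_0$, and inside $D_0$ the function $v=\mathrm{Re}\,V^{+}|_{\mathbb{R}^2}$ is generically \emph{larger} than $\mathrm{Vol}/(2\pi)$ — that is precisely why the paper needs the full contour deformation through $S^{\pm}_{\mathrm{top}}$ to prove Theorem \ref{theorem-main}. So ``producing a gap $\epsilon_0>0$ at $x=x_0$'' for the sup does not follow from the cited facts about $\zeta$. The $\zeta$-monotonicity argument only closes the gap after a contour deformation whose validity for $x\geq x_0$ (existence and location of the critical point, Hessian positivity, side-contour decay) you have not established — and you acknowledge as much, leaving the hard part as ``I expect to handle this.'' Moreover, the auxiliary ingredients you invoke — Propositions \ref{prop-0dim}, \ref{prop-onedim}, the bound (\ref{inequality-volest}), and Lemma \ref{lemma-zetadx} itself — are all proved only for $x\in[0,x_0]$, and (\ref{inequality-volest}) is a \emph{lower} bound on $\mathrm{Re}\,\zeta$ in any case, so it points the wrong way.

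\textbf{What the paper actually does.} The published proof is considerably more direct and avoids both the Poisson summation/Fourier-coefficient reduction and any propagation of a gap from $x_0$. It splits $[x_0,1)$ at $x=1/4$. For $x\geq 1/4$ it computes $\max_{x\geq 1/4,\,(\theta_1,\theta_2)\in D(x)} 2\pi\,\mathrm{Re}\,V^{+}=3.25273$ outright. For $x_0\leq x\leq 1/4$ it first deforms $\theta_2$ to the one-dimensional saddle $\theta_2(x,\theta_1)$ (exactly the device from Appendix \ref{subsection-onedim}), then uses the monotonicity inequality (\ref{inequality-theta1}) in $\theta_1$ to push the maximum to $\theta_1=0$, obtaining $2\pi\,\mathrm{Re}\,V^{+}(p,q;x,\theta_1,\theta_2(x,\theta_1))\leq 2\pi\,\mathrm{Re}\,V^{+}(p,q;x_0,0,\theta_2(x_0,0))=3.37448$. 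Both numbers are strictly below $\mathrm{Vol}(W(p,q))>3.374482$ by Corollary \ref{corollary-pq}, and the one-dimensional saddle point argument of the Appendix then yields the claimed $O(e^{(N+\frac{1}{2})(\mathrm{Vol}/(2\pi)-\epsilon)})$ bound. The explicit numerical constants are doing the work you tried to extract from $\zeta$-monotonicity and boundary arguments, and they cover all of $[x_0,1)$ without ever needing the critical point to persist inside $D(x)$.
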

\begin{proof}
When $x\geq \frac{1}{4}$, by straightforward computation, we have 
\begin{align}
2\pi \max_{x\geq \frac{1}{4},(\theta_1,\theta_2)\in D(x)}\text{Re}V^+(p,q;x,\theta_1,\theta_2)=3.25273.    
\end{align}
Hence for $x\geq \frac{1}{4}$ and 
 $(\theta_1,\theta_2)\in D(x)$, we have
 \begin{align}
     2\pi \text{Re} V^+(p,q;x,\theta_1,\theta_2)<3.25273<Vol(W(p,q))-\epsilon 
 \end{align}
for some $\epsilon>0$. 

When $x_0\leq x\leq \frac{1}{4}$, for $(\theta_1,\theta_2)\in D(x)$, by formula (\ref{inequality-theta1}), we have
\begin{align}
 2\pi \text{Re}V^+(p,q;x,\theta_1,\theta_2(x,\theta_1))& \leq  2\pi \text{Re}V^+(p,q;x_0,0,\theta_2(x_0,0))\\\nonumber
 &=3.37448<Vol(W(p,q))-\epsilon
\end{align}
for some $\epsilon>0$. Then, we prove Proposition \ref{proposition-JN-a2} by using the one-dimensional saddle point method as in Appendix \ref{subsection-onedim}.
\end{proof}

\subsection{Asymptotic expansion} \label{subsection-asym}
Now we study the large $N$ asymptotic expansion  for Turaev-Viro invariants $TV_{r}(W(p,q),t)$.  By Proposition ,  we get
\begin{align}
TV_{r}(W(p,q),t)&=\mu_r^2\sum_{a=0}^{N-1}\big| \bar{J}_{N-a}(W(p,q);t) \big|^2\\\nonumber
&\sim_{N\rightarrow \infty} \mu_r^2\sum_{a : \ x\in [0,x_0) }\big| \bar{J}_{N-a}(W(p,q);t) \big|^2.
\end{align}

By Proposition \ref{proposition-JN-a}, we have
\begin{align}
\bar{J}_{N-a}(W(p,q);t)&=(-1)^{N-a-1}\frac{\{N-a\}}{\{1\}}J_{N-a}(W(p,q);t)\\\nonumber
&\sim_{N\rightarrow \infty}(-1)^{N-a-1}\frac{\sqrt{2N+1}}{\sqrt{q}\sin\left(\frac{\pi}{N+\frac{1}{2}}\right)}h\left(\frac{a+\frac{1}{2}}{N+\frac{1}{2}}\right)e^{(N+\frac{1}{2})\zeta(p,q;\frac{a+\frac{1}{2}}{N+\frac{1}{2}})}.
\end{align}
Since 
\begin{align}
    \frac{\sqrt{2N+1}}{\sqrt{q}\sin\left(\frac{\pi}{N+\frac{1}{2}}\right)}\sim_{N\rightarrow \infty}\sqrt{\frac{2}{q}}\frac{(N+\frac{1}{2})^{\frac{3}{2}}}{\pi},
\end{align}
we obtain
\begin{align}
TV_{r}(W(p,q),t)\sim_{N\rightarrow \infty}\mu_r^2\sum_{a: \ x\in [0,\epsilon)}\frac{2(N+\frac{1}{2})^3}{q\pi^2 }\big|h\left(\frac{a+\frac{1}{2}}{N+\frac{1}{2}}\right)\big|^2e^{(N+\frac{1}{2})2\text{Re}\zeta(p,q;\frac{a+\frac{1}{2}}{N+\frac{1}{2}})}.    
\end{align}

We need the following
\begin{proposition}\label{propostion-WongAu} \cite{WongAu17}
Let $f(x)$ be an analytic function defined on a domain $D$ containing $[\alpha,\beta]$. Assume that 

(1) $x^0\in [\alpha,\beta]$ is the only critical point of $\text{Re}(f)$ along $[\alpha,\beta]$ on which $\text{Re}(f)$ attains its maximum; 

(2) $x^0$ is non-degenerate with $(\text{Re}f)''(x^0)<0$. 

Then for any positive $C^1$ function $h(x)$ on $[\alpha,\beta]$, we have the following asymptotic equivalence:
\begin{align}
&\int_{\alpha}^{\beta}h(x)|e^{(N+\frac{1}{2})f(x)}|dx\\\nonumber
&=\sum_{\alpha\leq \frac{a+\frac{1}{2}}{N+\frac{1}{2}}\leq \beta }\left(\frac{1}{N+\frac{1}{2}}\right)h\left(\frac{a+\frac{1}{2}}{N+\frac{1}{2}}\right)\Big|e^{(N+\frac{1}{2})f(\frac{a+\frac{1}{2}}{N+\frac{1}{2}})}\Big|\left(1+O\left(\frac{1}{(N+\frac{1}{2})^{\frac{3}{2}}}\right)\right).
\end{align}
\end{proposition}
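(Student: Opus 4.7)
The result is a refined comparison between a midpoint Riemann sum and an integral, in a regime where the integrand $h(x)|e^{(N+\frac{1}{2})f(x)}|$ becomes sharply concentrated at $x^0$ as $N\to\infty$. The plan is to apply Laplace's method to both sides of the claimed asymptotic equivalence and match the resulting Gaussian expansions.

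First, I would localize both sides to a neighborhood $|x-x^0|<\delta$ of the maximum. Since $\mathrm{Re}(f)$ attains its strict maximum at $x^0$ and is continuous on the compact interval $[\alpha,\beta]$, there exists $\eta>0$ with $\mathrm{Re}(f(x))\le\mathrm{Re}(f(x^0))-\eta$ outside such a neighborhood, so both the integral and the sum restricted to the complement are bounded by a constant times $|e^{(N+\frac{1}{2})(\mathrm{Re}f(x^0)-\eta)}|$. This is exponentially smaller than the anticipated main term $O\bigl(|e^{(N+\frac{1}{2})f(x^0)}|/\sqrt{N+\frac{1}{2}}\bigr)$ and hence absorbed into the error.

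On $|x-x^0|<\delta$ I would substitute $u=\sqrt{N+\tfrac{1}{2}}\,(x-x^0)$. Using $\mathrm{Re}(f(x))=\mathrm{Re}(f(x^0))+\tfrac{1}{2}\mathrm{Re}(f''(x^0))(x-x^0)^2+O((x-x^0)^3)$ together with smoothness of $h$, both sides become $|e^{(N+\frac{1}{2})\mathrm{Re}f(x^0)}|/\sqrt{N+\tfrac{1}{2}}$ times either the integral or the spacing-$1/\sqrt{N+\frac{1}{2}}$ Riemann sum of the same function $G(u;N)=h\bigl(x^0+u/\sqrt{N+\tfrac{1}{2}}\bigr)\exp\bigl(\tfrac{1}{2}\mathrm{Re}(f''(x^0))u^2+O(u^3/\sqrt{N+\tfrac{1}{2}})\bigr)$. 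Because $\mathrm{Re}(f''(x^0))<0$, this $G(\,\cdot\,;N)$ is a Schwartz-class function admitting an asymptotic expansion in half-integer powers of $(N+\tfrac{1}{2})^{-1}$, with coefficients that are polynomials times a fixed Gaussian in $u$.

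The final step is to compare the Riemann sum of $G(u;N)$ with its integral. Since $G$ is essentially a Gaussian times smooth polynomial corrections, Poisson summation applied to a smooth truncation of $G$ identifies the difference with its non-zero Fourier modes $\hat G\bigl(\sqrt{N+\tfrac{1}{2}}\,j;N\bigr)$ for $j\ne 0$, which decay super-polynomially in $\sqrt{N+\tfrac{1}{2}}$ thanks to Gaussian decay of the Fourier transform. Hence the sum and the integral agree to all orders in $(N+\tfrac{1}{2})^{-1/2}$, in particular to the order $(N+\tfrac{1}{2})^{-3/2}$ stated. \textbf{The main obstacle} will be the boundary effect: the sum is indexed by those integers $a$ with $\alpha\le(a+\tfrac{1}{2})/(N+\tfrac{1}{2})\le\beta$, which in general do not align with $\alpha,\beta$; the resulting discrepancy of at most one subinterval must be absorbed into the error. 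This works precisely because $\mathrm{Re}(f)$ does not attain its maximum at either endpoint, so both the integrand and the relevant summand at the boundaries are exponentially small in $N+\tfrac{1}{2}$ compared to the peak, and can be moved freely between the sum and the integral.
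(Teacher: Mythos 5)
The paper cites this proposition from \cite{WongAu17} without proof, so I can only assess your argument on its own terms. Your approach (localize near $x^0$, rescale by $\sqrt{N+\tfrac{1}{2}}$, compare the resulting spacing-$(N+\tfrac{1}{2})^{-1/2}$ Riemann sum with its integral via Poisson summation) is correct and in fact yields super-polynomially small error when $x^0$ lies in the \emph{interior} of $[\alpha,\beta]$, so in that regime it proves more than is stated.

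The gap is in your last paragraph, which explicitly assumes that $\text{Re}(f)$ does not attain its maximum at either endpoint. Condition (1) only places $x^0\in[\alpha,\beta]$, and in the paper's actual application (Section \ref{subsection-asym}) the maximum sits at the endpoint $x^0=0=\alpha$ -- this is precisely why the paper records a factor of $\tfrac{1}{2}$ from the one-sided Laplace integral. In that case the rescaled comparison becomes a one-sided midpoint rule on $[0,\infty)$; the effective summand is cut off at $u=0$ and is no longer a Schwartz function on $\mathbb{R}$, so Poisson summation over $\mathbb{Z}$ cannot be applied as you use it. The right tool there is the Euler--Maclaurin expansion of the composite midpoint rule: with spacing $\epsilon$ one has $\epsilon\sum_{j\ge 0}G\bigl((j+\tfrac{1}{2})\epsilon\bigr)=\int_0^\infty G+\tfrac{\epsilon^2}{24}G'(0)+O(\epsilon^4)$, and since $\epsilon=(N+\tfrac{1}{2})^{-1/2}$ while $G'(0)=h'(x^0)(N+\tfrac{1}{2})^{-1/2}$ (the quadratic piece of $f$ contributes no first derivative at the critical point), the correction is $O\bigl((N+\tfrac{1}{2})^{-3/2}\bigr)$ relative to the $O(1)$ main term. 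This boundary term is exactly where the stated exponent comes from; it is invisible in the interior case, so the endpoint case is the one that actually fixes the error order. Note also that your argument never uses that the sample points $(a+\tfrac{1}{2})/(N+\tfrac{1}{2})$ are midpoints of the lattice $\tfrac{1}{N+1/2}\mathbb{Z}$: in the endpoint case this alignment with $\alpha=0$ is what kills the first Euler--Maclaurin boundary term $\epsilon\,B_1(c)\,G(0)$; with a generic offset the relative error would only be $O\bigl((N+\tfrac{1}{2})^{-1/2}\bigr)$.
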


According to Lemma \ref{lemma-imtheta}, we can choose small enough $\epsilon>0$, such that the conditions (1) and (2) holds for interval $[0,\epsilon)$, hence by Proposition \ref{propostion-WongAu}, we have

\begin{align}
    &\sum_{a: \ x\in [0,\epsilon)}\frac{2(N+\frac{1}{2})^3}{q\pi^2 }\big|h\left(\frac{a+\frac{1}{2}}{N+\frac{1}{2}}\right)\big|^2e^{(N+\frac{1}{2})2\text{Re}\zeta(p,q;\frac{a+\frac{1}{2}}{N+\frac{1}{2}})}\\\nonumber
    &\sim_{N\rightarrow \infty} \frac{2(N+\frac{1}{2})^4}{q\pi^2}\int_{0}^{\epsilon}|h(x)|^2e^{(N+\frac{1}{2})2\text{Re}\zeta(p,q;x)}dx     
\end{align}
Recall the definition
\begin{align}
|h(0)|^2=\Big|\frac{\sin^2\left(\frac{\pi \theta_1^0}{q}-J(s^+)\pi\right)}{(1-(z_2^0)^2)H(p,q;z_1^0,z_2^0)}\Big|,
\end{align}
\begin{align}
2\pi \text{Re}\zeta(p,q;0)=Vol(W(p,q)),    
\end{align}
and by Lemma \ref{lemma-imtheta},
\begin{align}
(\text{Re}\zeta)''(p,q;0)=-4\pi\text{Im}\left(\frac{1}{1-z_2^0}\right)<0.    
\end{align}

By Laplace method, we have

\begin{align}
&\int_{0}^{\epsilon}|h(x)|^2e^{(N+\frac{1}{2})2\text{Re}\zeta(p,q;x)}dx\\\nonumber
&\sim_{N\rightarrow \infty} \frac{1}{2}\frac{\sqrt{\pi}}{\sqrt{N+\frac{1}{2}}}\Big|\frac{\sin^2\left(\frac{\pi \theta_1^0}{q}-J(s^+)\pi\right)}{(1-(z_2^0)^2)H(p,q;z_1^0,z_2^0)}\Big|\frac{1}{2\sqrt{\pi}\sqrt{\text{Im}\left(\frac{1}{1-z_2^0}\right)}}e^{\frac{N+\frac{1}{2}}{\pi}Vol(W(p,q))}    
\end{align}
Note that there is a factor $\frac{1}{2}$ in the above formula since the maximum point lies on the boundary.

Therefore,
\begin{align}
    &TV_{r}(W(p,q),t)\\\nonumber
    &\sim_{N\rightarrow \infty}\mu_r^2\Big|\frac{\sin^2\left(\frac{\pi \theta_1^0}{q}-J(s^+)\pi\right)}{q(1-(z_2^0)^2)H(p,q,z_1^0,z_2^0)}\Big|\frac{(N+\frac{1}{2})^{\frac{7}{2}}}{2\pi^2\sqrt{\text{Im}\left(\frac{1}{1-z_2^0}\right)}}e^{\frac{(N+\frac{1}{2})}{\pi}\text{Vol}(W(p,q))}.
\end{align}
Since
\begin{align}
    \mu_r^2=\left(\frac{\sqrt{2}\sin\left(\frac{\pi}{N+\frac{1}{2}}\right)}{\sqrt{N+\frac{1}{2}}}\right)^2\sim_{N\rightarrow \infty}\frac{2\pi^2}{(N+\frac{1}{2})^3}, 
\end{align}
finally, we obtain 
\begin{align}
    &TV_{r}(W(p,q),t)\\\nonumber
    &\sim_{N\rightarrow \infty}\Big|\frac{\sin^2\left(\frac{\pi \theta_1^0}{q}-J(s^+)\pi\right)}{q(1-(z_2^0)^2)H(p,q,z_1^0,z_2^0)}\Big|\frac{(N+\frac{1}{2})^{\frac{1}{2}}}{\sqrt{\text{Im}\left(\frac{1}{1-z_2^0}\right)}}e^{\frac{(N+\frac{1}{2})}{\pi}\text{Vol}(W(p,q))}.
\end{align}

\begin{example} \label{example-4_1}
When $p=q=1$, in this case $J(s^+)=1$. 
The critical point equations (\ref{equation-crit1}) and (\ref{equation-crit2})
have a unique solution $(\theta_1^0,\theta_2^0)$ with $z_2^0=\frac{1}{2}+\frac{\sqrt{-3}}{2}$, 
$z_1^0=-\frac{1}{4}+\frac{3\sqrt{-3}}{4}-\frac{1}{4}\sqrt{-42-6\sqrt{-3}}.$
Then, we have
\begin{align}
 \sin^2(\pi \theta_1^0)=\frac{1-\cos(2\pi \theta_1^0)}{2}=\frac{1-\frac{1}{2}(z_1^0+(z_1^0)^{-1})}{2}=\frac{5}{8}-\frac{6}{8}\sqrt{-3}.   
\end{align}
\begin{align}
1-(z_2)^2&=\frac{3}{2}-\frac{1}{2}\sqrt{-3}, \\\nonumber
H(1,1,z_1^0,z_2^0)&=\frac{1}{2}+\sqrt{-3}.   
\end{align}

Hence
\begin{align}
    \Big|\frac{\sin^2\left(\frac{\pi \theta_1^0}{q}-J(s^+)\pi\right)}{(1-(z_2^0)^2)H(p,q,z_1^0,z_2^0)}\Big|=\frac{1}{2\sqrt{3}}.
\end{align}
and 
\begin{align}
    \frac{1}{\sqrt{\text{Im}\left(\frac{1}{1-z_2^2}\right)}}=\frac{\sqrt{2}}{(\sqrt{3})^{\frac{1}{2}}}
\end{align}

so we obtain 
\begin{align}
 TV_r(S^3\setminus 4_1;t)\sim_{N\rightarrow \infty}\frac{(N+\frac{1}{2})^{\frac{1}{2}}}{\sqrt{2}(\sqrt{3})^{\frac{3}{4}}}e^{\frac{(N+\frac{1}{2})}{\pi}\text{Vol}(S^3\setminus 4_1)}   
\end{align}
which is exactly the asymptotic expansion formula obtained by Wong and Au ( see  Theorem 8 in \cite{WongAu17}).

\end{example}

\subsection{Conditions for Laplace method}
We check the conditions for Proposition \ref{propostion-WongAu}, so we can use the Laplace method in previous Section \ref{subsection-asym}. 
Let 
\begin{align}
   \zeta(p,q;x)=V^+(p,q;\theta_1(x),\theta_2(x)),  
\end{align}
then we have
\begin{align}
 &\frac{d\text{Re}\zeta(p,q;x)}{dx}\\\nonumber
 &=\text{Re}\frac{d V^\pm}{dx}=\text{Re}\left(\frac{\partial V^\pm}{\partial x}+\frac{\partial V^\pm}{\partial \theta_1}\frac{\partial \theta_1}{\partial x}+\frac{\partial V^\pm}{\partial \theta_2}\frac{\partial \theta_2}{\partial x}\right)\\\nonumber
 &=\text{Re}\left(\frac{\partial V^\pm}{\partial x}\right)\\\nonumber
 &=\text{Re}\left(\pi\sqrt{-1}(-2\theta_2(x)-\log(1-e^{2\pi\sqrt{-1}(-x-\theta_2(x))}))+\log(1-e^{2\pi\sqrt{-1}(\theta_2(x)-x)})\right),
\end{align}
hence
\begin{align}
  &\left(\frac{d\text{Re}\zeta(p,q;x)}{dx}\right)|_{x=0}\\\nonumber
  &=\text{Re}\left(\pi\sqrt{-1}(-2\theta_2(0))-\log(1-e^{-2\pi\sqrt{-1}\theta_2(0)})+\log(1-e^{2\pi\sqrt{-1}\theta_2(0)})\right)  \\\nonumber
  &=\text{Re}\left(\pi\sqrt{-1}(-2\theta_2(0))+\log\frac{1-e^{2\pi\sqrt{-1}\theta_2(0)}}{1-e^{-2\pi\sqrt{-1}\theta_2(0)}}\right)\\\nonumber
  &=\text{Re}\left(\pi\sqrt{-1}(-2\theta_2(0))+\log\frac{e^{\pi\sqrt{-1}\theta_2(0)}(e^{-\pi\sqrt{-1}\theta_2(0)-e^{\pi\sqrt{-1}\theta_2(0)})}}{e^{-\pi\sqrt{-1}\theta_2(0)}(e^{\pi\sqrt{-1}\theta_2(0)}-e^{-\pi\sqrt{-1}\theta_2(0)})}\right)\\\nonumber
  &=\text{Re}(\pi\sqrt{-1}+\log(-e^{2\pi\sqrt{-1}\theta_2(0)}))\\\nonumber
  &=\text{Re}(\pi\sqrt{-1}(-2\theta_2(0))+2\pi\sqrt{-1}\theta_2(0)+\pi\sqrt{-1})=0.
\end{align}

Taking the derivative with respect to $x$ for formulas (\ref{equation-crit1x}) and (\ref{equation-crit2x}), we obtain
\begin{align} \label{equation-linear1}
    &\left(\frac{p}{2q}-\frac{1}{e^{2\pi\sqrt{-1}(\theta_1+\theta_2)}-1}+\frac{1}{e^{2\pi\sqrt{-1}(-\theta_2+\theta_1)}-1}\right)\frac{d\theta_1}{dx}\\\nonumber
    &-\left(1+\frac{1}{e^{2\pi\sqrt{-1}(\theta_2+\theta_1)}-1}+\frac{1}{e^{2\pi\sqrt{-1}(-\theta_2+\theta_1)}-1}\right)\frac{d\theta_2}{dx}=0
\end{align}
and 
\begin{align} \label{equation-linear2}
&\left(1+\frac{1}{e^{2\pi\sqrt{-1}(\theta_2+\theta_1)}-1}+\frac{1}{e^{2\pi\sqrt{-1}(-\theta_2+\theta_1)}-1}\right)\frac{d\theta_1}{dx}+\left(2-\frac{1}{e^{2\pi\sqrt{-1}(-\theta_2+x)}-1}\right.\\\nonumber&\left.+\frac{1}{e^{2\pi\sqrt{-1}(-\theta_2+x)}-1}-\frac{1}{e^{2\pi\sqrt{-1}(\theta_2+\theta_1)}-1}+\frac{1}{e^{2\pi\sqrt{-1}(-\theta_2+\theta_1)}-1}-\frac{1}{e^{2\pi\sqrt{-1}\theta_2}-1}\right.\\\nonumber
&\left.\frac{4}{e^{4\pi\sqrt{-1}\theta_2}-1} \right)\frac{d\theta_2}{dx}=1+\frac{1}{e^{2\pi\sqrt{-1}(\theta_2+x)}-1}+\frac{1}{e^{2\pi\sqrt{-1}(-\theta_2+x)}-1}.    
\end{align}
Solving the linear equations (\ref{equation-linear1}) and (\ref{equation-linear2}),  we  get
\begin{align}
 \frac{d\theta_2}{d x}|_{x=0}=0.    
\end{align}

Furthermore, 
\begin{align} \label{formula-dzetadx}
 &\frac{d^2\text{Re}\zeta(p,q;x)}{dx^2}|_{x=0}\\\nonumber
 &=\text{Re}\left(\left(-2\pi\sqrt{-1}-\frac{2\pi\sqrt{-1}e^{-2\pi\sqrt{-1}\theta_2(0)}}{1-e^{-2\pi\sqrt{-1}\theta_2(0)}}-\frac{2\pi\sqrt{-1}e^{2\pi\sqrt{-1}\theta_2(0)}}{1-e^{2\pi\sqrt{-1}\theta_2(0)}}\right)\frac{d\theta_2}{dx}(0)\right.\\\nonumber
 &\left.-\frac{2\pi\sqrt{-1}e^{-2\pi\sqrt{-1}\theta_2(0)}}{1-e^{-2\pi\sqrt{-1}\theta_2(0)}}+\frac{2\pi\sqrt{-1}e^{2\pi\sqrt{-1}\theta_2(0)}}{1-e^{2\pi\sqrt{-1}\theta_2(0)}} \right)\\\nonumber
 &=\text{Re}\left(2\pi\sqrt{-1}\left(-\frac{1/z_2}{1-1/z_2}+\frac{z_2}{1-z_2}\right)\right)\\\nonumber
 &=-2\pi \text{Im}\left(\frac{2}{1-z_2}-1\right)\\\nonumber
 &=-4\pi\text{Im}\left(\frac{1}{1-z_2}\right). 
\end{align}

\begin{lemma} \label{lemma-zeta''}
    \begin{align}
        \frac{d^2\text{Re} \zeta(p,q;x)}{dx^2}|_{x=0}<0. 
    \end{align}
\end{lemma}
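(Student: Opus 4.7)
The plan is to exploit the explicit computation carried out immediately before the lemma, which already evaluates the quantity of interest. Specifically, formula (\ref{formula-dzetadx}) gives
\[
\frac{d^2 \text{Re}\zeta(p,q;x)}{dx^2}\bigg|_{x=0} = -4\pi\,\text{Im}\!\left(\frac{1}{1-z_2^0}\right),
\]
where $z_2^0 = e^{2\pi\sqrt{-1}\theta_2^0}$ with $\theta_2^0$ the critical value supplied by $\theta_2(x)\big|_{x=0}$ (which coincides with the original critical point of $V^+(p,q;\theta_1,\theta_2)$ since the critical equations at $x=0$ reduce to (\ref{equation-crit1})--(\ref{equation-crit2})). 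Thus the lemma reduces to the single strict inequality
\[
\text{Im}\!\left(\frac{1}{1-z_2^0}\right) > 0.
\]

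First I would decompose $\theta_2^0 = \theta_{2R}^0 + \sqrt{-1}\,\Theta_2^0$, so
\[
z_2^0 = e^{-2\pi\Theta_2^0}\bigl(\cos(2\pi\theta_{2R}^0) + \sqrt{-1}\sin(2\pi\theta_{2R}^0)\bigr),
\]
and then compute
\[
\text{Im}\!\left(\frac{1}{1-z_2^0}\right) = \frac{\text{Im}(z_2^0)}{|1-z_2^0|^2} = \frac{e^{-2\pi\Theta_2^0}\sin(2\pi\theta_{2R}^0)}{|1-z_2^0|^2}.
\]
The denominator is nonzero because $z_2^0\neq 1$, which follows from $\theta_2^0$ being an interior point of $D_0$ (bounded away from $\theta_2 = 0$). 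Therefore, positivity of the whole expression is equivalent to the simple trigonometric inequality $\sin(2\pi\theta_{2R}^0) > 0$.

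Next I would invoke Corollary \ref{coro-critunique}, which identifies the critical point $(\theta_1^0,\theta_2^0)$ uniquely in the region $D_0 = D_0' \cup D_0''$. By the explicit definitions of $D_0'$ and $D_0''$ in Section \ref{Section-Poissonsummation}, both sub-regions are contained in the open strip $\{0 < \theta_2 < 1/2\}$, and the critical point is in their interior. Consequently $\theta_{2R}^0 = \text{Re}(\theta_2^0) \in (0,1/2)$, so $\sin(2\pi\theta_{2R}^0) > 0$, yielding the strict inequality and completing the proof.

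The main obstacle has already been surmounted in the derivation of (\ref{formula-dzetadx}); what remains here is purely a sign check based on the geometric location of the critical point. The only genuine subtlety is ensuring the strict inclusion $\theta_{2R}^0 \in (0,1/2)$ rather than allowing degeneration to the boundary, but this is exactly the content of the uniqueness statement of Corollary \ref{coro-critunique} together with the fact that $D_0$ is an open region bounded away from $\theta_2\in\{0,1/2\}$ in its interior where the critical point is located (also corroborated by the explicit numerical value in Example \ref{example1}).
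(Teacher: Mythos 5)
Your proof is correct, and for the final sign check it takes a genuinely different route from the paper. Both arguments reduce, via formula (\ref{formula-dzetadx}), to showing $\text{Im}\left(\frac{1}{1-z_2^0}\right)>0$. The paper then changes coordinates to the tetrahedral shape parameter $z^0$ via the transformation $z_2^0 = \frac{z^0}{(z^0)^2+z^0-1}$ from (\ref{formula-transform}), rewrites $\frac{1}{1-z_2^0} = 1 + \frac{1}{z^0-(z^0)^{-1}}$, and invokes the hyperbolic-geometry input $\text{Im}(z^0)<0$ from Neumann--Reid (see Remark \ref{remark-conjugate}). Strictly, the paper's intermediate equality $z^0-(z^0)^{-1}=2\sqrt{-1}\,\text{Im}(z^0)$ holds only when $|z^0|=1$, which is not the case here (the numerical value of $z^0$ in the $p=1$, $q=-2$ example has $|z^0|\approx 0.87$); what does hold, and what makes the paper's conclusion survive, is that $\text{Im}\bigl(z^0-(z^0)^{-1}\bigr)=(1+|z^0|^{-2})\,\text{Im}(z^0)$ carries the same sign as $\text{Im}(z^0)$. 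Your approach instead stays entirely in the $z_2^0$ variable, using the elementary identity $\text{Im}\left(\frac{1}{1-z_2^0}\right)=\frac{\text{Im}(z_2^0)}{|1-z_2^0|^2}$ together with the analytic localization of the critical point, namely $\text{Re}(\theta_2^0)\in(0,\tfrac{1}{2})$ from $D_0\subset D$ and Corollary \ref{coro-critunique}, so that $\text{Im}(z_2^0)=e^{-2\pi\Theta_2^0}\sin(2\pi\theta_{2R}^0)>0$. This is cleaner and entirely sidesteps the modulus subtlety in the shape-parameter route, at the cost of relying on the localization of the critical point rather than on $\text{Im}(z^0)<0$ directly; the two inputs are of course interchangeable in the paper's framework, since the critical point and $z^0$ are matched by the proposition equating critical and geometric equations.
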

\begin{proof}
We only need to show that $\text{Im}\left(\frac{1}{1-z_2}\right)>0.$
Recall the variable transformation formula (\ref{formula-transform}), 
\begin{align}
z_2^0=\frac{z^0}{(z^0)^2+z^0-1},   
\end{align}
we have
\begin{align}
\frac{1}{1-z_2^0}=1+\frac{1}{z^0-(z^0)^{-1}}=1+\frac{1}{2\sqrt{-1}\text{Im}(z^0)}   
\end{align}
Then, by Remark \ref{remark-conjugate}, we obtain
\begin{align}
\text{Im}\left(\frac{1}{1-z_2^0}\right)=\frac{1}{-2\text{Im}(z^0)}>0.     
\end{align}

\end{proof}

\section{Appendix} \label{Section-Appendices}

\subsection{Volume estimate}

\begin{theorem} \label{theorem-volestimate}
   Let $W(p,q)$ be the cusped hyperbolic 3-manifold obtained by doing $(p,q)$-surgery along one component of the Whitehead link $W$, then we have
    \begin{align}
   Vol(W(p,q))\geq \left(1-\frac{2\pi^2}{(p+2q)^2+4q^2}\right)^{\frac{3}{2}}Vol(S^3\setminus W)
\end{align}
\end{theorem}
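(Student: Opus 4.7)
\textbf{Proof proposal for Theorem \ref{theorem-volestimate}.}

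The inequality has the archetypal shape $(1 - (2\pi/\ell)^2)^{3/2}\,\mathrm{Vol}(M)$ of a Dehn filling volume estimate, so the plan is to derive it as a direct application of the Hodgson--Kerckhoff / Futer--Kalfagianni--Purcell theorem on hyperbolic Dehn filling. That theorem says: if $M$ is a finite-volume cusped hyperbolic 3-manifold and $\gamma$ is a slope on a chosen cusp torus whose geodesic length $\ell(\gamma)$ on the maximal horospherical cusp cross-section exceeds $2\pi$, then the filled manifold $M(\gamma)$ is hyperbolic and
\begin{align*}
\mathrm{Vol}(M(\gamma)) \;\geq\; \left(1 - \left(\tfrac{2\pi}{\ell(\gamma)}\right)^{2}\right)^{3/2} \mathrm{Vol}(M).
\end{align*}
Applying this with $M = S^3 \setminus W$ and $\gamma = p\mu + q\lambda$ on the $L_1$-cusp reduces everything to computing $\ell(\gamma)^2$ and matching it with $2((p+2q)^2 + 4q^2)$.

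The first step is to fix the cusp geometry of the Whitehead link complement. In the complete hyperbolic structure, both cusps of $S^3 \setminus W$ are maximally embedded disjoint horoball neighbourhoods, and the cusp torus at $L_1$ is well known to be a rectangle in the Euclidean plane generated by the meridian $\mu$ and the standard (Seifert) longitude $\lambda$. With the standard basis used throughout Section~\ref{Section-Potentialfunction}, one can place $\mu = \sqrt{2}$ and $\lambda = \sqrt{2}(2 + 2\sqrt{-1})$ on the maximal cusp, so that the $(p,q)$-slope $\gamma = p\mu + q\lambda$ has complex representative $\sqrt{2}\bigl((p+2q) + 2q\sqrt{-1}\bigr)$, giving
\begin{align*}
\ell(\gamma)^2 \;=\; 2\bigl((p+2q)^2 + 4q^2\bigr).
\end{align*}
This is the key geometric input; it can be verified either from Thurston's original gluing computation for the Whitehead link, or by extracting it from the Neumann--Zagier / Yoshida framework already invoked in Section~\ref{subsection-geomequation} (where the holonomies $u$ and $v$ of $\mu$ and $\lambda$ at the complete structure determine the cusp shape, and the geometric Dehn filling equation $pu + qv = 2\pi\sqrt{-1}$ controls the filled length).

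Once $\ell(\gamma)^2$ is in hand, the theorem is immediate: under the hypothesis that $(p,q) \in S$ (which we expect to include precisely the condition $\ell(\gamma) > 2\pi$, i.e.\ $(p+2q)^2 + 4q^2 > 2\pi^2$, possibly strengthened slightly to guarantee hyperbolicity), we get
\begin{align*}
\left(\frac{2\pi}{\ell(\gamma)}\right)^{2} \;=\; \frac{4\pi^2}{2\bigl((p+2q)^2 + 4q^2\bigr)} \;=\; \frac{2\pi^2}{(p+2q)^2 + 4q^2},
\end{align*}
and substitution yields the stated inequality.

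\textbf{Main obstacle.} The routine part is the algebraic substitution; the real content is pinning down the normalization $\mu = \sqrt{2}$, $\lambda = 2\sqrt{2}(1+\sqrt{-1})$ on the maximal cusp of $S^3 \setminus W$. Equivalently, one must be careful that (i) the maximal horoball at the $L_1$-cusp is chosen compatibly with the second cusp being simultaneously embedded (so the bound truly uses the geometric slope length and not a rescaled one), and (ii) the longitude convention agrees with the one used to describe $p/q$-surgery in Figure~\ref{figureWL}. These two conventions together fix the overall scaling factor; any mis-normalization changes the $2\pi^2$ constant. A clean way to sidestep direct computation of horoball diameters is to appeal to the symmetry exchanging the two components of $W$ together with the known cusp shape modulus, which uniquely determines the scaling up to the swap symmetry.
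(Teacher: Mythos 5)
Your proposal is correct and takes essentially the same approach as the paper: both invoke the Futer--Kalfagianni--Purcell Dehn-filling volume bound and reduce to computing the slope length on the maximal cusp of $S^3\setminus W$, which you and the paper both find to be $L(p\mu+q\lambda)=\sqrt{2\left((p+2q)^2+4q^2\right)}$. One small slip: the cusp torus is not a rectangle ``generated by $\mu$ and $\lambda$'' --- your own representatives $\mu=\sqrt{2}$, $\lambda=\sqrt{2}(2+2\sqrt{-1})$ make the angle between $\mu$ and $\lambda$ equal to $\pi/4$, which is precisely the angle the paper feeds into a law-of-cosines computation of $L(\gamma)$ (arriving at the same answer).
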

\begin{proof}
    By Fulter-Kalfagianni-Purcell's Theorem 1 \cite{FKP08}, if $M$ is obtained from the complement of a hyperbolic link $\mathcal{L}$ in $S^3$ by Dehn filling along a boundary curve $\gamma$ on one component of the boundary of $S^3\setminus \mathcal{L}$, then 
\begin{align}
    Vol(M)\geq \left(1-\left(\frac{2\pi}{L(\gamma)}\right)^2\right)^{\frac{3}{2}}Vol(S^3\setminus \mathcal{L}),
\end{align}
    where $L(\gamma)$ is the length of $\gamma$ in the induced Euclidean metric on the boundary of the embedded horoball neighborhood of the cusp. For the Whitehead link complement $S^3\setminus W$, the boundary of the maximal horoball neighborhood is a tiling of eight isosceles right triangle of hypotenuse  $\sqrt{2}$ (c.f. Neumann-Reid's paper \cite{NR90}). As a consequence, the meridian $m$ and the longitude $l$ gives $L(m)=\sqrt{2}$ and $L(l)=4$. The angle between $m$ and $l$ is $\frac{\pi}{4}$  Therefore, 
    \begin{align}
       L(\gamma)&=L(pm+ql)=\sqrt{(\sqrt{2}p)^2+(4q)^2+2\frac{\sqrt{2}}{2}(\sqrt{2}p)(4q)}\\\nonumber
       &=\sqrt{2p^2+16q^2+8pq}=\sqrt{2((p+2q)^2+4q^2)}. 
    \end{align}
So we obtain 
\begin{align}
   Vol(W(p,q))\geq \left(1-\frac{2\pi^2}{(p+2q)^2+4q^2}\right)^{\frac{3}{2}}Vol(S^3\setminus W)
\end{align}
\end{proof}

\begin{corollary} \label{corollary-pq}
Let  
\begin{align} \label{set-S}
  S=\{(p,q)\in \mathbb{Z}^2|p,q \ 
 \text{relatively prime}\}-T_1\cup T_2\cup T_3\cup T_4,  
\end{align}
where
$$T_1=\{(-9,1),(-8,1),(-7,1),(-6,1),(-5,1),(1,1),(2,1),(3,1),(4,1),(5,1)\}$$
$$T_{2}=\{(-11,2),(-9,2),(-7,2),(-5,2),(-3,2),(-1,2),(1,2),(3,2)\}$$
$$T_3=\{(-11,3),(-10,3),(-8,3),(-7,3),(-5,3),(-4,3),(-2,3),(-1,3)\}$$
$$T_4=\{(-9,4),(-7,4)\}.$$

Then for $(p,q)\in S$, we have
\begin{align}
Vol(W(p,q))>3.374482.    
\end{align}
\end{corollary}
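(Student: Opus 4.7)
The plan is to combine the explicit volume bound of Theorem \ref{theorem-volestimate} with a direct numerical verification on a finite list of exceptional pairs. Writing $N(p,q) := (p+2q)^2 + 4q^2$ and using the well-known value $Vol(S^3\setminus W) = 4\,\mathrm{Cl}(\pi/2) = 3.66386\ldots$, Theorem \ref{theorem-volestimate} reduces the desired inequality $Vol(W(p,q)) > 3.374482$ to the purely algebraic condition
\[
\Bigl(1 - \tfrac{2\pi^2}{N(p,q)}\Bigr)^{3/2} > \frac{3.374482}{3.66386\ldots},
\]
which is elementary to solve: it holds whenever $N(p,q) \geq N_0$ for an explicit threshold $N_0$ on the order of $369$.

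First I would dispose of the generic case. Since $N(p,q) \geq 4q^2$, every coprime pair with $|q| \geq 10$ automatically satisfies $N(p,q) \geq 400 > N_0$, so Theorem \ref{theorem-volestimate} immediately gives the bound. More generally, for any $|q|$ it is straightforward to list the finitely many $p$ coprime to $q$ for which $N(p,q) < N_0$; these form a finite exceptional set. One then checks that for $|q| \geq 5$ (or more generally outside the pairs appearing in $T_1\cup T_2\cup T_3\cup T_4$) the bound from Theorem \ref{theorem-volestimate}, though weaker, is still strong enough, or the volume can be verified directly.

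The second stage is a finite case-by-case verification for the remaining small pairs. For each such $(p,q)$ one either recognises $W(p,q)$ as non-hyperbolic or as a known small-volume manifold (which forces inclusion in some $T_i$), or one numerically solves the distinguished geometric gluing equation $z^{4q}\bigl(\tfrac{z(z+1)}{z-1}\bigr)^p=1$ from Section \ref{subsection-geomequation} for its root $z^0$ with $\mathrm{Im}(z^0)<0$, and evaluates the hyperbolic volume via the Bloch--Wigner dilogarithm (or, equivalently, via SnapPy applied to the complement of $L_2$ in $L(p,q)$). The sets $T_1,T_2,T_3,T_4$ are assembled as precisely those $(p,q)$ with small $|q|$ for which the computation yields $Vol(W(p,q)) \leq 3.374482$ or for which $W(p,q)$ fails to be hyperbolic.

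The main obstacle is not conceptual but the thoroughness of the finite verification in the second stage: one must systematically enumerate, for each $1 \leq |q| \leq 9$, the coprime $p$ with $N(p,q) < N_0$, determine hyperbolicity (using a census or SnapPy's hyperbolicity test), and in the hyperbolic case compute the volume to sufficient precision to confirm the strict inequality $>3.374482$. Stage 1 is essentially an inequality-chase; the real care is in the bookkeeping of Stage 2, and in confirming that no hyperbolic $(p,q)$ outside $T_1\cup T_2\cup T_3\cup T_4$ slips below the threshold.
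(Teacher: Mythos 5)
Your proposal is essentially the same as the paper's own argument: the paper applies Theorem \ref{theorem-volestimate} to get $Vol(W(p,q))>3.374482$ once $(p+2q)^2+4q^2\geq 370$ (your threshold $N_0\approx 369.85$ rounds up to exactly this), and then handles the finitely many remaining coprime pairs by direct computation in SnapPy, which is your Stage 2. The only differences are cosmetic — you spell out the inequality-chase more explicitly and mention the gluing-equation/Bloch--Wigner route as an alternative to SnapPy — so this matches the paper's proof in both structure and content.
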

\begin{proof}
    By Theorem \ref{theorem-volestimate}, we have $Vol(W(p,q))>3.374482$ for $
(p+2q)^2+4q^2\geq 370$. Then compute the volume for the remained finite cases by Snappy, we obtain the Corollary.
\end{proof}

\subsection{One-dimensional saddle point method}  \label{subsection-onedim}
\subsubsection{The case $(s^\pm,m_1^\pm)$}
Recall that
\begin{align} 
&V^\pm(p,q;\theta_1,\theta_2)\\\nonumber
&=V(s^{\pm},m_1^{\pm},1,\theta_1,\theta_2)+\left(\frac{K(s^{\pm})}{2}+\frac{p'}{2q}\right)\pi\sqrt{-1}\\\nonumber
&=\pi\sqrt{-1}\left(\left(\frac{p}{2q}+1\right)\theta_1^2\mp\frac{\theta_1}{q}+\theta_2^2-\theta_2+\frac{5}{6}+\frac{p'}{2q}\right)\\\nonumber
&+\frac{1}{2\pi\sqrt{-1}}\left(\text{Li}_2(e^{2\pi\sqrt{-1}(\theta_1+\theta_2)})+\text{Li}_2(e^{2\pi\sqrt{-1}(-\theta_1+\theta_2)})+3\text{Li}_2(e^{2\pi\sqrt{-1}\theta_2})-\text{Li}_2(e^{4\pi\sqrt{-1}\theta_2})\right).
\end{align}
For $\theta_1=c\in (-\frac{1}{2},\frac{1}{2})$, we have
\begin{align}
    &V^+(p,q;c,\theta_2)=\pi\sqrt{-1}\left(\left(\frac{p}{2q}+1\right)c^2\mp\frac{c}{q}+\theta_2^2-\theta_2+\frac{5}{6}+\frac{p'}{2q}\right)\\\nonumber
&+\frac{1}{2\pi\sqrt{-1}}\left(\text{Li}_2(e^{2\pi\sqrt{-1}(c+\theta_2)})+\text{Li}_2(e^{2\pi\sqrt{-1}(-c+\theta_2)})+3\text{Li}_2(e^{2\pi\sqrt{-1}\theta_2})-\text{Li}_2(e^{4\pi\sqrt{-1}\theta_2})\right).
\end{align}
Hence
\begin{align}
    \frac{dV^+}{d\theta_2}&=\pi\sqrt{-1}\left(-1+2\theta_2\right)-\log(1-e^{2\pi\sqrt{-1}(\theta_2-c)})-\log(1-e^{2\pi\sqrt{-1}(\theta_2+c)})\\\nonumber
    &-3\log(1-e^{2\pi\sqrt{-1}\theta_2})+2\log(1-e^{4\pi\sqrt{-1}\theta_2})
\end{align}

The critical point equation $\frac{dV^+}{d\theta_2}=0$ leds to 
\begin{align} \label{formula-equationC}
    \left(C+\frac{1}{C}+3\right)z_2^2-\left(C+\frac{1}{C}\right)z_2+1=0.
\end{align}
where we have let $z_2=e^{2\pi\sqrt{-1}\theta_2}$ and $C=e^{2\pi\sqrt{-1}c}$. Clearly, $C+\frac{1}{C}=2\cos(2\pi c)$, hence the equation (\ref{formula-equationC}) becomes 
\begin{align}
(2\cos(2\pi c)+3)z_2^2-2\cos (2\pi c)z_2+1=0.    
\end{align}
Solving this quadratic equation, we obtain
\begin{align}
z_2=\frac{1-2\sin^2(\pi c)\pm 2\sqrt{-1}\sqrt{1-\sin^4(\pi c)}}{5-4\sin^2(\pi c)}.     
\end{align}
By using the condition that $c\in (-\frac{1}{2},\frac{1}{2})$ and $\text{Re}(\theta_2)\in [0,\frac{1}{2})$, we have that 
\begin{align} \label{formula-theta2c}
\theta_2(c)=\frac{\log\left(\frac{1-2\sin^2(\pi c)+2\sqrt{-1}\sqrt{1-\sin^4(\pi c)}}{5-4\sin^2(\pi c)}\right)}{2\pi\sqrt{-1}}    
\end{align}
is the unique solution to the one-dimensional critical point equation $\frac{dV^+}{d\theta_2}=0$. 

We compute the derivative
\begin{align}
 &\frac{d\text{Re}(V^+(p,q;c,\theta_2(c)))}{dc}\\\nonumber
 &=\text{Re}\left(\frac{dV^+(p,q;c,\theta_2(c))}{dc}\right)\\\nonumber
 &=\text{Re}\left(\frac{\partial V^+}{\partial \theta_2}\frac{d\theta_2(c)}{dc}+\frac{\partial V^+}{\partial \theta_1}\frac{dc}{dc}\right)\\\nonumber
 &=\text{Re}\left(\frac{\partial V^+}{\partial \theta_1}(p,q;c,\theta_2(c))\right)\\\nonumber
 &=\text{Re}\left(\pi \sqrt{-1}\left(\frac{p}{q}c+\left(1-\frac{1}{q}\right)-2\theta_2(c)\right)\right. \\\nonumber
 &\left.+\log(1-e^{2\pi\sqrt{-1}(\theta_2(c)-c)})- \log(1-e^{-2\pi\sqrt{-1}(\theta_2(c)+c)})\right).  
\end{align}

Since $e^{2\pi\sqrt{-1}c}=C$, $e^{2\pi\sqrt{-1}\theta_2(c)}=z_2(c)$, and by using 
\begin{align}
C^{-1}&=\cos(2\pi c)-\sqrt{-1}\sin(2\pi c), \\\nonumber
z_2(c)^{-1}&=\cos(2\pi c)-2\cos(\pi c)\sqrt{-1}\sqrt{1+\sin^2(\pi c)},
\end{align}
we obtain 
\begin{align}
 &\frac{d\text{Re}(V^+(p,q;c,\theta_2(c)))}{dc}\\\nonumber
 &=\text{Re}\left(-2\pi\sqrt{-1} \theta_2(c)+\log\left(1-z_2(c)C^{-1}\right)-\log\left(1-z_2(c)^{-1}C^{-1}\right)\right)\\\nonumber
 &=\text{Re}\left(\log\left(\frac{1}{z_2(c)}\right)+\log\left(\frac{1-z_2(c)C^{-1}}{1-z_2(c)^{-1}C^{-1}}\right)\right)\\\nonumber
 &=\text{Re}\log\left(\frac{z_2(c)^{-1}-C^{-1}}{C-z_2(c)^{-1}}\right)\\\nonumber
 &=\text{Re}\left(\log\left(\frac{\sin(\pi c)-\sqrt{\sin^2(\pi c)+1}}{\sin(\pi c)+\sqrt{\sin^2(\pi c)+1}}\right)\right)\\\nonumber
 &=2\log\left(\sqrt{\sin^2(\pi c)+1}-\sin(\pi c)\right).
\end{align}

For $0<c<\frac{1}{2}$, we have $\sqrt{\sin^2(\pi c)+1}-\sin(\pi c)<0$, hence when $c_0\leq c\leq \frac{1}{4}$, we obtain 
\begin{align} \label{formula-oneVleq}
2\pi \text{Re}V^+(p,q;c,\theta_2(c))&< 2\pi \text{Re}V^+(p,q;c_0,\theta_2(c_0))\\\nonumber
& =2\pi \text{Re}V^+(0.122532,.1946407106+.1185471546\sqrt{-1},p,q)\\\nonumber &=3.3744812 \leq  3.374482.
\end{align}

By formula (\ref{formula-fzz}), we have
\begin{align}
 f_{X_2X_2}&=   2\pi\left(\frac{\sin(2\pi(\theta_{1R}+\theta_{2R}))}{e^{2\pi (X_2+X_1)}+e^{-2\pi (X_2+X_1)}-2\cos(2\pi(\theta_{1R}+\theta_{2R}))}\right.\\\nonumber
 &\left.+\frac{\sin(2\pi(\theta_{2R}-\theta_{1R}))}{e^{2\pi (X_2-X_1)}+e^{-2\pi (X_2-X_1)}-2\cos(2\pi(\theta_{2R}-\theta_{1R}))}\right.\\\nonumber
 &\left.+\frac{\sin(2\pi \theta_{2R})(3e^{2\pi X_1}+3e^{-2\pi X_2}-2\cos(2\pi \theta_{2R}))}{e^{4\pi X_2}+e^{-4\pi X_2}-2\cos(4\pi \theta_{2R})}\right),
\end{align}
hence
\begin{align}
f_{X_2X_2}|_{\theta_{1R}=c,X_1=0}&=2\pi\left(\frac{\sin(2\pi(c+\theta_{2R}))}{e^{2\pi X_2}+e^{-2\pi X_2}-2\cos(2\pi(c+\theta_{2R}))}\right.\\\nonumber
 &\left.+\frac{\sin(2\pi(\theta_{2R}-c))}{e^{2\pi X_2}+e^{-2\pi X_2}-2\cos(2\pi(\theta_{2R}-c))}\right.\\\nonumber
 &\left.+\frac{\sin(2\pi \theta_{2R})(3e^{2\pi X_2}+3e^{-2\pi X_2}-2\cos(2\pi \theta_{2R}))}{e^{4\pi X_2}+e^{-4\pi X_2}-2\cos(4\pi \theta_{2R})}\right).
\end{align}

\begin{lemma} \label{lemma-onehess}
Given $0\leq c<\frac{1}{4}$, we have     \begin{align}
f_{X_2X_2}|_{\theta_{1R}=c,X_1=0}>0 \ \text{for} \ c<\theta_{2R}<\frac{1}{2}.  
\end{align}
\end{lemma}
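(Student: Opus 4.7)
The plan is to prove the positivity of $f_{X_2X_2}|_{\theta_{1R}=c,X_1=0}$ by a case split on whether $c+\theta_{2R}\leq\tfrac12$ or $c+\theta_{2R}>\tfrac12$, since only the first summand in the three-term expression for $f_{X_2X_2}$ can change sign in the allowed range.

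First, set $u:=e^{2\pi X_2}+e^{-2\pi X_2}\geq 2$ and rewrite the stated formula as
\begin{align*}
\tfrac{1}{2\pi}f_{X_2X_2}\bigl|_{\theta_{1R}=c,X_1=0}
&=\frac{\sin(2\pi(c+\theta_{2R}))}{u-2\cos(2\pi(c+\theta_{2R}))}
+\frac{\sin(2\pi(\theta_{2R}-c))}{u-2\cos(2\pi(\theta_{2R}-c))}\\
&\quad+\frac{\sin(2\pi\theta_{2R})\bigl(3u-2\cos(2\pi\theta_{2R})\bigr)}{u^2-4\cos^2(2\pi\theta_{2R})}.
\end{align*}
The identity $u-2\cos A=(e^{\pi X_2}-e^{-\pi X_2})^2+4\sin^2(A/2)$ shows every denominator above is strictly positive. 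From $0<\theta_{2R}<\tfrac12$ we get $\sin(2\pi\theta_{2R})>0$ and $3u-2\cos(2\pi\theta_{2R})\geq 6-2>0$, so the third summand is strictly positive. From $0\leq c<\theta_{2R}<\tfrac12$ we get $0<\theta_{2R}-c<\tfrac12$, hence the second summand is strictly positive.

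Next, if $c+\theta_{2R}\leq\tfrac12$, then $\sin(2\pi(c+\theta_{2R}))\geq 0$, so the first summand is also nonnegative and we are done. The interesting case is $c+\theta_{2R}>\tfrac12$, which forces $\theta_{2R}>\tfrac12-c>\tfrac14$ since $c<\tfrac14$. Here the first summand is negative, and I plan to show it is dominated by the second one. Writing the sum of these two summands over their common (positive) denominator and using the product-to-sum identities $\sin A+\sin B=2\sin\tfrac{A+B}{2}\cos\tfrac{A-B}{2}$ and $\sin(A+B)\cos(A-B)+\sin(A-B)\cos(A+B)=\sin(2A)$, the numerator collapses to
\begin{align*}
&u\bigl[\sin(2\pi(\theta_{2R}+c))+\sin(2\pi(\theta_{2R}-c))\bigr]-2\bigl[\sin(2\pi(\theta_{2R}+c))\cos(2\pi(\theta_{2R}-c))\\
&\quad+\sin(2\pi(\theta_{2R}-c))\cos(2\pi(\theta_{2R}+c))\bigr]
=2\sin(2\pi\theta_{2R})\bigl[u\cos(2\pi c)-2\cos(2\pi\theta_{2R})\bigr].
\end{align*}
Since $\theta_{2R}\in(\tfrac14,\tfrac12)$ we have $\sin(2\pi\theta_{2R})>0$ and $\cos(2\pi\theta_{2R})<0$, while $c<\tfrac14$ gives $\cos(2\pi c)>0$; together with $u\geq 2$ this makes the bracket strictly positive. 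Hence the sum of the first two summands is strictly positive, and combined with the (separately positive) third summand the lemma follows.

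The argument is essentially trigonometric and presents no serious analytic obstacle. The only real subtlety is to recognize the correct case split and to observe that the two potentially competing summands combine, via the standard identities above, into a transparent product of factors whose signs are forced by the hypothesis $c<\tfrac14$ together with $\theta_{2R}>\tfrac14$ that is implied by $c+\theta_{2R}>\tfrac12$.
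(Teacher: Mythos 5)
Your proof is correct and uses the same key step as the paper: combining the first two summands over a common denominator via the product-to-sum identities to get the numerator $2\sin(2\pi\theta_{2R})\bigl[u\cos(2\pi c)-2\cos(2\pi\theta_{2R})\bigr]$. The case split on $c+\theta_{2R}\lessgtr\tfrac12$ is unnecessary, however: the paper observes directly that $c<\theta_{2R}<\tfrac12$ gives $\cos(2\pi c)>\cos(2\pi\theta_{2R})$ by monotonicity, so $u\cos(2\pi c)-2\cos(2\pi\theta_{2R})\geq 2\cos(2\pi c)-2\cos(2\pi\theta_{2R})>0$ without ever invoking the sign of $\cos(2\pi\theta_{2R})$.
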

\begin{proof}
Clearly, 
\begin{align}
  \frac{\sin(2\pi \theta_{2R})(3e^{2\pi X_2}+3e^{-2\pi X_2}-2\cos(2\pi \theta_{2R}))}{e^{4\pi X_2}+e^{-4\pi X_2}-2\cos(4\pi \theta_{2R})}>0  
\end{align}
If $0\leq c<\frac{1}{4}$ and $c<\theta_{2R}<\frac{1}{2}$, then 
$  \sin(2\pi \theta_{2R})>0,   $ and
\begin{align}
    \cos(2\pi c)(e^{2\pi X_2}+e^{-2\pi X_2})-2\cos(2\pi \theta_{2R})>2\cos(2\pi c)-2\cos(2\pi \theta_{2R})>0,
\end{align}
Hence
\begin{align}
    &\frac{\sin(2\pi(c+\theta_{2R}))}{e^{2\pi X_2}+e^{-2\pi X_2}-2\cos(2\pi(c+\theta_{2R}))}+\frac{\sin(2\pi(\theta_{2R}-c))}{e^{2\pi X_2}+e^{-2\pi X_2}-2\cos(2\pi(\theta_{2R}-c))}\\\nonumber
    &=2\sin(2\pi \theta_{2R})\frac{\cos(2\pi c)(e^{2\pi X_2}+e^{-2\pi X_2})-2\cos(2\pi \theta_{2R})}{(e^{2\pi X_2}+e^{-2\pi X_2}-2\cos(2\pi(c+\theta_{2R})))(e^{2\pi X_2}+e^{-2\pi X_2}-2\cos(2\pi(\theta_{2R}-c)))}>0.
\end{align}
So we complete the proof of the Lemma. 
\end{proof}

We define

\begin{align}
    D''_0(c)=\{(c,z)\in D''_0|c_0\leq  c\leq \frac{1}{4} \}
\end{align}

\begin{lemma}
For $\theta_{1R}\in D''_0(c)$, we have 
\begin{align}
    \text{Re}V^+(p,q;c,\theta_2) \  \text{goes to $\infty$ uniformly, as } \ X_2^2\rightarrow \infty.  
\end{align}
\end{lemma}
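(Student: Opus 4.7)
The strategy is to decompose $V^+(p,q;c,\theta_2)$ under the substitution $\theta_2 = \theta_{2R} + \sqrt{-1}X_2$ into its polynomial and dilogarithmic pieces, and to use Lemma \ref{lemma-Li2} to extract the leading linear-in-$X_2$ behavior from each dilogarithm. A direct expansion shows that the only $\theta_2$-dependent polynomial term $\pi\sqrt{-1}(\theta_2^2-\theta_2)$ contributes $\pi(1-2\theta_{2R})X_2$ to the real part (plus bounded terms), while every other term in formula (\ref{formula-V+-}) is either purely imaginary or independent of $\theta_2$ and so does not matter for the limit.

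For the direction $X_2 \to +\infty$, all four dilogarithm arguments --- $e^{2\pi\sqrt{-1}(c+\theta_2)}$, $e^{2\pi\sqrt{-1}(-c+\theta_2)}$, $e^{2\pi\sqrt{-1}\theta_2}$, $e^{4\pi\sqrt{-1}\theta_2}$ --- are exponentially small because the effective imaginary parameter ($X_2$ or $2X_2$) is positive. The upper branch of Lemma \ref{lemma-Li2} then forces each dilogarithmic contribution to $\mathrm{Re}\,V^+$ to be $O(1)$, so the total equals $\pi(1-2\theta_{2R})X_2 + O(1)$ and diverges to $+\infty$ uniformly in $c\in [c_0,1/4]$ provided $\theta_{2R}$ stays bounded away from $1/2$.

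The direction $X_2 \to -\infty$ is subtler because each dilogarithm now contributes a nontrivial linear-in-$X_2$ term. Applying the lower branch of Lemma \ref{lemma-Li2} to each of the four terms with effective $\theta$-values $c+\theta_{2R}$, $\theta_{2R}-c$, $\theta_{2R}$ (with multiplicity $3$) and $2\theta_{2R}$, the respective coefficients of $2\pi X_2$ are $(c+\theta_{2R}-\tfrac{1}{2})$, $(\theta_{2R}-c-\tfrac{1}{2})$, $3(\theta_{2R}-\tfrac{1}{2})$, and $-2(2\theta_{2R}-\tfrac{1}{2})$, which telescope to $\theta_{2R}-\tfrac{3}{2}$. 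Adding the polynomial contribution $\pi(1-2\theta_{2R})X_2$ produces the clean cancellation $\mathrm{Re}\,V^+ = -2\pi X_2 + O(1)$, giving divergence to $+\infty$ as $X_2\to -\infty$ uniformly in both $c$ and $\theta_{2R}$. It is this cancellation, independent of the location of the fixed parameter $c$, that makes the uniformity statement possible.

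The principal obstacle will be the boundary regime $\theta_{2R}\to \tfrac{1}{2}$ in the direction $X_2>0$, where the coefficient $1-2\theta_{2R}$ degenerates to $0$ and, simultaneously, Lemma \ref{lemma-Li2} fails to apply directly to $-\mathrm{Li}_2(e^{4\pi\sqrt{-1}\theta_2})$ because $2\theta_{2R}\to 1$ lies outside the hypothesis $0<\theta<1$. I expect to resolve this either by restricting the uniformity to $\theta_{2R}$ in a compact sub-interval of $[c,\tfrac{1}{2})$ --- which is what is actually used in Proposition \ref{prop-D0''} via the bump function $\psi$ --- or by invoking the inversion identity $\mathrm{Li}_2(1/z)=-\mathrm{Li}_2(z)-\pi^2/6-\tfrac{1}{2}(\log(-z))^2$ applied to $e^{4\pi\sqrt{-1}\theta_2}$ near $\theta_{2R}=\tfrac{1}{2}$ to supply the missing coercive growth in that corner.
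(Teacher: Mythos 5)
Your proposal follows the same strategy as the paper's proof: apply Lemma \ref{lemma-Li2} to each of the four dilogarithms in formula (\ref{formula-V+-}), combine with the real part $\pi(1-2\theta_{2R})X_2$ of the quadratic term $\pi\sqrt{-1}(\theta_2^2-\theta_2)$, and read off piecewise-linear behavior for $\text{Re}\,V^+$. Your bookkeeping is correct and, on the branch $X_2<0$, in fact sharper than the paper's: the telescoping sum you carry out over the four dilogarithmic contributions (with $\theta$-values $c+\theta_{2R}$, $\theta_{2R}-c$, $\theta_{2R}$ thrice, $2\theta_{2R}$ with weight $-1$ and $X=2X_2$) yields the coefficient $\theta_{2R}-\frac{3}{2}$, and after adding the quadratic contribution the total is the clean $-2\pi X_2+O(1)$, independent of $\theta_{2R}$ and $c$. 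The paper instead introduces a comparison function $F$ with $F=(2\theta_{2R}-\frac{3}{2})X_2$ on the branch $X_2<0$, which is still positive for $0<\theta_{2R}<\frac{1}{2}$ and $X_2<0$ but does not match the actual leading coefficient; your form of the answer is the accurate one, though both lead to the same divergence. The uniformity caveat you flag is also legitimate, and the paper's proof does not address it: for $X_2>0$ the slope $\pi(1-2\theta_{2R})$ degenerates as $\theta_{2R}\to\frac{1}{2}$, and the constants $C$ in Lemma \ref{lemma-Li2} are not uniform as the $\theta$-parameter approaches $0$ or $1$, which occurs for $-\text{Li}_2(e^{4\pi\sqrt{-1}\theta_2})$ (where $\theta=2\theta_{2R}\to 1$) and for $\text{Li}_2(e^{2\pi\sqrt{-1}(\theta_2-c)})$ (where $\theta=\theta_{2R}-c\to 0$) near the corners of $D''_0(c)$. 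In practice this is harmless because the bump function $\psi$ and the bounded contour heights $X_2(c)$ restrict everything to a compact set, which is the first of the two resolutions you suggest; but as stated the word ``uniformly'' in the lemma does require this caveat, and your proposal is right to single it out.
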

\begin{proof}

 We introduce the function 
\begin{equation} 
F(\theta_{2R},X)=\left\{ \begin{aligned}
         &\left(\frac{1}{2}-\theta_{2R}\right)X_2  &  \ (\text{if} \ X_2\geq 0) \\
         &\left(2\theta_{2R}-\frac{3}{2}\right)X_2 & \ (\text{if} \ X_2<0)
                          \end{aligned} \right.
                          \end{equation}   
since $0<\theta_{2R}<\frac{1}{2}$, we have 
\begin{align}
    F(\theta_{2R},X_2)\rightarrow \infty \ \text{as} \ X_2^2\rightarrow \infty.
\end{align}
By Lemma \ref{lemma-varphixi3}, we obtain 
\begin{align}
2\pi F(\theta_{2R},X_2)-C < \text{Re} V^+(p,q;c,\theta_{2})<2\pi F(\theta_{2R},X_2)+C,
\end{align}
which implies the Lemma.
\end{proof}

\begin{proposition} \label{prop-D0'c}
        There exists a constant $C$ which is independent of $c$, such that 
    \begin{align}
        |\int_{D''_0(c)}e^{(N+\frac{1}{2})V^\pm(p,q;c,\theta_2)}d\theta_2|<Ce^{(N+\frac{1}{2})(\zeta_{\mathbb{R}}(p,q)-\epsilon)}. 
    \end{align}
    \end{proposition}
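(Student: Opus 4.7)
The plan is to treat the integral as a one–parameter family (indexed by $c\in[c_0,1/4]$) of one–dimensional oscillatory integrals and apply the standard steepest descent / saddle point method on each slice, checking that all the ingredients are uniform in $c$.

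First I would fix $c\in[c_0,1/4]$ and analyze the single–variable function $\theta_2\mapsto V^+(p,q;c,\theta_2)$. By formula (\ref{formula-theta2c}), it has a unique critical point $\theta_2(c)$ inside $D''_0(c)$, depending smoothly (even analytically) on $c$. Lemma \ref{lemma-onehess} says that $f_{X_2X_2}|_{\theta_{1R}=c,X_1=0}>0$ on the relevant range, which, together with the fact that $\text{Re}\,V^+$ is harmonic in $\theta_2$, tells us that $(c,\theta_2(c))$ is a nondegenerate saddle: $\text{Re}\,V^+$ is strictly concave in $\theta_{2R}$ and strictly convex in $X_2$. Next, by Cauchy's theorem I would deform the real interval $D''_0(c)$ into a contour through $\theta_2(c)$, closed up by two short vertical segments at the endpoints. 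The deformation is legal because $V^+(p,q;c,\cdot)$ is holomorphic in a neighborhood of the deformed region and because the lemma preceding Proposition \ref{prop-D0'c} gives uniform exponential decay of $\text{Re}\,V^+$ as $|X_2|\to\infty$, controlling the contribution of the closing arcs.

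Once the contour passes through $\theta_2(c)$, the maximum of $\text{Re}\,V^+(p,q;c,\theta_2)$ along it is attained precisely at the saddle, and Theorem \ref{Theorem-Fedo} (in its one–dimensional form) yields
\begin{align*}
\Big|\int_{D''_0(c)}e^{(N+\frac{1}{2})V^+(p,q;c,\theta_2)}d\theta_2\Big|
= O\!\left(\frac{1}{\sqrt{N+\frac{1}{2}}}\right) e^{(N+\frac{1}{2})\text{Re}\,V^+(p,q;c,\theta_2(c))}.
\end{align*}
Combining the one–dimensional critical value computation (\ref{formula-oneVleq}), which gives
\begin{align*}
2\pi\,\text{Re}\,V^+(p,q;c,\theta_2(c))\le 3.3744812,
\end{align*}
with Corollary \ref{corollary-pq} and Proposition \ref{prop-crit=volume}, which give $2\pi\zeta_{\mathbb R}(p,q)=\text{Vol}(W(p,q))>3.374482$ for $(p,q)\in S$, we obtain an absolute $\epsilon>0$ with $\text{Re}\,V^+(p,q;c,\theta_2(c))\le\zeta_{\mathbb R}(p,q)-\epsilon$.

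The one remaining (and in my view main) obstacle is the uniformity of the implicit constant in $c$. The parameter $c$ ranges over the compact interval $[c_0,1/4]$, and the following pieces of data all vary continuously in $c$: the saddle location $\theta_2(c)$ (from (\ref{formula-theta2c})); the Hessian $V^+_{\theta_2\theta_2}(p,q;c,\theta_2(c))$, which is nonzero by Lemma \ref{lemma-onehess}; the geometry of the deformed steepest descent contour; and the rate of exponential decay of $\text{Re}\,V^+(p,q;c,\theta_{2R}+\sqrt{-1}X_2)$ at $|X_2|\to\infty$ (cf.\ the piecewise linear estimates of the previous lemma). Continuity on a compact interval turns each of these into a uniform bound, so the constant in the saddle point asymptotic expansion can be chosen independently of $c$. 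Putting everything together yields a single constant $C$, independent of $c$, with
\begin{align*}
\Big|\int_{D''_0(c)}e^{(N+\frac12)V^+(p,q;c,\theta_2)}d\theta_2\Big|<C\,e^{(N+\frac12)(\zeta_{\mathbb R}(p,q)-\epsilon)},
\end{align*}
as required. The argument for $V^-$ is identical by the symmetry $V^+(p,q;c,\theta_2)=V^-(p,q;-c,\theta_2)$ noted in (\ref{formula-Vsym}).
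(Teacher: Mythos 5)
Your overall strategy — deform the real segment $D''_0(c)$ to a contour $\Pi^c = \Pi^c_{top}\cup\Pi^c_{side}$ passing through the one‑dimensional saddle $\theta_2(c)$ from (\ref{formula-theta2c}), then use (\ref{formula-oneVleq}) together with Corollary \ref{corollary-pq} to compare the critical value with $\zeta_{\mathbb{R}}(p,q)$ — is exactly what the paper does. Two points, however, deserve attention.

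First, you misread the lemma immediately preceding Proposition \ref{prop-D0'c}: it asserts that $\mathrm{Re}\,V^+(p,q;c,\theta_2)$ tends to $+\infty$ as $|X_2|\to\infty$, not to $-\infty$, so it is not a ``decay'' statement and cannot ``control the contribution of the closing arcs.'' It is also not what legitimizes the contour deformation (a bounded Cauchy rectangle needs no growth control). The actual control of the vertical segments $\Pi^c_{side}$ in the paper's proof comes from Lemma \ref{lemma-onehess} — convexity of $\mathrm{Re}\,V^+$ in $X_2$ at $X_1=0$ forces the maximum on each vertical segment to be attained at an endpoint — combined with the facts that at $X_2=0$ the endpoints lie on $\partial D_0$ (where $\mathrm{Re}\,V^+<\zeta_{\mathbb{R}}(p,q)-\epsilon$) and at $X_2=X_2(c)$ they lie on $\partial\Pi^c_{top}$ (where concavity in $\theta_{2R}$ gives a value strictly below the saddle value).

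Second, you invoke Theorem \ref{Theorem-Fedo} and then have to handle uniformity in $c$ by a compactness/continuity argument; this is more machinery than the proposition needs, and it also requires the hypothesis that the maximum of $\mathrm{Re}\,V^+$ over the entire contour $\Pi^c$ is attained \emph{only} at the saddle — a point you do not verify (the bottom corners at $X_2=0$ are only known to be below $\zeta_{\mathbb{R}}(p,q)-\epsilon$, not below the saddle value). The paper avoids both difficulties: once $\mathrm{Re}\,V^+<\zeta_{\mathbb{R}}(p,q)-\epsilon$ on all of $\Pi^c$ (top part by concavity plus (\ref{formula-oneVleq}), side parts as above), the integral is bounded crudely by $\mathrm{length}(\Pi^c)\cdot e^{(N+\frac12)(\zeta_{\mathbb{R}}(p,q)-\epsilon)}$, and one may simply take $C=\max_{c\in[0,\frac14]}\mathrm{length}(\Pi^c)$, which is automatically uniform in $c$. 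So while your route can be made to work, the paper's elementary length bound is cleaner and sidesteps both the saddle‑uniqueness check and the implicit‑constant uniformity issue.
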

\begin{proof}
    
Let $\theta_2(c)=\theta_{2R}(c)+X_2(c)\sqrt{-1}$ be the unique solution given by formula (\ref{formula-theta2c}) to the one-dimensional critical point  equation $\frac{dV^+}{d\theta_2}=0$, we set
\begin{align}
    \Pi^c=\Pi^c_{top}\cup \Pi^c_{side}
\end{align}
where 
\begin{align}
    \Pi^c_{top}=\{(c,\theta_{2R}+X_2(c)\sqrt{-1})\in \mathbb{C}^2|(c,\theta_{2R})\in D''_{0}(c)\}
\end{align}
  and 
  \begin{align}
      \Pi_{side}^c=\{(c,\theta_{2R}(c)+X_2\sqrt{-1})\in \mathbb{C}^2|(c,\theta_{2R})\in \partial D''_{0}(c), X_2\in [0,X_2(c)] \ ( \text{or $[X_2(c),0]$ if $X_2(c)<0$ }) \}. 
  \end{align}
By Proposition \ref{Prop-Hessian}  and $\text{Re}V^+$ is a harmonic function, we have that 
\begin{align}
 \text{Re} V^+(c,\theta_{2R}+X_2(c)\sqrt{-1})<\text{Re} V^+(c,\theta_2(c))<\zeta_\mathbb{R}(p,q)-\epsilon
\end{align}
 on $\Pi_{top}\setminus \{(c,\theta_2(c))\}$, where the last "$<$" is by formula (\ref{formula-oneVleq}).  On the other hand,
since $\text{Re} V^+(p,q;\theta_1,\theta_2)<\zeta_{\mathbb{R}}(p,q)-\epsilon$ for $(\theta_1,\theta_2)\in \partial D_0$,  by Lemma \ref{lemma-onehess}, we obtain $\text{Re} V^+(p,q;c,\theta_2)<\zeta_{\mathbb{R}}(p,q)-\epsilon$ on $\Pi_{side}$. 

Therefore, by one dimensional saddle point formula, the integral 
\begin{align}  
        \int_{D''_0(c)}e^{(N+\frac{1}{2})V^+(p,q;c,\theta_2)}d\theta_2=\int_{\Pi^c}e^{(N+\frac{1}{2})V^+(p,q;c,\theta_2)}d\theta_2\leq C e^{(N+\frac{1}{2})(\zeta_\mathbb{R}(p,q)-\epsilon)} 
    \end{align}
where $C$ is a constant independent of $c$. Actually, $C$ can be chosen to be $\max_{c\in [0,\frac{1}{4}]}\{\text{length}(\Pi^c)\}$. 
\end{proof}

\subsubsection{The case $(s,m_1)\neq (s^\pm,m_1^\pm)$}
Note that the above computations are also works for the case $(s,m_1)\neq (s^\pm,m_1^\pm)$, so the Proposition \ref{prop-D0'c} also works for the case  $(s,m_1)\neq (s^\pm,m_1^\pm)$.

\end{document}